  \let\original@@tocwrite=\@tocwrite
  \newif\ifAHenVflag
  \def\jlreq@uniqtoken{\jlreq@uniqtoken}
  \def\jlreq@endmark{\jlreq@endmark}
  \long\def\jlreq@getfirsttoken#1#{\jlreq@getfirsttoken@#1\bgroup\jlreq@endmark}
  \long\def\jlreq@getfirsttoken@#1#2\jlreq@endmark#3\jlreq@endmark{#1}
  \renewcommand{\@tocwrite}[2]{%
    \begingroup
      \AHenVflagfalse 
      \@ifempty{#2}{}{%
        \expandafter\expandafter\expandafter\ifx\jlreq@getfirsttoken#2\jlreq@uniqtoken{}\jlreq@endmark\Sectionformat\expandafter\@firstoftwo\else\expandafter\@secondoftwo\fi
        {%
          \def\Sectionformat##1##2{\@ifempty{##1}{}{\AHenVflagtrue}}%
          #2
        }{\AHenVflagtrue}%
      }%
      \def\@tempa{}%
      \ifAHenVflag\def\@tempa{\original@@tocwrite{#1}{#2}}\fi
    \expandafter\endgroup
    \@tempa
  }%
\theoremstyle{plain}
\newtheorem{theorem}{Theorem}[section]
\newtheorem{corollary}[theorem]{Corollary}
\newtheorem{lemma}[theorem]{Lemma}
\newtheorem{proposition}[theorem]{Proposition}
\newtheorem{proposition-definition}[theorem]{Proposition-Definition}
\theoremstyle{definition}
\newtheorem{definition}[theorem]{Definition}
\newtheorem{question}[theorem]{Question}
\theoremstyle{remark}
\newtheorem{remark}[theorem]{Remark}
\newtheorem{example}[theorem]{Example}
\DeclareMathOperator{\Ind}{Ind}
\DeclareMathOperator{\Mod}{Mod}
\DeclareMathOperator{\ind}{c-Ind}
\DeclareMathOperator{\Hom}{Hom}
\DeclareMathOperator{\val}{val}
\DeclareMathOperator{\Lie}{Lie}
\DeclareMathOperator{\Char}{char}
\DeclareMathOperator{\Coind}{Coind}
\newcommand*{\St}{\mathrm{St}}
\newcommand*{\charone}{\mathbf{1}}
\newcommand*{\id}{\mathrm{id}}
\newcommand*{\aff}{\mathrm{aff}}
\numberwithin{equation}{section}
\author{N. Abe}
\address[N. Abe]{Department of Mathematics, Hokkaido University, Kita 10, Nishi 8, Kita-Ku, Sapporo, Hokkaido, 060-0810, Japan}
\thanks{The first-named author was supported by JSPS KAKENHI Grant Number 26707001.}
\email{abenori@math.sci.hokudai.ac.jp}
\author{G. Henniart} 
\address[G. Henniart]{Universit\'e de Paris-Sud, Laboratoire de Math\'ematiques d'Orsay, Orsay cedex F-91405 France;
CNRS, Orsay cedex F-91405 France}
\email{Guy.Henniart@math.u-psud.fr}
\author{M.-F. Vign\'eras}
\address[M.-F. Vign\'eras]{Institut de Math\'ematiques de Jussieu, 175 rue du Chevaleret, Paris 75013 France}
\email{vigneras@math.jussieu.fr}
\title{On pro-$p$-Iwahori invariants of $R$-representations of reductive $p$-adic groups}
\date{}
\subjclass[2010]{primary 20C08, secondary  11F70}
\keywords{parabolic induction, pro-$p$ Iwahori Hecke algebra}
\begin{document} 

\begin{abstract}
Let $F$ be locally compact field with residue characteristic $p$, and $\mathbf{G}$ a connected reductive $F$-group.
Let $\mathcal{U}$ be a pro-$p$ Iwahori subgroup of $G = \mathbf{G}(F)$.
Fix a commutative ring $R$.
If $\pi$ is a smooth $R[G]$-representation, the space of invariants $\pi^{\mathcal{U}}$ is a right module over the Hecke algebra $\mathcal{H}$ of $\mathcal{U}$ in $G$.

Let $P$ be a parabolic subgroup of $G$ with a Levi decomposition $P = MN$ adapted to $\mathcal{U}$.
We complement previous investigation of Ollivier-Vign\'eras on the relation between taking $\mathcal{U}$-invariants and various functor like $\Ind_P^G$ and right and left adjoints.
More precisely the authors' previous work with Herzig introduce representations $I_G(P,\sigma,Q)$ where $\sigma$ is a smooth representation of $M$ extending, trivially on $N$, to a larger parabolic subgroup $P(\sigma)$, and $Q$ is a parabolic subgroup between $P$ and $P(\sigma)$.
Here we relate $I_G(P,\sigma,Q)^{\mathcal{U}}$ to an analogously defined $\mathcal{H}$-module $I_\mathcal{H}(P,\sigma^{\mathcal{U}_M},Q)$, where $\mathcal{U}_M = \mathcal{U}\cap M$ and $\sigma^{\mathcal{U}_M}$ is seen as a module over the Hecke algebra $\mathcal{H}_M$ of $\mathcal{U}_M$ in $M$.
In the reverse direction, if $\mathcal{V}$ is a right $\mathcal{H}_M$-module, we relate $I_\mathcal{H}(P,\mathcal{V},Q)\otimes \ind_\mathcal{U}^G\charone$ to $I_G(P,\mathcal{V}\otimes_{\mathcal{H}_M}\ind_{\mathcal{U}_M}^M\charone,Q)$.
As an application we prove that if $R$ is an algebraically closed field of characteristic $p$, and $\pi$ is an irreducible admissible representation of $G$, then the contragredient of $\pi$ is $0$ unless $\pi$ has finite dimension.
\end{abstract}
\maketitle

 \setcounter{tocdepth}{2}  
 
 \tableofcontents
    
\section{Introduction}   
\subsection{}
The present paper is a companion to \cite{arXiv:1703.05599} and is similarly inspired by the classification results of  \cite{MR3600042}; however it can be read independently.
We recall the setting. We have a non-archimedean locally compact field $F$ of residue characteristic $p$ and a connected reductive $F$-group $\mathbf G$.  We fix  a commutative ring $R$ and study the smooth $R$-representations of $G = \mathbf{G}(F)$.

In \cite{MR3600042} the irreducible  admissible $R$-representations of $G$ are classified in terms of supersingular ones when $R$ is an algebraically closed field of characteristic $p$. That classification is expressed in terms of   representations $I_G(P,\sigma,Q)$, which make sense for any $R$. In that notation,
 $P$ is a parabolic subgroup of $G$ with a Levi decomposition $P=MN$ and $\sigma$ a smooth $R$-representation of the Levi subgroup $M$; there is a maximal parabolic subgroup $P(\sigma)$ of $G$ containing $P$ to which $\sigma$ inflated to $P$ extends to a representation $e_{P(\sigma)}(\sigma)$, and $Q$ is a parabolic subgroup of $G$ with $P\subset Q \subset P(\sigma)$. Then
$$I_G(P,\sigma,Q)=\Ind_{P(\sigma)}^G(e_{P(\sigma)}(\sigma)\otimes \St_Q^{P(\sigma)})$$
where $\Ind$ stands for parabolic induction and $\St_Q^{P(\sigma)}= \Ind_Q^{P(\sigma)}R/ \sum\Ind_{Q'}^{P(\sigma)}R$, the sum being over parabolic subgroups $Q'$ of $G$ with $Q\subsetneq Q'\subset P(\sigma)$. 
Alternatively, $I_G(P,\sigma,Q)$ is the quotient of $\Ind_{P(\sigma)}^G(e_{P(\sigma)}(\sigma))$ by $ \sum\Ind_{Q'}^{G}e_{Q'}(\sigma)$ with $Q'$ as above, where $e_{Q}(\sigma)$ is the restriction of $e_{P(\sigma)}(\sigma)$ to $Q$, similarly for $Q'$.

In \cite{arXiv:1703.05599} we mainly studied what happens to $I_G(P,\sigma,Q)$ when we apply to it, for a parabolic subgroup $P_1$ of $G$, the left adjoint of $\Ind_{P_1}^G$, or its right adjoint. Here we tackle a different question. We fix a pro-$p$ parahoric subgroup $\mathcal{U}$ of $G$ in good position with respect to $P$, so that in particular $\mathcal{U}_M=\mathcal{U}\cap M$ is a  pro-$p$ parahoric subgroup of $M$. One of our main goals is to identify the $R$-module 
$I_G(P,\sigma,Q)^\mathcal{U}$ of $\mathcal{U}$-invariants, as a right module over the Hecke  algebra $\mathcal{H}=\mathcal{H}_G$ of $\mathcal{U}$ in $G$ - the convolution  algebra on the double coset space $\mathcal{U} \backslash G/\mathcal{U}$ -  in terms on the module $\sigma^{\mathcal{U}_M}$ over the Hecke  algebra $ \mathcal{H}_M$ of $\mathcal{U}_M$ in $M$. That goal is achieved in section \ref{S:9.0}, Theorem~\ref{thm:main}.

\subsection{} The initial work has been done in \cite[\S 4]{arXiv:1703.04921} where $(\Ind_{P }^G\sigma)^\mathcal{U}$ is identified. Precisely, writing $M^+$ for the monoid  of  elements $m\in M$ with $m(\mathcal{U} \cap N) m^{-1} \subset \mathcal{U} \cap N$, the subalgebra  $\mathcal{H}_{M^+}$  of $\mathcal{H}_M$  with support in $M^+$,  has a natural algebra embedding $\theta$ into $\mathcal{H}$ and \cite[Proposition 4.4]{arXiv:1703.04921} identifies  $(\Ind_{P }^G\sigma)^\mathcal{U}$ with $\Ind_{\mathcal{H}_M}^{\mathcal{H}} (\sigma^{\mathcal{U}_M})=\sigma^{\mathcal{U}_M}\otimes_{\mathcal{H}_{M^+}}\mathcal{H}$.  So in a sense, this paper is a sequel to \cite{arXiv:1703.04921} although some of our results here are used in \cite[\S 5]{arXiv:1703.04921}.

As $I_G(P,\sigma,Q)$ is only a subquotient of $\Ind_{P }^G\sigma$ and taking $\mathcal{U}$-invariants is only left exact, it is not straightforward to describe $I_G(P,\sigma,Q)^\mathcal{U}$ from the previous result. However, that takes care of the parabolic induction step, so in a first approach we may assume $P(\sigma)=G$ so that $I_G(P,\sigma,Q)=e_{G}(\sigma)\otimes \St_Q^{G}$. The crucial case is when moreover $\sigma$ is $e$-minimal, that is, not an extension 
$e_M(\tau)$ of a smooth $R$-representation $\tau$ of a proper Levi subgroup of $M$. That case is treated first and the general case in section \ref{S:9.0} only.

\subsection{}\label{S:1.3} To explain our results, we need more notation. We choose a maximal $F$-split torus $T$ in $G$, a minimal parabolic subgroup $B=ZU$ with Levi component $Z$ the $G$-centralizer of $T$. We assume that $P=MN$ contains $B$ and $M$ contains $Z$,  and that $\mathcal{U}$ corresponds to an alcove in the apartment associated to $T$ in the adjoint building of $G$. It turns out that when $\sigma$ is $e$-minimal, the set $\Delta_M$ of simple roots of $T$ in $\Lie N$ is orthogonal to its complement in the set $\Delta$ of simple roots of $T$ in $\Lie U$. We assume until the end of this section \S \ref{S:1.3}, that $\Delta_{M}$ and 
 $\Delta_2=\Delta \setminus   \Delta_M$ are orthogonal. If $M_2$ is the Levi subgroup - containing $Z$ -  corresponding to $\Delta_2$, both $M$ and $M_2$ are normal in $G$, $M\cap M_2=Z$ and $G=M_1M_2$. Moreover the normal subgroup $M'_2$ of $G$ generated by $N$ is included in $M_2$ and $G=MM'_2$. 
  
 We  say that a right $\mathcal{H}_M$-module  $\mathcal{V}$ is extensible to $\mathcal{H}$ if $T_z^M $ acts trivially on  $\mathcal{V}$  for $z\in Z\cap M'_2$ (\S \ref{S:8.2}). In this case,  we show that there is a natural structure of right $\mathcal{H}$-module $e_{\mathcal{H}}(\mathcal{V})$ on $\mathcal{V}$ such that $T_g\in \mathcal{H}$   corresponding to $\mathcal{U} g \mathcal{U} $ for $g\in   M'_2$ acts as in the trivial character of $G$ (\S \ref{S:8.1}). We call $e_\mathcal{H}(\mathcal{V})$ the extension of $\mathcal{V}$ to $\mathcal{H}$ though $\mathcal{H}_M$ is not a subalgebra of $\mathcal{H}$.  That notion is already present in \cite{arXiv:1406.1003_accepted}  in the case where $R$ has characteristic $p$. Here we extend the construction to any $R$ and prove some more properties. In particular we produce an $\mathcal{H}$-equivariant embedding $e_\mathcal{H}(\mathcal{V})$ into $\Ind_{\mathcal{H}_M}^{\mathcal{H}} \mathcal{V}$ (Lemma \ref{lemma:sigmaemb}). 
  If  $Q$ is a parabolic subgroup of $G$ containing $P$, we go further and put on $e_\mathcal{H}(\mathcal{V})\otimes_R(\Ind_Q^GR)^\mathcal{U}$ and $e_\mathcal{H}(\mathcal{V})\otimes_R(\St_Q^G )^\mathcal{U}$  structures of $\mathcal{H}$-modules (Proposition \ref{lemma:tensor} and Corollary~\ref{cor:VS}) - note that $\mathcal{H}$ is not a group algebra and there is no obvious notion of tensor product of $\mathcal{H}$-modules.

 If $\sigma$ is an $R$-representation of $M$ extensible to $G$, then its extension $e_G(\sigma)$ is simply obtained by letting $M'_2$ acting trivially on the space of $\sigma$; moreover it is clear that  $\sigma^{\mathcal{U}_M}$   is extensible to $\mathcal{H}$, and one shows easily that   $e_G(\sigma)^\mathcal{U}=e_\mathcal{H}(\sigma^{\mathcal{U}_M})$ as an $\mathcal{H}$-module (\S \ref{S:8.4}). Moreover, the natural inclusion of $\sigma$ into $\Ind_P^G\sigma$ induces on taking pro-$p$ Iwahori invariants an embedding $e_\mathcal{H}(\sigma^{\mathcal{U}_M})\to (\Ind_{P }^G\sigma)^\mathcal{U}$ which, via the isomorphism of \cite{arXiv:1703.04921}, yields exactly the above embedding of $\mathcal{H}$-modules of $e_\mathcal{H}(\sigma^{\mathcal{U}_M})$ into $\Ind_{\mathcal{H}_M}^{\mathcal{H}} (\sigma^{\mathcal{U}_M})$.

 Then we show that the $\mathcal{H}$-modules $(e_G(\sigma)\otimes_R\Ind_Q^GR)^\mathcal{U}$ and $e_\mathcal{H}(\sigma^{\mathcal{U}_M})\otimes_R(\Ind_Q^GR)^\mathcal{U}$  are equal, and similarly 
 $(e_G(\sigma)\otimes_R\St_Q^G)^\mathcal{U} $and $e_\mathcal{H}(\sigma^{\mathcal{U}_M})\otimes_R(\St_Q^G)^\mathcal{U} $ are equal (Theorem~\ref{thm:main8}).

\subsection{} \label{S:1.4}We turn back to the general case where we do not assume that  $\Delta_{M}$ and 
 $ \Delta \setminus   \Delta_M$ are orthogonal. Nevertheless, given a  right $\mathcal{H}_M$-module  $\mathcal{V}$, there exists a 
largest  Levi subgroup $M(\mathcal{V})$  of $G$ - containing $Z$ - corresponding to $\Delta \cup \Delta_1$ where $\Delta_1$ is a subset of $ \Delta \setminus   \Delta_M$ orthogonal to $\Delta_M$, such that $\mathcal{V}$ extends 
  to a right $\mathcal{H}_{M(\mathcal{V})}$-module $e_{M(\mathcal{V})}(\mathcal{V})$ with the notation of  section \eqref{S:1.3}. For any parabolic subgroup $Q$ between $P$ and $P(\mathcal{V})=M(\mathcal{V})U$ we put (Definition  \ref{def:Htriple})
  $$I_\mathcal{H}(P,\mathcal{V},Q)= \Ind_{\mathcal{H}_M}^\mathcal{H} (e_{M(\mathcal{V})}(\mathcal{V})\otimes_R (\St^{M(\mathcal{V})}_{Q\cap M(\mathcal{V}))})^{\mathcal{U}_{M(\mathcal{V})}}).$$
  We refer to Theorem \ref{thm:main} for the description of the right $\mathcal{H}$-module $I_G(P,\sigma,Q)^\mathcal{U}$ for any smooth $R$-representation $\sigma$ of $\mathcal{U}$. As a special case, it says that when $\sigma$ is  $e$-minimal then $P(\sigma)\supset P(\sigma^{\mathcal{U}_M})$ and if moreover $P(\sigma)=P(\sigma^{\mathcal{U}_M})$ then $I_G(P,\sigma,Q)^\mathcal{U}$ is isomorphic to
  $I_\mathcal{H}(P,\sigma^{\mathcal{U}_M},Q)$.
  
  \begin{remark}
  In \cite{arXiv:1406.1003_accepted} are attached similar $\mathcal{H}$-modules to $(P,\mathcal{V},Q)$; here we write them $CI_\mathcal{H}(P,\mathcal{V},Q)$ because their definition uses, instead of  $\Ind_{\mathcal{H}_M}^\mathcal{H}$ a different kind of induction, which we call coinduction. In loc.\ cit.\ those modules are use to give, when $R$ is an algebraically closed field of characteristic $p$, a classification of simple $\mathcal{H}$-modules in terms of supersingular  modules - that classification is similar to the classification of irreducible admissible $R$-representations of $G$ in \cite{MR3600042}. Using the comparison between induced and coinduced modules established in \cite[4.3]{MR3437789} for any $R$, our corollary \ref{cor:isoCI} expresses  $CI_\mathcal{H}(P,\mathcal{V},Q)$ as a module $I_\mathcal{H}(P_1,\mathcal{V}_1,Q_1)$; consequently we show in \S \ref{S:9.5} that the classification of \cite{arXiv:1406.1003_accepted} can also be expressed in terms of modules $I_\mathcal{H}(P,\mathcal{V},Q)$.
  \end{remark}

\subsection{} In a reverse direction one can associate to a right $\mathcal{H}$-module $\mathcal{V}$ a smooth $R$-representation $\mathcal{V} \otimes_\mathcal{H} R[\mathcal{U}\backslash G]$ of $G$  (seeing $\mathcal{H}$ as the endomorphism ring of the $R[G]$-module $R[\mathcal{U}\backslash G]$).

If  $\mathcal{V}$ is a right $\mathcal{H}_M$-module, we construct, again using \cite{arXiv:1703.04921}, a natural $R[G]$-map
$$I_\mathcal{H}(P,\mathcal{V},Q)\otimes_\mathcal{H} R[\mathcal{U}\backslash G]\to \Ind_{P(\mathcal{V})}^G (e_{M(\mathcal{V})}(\mathcal{V})\otimes_R \St^{M(\mathcal{V})}_{Q\cap M(\mathcal{V})}),
$$
with the notation of \eqref{S:1.4}. We show in \S \ref{S:10} that it is an isomorphism under a mild assumption on the $\mathbb Z$-torsion in $\mathcal{V}$; in particular it is an isomorphism if $p=0$ in $R$.

\subsection{}
In the final section \S \ref{S:9}, we turn back to the case where $R$ is an algebraically closed field of characteristic $p$.
We prove that the smooth dual of an irreducible admissible $R$-representation $V$ of $G$ is $0$ unless $V$ is finite dimensional - that result is new if $F$ has positive characteristic, a case where 
the proof of Kohlhaase \cite{kohlhasse-smooth-duality} for $\Char (F)=0$ does not apply. Our proof first reduces to the case where $V$ is supercuspidal (by \cite{MR3600042}) then uses again the $\mathcal{H}$-module $V^\mathcal{U}$.

 \section{Notation, useful facts and preliminaries}\label{S:2}
 \subsection{The group $G$ and its standard parabolic subgroups $P=MN$}\label{S:2.1}
 In all that follows, $p$ is a prime number, $F$ is a local field with finite residue field $k$ of characteristic $p$;
 We denote an algebraic group over $F$ by a bold letter, like $\mathbf H$, and use the same ordinary letter for the group of $F$-points, $H=\mathbf H(F)$.
  We fix a connected reductive $F$-group $\mathbf G$. We fix a maximal $F$-split subtorus $\mathbf T$ and write $\mathbf Z$ for its $\mathbf G$-centralizer; we also fix a minimal parabolic subgroup $\mathbf B$ of $\mathbf G$  with Levi component $\mathbf Z$, so that $\mathbf B=\mathbf  Z  \mathbf U$ where $ \mathbf U$ is the unipotent radical of  $\mathbf B$. Let $X^*( \mathbf T)$ be the group of $F$-rational characters of  $\mathbf T$ and $\Phi$ the subset of roots of  $\mathbf T$ in the Lie algebra of  $\mathbf G$. Then $ \mathbf B$ determines a subset $\Phi^+$ of positive roots - the roots of $\mathbf T$ in the Lie algebra of $\mathbf U$-  and a subset of simple roots $\Delta$. The $\mathbf G$-normalizer  $\mathbf N_{\mathbf G}$ of $\mathbf T$ acts on $X^*( \mathbf T)$ and through that action,  $\mathbf N_{\mathbf G}/ \mathbf Z$ identifies with the Weyl group of the root system $\Phi$.
Set $\mathcal N:=\mathbf N_{\mathbf G}(F)$ and note that  $\mathbf N_{\mathbf G}/ \mathbf Z \simeq \mathcal N/Z$; we write $\mathbb W $ for  $\mathcal N/Z$.

A standard parabolic subgroup  of $\mathbf G$ is a parabolic $F$-subgroup containing $\mathbf B$. Such a  parabolic subgroup $\mathbf P$  has a unique Levi subgroup $\mathbf M$ containing $\mathbf Z$, so that $\mathbf P=\mathbf M\mathbf N$ where $\mathbf N$ is the unipotent radical of $\mathbf P$ -  we also call  $\mathbf M$ standard.  By a common abuse of language to describe the preceding situation, we simply say ``let $P=MN$ be a standard parabolic subgroup of $G$''; we sometimes write $N_P$ for $N$ and $M_P$  for $M$. The parabolic subgroup of $G$ opposite to $P$ will be written $\overline P$ and its unipotent radical $\overline N$, so that $\overline P=M\overline N$, but beware that $\overline P$ is not standard !  We write  $\mathbb W_M$ for the Weyl group $(M\cap\mathcal{N})/Z$.

If $\mathbf P=\mathbf M\mathbf N$ is a standard parabolic subgroup of $G$, then $\mathbf M\cap \mathbf B$ is a minimal parabolic subgroup of $\mathbf M$. If $\Phi_M$ denotes the set of roots of  $\mathbf T$ in the Lie algebra of $\mathbf M$, with respect to $\mathbf M\cap \mathbf B$  we have   $\Phi_M^+=\Phi_M\cap \Phi^+ $ and $\Delta_M=\Phi_M\cap \Delta$.  We also write  $\Delta_P$ for  $\Delta_M$ as $P$ and $M$ determine each other, $P=MU$. Thus we obtain a bijection $P\mapsto \Delta_P$ from standard parabolic subgroups of $G$ to subsets of $\Delta$, with $B$ corresponds to $\Phi$ and $G$ to $\Delta$. If $I$ is a subset of $\Delta$, we sometimes denote by $P_I=M_IN_I$ the corresponding standard parabolic subgroup of $G$. If $I=\{\alpha\}$ is a singleton, we write $P_\alpha=M_\alpha  N_\alpha$. We note a few useful properties.  If $P_1$ is another standard parabolic subgroup of $G$, then $P\subset P_1$ if and only if $\Delta_P\subset \Delta_{P_1}$; we have $\Delta_{P\cap P_1}= \Delta_P\cap \Delta_{P_1}$ and the parabolic subgroup corresponding to $\Delta_P\cup \Delta_{P_1}$ is  the subgroup $\langle P,P_1\rangle$  of $G$ generated by $ P $ and $P_1$. The standard parabolic subgroup of $M$ associated to $\Delta_M \cap \Delta_{M_1}$ is $M\cap P_1=(M\cap M_1)(M\cap N_1)$ \cite[Proposition 2.8.9]{MR794307}.
It is convenient to write $G'$ for the subgroup of $G$ generated by the unipotent radicals of the parabolic subgroups; it is also the normal subgroup of $G$ generated by $U$, and we have $G=ZG'$.  For future references, we give now a useful lemma extracted from \cite{MR3600042}:

\begin{lemma}\label{lemma:ZG'}The group $Z\cap  G'$ is generated by the $Z\cap M'_\alpha$,  $\alpha $ running through $\Delta $.
\end{lemma}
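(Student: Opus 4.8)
The inclusion $\supseteq$ is immediate: $G'$ is normal in $G$, and each $M'_\alpha=\langle U_\alpha,U_{-\alpha}\rangle$ is generated by root subgroups, every one of which is a $G$-conjugate of a subgroup of $U$; so $M'_\alpha\subseteq G'$ and $Z\cap M'_\alpha\subseteq Z\cap G'$. For the reverse inclusion put $N_0:=\langle Z\cap M'_\alpha:\alpha\in\Delta\rangle$. I would first record that $G'$ is itself generated by the $M'_\alpha$: indeed $G'=\langle U_a:a\in\Phi\rangle$, and by the Chevalley commutator relations each $U_a$ lies in $\langle M'_\alpha:\alpha\in J\rangle$, where $J\subseteq\Delta$ is the support of $a$ (for then $a$ is a root of the standard Levi $M_J$). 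So it is enough to prove that every $z\in Z\cap G'$ lies in $N_0$.

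The plan is to use the Bruhat decomposition of $G'$. With $B':=B\cap G'=(Z\cap G')U$, $N':=\mathcal N\cap G'$, and $n_w\in N'$ a representative of $w\in\mathbb W$, the quadruple $(G',B',N',\{s_\alpha\}_{\alpha\in\Delta})$ is a Tits system with Weyl group $\mathbb W$, so $G'=\coprod_{w\in\mathbb W}B'n_wB'$, disjointly. Consider the subset $\mathcal X:=\bigcup_{w\in\mathbb W}N_0\,U n_w U$ of $G'$. Granting for a moment that $\mathcal X$ is a subgroup: it contains every generator $M'_\alpha$, by the rank-one Bruhat decomposition $M'_\alpha=(Z\cap M'_\alpha)U_\alpha\sqcup(Z\cap M'_\alpha)U_\alpha n_{s_\alpha}U_\alpha$, so $\mathcal X\supseteq\langle M'_\alpha\rangle=G'$, i.e.\ $\mathcal X=G'$; then $z\in Z\cap G'=\mathcal X$ lies in $B'$, hence by disjointness of cells in $\mathcal X\cap B'=N_0U$, say $z=nu$ with $n\in N_0\subseteq Z$ and $u\in U$, whence $u=n^{-1}z\in Z\cap U=\{1\}$ and $z=n\in N_0$.

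So it remains to check that $\mathcal X$ is a subgroup, a computation with products of Bruhat cells. Since $N_0\subseteq Z$ normalizes $U$, one has $(N_0Un_wU)(N_0Un_{w'}U)=N_0\,Un_wUn_{w'}U$ as soon as $N_0$ is normalized by the $n_w$; and then a standard induction shows $Un_wUn_{w'}U\subseteq\bigcup_{w''}N_0\,Un_{w''}U$ --- the key being that in a rank-one group $M'_\alpha$ the whole Bruhat decomposition is $(Z\cap M'_\alpha)U_\alpha\sqcup(Z\cap M'_\alpha)U_\alpha n_{s_\alpha}U_\alpha$, so the torus elements produced in any such product of cells all lie in some $Z\cap M'_\alpha\subseteq N_0$ (closure under inversion is similar). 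Thus everything reduces to knowing that $N_0$ is normalized by each $n_{s_\alpha}$. But $n_{s_\alpha}(Z\cap M'_\beta)n_{s_\alpha}^{-1}=Z\cap M'_{s_\alpha\beta}$, and $s_\alpha\beta$ is a root supported on $\{\alpha,\beta\}$, so $M'_{s_\alpha\beta}\subseteq M'_{\{\alpha,\beta\}}$; hence this follows from the Lemma for the standard Levi $M_{\{\alpha,\beta\}}$, of semisimple rank $\le2$, and an induction on $|\Delta|$ leaves only the case $|\Delta|\le2$.

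That base case is where I expect the real difficulty. Rank $\le1$ is trivial, and if $\alpha\perp\beta$ the group is a product of two rank-one factors; the content is $\Delta=\{\alpha,\beta\}$ with $\alpha$, $\beta$ non-orthogonal, where one must prove $Z\cap M'_{\{\alpha,\beta\}}=\langle Z\cap M'_\alpha,Z\cap M'_\beta\rangle$ directly. Here I would pass to the simply connected cover $\pi\colon\mathbf G_{\mathrm{sc}}\to\mathbf G$: it is an isomorphism on the unipotent radicals of parabolic subgroups, hence on $G'$ and on each $M'_\alpha$, and its kernel is central, hence contained in every centralizer of a torus, so that $Z\cap G'$ and $Z\cap M'_\alpha$ are precisely the $\pi$-images of the corresponding subgroups of $\mathbf G_{\mathrm{sc}}(F)$; thus the Lemma for $\mathbf G$ reduces to the Lemma for $\mathbf G_{\mathrm{sc}}$. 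For a simply connected group $G=G'$ by the Kneser--Tits theorem over the local field $F$ (the anisotropic factors contributing nothing), and the assertion becomes that $\mathbf Z_G(\mathbf T)(F)$ is generated by the subgroups $\mathbf Z_G(\mathbf T)(F)\cap\mathbf M_\alpha^{\mathrm{der}}(F)$. For split $\mathbf G$ this is classical, the simple coroots forming a $\mathbb Z$-basis of the cocharacter lattice $X_*(\mathbf T)=\mathbb Z\Phi^\vee$; in general it is the corresponding statement for the valued root datum, part of the Bruhat--Tits structure theory of $\mathbf Z_G(\mathbf T)(F)$. Alternatively, one may simply invoke \cite{MR3600042}, whence this Lemma is extracted.
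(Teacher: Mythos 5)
The paper's own proof is exactly your closing sentence: it takes $I=\emptyset$ in \cite[II.6 Proposition]{MR3600042} and stops. So your ``alternatively, invoke \cite{MR3600042}'' is the proof on record, and everything before it is an attempt at a self-contained argument. The reduction machinery you set up is sound in outline: the refined Bruhat decomposition $\mathcal X=\bigcup_w N_0\,Un_wU$, the observation that the only torus elements created when multiplying cells come from the rank-one decompositions $U_{-\alpha}\setminus\{1\}\subseteq U_\alpha(Z\cap M'_\alpha)n_{s_\alpha}U_\alpha$, and the reduction of ``$N_0$ is normalized by the $n_{s_\alpha}$'' to the Lemma for rank-two standard Levis all work (modulo routine bookkeeping with the representatives $n_w$; e.g.\ closure of $\mathcal X$ under inversion uses $n_{s_\alpha}^2\in Z\cap M'_\alpha$). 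The passage to the simply connected cover is also legitimate, since you only need surjectivity of $\pi$ onto $G'$, $M'_\alpha$ and the corresponding $Z$-intersections, not injectivity.

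The genuine gap is the base case you yourself flag. After all reductions, what remains is: for $\mathbf G$ semisimple, simply connected, isotropic of relative rank $\le 2$ over $F$, the group $Z=\mathbf Z_{\mathbf G}(\mathbf T)(F)$ is generated by the $Z\cap M'_\alpha$. Your coroot argument covers the split case, and it extends to the quasi-split case because there $\mathbf Z$ is an induced torus, a direct product indexed by the simple relative roots. But for inner forms $\mathbf Z$ is not a torus, and the assertion is \emph{not} part of the Bruhat--Tits theory of valued root data: those axioms treat $Z$ as a black box and say nothing about it being generated by the rank-one pieces. What is left unproved is essentially equivalent to the Lemma itself and is precisely the content of the cited AHHV proposition, so the appeal to ``the corresponding statement for the valued root datum'' does not close the argument; your $SL_n(D)$-style computation is one instance, but the other non-quasi-split rank-$\le 2$ groups (e.g.\ unitary groups over division algebras) are not addressed. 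A second, smaller gap: in the orthogonal rank-two case you need $Z\cap(M'_\alpha M'_\beta)=(Z\cap M'_\alpha)(Z\cap M'_\beta)$, which requires an argument using that $M'_\alpha\cap M'_\beta$ is finite and central (Zariski density of $T$ in $\mathbf T$), not merely the statement that the group is a product of two rank-one factors.
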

\begin{proof}
Take $I = \emptyset$ in \cite[II.6.Proposition]{MR3600042}.
\end{proof}

Let $v_F$ be the normalized valuation of $F$.
For each $\alpha\in  X^*(T)$, the homomorphism $x\mapsto v_F(\alpha(x)):T\to\mathbb Z$ extends uniquely to a homomorphism $Z\to\mathbb Q$ that we denote in the same way.  This defines a homomorphism $Z\xrightarrow{v} X_*(T)\otimes \mathbb  Q$ such that $\alpha(v(z))= v_F(\alpha(z))$ for $z\in Z, \alpha \in X^*(T)$. 

\bigskip An interesting situation occurs when $\Delta=I\sqcup J$ is the union of two orthogonal subsets $I$ and $J$. In that case, $G'=M'_IM'_J$, $M'_I$ and $M'_J$ commute with each other, and their intersection is finite and central in $G$  \cite[II.7 Remark 4]{MR3600042}.

\subsection{$I_G(P,\sigma, Q)$ and minimality}\label{S:2.22} We recall from \cite{MR3600042} the construction of $I_G(P,\sigma,Q)$, our main object of study.  

%
%
Let $\sigma$ be an $R$-representation of $M$ and $P(\sigma)$ be the standard parabolic subgroup with
\[
\Delta_{P(\sigma)}=\{\alpha \in \Delta\setminus \Delta_P\ | \ Z\cap M'_\alpha \ \text{acts trivially on } \ \sigma\}\cup \Delta_P.  
\]
This is the largest parabolic subgroup $P(\sigma)$ containing $P$ to which $\sigma$ extends, here $N\subset P$ acts on $\sigma$ trivially.
Clearly when $P\subset Q \subset P(\sigma)$, $\sigma$ extends to $Q$ and the extension is denoted by $e_Q(\sigma)$.
The restriction of $e_{P(\sigma)}(\sigma)$ to $Q$ is $e_Q(\sigma)$.
If there is no risk of ambiguity, we write
\[
e(\sigma)=e_{P(\sigma)}(\sigma).
\]
\begin{definition}\label{def:Gtriple} An {\bf $R[G]$-triple} is a triple $(P,\sigma,Q)$ made out of  a standard parabolic subgroup $P=MN$ of $G$, a smooth $R$-representation of $M$, and a parabolic subgroup $Q$ of $G$ with $P\subset Q \subset P(\sigma)$.  To an $R[G]$-triple $(P,\sigma,Q)$ is associated a smooth  $R$-representation of $G$:  
 $$I_G(P,\sigma,Q) =\Ind_{P(\sigma)}^G(e(\sigma)\otimes \St_Q^{P(\sigma)})$$
where $\St_Q^{P(\sigma)}$ is the quotient of $ \Ind_Q^{P(\sigma)}\charone$, $\charone$ denoting the trivial $R$-representation of   $Q$,  by the sum  of its subrepresentations $\Ind_{Q'}^{P(\sigma)}\charone$, the sum being over the set  of  parabolic subgroups $Q'$ of $G$ with $Q\subsetneq Q'\subset P(\sigma)$. 
\end{definition}

Note that $I_G(P,\sigma,Q) $ is naturally isomorphic to the quotient of $\Ind_{Q}^G(e_Q(\sigma))$ by the sum of its subrepresentations $\Ind_{Q'}^G(e_{Q'}(\sigma))$ for $Q\subsetneq Q'\subset P(\sigma)$ by Lemma 2.5.  

 \bigskip It might happen that $\sigma$ itself has the form $e_{P}(\sigma_1)$ for some standard parabolic subgroup $P_1=M_1N_1$ contained in $P$  and some $R$-representation $\sigma_1$ of $M_1$. 
 In that case, $P(\sigma_1)=P(\sigma)$ and $e(\sigma)=e(\sigma_1)$. We say that $\sigma$   is  {\bf $e$-minimal} if  $\sigma=e_{P}(\sigma_1)$ implies $P_1=P, \sigma_1=\sigma$.  
 
 \begin{lemma}[{\cite[Lemma~2.9]{arXiv:1703.05599}}]
 \label{lemma:min}Let $P=MN $ be a standard parabolic subgroup of $G$ and  let $\sigma$ be an  $R$-representation of $M$. There exists a unique standard parabolic subgroup  $P_{\min,\sigma}=M_{\min,\sigma}N_{\min,\sigma}$  of $G$ and a unique  $e$-minimal    representation of $\sigma_{\min}$ of $M_{\min,\sigma}$ with $\sigma= e_P(\sigma_{\min})$. Moreover $P(\sigma)=P(\sigma_{\min})$ and $e(\sigma)=e(\sigma_{\min})$.
  \end{lemma}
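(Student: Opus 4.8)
The plan is to reduce everything to bookkeeping with subsets of $\Delta_P$, once the explicit shape of the extension functor is in hand. First I would recall from \cite{MR3600042} the following description: for standard parabolic subgroups $P_1=M_1N_1\subseteq P=MN$ and a smooth $R$-representation $\tau$ of $M_1$, the extension $e_P(\tau)$ exists exactly when $Z\cap M'_\alpha$ acts trivially on $\tau$ for every $\alpha\in\Delta_P\setminus\Delta_{P_1}$, and it is then the unique representation of $M$ whose restriction to $M_1$ is $\tau$ and on which $M'_\alpha$ acts trivially for all such $\alpha$ --- uniqueness being forced because $M$ is generated by $M_1$ together with the subgroups $M'_\alpha$, $\alpha\in\Delta_P\setminus\Delta_{P_1}$. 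Equivalently, a representation $\sigma$ of $M$ has the form $e_P(\tau)$ for some $\tau$ on $M_1$ if and only if $M'_\alpha$ acts trivially on $\sigma$ for every $\alpha\in\Delta_P\setminus\Delta_{P_1}$ (this automatically forces triviality of $Z\cap M'_\alpha$, so no side condition is needed), and then necessarily $\tau=\sigma|_{M_1}$.

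Granting this, I would set $\Delta_0:=\{\alpha\in\Delta_P\mid M'_\alpha\ \text{acts trivially on}\ \sigma\}$, let $P_{\min,\sigma}=M_{\min,\sigma}N_{\min,\sigma}$ be the standard parabolic subgroup with $\Delta_{P_{\min,\sigma}}=\Delta_P\setminus\Delta_0$, and put $\sigma_{\min}:=\sigma|_{M_{\min,\sigma}}$. The criterion above gives $\sigma=e_P(\sigma_{\min})$ directly. For $e$-minimality of $\sigma_{\min}$: if $\alpha\in\Delta_{P_{\min,\sigma}}$ then $M'_\alpha\subseteq M_{\min,\sigma}$, so $M'_\alpha$ acts trivially on $\sigma_{\min}$ precisely when it acts trivially on $\sigma$, which by definition of $\Delta_0$ never occurs for $\alpha\in\Delta_{P_{\min,\sigma}}$; applying the criterion inside $M_{\min,\sigma}$, $\sigma_{\min}$ cannot be written $e_{P_{\min,\sigma}}(\tau')$ with $\tau'$ a representation of a proper standard Levi subgroup of $M_{\min,\sigma}$. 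This proves existence.

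For uniqueness, suppose $\sigma=e_P(\sigma')$ with $\sigma'$ an $e$-minimal representation of the Levi $M'$ of a standard parabolic $P'\subseteq P$. The criterion gives $\Delta_P\setminus\Delta_{P'}\subseteq\Delta_0$. Conversely, if some $\alpha\in\Delta_0$ lay in $\Delta_{P'}$ then $M'_\alpha\subseteq M'$ would act trivially on $\sigma'=\sigma|_{M'}$, and the criterion inside $M'$ would exhibit $\sigma'$ as an extension from the proper standard Levi attached to $\Delta_{P'}\setminus\{\alpha\}$, contradicting $e$-minimality; hence $\Delta_0\subseteq\Delta_P\setminus\Delta_{P'}$. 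Therefore $\Delta_{P'}=\Delta_P\setminus\Delta_0=\Delta_{P_{\min,\sigma}}$, so $P'=P_{\min,\sigma}$ and $\sigma'=\sigma|_{M'}=\sigma_{\min}$. Finally, $P(\sigma)=P(\sigma_{\min})$ and $e(\sigma)=e(\sigma_{\min})$ are immediate from the already noted fact that $\sigma=e_P(\sigma_1)$ implies $P(\sigma_1)=P(\sigma)$ and $e(\sigma)=e(\sigma_1)$. The only substantive ingredient is the structural description of $e_P$ quoted in the first paragraph --- notably the generation of $M$ by $M_1$ and the $M'_\alpha$, and the characterization of the extension through triviality of those $M'_\alpha$; the rest is elementary manipulation of subsets of $\Delta_P$, so the main task is to invoke \cite{MR3600042} correctly rather than to prove anything hard.
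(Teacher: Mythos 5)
Your argument is correct: defining $\Delta_0$ by triviality of $M'_\alpha$ on $\sigma$ (and not merely of $Z\cap M'_\alpha$, which is the point where care is needed) and using that $e_P(\tau)$ is the unique representation of $M$ restricting to $\tau$ on $M_1$ and killing the $M'_\alpha$ for $\alpha\in\Delta_P\setminus\Delta_{P_1}$ --- together with the fact that $M$ is generated by $M_1$ and these $M'_\alpha$ --- yields existence, $e$-minimality of $\sigma_{\min}$ and uniqueness exactly as you say, while the final assertion is the transitivity fact $P(\sigma)=P(\sigma_1)$, $e(\sigma)=e(\sigma_1)$ recorded in the paper just before the lemma. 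The paper itself gives no proof (it quotes Lemma~2.9 of \cite{arXiv:1703.05599}), and your derivation from the extension formalism of \cite{MR3600042} is precisely the intended bookkeeping.
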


\begin{lemma} \label{lemma:2.2}Let $P=MN $ be a standard parabolic subgroup of $G$ and  $\sigma$ an $e$-minimal $R$-representation of $M$. Then $\Delta_P$ and $\Delta_{P(\sigma)}\setminus \Delta_P$ are orthogonal. 
\end{lemma}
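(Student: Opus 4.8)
The plan is to prove the statement by contradiction in the style of $e$-minimality: assume $\Delta_P$ and $\Delta_2:=\Delta_{P(\sigma)}\setminus\Delta_P$ are \emph{not} orthogonal, and produce a standard parabolic subgroup $P_1=M_1N_1\subsetneq P$ and a representation $\sigma_1$ of $M_1$ with $\sigma=e_P(\sigma_1)$, contradicting the $e$-minimality of $\sigma$. First I reduce to a single connected component. Since $\Delta_P\not\perp\Delta_2$ there are $\alpha\in\Delta_P$ and $\beta\in\Delta_2$ with $\alpha\not\perp\beta$; let $C$ be the connected component of the Dynkin diagram of $\Delta_{P(\sigma)}$ containing $\alpha$ and $\beta$, and set $\Delta_1=C\cap\Delta_P$, so $\alpha\in\Delta_1\neq\emptyset$. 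Because $C$ is a full component of $\Delta_{P(\sigma)}$ and $\Delta_P\setminus\Delta_1$ is disjoint from $C$, the set $\Delta_1$ is orthogonal to $\Delta_P\setminus\Delta_1$. Let $P_1=M_1N_1$ be the standard parabolic subgroup with $\Delta_{P_1}=\Delta_P\setminus\Delta_1$; then $P_1\subsetneq P$.

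The heart of the argument is to show that $\sigma$ is trivial on $M'_{\Delta_1}$. Since $C\subseteq\Delta_{P(\sigma)}$, the standard Levi $M_C$ is contained in $M_{P(\sigma)}$, so $\sigma_C:=e(\sigma)|_{M_C}$ is defined, and $e(\sigma)|_M=\sigma$. As $C$ is connected, $M'_C$ is almost $F$-simple, so by Tits' simplicity theorem it has no proper non-central normal subgroup. Now $\beta\in C$ gives $M'_\beta\subseteq M_C$ and $Z\cap M'_\beta\subseteq Z\subseteq M\cap M_C$; since $\beta\in\Delta_{P(\sigma)}\setminus\Delta_P$, the group $Z\cap M'_\beta$ acts trivially on $\sigma$, hence on $\sigma_C$. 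But $Z\cap M'_\beta$ is infinite (it contains a maximal split torus of the relative rank one group $M'_\beta$), whereas the centre of $M'_C$ is finite; so $Z\cap M'_\beta$ is not central in $M'_C$, its normal closure is all of $M'_C$, and $\sigma_C$ is trivial on $M'_C$. In particular $\sigma_C$, hence $\sigma$, is trivial on $M'_\gamma$ for every $\gamma\in\Delta_1\subseteq C$, hence on $M'_{\Delta_1}=\langle M'_\gamma:\gamma\in\Delta_1\rangle$.

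Finally I assemble the contradiction. Since $\Delta_1\perp(\Delta_P\setminus\Delta_1)$, the fact recalled at the end of \S\ref{S:2.1} (applied inside $M$) gives $M'=M'_{\Delta_1}\cdot M'_{\Delta_P\setminus\Delta_1}$ with commuting factors meeting in a finite central subgroup; with $M=ZM'$ and $M_1=ZM'_{\Delta_P\setminus\Delta_1}$ this yields $M=M'_{\Delta_1}M_1$, and $M'_{\Delta_1}\cap M_1\subseteq M_{\Delta_1}\cap M_1=Z$, so $M'_{\Delta_1}\cap M_1=Z\cap M'_{\Delta_1}$, which is central in $M_1$. As $\sigma$ is trivial on $M'_{\Delta_1}$ (in particular on $Z\cap M'_{\Delta_1}$), the rule $\sigma(xm_1)=\sigma(m_1)$ for $x\in M'_{\Delta_1}$, $m_1\in M_1$ is well defined and exhibits $\sigma$ as the canonical extension $e_P(\sigma_1)$ of $\sigma_1:=\sigma|_{M_1}$ to $M$ — this extension being legitimate because $Z\cap M'_\gamma$ acts trivially on $\sigma_1$ for every $\gamma\in\Delta_1$. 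With $P_1\subsetneq P$, this contradicts the $e$-minimality of $\sigma$, and completes the proof.

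I expect the main obstacle to be the middle step: converting triviality of $\sigma$ on the toral piece $Z\cap M'_\beta$ attached to an exterior simple root $\beta\in\Delta_2$ into triviality on the interior factor $M'_{\Delta_1}\subseteq M$. This forces one to leave $M$ and work inside $M_{P(\sigma)}$ through the extension $e(\sigma)$, and it is precisely where irreducibility of the component $C$ (hence almost-simplicity of $M'_C$) and infiniteness of $Z\cap M'_\beta$ are used; some care is also needed to verify that the relevant standard-Levi intersections are as claimed and that the inflation produced at the end is genuinely the $e$-extension $e_P(\sigma_1)$.
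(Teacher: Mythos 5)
The paper itself does not prove this lemma: its ``proof'' is a one-line citation of \cite[II.7 Corollary~2]{MR3600042}. Your direct argument is therefore necessarily a different route on the page, though in substance it reconstructs the mechanism behind that citation, and I find it correct. The skeleton works: the connected component $C$ of $\Delta_{P(\sigma)}$ through a non-orthogonal pair $\alpha\in\Delta_P$, $\beta\in\Delta_{P(\sigma)}\setminus\Delta_P$ is automatically orthogonal to $\Delta_P\setminus C$; passing to $e(\sigma)|_{M_C}$ is legitimate (the existence of $e_{P(\sigma)}(\sigma)$ does not presuppose the orthogonality being proved, so there is no circularity) and converts triviality of $Z\cap M'_\beta$ on $\sigma$ into triviality of all of $M'_C$, hence of $M'_{\Delta_1}$ with $\Delta_1=C\cap\Delta_P\ni\alpha$; and the orthogonal decomposition $M=M'_{\Delta_1}M_1$ then exhibits $\sigma$ as $e_P(\sigma|_{M_1})$ with $P_1\subsetneq P$, contradicting $e$-minimality. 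Two points deserve tightening. First, ``$M'_C$ is almost $F$-simple'' is a category slip: $M'_C$ is an abstract group of rational points, not an algebraic group. What you need is that $M'_C$ is quasi-simple (perfect, with simple quotient by its finite centre); this holds because the irreducibility of $\Phi_C$ forces the derived group of $\mathbf M_C$ to have a single isotropic almost-$F$-simple factor, to whose simply connected cover Tits' theorem applies, and perfectness of $M'_C$ is then needed to pass from ``$N\cdot Z(M'_C)=M'_C$'' to ``$N=M'_C$'' for the normal subgroup $N=\ker(e(\sigma))\cap M'_C$. Second, $Z\cap M'_\beta$ does not literally contain a maximal split torus of $M_\beta$; the correct and sufficient statement is that $v(Z\cap M'_\beta)$ contains $\mathbb Z\beta^\vee\neq 0$ (a fact the paper itself invokes in \S\ref{S:8.0}), so $Z\cap M'_\beta$ is infinite and hence not contained in the finite centre of $M'_C$. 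With these two repairs the proof is complete, and it has the merit of making the paper self-contained at this point.
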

 
 That comes from \cite[II.7 Corollary~2]{MR3600042}. That corollary of loc.\ cit.\ also shows that when $R$ is a field and $\sigma$ is supercuspidal, then $\sigma$ is $e$-minimal.
 Lemma \ref{lemma:2.2} shows that $\Delta_{P_{\min,\sigma}}$ and $\Delta_{P(\sigma_{\min})}\setminus\Delta_{P_{\min,\sigma}}$ are orthogonal. 
 
 Note that when   $\Delta_P$ and $\Delta_\sigma$ are orthogonal of union $\Delta=\Delta_P\sqcup \Delta_\sigma$, then $G=P(\sigma)= M M'_{ \sigma}$ and $e(\sigma) $ is the $R$-representation of $G$ simply obtained by extending $\sigma$ trivially on $M'_{ \sigma}$.

\begin{lemma}[{\cite[Lemma~2.11]{arXiv:1703.05599}}]
 \label{lemma:2.3} Let $(P,\sigma,Q)$ be an $R[G]$-triple.  Then $(P_{\min,\sigma},\sigma_{\min},Q)$ is an  $R[G]$-triple
  and $I_G(P,l\sigma,Q) =I_G(P_{\min,\sigma},\sigma_{\min},Q)$.
\end{lemma}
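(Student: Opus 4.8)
The plan is to derive everything from Lemma~\ref{lemma:min} together with the bare definition of $I_G$ in Definition~\ref{def:Gtriple}. Recall what Lemma~\ref{lemma:min} supplies: $P_{\min,\sigma}\subset P$, the equality $\sigma=e_P(\sigma_{\min})$, and the crucial identities $P(\sigma)=P(\sigma_{\min})$ and $e(\sigma)=e(\sigma_{\min})$.

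First I would verify that $(P_{\min,\sigma},\sigma_{\min},Q)$ is an $R[G]$-triple. The representation $\sigma_{\min}$ is smooth: it is the restriction to $M_{\min,\sigma}\subset M$ of the smooth representation $\sigma=e_P(\sigma_{\min})$, and restriction of a smooth representation is smooth. Since $(P,\sigma,Q)$ is an $R[G]$-triple we have $P\subset Q\subset P(\sigma)$; combining $P_{\min,\sigma}\subset P$ with the first inclusion and $P(\sigma)=P(\sigma_{\min})$ with the second gives $P_{\min,\sigma}\subset Q\subset P(\sigma_{\min})$, which is precisely the condition required for $(P_{\min,\sigma},\sigma_{\min},Q)$ to be an $R[G]$-triple.

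It then remains to identify the two $R$-representations. Unwinding Definition~\ref{def:Gtriple},
$$I_G(P,\sigma,Q)=\Ind_{P(\sigma)}^G\bigl(e(\sigma)\otimes\St_Q^{P(\sigma)}\bigr),\qquad I_G(P_{\min,\sigma},\sigma_{\min},Q)=\Ind_{P(\sigma_{\min})}^G\bigl(e(\sigma_{\min})\otimes\St_Q^{P(\sigma_{\min})}\bigr).$$
The point to make is that $\St_Q^{P(\sigma)}$ depends only on the pair $(Q,P(\sigma))$: it is the quotient of $\Ind_Q^{P(\sigma)}\charone$ by the sum of the subrepresentations $\Ind_{Q'}^{P(\sigma)}\charone$ over parabolic subgroups $Q'$ with $Q\subsetneq Q'\subset P(\sigma)$, and neither $\Ind_Q^{P(\sigma)}\charone$ nor the indexing set of $Q'$'s involves $P$ or $\sigma$. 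Hence $P(\sigma)=P(\sigma_{\min})$ forces $\St_Q^{P(\sigma)}=\St_Q^{P(\sigma_{\min})}$, and with $e(\sigma)=e(\sigma_{\min})$ the two displayed objects literally coincide, giving the asserted equality.

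I do not anticipate a genuine obstacle: the statement is a formal corollary of Lemma~\ref{lemma:min}, and the only places deserving a sentence are the smoothness of $\sigma_{\min}$ and the inclusion chain that makes $(P_{\min,\sigma},\sigma_{\min},Q)$ a legitimate triple. If anything, the subtlest item is tracking that $e(\sigma)$ really is unchanged under passage to $\sigma_{\min}$ — but this is exactly the last assertion of Lemma~\ref{lemma:min}, so nothing new is needed.
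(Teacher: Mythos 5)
Your proof is correct and is exactly the intended argument: the paper itself only cites \cite[Lemma~2.11]{arXiv:1703.05599} for this statement, and that proof is likewise a formal unwinding of Lemma~\ref{lemma:min} ($P(\sigma)=P(\sigma_{\min})$, $e(\sigma)=e(\sigma_{\min})$) combined with the observation that Definition~\ref{def:Gtriple} depends only on the data $(P(\sigma),e(\sigma),Q)$. Your checks that $\sigma_{\min}$ is smooth and that the inclusion chain $P_{\min,\sigma}\subset Q\subset P(\sigma_{\min})$ holds are the only points needing verification, and you handle both correctly.
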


  \subsection{Pro-$p$ Iwahori Hecke algebras}\label{S:2.3}
 We fix a  special parahoric subgroup $\mathcal K$ of $G$ fixing a special vertex $x_0$ in the apartment $\mathcal A$ associated to $T$ in the  Bruhat-Tits building of the adjoint group of $G$.    We let $\mathcal B$  be the  Iwahori subgroup fixing the alcove $\mathcal C$ in  $\mathcal A$ with vertex $x_0$ contained in the Weyl chamber (of vertex $x_0$) associated to $B$. We let $\mathcal U$ be the pro-$p$ radical of $\mathcal B$ (the   pro-$p$ Iwahori subgroup).  The pro-$p$ Iwahori Hecke ring $\mathcal{H}=\mathcal{H}(G,\mathcal U) $ is the convolution ring of compactly supported functions $G\to \mathbb Z$ constant on the double classes of $G$ modulo $\mathcal U$. We denote by $T(g)$ the characteristic function of $\mathcal{U} g \mathcal{U}$ for $g\in G$, seen as an element of $\mathcal{H}$. Let $R$ be a commutative ring. The pro-$p$ Iwahori Hecke $R$-algebra $\mathcal{H}_{M,R}$ is $R\otimes_{\mathbb Z}\mathcal{H}_{M}$. We will follow the custom to still denote by $h$ the natural image $1\otimes h$ of $h\in \mathcal{H}$ in  $\mathcal{H}_R$.

  For $P=MN$ a standard parabolic subgroup of $G$,  the similar  objects  for  $M $ are indexed by $M$, we have $\mathcal K_M=\mathcal K\cap M, \mathcal B_M=\mathcal B\cap M, \mathcal U_M=\mathcal U\cap M$,  the pro-$p$ Iwahori Hecke ring $\mathcal{H}_M=\mathcal{H}(M,\mathcal{U}_M)  $, $T^M(m)$ the characteristic function of $\mathcal{U}_M m \mathcal{U}_M$ for $m\in M$, seen as an element of  
  $\mathcal{H}_M $. 
    The pro-$p$ Iwahori group $\mathcal{U}$ of $G$ satisfies the  Iwahori decomposition with respect to $P$:  $$\mathcal U= \mathcal{U}_N\mathcal U_M\mathcal U_{\overline N} ,$$ 
 where $\mathcal{U}_N= \mathcal{U} \cap N, \mathcal U_{\overline N}=\mathcal U\cap \overline N$.
 The linear map
   \begin{equation}\label{eq:theta} \mathcal{H}_{M }\xrightarrow{\theta} \mathcal{H}, \quad  \theta(T^{M}(m))=T(m) \quad (m\in M)\end{equation}  does not respect the product. But if we introduce the monoid $M^+$ of elements $m\in M$ contracting $\mathcal U_N$,  meaning $m\mathcal U_N m^{-1}\subset \mathcal U_N $, and the submodule  $ \mathcal{H}_{M^+ }\subset \mathcal{H}_M$ of  functions with support in $M^+$, we have \cite[Theorem~1.4]{MR3437789}:
   
\bigskip {\sl  $  \mathcal{H}_{M^+ }$ is a subring of $\mathcal{H}_M$ and $\mathcal{H}_M$ is the localization of $ \mathcal{H}_{M^+ }$ at an element $\tau^M\in \mathcal{H}_{M^+}$ central and invertible in $\mathcal{H}_M$, meaning     $\mathcal{H}_M= \cup_{n\in \mathbb N} \mathcal{H}_{M^+}(\tau^M)^{-n}$. The map $ \mathcal{H}_{M }\xrightarrow{\theta} \mathcal{H}$ is injective and its  restriction $\theta|_{ \mathcal{H}_{M^+ }}$  to  $\mathcal{H}_{M^+ }$ respects the product. 

 These  properties  are also true when $(M^+, \tau^M)$ is  replaced by its inverse   $(M^-, (\tau^M)^{-1})$ where $M^-=\{m^{-1}\in M \ | \ m\in M^+\}$.}


\section{Pro-$p$ Iwahori invariants of $I_G(P,\sigma,Q)$}\label{S:8}
    
\subsection{Pro-$p$ Iwahori Hecke algebras: structures}

We supplement here the notations of \S \ref{S:2.1} and \S \ref{S:2.3}.  The subgroups $Z^0=Z\cap \mathcal K= Z\cap \mathcal B$ and $Z^1=Z\cap \mathcal U$ are normal in $\mathcal N$ and we put
$$W= \mathcal N/ Z^0, \ W(1)= \mathcal N/ Z^1,\  \Lambda= Z/ Z^0, \  \Lambda(1)= Z/ Z^1, \ Z_k=Z^0/Z^1.$$
We have $\mathcal N = (\mathcal N\cap \mathcal K) Z$ so that we see the finite Weyl group $\mathbb W=\mathcal N/Z$ as the subgroup $(\mathcal N\cap \mathcal K)/Z^0$ of $W$; in this way $W$ is the semi-direct product $\Lambda \rtimes \mathbb W$.  The   image $W_{G'}=W'$ of $\mathcal N\cap G'$ in $W$ is an affine Weyl group generated by  the set $S^{\aff}$ of affine reflections determined by the walls of the alcove $\mathcal C$. The group $W'$ is normal in $W$ and  $W$  is the semi-direct product $W'\rtimes \Omega$ where $\Omega$ is the image in $W$ of the normalizer $\mathcal N_{\mathcal C}$ of  $\mathcal C$ in $\mathcal N$.  The length function $\ell$ on the affine Weyl system $(W',S^{\aff})$ extends to a length function on $W$ such that $\Omega$ is the set of elements of length $0$. We also view $\ell$ as a function of $W(1)$ via the quotient map $W(1)\to W$.
We write
\begin{equation}\label{eq:hattildew}(\hat w,\tilde w, w)\in \mathcal N\times W(1) \times W \ \text{corresponding via the quotient maps } \ \mathcal N\to W(1)\to W. 
\end{equation}
When  $w=s$ in $S^{\aff}$ or more generally $w$ in $W_{G'}$, we will most of the time  choose $\hat w$ in $ \mathcal N \cap G' $ and $\tilde w$ in    the image ${}_1W_{G'}$ of $\mathcal N\cap G'$ in $W(1)$.

\bigskip We are now ready to describe the pro-$p$ Iwahori Hecke ring $\mathcal{H}=\mathcal{H}(G,\mathcal{U})$ \cite{MR3484112}. We have $G=\mathcal{U} \mathcal N \mathcal{U}$ and 
for $n,n'\in \mathcal N$ we have $\mathcal{U} n \mathcal{U}=\mathcal{U} n' \mathcal{U}$ if and only if $nZ^1=n'Z^1$.  For  $n\in \mathcal N$ of image $w\in W(1)$ and  $g\in \mathcal{U} n \mathcal{U}$ we denote $T_w=T(n)=T(g)$ in  $\mathcal{H}$. The relations among the basis elements $(T_w)_{w\in W(1)}$ of $\mathcal{H}$  are:
\begin{enumerate}
\item Braid relations : $T_w T_{w'}=T_{ww'}$ for $w,w'\in W(1)$ with $ \ell(ww')=\ell(w)+\ell(w')$.
\item Quadratic relations : $T_{\tilde s}^2= q_sT_{\widetilde{s}^2} + c_{\tilde s} T_{\tilde s}$
\end{enumerate}
for $\tilde s\in W(1)$ lifting  $s\in S^{\aff}$, where $q_s=q_G(s)=|\mathcal{U}/\mathcal{U}\cap \hat s \mathcal{U}  (\hat s)^{-1}|$ depends only on $s$, and $ c_{\tilde s}=\sum_{t\in Z_k} c_{\tilde s}(t) T_t$  for integers $c_{\tilde s}(t)\in \mathbb N$ summing to $q_s-1$.  

We shall   need the   basis elements $(T^*_w)_{w\in W(1)}$  of $\mathcal{H}$ defined by:
\begin{enumerate}
\item $T^*_w=T_w$  for $w \in W(1)$ of length $\ell(w)=0$.
\item $T^*_{\tilde s}=T_{\tilde s}- c_{\tilde s}$ for $\tilde s\in W(1)$ lifting $s\in S^{\aff}$.
\item $T^*_{ww'}=T^*_wT^*_{w'}$  for $w,w'\in W(1)$ with $ \ell(ww')=\ell(w)+\ell(w')$.
\end{enumerate}\bigskip We need more notation for the  definition of the admissible  lifts of $S^{\aff}$ in $\mathcal N_G$.  Let  $s\in S^{\aff}$ fixing a face  $\mathcal C_s$ of the alcove $\mathcal C$ and $\mathcal K_s$  the parahoric subgroup of $G$ fixing $\mathcal C_s$. The theory of Bruhat-Tits associates to $\mathcal C_s$   a certain  root  $\alpha_s\in \Phi^+$  \cite[\S 4.2]{MR3484112}.  We consider  
the group  $G'_s$ generated by  $U_{\alpha_s}\cup U_{-\alpha_s}$ where  $U_{\pm \alpha_s}$  the root subgroup of $\pm \alpha_s$   (if $2 \alpha_s\in \Phi$, then $U_{2 \alpha_s}\subset  U_{ \alpha_s}$) and the group $\mathcal G'_s$  generated by $\mathcal{U}_{\alpha_s} \cup \,\mathcal{U}_{-\alpha_s}$ where
 $\mathcal{U}_{\pm \alpha_s}= U_{\pm \alpha_s}\cap \mathcal K_s$.  When $u\in  \mathcal{U}_{ \alpha_s}-\{1\}$, the intersection  $\mathcal N_G \cap \mathcal{U}_{-\alpha_s}u \,\mathcal{U}_{-\alpha_s}$    (equal to $\mathcal N_G \cap U_{-\alpha_s}u U_{-\alpha_s}$  \cite[6.2.1 (V5)]{MR0327923} \cite[\S 3.3 (19)]{MR3484112})  possesses  a  single element $n_s(u)$. 
The group $Z'_s=Z\cap \mathcal G'_s$ is  contained in  $Z\cap \mathcal K_s=Z^0$; its image in $Z_k$ is denoted by $Z'_{k,s}$. 

The elements   $n_s(u)$  for  $u\in  \mathcal{U}_{ \alpha_s}-\{1\}$ are   the admissible lifts of $s$ in $\mathcal N_G$; their images in $W(1)$ are  the admissible lifts of $s$ in $W(1)$. 
By  \cite[Theorem  2.2, Proposition 4.4]{MR3484112},
when $\tilde s\in W(1)$ is an admissible lift of $s$, $c_{\tilde s}(t)=0$ if $t\in Z_k\setminus Z'_{k,s}$, and 
\begin{equation}\label{eq:cs}
c_{\tilde s}\equiv (q_s-1)|Z'_{k,s}|^{-1}\sum _{t\in Z'_{k,s}} T_t \quad \mod  p.
\end{equation}
The admissible lifts  of   $ S$ in $\mathcal N_G$ are contained in $\mathcal N_G \cap \mathcal K$ because  $\mathcal K_s \subset \mathcal K$ when $s\in S$. 

\begin{definition}\label{defn:admissible_lift} An admissible lift of the finite Weyl group $\mathbb W$  in  $\mathcal N_G $ is a map $w\mapsto \hat w:\mathbb W\to \mathcal N_G \cap \mathcal K$ such that $\hat s$ is admissible for all $s\in S$ and $\hat w=\hat w_1 \hat w_2$ for $w_1,w_2\in \mathbb W$ such that  $w=w_1 w_2$ and $\ell(w)=\ell(w_1)+\ell(w_2)$.
 \end{definition}
 Any choice of admissible lifts   of $S $ in $\mathcal N_G\cap \mathcal K$ extends  uniquely to an admissible lift of $\mathbb W$   (\cite[IV.6]{MR3600042}, \cite[Proposition 2.7]{arXiv:1703.04921}).
   
\bigskip Let $P=MN$ be a standard parabolic subgroup of $G$.   The groups $Z, Z^0=Z\cap \mathcal K_M=Z\cap \mathcal B_M, Z^1=Z\cap \mathcal U_M$ are the same for $G$ and $M$, but $\mathcal N _M= \mathcal N\cap M$  and $M\cap G'$ are subgroups of $ \mathcal N$ and $G'$. The monoid $M^+$ (\S \ref{S:2.3}) contains  $(\mathcal N _M\cap \mathcal K)$ and is equal to 
$M^+=\mathcal{U}_M \mathcal N _{M^+}\mathcal{U}_M$ where $\mathcal N _{M^+}= \mathcal N\cap M^+$. An element $z\in Z$ belongs to $M^+$ if and only if $v_F(\alpha(z))\geq 0$ for all $\alpha \in \Phi^+\setminus \Phi_M^+$  (see \cite[Lemme 2.2]{MR3437789}).
Put $W_M = \mathcal{N}_M/Z^0$ and $W_M(1) = \mathcal{N}/Z^1$.

Let $\epsilon=+$ or $\epsilon=-$. We denote by $W_{M^\epsilon}$  the images of $\mathcal N _{M^\epsilon}$ in $W_M, W_M(1)$.
We see the groups $W_M, W_M(1), {}_1W_{M'}$ as subgroups of $W, W(1), {}_1W_{G'}$. 
As $\theta$ (\S \ref{S:2.3}), the linear injective map 
\begin{equation}\label{eq:theta*} \mathcal{H}_{M }\xrightarrow{\theta^*} \mathcal{H}, \quad  \theta^*(T_w^{M,*})= T^*_w, \quad (w\in W_M(1)),
\end{equation} respects the product on the subring  $ \mathcal{H}_{M^\epsilon } $.  
 Note that $\theta$ and $\theta^*$ satisfy the obvious transitivity property with respect to a change of parabolic subgroups.

\subsection{Orthogonal case}\label{S:8.0}
Let us examine the case where $\Delta_M$ and $\Delta\setminus \Delta_M$ are orthogonal, writing $M_2=M_{\Delta\setminus \Delta_M}$ as in \S \ref{S:1.3}.

From $M\cap M_2=Z$ we get $W_M\cap W_{M_2}=\Lambda, W_M(1)\cap W_{M_2}(1)= \Lambda(1)$, the semisimple building of $G$ is the product of those of $M$ and $M_2$ and $S^{\aff}$ is the disjoint union of $S_M^{\aff}$ and $S_{M_2}^{\aff}$, the group $W_{G'}$ is the direct product of  $W_{M'}$ and  $W_{M_2'}$. 
For $\tilde s\in W_M(1)$ lifting $s\in S_M^{\aff}$, the elements $T^M_{\tilde s}\in \mathcal{H}_M$ and $T_{\tilde s}\in \mathcal{H}$ satisfy the same quadratic relations.  
  A word of caution is necessary for the lengths $\ell_M$ of $W_M $ and  $\ell_{M_2}$ of $W_{M_2} $ different from  the restrictions   of the length $\ell$ of $W_M$, for example  $\ell_M(\lambda)=0$  for $\lambda \in \Lambda \cap W_{M_2'}$.  
  
 \begin{lemma}We have $\Lambda =(W_{M^\epsilon} \cap  \Lambda) (W_{M_2'}\cap  \Lambda)$.
 \end{lemma}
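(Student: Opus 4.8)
The goal is to show that when $\Delta_M$ and $\Delta_2 = \Delta\setminus\Delta_M$ are orthogonal, the lattice $\Lambda = Z/Z^0$ factors as the product of $W_{M^\epsilon}\cap\Lambda$ and $W_{M_2'}\cap\Lambda$. Fix $\epsilon$, say $\epsilon = +$; the argument for $\epsilon = -$ is symmetric (or follows by applying $+$ to the opposite parabolic). The containment $(W_{M^+}\cap\Lambda)(W_{M_2'}\cap\Lambda)\subset\Lambda$ is automatic, so the content is surjectivity: every $\lambda\in\Lambda$ is a product $\lambda_1\lambda_2$ with $\lambda_1\in W_{M^+}\cap\Lambda$ and $\lambda_2\in W_{M_2'}\cap\Lambda$. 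The plan is to work with a representative $z\in Z$ of $\lambda$ and to split off, using the orthogonal decomposition $G = MM_2'$ (equivalently $G' = M'M_2'$ and $Z\cap G' = (Z\cap M')(Z\cap M_2')$, which is Lemma~\ref{lemma:ZG'} combined with $\Delta = \Delta_M\sqcup\Delta_2$ and the remark after \S\ref{S:2.1}), a piece of $z$ coming from $M_2'$, leaving a piece lying in $M$ whose valuation is suitably nonnegative.

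\textbf{Key steps.} First I would record that since $M$ and $M_2$ are both normal in $G$ with $M\cap M_2 = Z$ and $G = MM_2$, the same holds at the level of the groups in $W(1)$: $W_M\cap W_{M_2} = \Lambda$, and $W_{M'}\cap W_{M_2'}$ is the finite central image of $Z\cap M'\cap M_2'$. Next, the criterion recalled at the end of \S\ref{S:2.3} (from \cite[Lemme~2.2]{MR3437789}) says $z\in M^+$ iff $v_F(\alpha(z))\ge 0$ for all $\alpha\in\Phi^+\setminus\Phi_M^+$; by orthogonality $\Phi^+\setminus\Phi_M^+$ is exactly $\Phi_{M_2}^+$ together with roots involving $\Delta_2$, and the relevant observation is that modifying $z$ by an element of $Z\cap M_2'$ changes only the $\alpha$-valuations for $\alpha\in\Phi_{M_2}$, leaving $v_F(\beta(z))$ unchanged for $\beta\in\Phi_M$. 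So I would choose $z_2\in Z\cap M_2'$ such that $v(z z_2^{-1})$, projected to the span of $\Delta_2$, is dominant (nonnegative on $\Phi_{M_2}^+$); this is possible because $Z\cap M_2'$ surjects, up to the relevant finite/orthogonality constraints, onto enough of the coweight lattice spanned by $\Delta_2^\vee$ — here one uses that $M_2'$ is generated by the $U_{\pm\alpha}$, $\alpha\in\Delta_2$, hence $v(Z\cap M_2')$ spans $\mathbb{Q}\Delta_2^\vee$. Setting $z_1 = z z_2^{-1}$, one checks $v_F(\alpha(z_1))\ge 0$ for all $\alpha\in\Phi^+\setminus\Phi_M^+$ (the $\Phi_{M_2}$-part is handled by the choice of $z_2$; the mixed roots are handled because $\Delta_M\perp\Delta_2$ forces those pairings to depend only on the $\Delta_2$-component), whence $z_1\in M^+$, so its image $\lambda_1$ lies in $W_{M^+}\cap\Lambda$ and $\lambda = \lambda_1\lambda_2$ with $\lambda_2\in W_{M_2'}\cap\Lambda$ as desired. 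For the $-$ case one instead makes $v(zz_2^{-1})$ antidominant on $\Phi_{M_2}^+$.

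\textbf{Main obstacle.} The delicate point is the surjectivity input: that one can adjust $z$ by an element of $Z\cap M_2'$ to land in $M^+$ (resp. $M^-$). This requires knowing that $v(Z\cap M_2')$ is "large enough" in $X_*(T)\otimes\mathbb{Q}$ — precisely, that its image in the quotient $X_*(T)\otimes\mathbb{Q} \big/ (\mathbb{Q}\Delta_M^\vee)$ (where the $M$-side lives) contains all of $\mathbb{Q}\Delta_2^\vee$, or at least enough to achieve dominance after rescaling. Here I would invoke that $M_2'$ is the normal subgroup of $M_2$ generated by the positive root subgroups for $\Delta_2$, so $Z\cap M_2'$ is generated by the $Z\cap M_\alpha'$ for $\alpha\in\Delta_2$ (Lemma~\ref{lemma:ZG'} applied inside $M_2$), and for each such $\alpha$ the valuation $v(Z\cap M_\alpha')$ is a nonzero multiple of $\alpha^\vee$ in $X_*(T)\otimes\mathbb{Q}$; summing over $\Delta_2$ gives a sublattice of finite index in $\mathbb{Q}\Delta_2^\vee\cap X_*(T)$, which after clearing denominators suffices to dominate the $\Delta_2$-component of any $v(z)$. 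Once this is in hand the rest is the bookkeeping with the $M^+$-criterion and orthogonality described above, and the $\epsilon = -$ statement is obtained by the same argument with "dominant" replaced by "antidominant."
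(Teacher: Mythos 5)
Your proof is correct and follows essentially the same route as the paper's: represent $\lambda$ in $\Lambda$, use the valuation criterion $z\in M^{\pm}\Leftrightarrow v_F(\alpha(z))\gtrless 0$ for $\alpha\in\Phi^+\setminus\Phi_M^+=\Phi_{M_2}^+$ (by orthogonality there are in fact no mixed roots, so your hedge about them is moot), and correct the $\Delta_2$-component by an element of $W_{M_2'}\cap\Lambda$, invoking the invertibility of $(\alpha(\beta^\vee))_{\alpha,\beta\in\Delta_{M_2}}$. The only (harmless) difference is the source of the key lattice input: the paper cites \cite[after formula (71)]{MR3484112} for the containment of the full coroot lattice $\oplus_{\alpha\in\Delta_{M_2}}\mathbb{Z}\alpha^\vee$ in $v(W_{M_2'}\cap\Lambda)$, whereas you derive a finite-index sublattice from Lemma~\ref{lemma:ZG'}, which suffices equally well after rescaling.
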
 
  \begin{proof} We prove the lemma for $\epsilon=-$. The case $\epsilon=+$ is similar. 
  The map $v:Z\to X_*(T)\otimes \mathbb Q$ defined in \S \ref{S:2.1} is trivial on $Z^0$   and we also write $v$ for the resulting homomorphism on $\Lambda$. 
  For  $\lambda  \in \Lambda$ there exists $\lambda_2\in W_{M'_2}\cap \Lambda $ such that $\lambda \lambda_2\in W_{M^-}$, or equivalently $\alpha (v(\lambda \lambda_2))\leq 0$
  for all $\alpha \in \Phi^+\setminus \Phi_M^+ =\Phi_{M_2}^+$.   It suffices to have the inequality for  $\alpha \in \Delta_{M_2}$.  The matrix $(\alpha(\beta^\vee))_{\alpha, \beta\in \Delta_{M_2}}$ is invertible, hence there exist  $n_\beta \in \mathbb Z$ such that $\sum_{\beta \in \Delta_{M_2}} n_\beta \alpha(\beta^\vee) \leq - \alpha (v(\lambda))$ for all $\alpha \in \Delta_{M_2}$.    As $v (W_{M'_2}\cap \Lambda)$ contains  $ \oplus _{\alpha \in \Delta_{M_2}} \mathbb Z \alpha^\vee$ where $\alpha^\vee$ is the coroot of $\alpha$ \cite[after  formula (71)]{MR3484112}, there exists $\lambda_2\in W_{M'_2}\cap \Lambda$ with $v(\lambda_2) = \sum_{\beta\in\Delta_{M_2}}n_\beta\beta^\vee$.
 \end{proof} 
  
  The  groups 
 $\mathcal  N\cap M'$ and $ \mathcal  N\cap M_2'$ are normal in $\mathcal N$, and $\mathcal N= (\mathcal N\cap M') \mathcal N_{\mathcal C}( \mathcal  N\cap M_2') =Z (\mathcal N\cap M')( \mathcal  N\cap M_2')$, and  
 $$W= W_{M'}\Omega W_{M_2'}=W_MW_{M_2'}=W_{M^+} W_{M_2'}= W_{M^-} W_{M_2'}$$
The first two equalities are clear, the equality $W_MW_{M_2'}=W_{M^\epsilon} W_{M_2'}$ follows from 
  $W_M= \mathbb W_M \Lambda $,  $\mathbb W_M \subset W_{M^\epsilon}$ and  the lemma.
 The inverse image in $W(1)$ of these groups are
     \begin{equation}\label{Wdec}W(1)= {}_1W_{M'}\, \Omega(1)\,  {}_1W_{M_2'}=   W_M(1) \, {}_1W_{M_2'}= W_{M^+}(1) \, {}_1W_{M_2'}= W_{M^-}(1) \, {}_1W_{M_2'}.
 \end{equation}

  We  recall  the function $q_G(n)=q(n)= | \mathcal U /( \mathcal U \cap  n^{-1}\mathcal U n)|$ on $\mathcal N$ \cite[Proposition 3.38]{MR3484112}  and we extend to  $\mathcal N$ the functions 
   $ q_M$ on $\mathcal N\cap M$ and  $q_{M_2}$ on $\mathcal N\cap M_2$:
\begin{equation}\label{eq:q} q_{M}(n) = | \mathcal U_{M}/( \mathcal U_{M}\cap  n^{-1}\mathcal U_{M} n)| ,\quad  q_{M_2}(n) = | \mathcal U_{M_2}/( \mathcal U_{M_2}\cap  n^{-1}\mathcal U_{M_2} n)| .
\end{equation}
The functions $q, q_M, q_{M_2}$  descend  to   functions on $W(1) $ and on $W$,  also denoted by $q, q_M, q_{M_2}$.

\begin{lemma} \label{lemma:0} Let $n\in \mathcal N$ of image $w\in W$. We have  
\begin{enumerate}
\item $q(n)=q_M(n) q_{M_2}(n)$.
\item $q_M(n)=q_M(n_M)$ if  $n=n_Mn_2$, $n_M\in \mathcal N \cap M,  n_2\in  \mathcal N \cap M'_2$ and similarly when $M$ and $M_2$ are permuted.
\item $ q( w)=1 \Leftrightarrow q_M(\lambda w_{M})=q_{M_2}(\lambda w_{M_2})=1 $, if   $w= \lambda  w_{M}  w_{M_2}, (\lambda, w_{M}, w_{M_2})\in \Lambda\times  \mathbb W_{M}\times \mathbb W_{M_2}$.
\item On the coset $(\mathcal N \cap M'_2) \mathcal N_{\mathcal C} n$, $q_M$ is constant equal to  $q_{M}(n_{M'})$  for any element $n_{M'}\in  M' \cap (\mathcal N \cap M'_2) \mathcal N_{\mathcal C} n$. A similar  result is true when  $M$ and $M_2$ are permuted.
\end{enumerate}\end{lemma}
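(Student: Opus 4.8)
The plan is to work throughout in the extended affine Weyl group $W$, on which $q$, $q_M$ and $q_{M_2}$ all descend, using the orthogonal-case structure recalled above: $\mathbb W=\mathbb W_M\times\mathbb W_{M_2}$, $W_{G'}=W_{M'}\times W_{M'_2}$, $S^{\aff}=S^{\aff}_M\sqcup S^{\aff}_{M_2}$, and $M$, $M_2$ normal in $G$. I would first record two preliminary facts: (a) $\mathcal U_M=(\mathcal U_M\cap M')Z^1$, and symmetrically for $M_2$ --- this comes from the Iwahori factorization $\mathcal U_M=\mathcal U_{\overline{U_M}}Z^1\mathcal U_{U_M}$, the inclusions $\mathcal U_{U_M},\mathcal U_{\overline{U_M}}\subset M'$, and the fact that $Z^1$ normalizes each $\mathcal U\cap U_\alpha$, which lets one move the middle $Z^1$ to the right; and (b) conjugation by any element of $\mathcal N\cap M'_2$ or of $\mathcal N_{\mathcal C}$ fixes $\mathcal U_M$ --- for $\mathcal N\cap M'_2$ because (orthogonality) $M'_2$ centralizes $M'$ and normalizes $Z^1$, hence fixes both factors of (a); for $\mathcal N_{\mathcal C}$ because it normalizes $\mathcal U$ and $M$, hence $\mathcal U\cap M$. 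By symmetry, $\mathcal U_{M_2}$ is fixed under conjugation by $\mathcal N\cap M'$ and by $\mathcal N_{\mathcal C}$.

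From (b) and the defining formula $q_M(n)=|\mathcal U_M/(\mathcal U_M\cap n^{-1}\mathcal U_Mn)|$, it follows at once (replace $n$ by $hn$ or $nh$ and conjugate the intersection back by $h$) that $q_M$ is invariant under left and right translation by $\mathcal N\cap M'_2$ and by $\mathcal N_{\mathcal C}$, symmetrically for $q_{M_2}$, and that $q$ is invariant under translation by $\mathcal N_{\mathcal C}$. Part (2) is then precisely the right-invariance of $q_M$ under $\mathcal N\cap M'_2$ (and its permuted form, that of $q_{M_2}$ under $\mathcal N\cap M'$). For Part (4): $q_M$ is constant on the right coset $Hn$ with $H=(\mathcal N\cap M'_2)\mathcal N_{\mathcal C}$, which is a subgroup ($\mathcal N_{\mathcal C}$ normalizes $\mathcal N\cap M'_2$); and $Hn$ meets $M'$, because from $\mathcal N=(\mathcal N\cap M')\mathcal N_{\mathcal C}(\mathcal N\cap M'_2)$, the commuting of $M'$ with $M'_2$, and $\mathcal N_{\mathcal C}$ normalizing both, one rewrites $n=b\,\omega\,a$ with $b\in\mathcal N\cap M'_2$, $\omega\in\mathcal N_{\mathcal C}$, $a\in\mathcal N\cap M'$, giving $a=n_{M'}\in Hn\cap M'$; the constant value of $q_M$ is then $q_M(n_{M'})$. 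The permuted version is symmetric.

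For Part (1) I would use multiplicativity of $q$ (\cite{MR3484112}): $q(ww')=q(w)q(w')$ when $\ell(ww')=\ell(w)+\ell(w')$, and $q(\omega)=1$ for $\omega\in\Omega$. As $(W_{G'},S^{\aff})$ is the direct product of the Coxeter systems $(W_{M'},S^{\aff}_M)$ and $(W_{M'_2},S^{\aff}_{M_2})$, lengths add on $W_{G'}$ and $\ell$ restricted to the factor $W_{M'}$ equals $\ell_M$ there; with $q_s=q^M_s$ for $s\in S^{\aff}_M$ this gives $q=q_M$ on $W_{M'}$, and likewise $q=q_{M_2}$ on $W_{M'_2}$. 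Writing an arbitrary $w\in W$ as $w=w'w''\omega$ with $w'\in W_{M'}$, $w''\in W_{M'_2}$, $\omega\in\Omega$, one gets $q(w)=q(w')q(w'')=q_M(w')q_{M_2}(w'')$, while the invariances of the previous paragraph give $q_M(w)=q_M(w')$ and $q_{M_2}(w)=q_{M_2}(w'')$; hence $q(w)=q_M(w)q_{M_2}(w)$.

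Part (3) rests on the equivalence $q(w)=1\iff w\in\Omega$ (a lift of $w$ normalizes $\mathcal U$) --- valid because $q_s>1$ for every $s\in S^{\aff}$, or by a Haar-volume argument --- together with the analogous equivalences for $q_M$ on $W_M$ with $\Omega_M$, and for $q_{M_2}$. In the orthogonal case the building, hence the apartment, of $G$ is the product of those of $M$ and $M_2$, with alcove $\mathcal C=\mathcal C_M\times\mathcal C_{M_2}$; and $w=\lambda w_Mw_{M_2}$ acts on this product componentwise, its $M$-component being the $W_M$-action of $\lambda w_M$ and its $M_2$-component the $W_{M_2}$-action of $\lambda w_{M_2}$ (because $\mathbb W_{M_2}$ fixes the $M$-apartment pointwise, $\mathbb W_M$ fixes the $M_2$-apartment pointwise, and the translation $v(\lambda)$ splits along the two factors). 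Thus $w$ stabilizes $\mathcal C$ iff $\lambda w_M$ stabilizes $\mathcal C_M$ and $\lambda w_{M_2}$ stabilizes $\mathcal C_{M_2}$, which is exactly $q(w)=1\iff q_M(\lambda w_M)=q_{M_2}(\lambda w_{M_2})=1$. I expect the main obstacle to be the bookkeeping in (a)--(b) --- keeping straight which of $M'$, $M'_2$, $Z^1$, $\mathcal N_{\mathcal C}$ normalizes or centralizes which subgroup, in particular securing the decomposition $\mathcal U_M=(\mathcal U_M\cap M')Z^1$ and the ensuing $\mathcal N\cap M'_2$-stability of $\mathcal U_M$; everything else is a short formal consequence.
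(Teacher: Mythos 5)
Your proof is correct, but it takes a genuinely different route from the paper's for parts (1) and (3). The paper derives all four assertions from a single explicit homeomorphism: the product map $Z^1\prod_{\alpha\in\Phi_{M,\mathrm{red}}}\mathcal U_\alpha\prod_{\alpha\in\Phi_{M_2,\mathrm{red}}}\mathcal U_\alpha\to\mathcal U$, which, intersected with $n^{-1}\mathcal U n$, yields for every $n\in\mathcal N$ at once a bijection $\mathcal U/(\mathcal U\cap n^{-1}\mathcal U n)\simeq \mathcal Y_{M'_2}/(\cdots)\times\mathcal Y_{M'}/(\cdots)$ together with $\mathcal Y_{M'}/(\cdots)\simeq\mathcal U_M/(\mathcal U_M\cap n^{-1}\mathcal U_Mn)$; assertion (1) is then a count, (2) follows because $\mathcal N\cap M'_2$ normalizes $\mathcal Y_{M'}$, (3) is the one-line combination of (1) and (2), and (4) is the normalizer argument you also give. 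Your treatment of (2) and (4) is essentially the paper's, repackaged through the normalization of $\mathcal U_M=(\mathcal U_M\cap M')Z^1$ by $(\mathcal N\cap M'_2)\mathcal N_{\mathcal C}$. For (1) you instead use multiplicativity of $q$ along reduced words, the direct-product Coxeter structure of $(W_{G'},S^{\aff})$, and the equality $q_s=q_s^M$ for $s\in S_M^{\aff}$; this is legitimate (the paper records just before the lemma that $T_{\tilde s}$ and $T^M_{\tilde s}$ satisfy the same quadratic relation), but within the paper's internal logic that equality is itself most naturally a by-product of the coset bijection you are bypassing, so you are importing it from the general theory of [Vig15b]/[Vig16] rather than deriving it. For (3), your alcove-stabilizer argument in the product building is correct but an unnecessary detour: with your own (1) and (2) in hand, $q(w)=q_M(\lambda w_M)\,q_{M_2}(\lambda w_{M_2})$ is a product of positive integers and equals $1$ exactly when both factors do. What the paper's approach buys is that the explicit bijection of coset spaces is reused verbatim later (in the computation of $vT(n)$ on $e(\sigma)^{\mathcal U}$); what yours buys is independence from the fine root-group combinatorics, at the cost of quoting multiplicativity of $q$ and $q_s=q_s^M$ from elsewhere.
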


\begin{proof}   The product map 
\begin{equation}\label{eq:U}Z^1 \prod_{\alpha \in \Phi_{M,red}} \mathcal U_{\alpha} \prod_{\alpha \in \Phi_{M_2,red}}  \mathcal U_{\alpha}\to \mathcal U
\end{equation} with $ \mathcal U_{\alpha}=  U_{\alpha}\cap  \mathcal U $, is a homeomorphism.  
We have   $\mathcal U_{M }=Z^1 \mathcal Y_{M' }$, $\mathcal U_{M'}= (Z^1\cap M') \mathcal Y_{M' }$  where  $\mathcal Y_{M'}=\prod_{\alpha \in \Phi_{M,red}} \mathcal U_{\alpha} $ and $\mathcal N \cap M'_2$ normalizes  $ \mathcal Y_{M'} $. Similar results are true  when $M$ and $M_2$ are permuted, and $\mathcal{U}=\mathcal{U}_{M'}\mathcal{U}_{M_2}=  \mathcal{U}_{M}\mathcal{U}_{M'_2}$. 
 
  Writing $\mathcal N=Z (\mathcal N\cap M')( \mathcal  N\cap M_2')$ (in any order), we see that the product map 
\begin{equation}\label{eq:nUn} Z^1
( \mathcal Y_{M'}\cap  n^{-1}\mathcal Y_{M'}  n )(\mathcal Y_{M'_2}\cap   n^{-1}\mathcal Y_{M'_2} n)\to \mathcal U \cap n^{-1}\mathcal U n
\end{equation} 
is an homeomorphism. The inclusions induce  bijections 
\begin{equation}\label{eq:UY}\mathcal Y_{M'}/( \mathcal Y_{M'}\cap  n^{-1}\mathcal Y_{M'} n)\simeq  \mathcal U_{M' }/ (\mathcal U_{M' }\cap n^{-1}\mathcal U_{M'} n)\simeq   \mathcal U_{M}/( \mathcal U_{M}\cap  n^{-1}\mathcal U_{M} n) ,
\end{equation} 
similarly for $M_2$, 
and also a bijection \begin{align}\label{eq:UnU}\mathcal U / (\mathcal U \cap n^{-1}\mathcal U n)&\simeq \mathcal Y_{M'_2}/ (\mathcal Y_{M'_2}\cap  n^{-1}\mathcal Y_{M'_2}  n)\times   (\mathcal Y_{M'}/( \mathcal Y_{M'}\cap  n^{-1}\mathcal Y_{M'} n). \end{align}
The assertion  (1) in the lemma  follows from  \eqref{eq:UY}, \eqref{eq:UnU}.

 The assertion (2)  follows from \eqref{eq:nUn}; it  implies the assertion (3).

A subgroup of $\mathcal N$ normalizes $\mathcal{U}_M$  if and only if it  normalizes $\mathcal Y_{M'}$ by  \eqref{eq:UY} if and only if $q_M=1$ on this group. The group $\mathcal N \cap M'_2 $ normalizes $\mathcal Y_{M'}$ because the elements of $M'_2$ commute with those of $M'$ and  $q_M$ is trivial on $\mathcal N_{\mathcal C} $ by (2). Therefore  the group  $(\mathcal N \cap M'_2) \mathcal N_{\mathcal C} $ normalizes $\mathcal{U}_M$. The coset  $ (\mathcal N \cap M'_2) \mathcal N_{\mathcal C} n$  contains  an element $n_{M'}\in  M'$.  For  $x\in (\mathcal N \cap M'_2) \mathcal N_{\mathcal C}$,   $ (xn_{M'})^{-1} \mathcal{U} xn_{M'}= n_{M'}^{-1} \mathcal{U} n_{M'}$ hence  $q_{M}(xn_{M'})=q_{M}(n_{M'})$.
 \end{proof}     
 \subsection{Extension of an $\mathcal{H}_M$-module   to $\mathcal{H}$}\label{S:8.2}

 This section is inspired by   similar results for the pro-$p$ Iwahori Hecke algebras over an algebraically closed field field of characteristic $p$~\cite[Proposition 4.16]{arXiv:1406.1003_accepted}. 
We keep the setting of \S  \ref{S:8.0} and we introduce ideals:

\begin{itemize}
\item $\mathcal J_\ell$ (resp.\ $\mathcal J_r$) the  left (resp.\ right) ideal of $\mathcal{H}$ generated by $T_w^*-1_\mathcal{H}$ for all $w\in {}_1W_{M'_2}$,
\item $\mathcal J_{M,\ell}$ (resp.\ $\mathcal J_{M,r}$) the left (resp.\ right) ideal of $\mathcal{H}_M$ generated by $T_\lambda^{M,*}-1_{\mathcal{H}_M}$ for all $\lambda$ in $ {}_1W_{M'_2}\cap W_M(1) = {}_1W_{M'_2}\cap \Lambda(1)$.
\end{itemize}    
\noindent  The next proposition shows that the  ideals  $ \mathcal J_\ell= \mathcal J_{r}$ are equal and similarly $ \mathcal J_{M, \ell}= \mathcal J_{M,r} $. After the proposition, we will drop the indices $\ell$  and $r$.     
  
\begin{proposition}\label{prop:ideal} The ideals $ \mathcal J_\ell$ and $\mathcal J_{r}$ are equal to the submodule $\mathcal J'$ of $\mathcal{H}$ generated by  $T^*_w- T^*_{ww_2}$ for   all $w\in W(1)$ and $w_2\in  {}_1W_{M'_2}$.

The ideals $ \mathcal J_{M, \ell}$ and $  \mathcal J_{M,r} $ are  equal to the submodule $\mathcal J'_M$of $\mathcal{H}_M$ generated by  $T^{M,*}_w- T^{M,*}_{w\lambda_2}$  for  all $w\in W_M(1)$ and $\lambda_2\in  \Lambda (1) \cap {}_1W_{M'_2} $.
 \end{proposition}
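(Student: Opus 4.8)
The plan is to dispose of the assertion for $\mathcal{H}_M$ first, it being the easy one. Each $\lambda_2\in\Lambda(1)\cap{}_1W_{M'_2}$ has $\ell_M$-length $0$, since its image in $W_M$ lies in $\Lambda\cap W_{M'_2}$ (\S\ref{S:8.0}); hence $T^{M,*}_{w\lambda_2}=T^{M,*}_w\,T^{M,*}_{\lambda_2}$ for every $w\in W_M(1)$ by the braid relation, so $\mathcal J'_M$ is the $\mathbb Z$-span of the $T^{M,*}_w(1_{\mathcal{H}_M}-T^{M,*}_{\lambda_2})$, i.e.\ $\mathcal J'_M=\mathcal J_{M,\ell}$. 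As $Z$ is normal in $\mathcal N_M$ and ${}_1W_{M'_2}$ is normal in $W(1)$, the group $\Lambda(1)\cap{}_1W_{M'_2}$ is normal in $W_M(1)$; writing $v^{-1}\lambda_2 v=\lambda_2'$ one gets $(1_{\mathcal{H}_M}-T^{M,*}_{\lambda_2})T^{M,*}_v=T^{M,*}_v(1_{\mathcal{H}_M}-T^{M,*}_{\lambda_2'})$, so $\mathcal J_{M,\ell}$ is also a right ideal and $\mathcal J_{M,\ell}=\mathcal J_{M,r}=\mathcal J'_M$.

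For $\mathcal{H}$ I would argue in three steps. Preliminaries on $\mathcal J'$: since ${}_1W_{M'_2}$ is normal in $W(1)$ its left and right cosets coincide, so $ww_2=(ww_2w^{-1})w$ shows $\mathcal J'$ equals the submodule $\mathcal J'_{\mathrm{left}}$ spanned by the $T^*_w-T^*_{w_2w}$; moreover $\mathcal J'$ is the set of all $\sum_w c_wT^*_w$ with $\sum_{w\in C}c_w=0$ for every coset $C$ of ${}_1W_{M'_2}$ in $W(1)$, and taking $w=1$ gives $T^*_{w_2}-1_\mathcal{H}\in\mathcal J'$. \emph{Step 1: $\mathcal J'\subseteq\mathcal J_\ell\cap\mathcal J_r$.} Telescoping $T^*_w-T^*_{ww_2}$ along a reduced expression of $w_2$ in ${}_1W_{M'_2}$ (whose image in $W$ lies in the affine Weyl group $W_{M'_2}$) reduces to the elements $T^*_v-T^*_{v\tilde s}$, $\tilde s$ an admissible lift of some $s\in S^{\aff}_{M_2}$, and $T^*_v-T^*_{v\omega}$, $\omega\in{}_1W_{M'_2}$ of length $0$. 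Now $T^*_v-T^*_{v\tilde s}$ equals $-T^*_v(T^*_{\tilde s}-1_\mathcal{H})$ if $\ell(vs)=\ell(v)+1$ and equals $T^*_{v\tilde s}(T^*_{\tilde s^{-1}}-1_\mathcal{H})$ if $\ell(vs)=\ell(v)-1$ (using $T^*_v=T^*_{v\tilde s}T^*_{\tilde s^{-1}}$), while $T^*_v-T^*_{v\omega}=-T^*_v(T^*_\omega-1_\mathcal{H})$; these lie in $\mathcal J_\ell$ because $\tilde s^{\pm1},\omega\in{}_1W_{M'_2}$ (here one uses $\mathcal G'_s\subset M'_2$ for $s\in S^{\aff}_{M_2}$). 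The mirror argument applied to $\mathcal J'=\mathcal J'_{\mathrm{left}}$ gives $\mathcal J'\subseteq\mathcal J_r$.

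\emph{Step 2: $\mathcal J'$ is a two-sided ideal.} Writing any $v\in W(1)$ as a product of admissible lifts of elements of $S^{\aff}$ and a length-$0$ element and iterating, it suffices to show $\mathcal J'\,T^*_{\tilde s}\subseteq\mathcal J'$ for $\tilde s$ an admissible lift of some $s\in S^{\aff}$. Fix $\delta=T^*_w-T^*_{ww_2}$ with $w_2\in{}_1W_{M'_2}$ and expand $\delta T^*_{\tilde s}$, using $(T^*_{\tilde s})^2=q_sT^*_{\tilde s^2}-\sum_t c_{\tilde s}(t)T^*_{\tilde s t}$ wherever a length drop forces the quadratic relation. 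If $s\in S^{\aff}_M$, then $\alpha_s\in\Phi_M$, so $\tilde s,\tilde s^2\in{}_1W_{M'}$ commute with $w_2$, whence $\ell(ws)-\ell(w)=\ell(ww_2s)-\ell(ww_2)$ (orthogonal product structure of $W_{G'}$); a short case check then shows every basis index occurring in $\delta T^*_{\tilde s}$ differs from another by right multiplication by an element of ${}_1W_{M'_2}$, so $\delta T^*_{\tilde s}\in\mathcal J'$. If $s\in S^{\aff}_{M_2}$, then $\alpha_s\in\Phi_{M_2}$, so $\tilde s\in{}_1W_{M'_2}$, $\tilde s^2\in Z'_{k,s}\subset{}_1W_{M'_2}$, and $c_{\tilde s}$ is supported on $Z'_{k,s}$ (\eqref{eq:cs}); reducing $\delta T^*_{\tilde s}$ modulo $\mathcal J'$, all indices are $\equiv w$ or $\equiv w\tilde s\equiv w$, and the coefficients collapse to $(1-q_s)(T^*_{w\tilde s}-T^*_w)\equiv0$ since $\sum_t c_{\tilde s}(t)=q_s-1$. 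Thus $\mathcal J'$ is a right ideal, and the same computation applied to $\mathcal J'=\mathcal J'_{\mathrm{left}}$ makes it a left ideal.

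\emph{Conclusion, and the main obstacle.} Being a two-sided ideal containing every $T^*_{w_2}-1_\mathcal{H}$, $\mathcal J'$ contains both the left ideal $\mathcal J_\ell$ and the right ideal $\mathcal J_r$ generated by those elements; together with Step 1 this gives $\mathcal J_\ell=\mathcal J_r=\mathcal J'$, proving the first assertion. The hard part is Step 2: organizing the case analysis of the length changes and of the error term $c_{\tilde s}$, and verifying that all group elements produced stay inside a single ${}_1W_{M'_2}$-coset --- which is exactly where the orthogonality of $\Delta_M$ and $\Delta\setminus\Delta_M$ enters, so that $S^{\aff}=S^{\aff}_M\sqcup S^{\aff}_{M_2}$ with $M'$ and $M'_2$ commuting ($S^{\aff}_M$-reflections then act identically on $w$ and $ww_2$) and with the $S^{\aff}_{M_2}$-reflections and the support of their $c_{\tilde s}$ contained in ${}_1W_{M'_2}$.
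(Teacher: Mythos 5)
Your argument is correct, and it reaches the two equalities by a genuinely different organization than the paper's, even though the decisive computations coincide. For the containment $\mathcal J'\subseteq\mathcal J_\ell$ the paper has a two-line route you did not find: factor $w=xy$ with $x\in{}_1W_{M'}\Omega(1)$ and $y\in{}_1W_{M'_2}$, so that $\ell(w)=\ell(x)+\ell(y)$ and $\ell(ww_2)=\ell(x)+\ell(yw_2)$, whence $T^*_{ww_2}-T^*_w=T^*_x(T^*_{yw_2}-1)-T^*_x(T^*_y-1)\in\mathcal J_\ell$; your telescoping along a decomposition of $w_2$, with the length dichotomy and the observation that $T^*_{\tilde s^{-1}}-1$ is again a generator because ${}_1W_{M'_2}$ is a group, is longer but equally valid and has the merit of avoiding the quadratic relation in this direction. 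For the reverse containment the paper does not pass through the statement that $\mathcal J'$ is a two-sided ideal: it directly shows $T^*_w(T^*_{w_2}-1)\in\mathcal J'$ by induction on $\ell(w_2)$, which is exactly $\mathcal J_\ell\subseteq\mathcal J'$ and spares the $S^{\aff}_M$-half of your Step~2 (that extra half does go through, via the commutation of ${}_1W_{M'}$ with ${}_1W_{M'_2}$ and the resulting equality $\ell(ww_2\tilde s)-\ell(ww_2)=\ell(w\tilde s)-\ell(w)$, but the proposition does not need it). The core step — expanding $(T^*_{\tilde s})^2=q_sT^*_{\tilde s^2}-T^*_{\tilde s}c_{\tilde s}$ and collapsing the error term using $\sum_t c_{\tilde s}(t)=q_s-1$ together with $Z'_{k,s}\subset Z_k\cap{}_1W_{M'_2}$ for $s\in S^{\aff}_{M_2}$ — is the same in both proofs. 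Two details you should make explicit: in Step~2 one must also record stability of $\mathcal J'$ under right multiplication by $T^*_u$ for $u$ of length $0$ (immediate from the braid relations and normality of ${}_1W_{M'_2}$), and in the $\mathcal{H}_M$ case the normality of $\Lambda(1)\cap{}_1W_{M'_2}$ in $W_M(1)$, which you use to pass from the left to the right ideal, deserves the one-line justification that ${}_1W_{M'_2}$ is normal in $W(1)$ and $\Lambda(1)$ is normal in $W_M(1)$.
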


  \begin{proof}  (1)  We  prove $\mathcal J_{\ell} =  \mathcal J'$. 
 Let $w\in W(1), w_2\in {}_1W_{M'_2}$. We prove by induction on the length of $w_2$ that $T^*_w(T^*_{w_2}-1)\in  \mathcal J'$. This is obvious when $\ell(w_2)=0$ because $T^*_wT^*_{w_2}=T^*_ {ww_2}$. Assume that $\ell(w_2)=1$ and put $s=w_2$. If $\ell(ws)=\ell(w)+1$, as before $T^*_w(T^*_{s}-1)\in  \mathcal J'$ because $T^*_wT^*_{s}=T^*_ {ws}$. Otherwise $\ell(ws)=\ell(w)-1$ and 
 $T^*_w =T^*_{ws^{-1}}T_s^*$ hence  
 $$T^*_w(T^*_{s}-1)=T^*_{ws^{-1}}(T^*_{s})^2-T^*_w=T^*_{ws^{-1}}(q_s T^*_{s^2} - T^*_{s} c_s)-T^*_w=
q_s T^*_{ws } - T^*_w (c_s+1).
$$
Recalling from \ref{S:2.3} that  $c_s+1= \sum_{t\in Z'_k} c_s(t)T_t$ with $c_s(t)\in \mathbb N$ and $\sum_{t\in Z'_k} c_s(t)=q_s$,  
$$q_s T^*_{ws } - T^*_w (c_s+1)= \sum_{t\in Z'_k} c_s(t)(T^*_{ws } - T^*_wT^*_t)= \sum_{t\in Z'_k} c_s(t)(T^*_{ws } - T^*_{ws  s^{-1}t}) \in  \mathcal J'.
$$
Assume  now that $\ell(w_2)>1$. Then, we factorize $w_2=xy$  with $x,y \in {}_1W_{M_2}$ of length $\ell(x), \ell(y)< \ell(w_2)$ and $ \ell(w_2)=\ell(x)+\ell(y)$. The element   $T^*_w(T^*_{w_2}-1)=T^*_w T^*_x(T^*_y-1)+T^*_w(T^*_x-1)$ lies in $\mathcal J'$ by induction.

Conversely, we prove  $T^*_{w w_2}-T^*_w\in \mathcal J_{\ell}$. We factorize $w=xy$ with $y\in {}_1W_{M_2}$ and $x\in {}_1W_{M'} \Omega(1)$. Then, we have $\ell(w)=\ell(x)+\ell(y)$ and $\ell(ww_2)=\ell(x)+\ell(yw_2)$. Hence
$$T^*_{ww_2}-T^*_w=T_x^*(T^*_{yw_2}-T^*_y)=T_x^*(T^*_{yw_2}-1)-T_x^*(T^*_{y }-1) \in  \mathcal J_{\ell}.$$
 This ends the proof of $\mathcal J_{\ell} = \mathcal J'$. 

By the same argument,  the right ideal $\mathcal J_{r}$ of $\mathcal{H}$   is equal to the  submodule  of $\mathcal{H}$ generated by  $T^*_{ w_2w }-T^*_w$ for all  $w\in W(1)$ and $ w_2\in {}_1W_{M'_2}$. But this latter submodule is equal to $\mathcal J'$ because ${}_1W_{M'_2}$ is normal in $W(1)$.
 Therefore we proved $\mathcal J'= \mathcal J_{r}=\mathcal J_{\ell} $.
 
(2) Proof of the second  assertion.  We prove $ \mathcal J_{M,\ell}= \mathcal J'_M$. The proof is easier than in (1)  because for $w\in W_M(1)$ and  $\lambda_2\in {}_1W_{M'_2}\cap  \Lambda(1)$, we have  $\ell(w \lambda_2)=\ell(w)+\ell(\lambda_2)$ hence $T^{M,*}_w (T^{M,*} _{ \lambda_2}-1)= T^{M,*}_{w \lambda_2}-T^{M,*}_w$. We have also   $\ell( \lambda_2 w)=\ell(\lambda_2)+\ell(w)$ hence $(T^{M,*} _{ \lambda_2}-1)T^{M,*}_w= T^{M,*}_{ \lambda_2 w}-T^{M,*}_w$ hence  $ \mathcal J_{M,r}$    is equal to the submodule of $\mathcal{H}_M$ generated by  $T^{M,*}_{  \lambda_2 w}-T^{M,*}_w$ for all  $w\in W_M(1)$ and  $\lambda_2\in {}_1W_{M'_2}\cap  \Lambda(1)$. This latter submodule is  $\mathcal J'_M$, 
  as  ${}_1W_{M'_2} \cap \Lambda (1)={}_1W_{M'_2} \cap W_M (1) $ is normal in $W_M(1)$.  Therefore $\mathcal J'_M= \mathcal J_{M,r}=\mathcal J_{M,\ell}$. 
  \end{proof}

By Proposition \ref{prop:ideal},  a basis of $\mathcal J$ is $T^*_w- T^*_{ww_2}$ for $w $ in  a system of representatives of $W(1)/  {}_1W_{M'_2}$, and $w_2\in  {}_1W_{M'_2} \setminus \{1\}$. Similarly a basis of $\mathcal J_M$ is $T^{M,*}_w- T^{M,*}_{w\lambda_2}$ for $w$ in a system of representatives of $W_M(1)/ ( \Lambda (1) \cap {}_1W_{M'_2})$.  and  $\lambda_2\in (\Lambda (1) \cap {}_1W_{M'_2} )\setminus \{1\}$.

\begin{proposition} \label{prop:ext} The natural ring  inclusion  of $\mathcal{H}_{M^- }$ in $\mathcal{H}_{M}$ and the  ring inclusion of   $\mathcal{H}_{M^- }$ in $\mathcal{H}$ via $\theta^*$ induce  ring isomorphisms
$$  \mathcal{H}_{M}/\mathcal J_M \xleftarrow{\sim} \mathcal{H}_{M^-}/(\mathcal J_M \cap \mathcal{H}_{M^-} )\xrightarrow{\sim}  \mathcal{H}/\mathcal J.$$
\end{proposition}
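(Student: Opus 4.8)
The plan is to reduce the statement to the explicit basis description of the ideals $\mathcal{J}$ and $\mathcal{J}_M$ supplied by Proposition~\ref{prop:ideal}, after which it becomes elementary linear algebra over $R$. First I would dispose of well-definedness. The left-hand arrow is induced by the ring inclusion $\mathcal{H}_{M^-}\hookrightarrow\mathcal{H}_M$ of \S\ref{S:2.3}, and it descends to the quotients because $\mathcal{J}_M\cap\mathcal{H}_{M^-}$ is a two-sided ideal of $\mathcal{H}_{M^-}$ landing inside $\mathcal{J}_M$. The right-hand arrow is induced by $\theta^*$, which by \eqref{eq:theta*} restricts to a unital ring homomorphism on $\mathcal{H}_{M^-}$; that it descends amounts to the inclusion $\theta^*(\mathcal{J}_M\cap\mathcal{H}_{M^-})\subset\mathcal{J}$, which I will obtain for free from the computation of $\mathcal{J}_M\cap\mathcal{H}_{M^-}$ below. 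It then remains only to prove that each arrow is bijective as a map of $R$-modules.

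The key reformulation is the following. By Proposition~\ref{prop:ideal}, $\mathcal{J}=\mathcal{J}'$ is exactly the kernel of the $R$-linear map sending $\sum_{w\in W(1)}c_wT^*_w\in\mathcal{H}$ to the family $\bigl(\sum_{w\in C}c_w\bigr)_C$, with $C$ running over the cosets in $W(1)/{}_1W_{M'_2}$; equivalently $\mathcal{H}/\mathcal{J}$ is $R$-free on the images of the $T^*_w$ with $w$ ranging over $W(1)/{}_1W_{M'_2}$. Likewise $\mathcal{J}_M=\mathcal{J}'_M$ is the kernel of the analogous ``coset coefficient-sum'' map on $\mathcal{H}_M$, for the cosets in $W_M(1)/(\Lambda(1)\cap{}_1W_{M'_2})$, and $\mathcal{H}_M/\mathcal{J}_M$ is $R$-free on the images of the $T^{M,*}_w$. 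Two group-theoretic facts already in the text feed into the comparison: since $W_M(1)\cap{}_1W_{M'_2}=\Lambda(1)\cap{}_1W_{M'_2}$, every ${}_1W_{M'_2}$-coset in $W(1)$ meets $W_M(1)$ in a single $(\Lambda(1)\cap{}_1W_{M'_2})$-coset, which canonically identifies the two coset spaces; and, by \eqref{Wdec} and the lemma just before it, $W_{M^-}(1)$ surjects onto $W(1)/{}_1W_{M'_2}$ and onto $W_M(1)/(\Lambda(1)\cap{}_1W_{M'_2})$.

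Granting these, the conclusion is quick. Take $x=\sum_{v\in W_{M^-}(1)}c_vT^{M,*}_v\in\mathcal{H}_{M^-}$. Using the single-coset fact, the two conditions ``$x\in\mathcal{J}_M$'' and ``$\theta^*(x)=\sum_v c_vT^*_v\in\mathcal{J}$'' are \emph{both} equivalent to $\sum_{v\in C\cap W_{M^-}(1)}c_v=0$ for every coset $C$ in the common coset space. As the coefficient-sum-zero submodule of a free $R$-module is spanned by the differences of basis vectors, this shows
\[
\mathcal{J}_M\cap\mathcal{H}_{M^-}=\{x\in\mathcal{H}_{M^-}:\theta^*(x)\in\mathcal{J}\}=\operatorname{span}_R\{\,T^{M,*}_v-T^{M,*}_{v'}:v,v'\in W_{M^-}(1),\ v^{-1}v'\in{}_1W_{M'_2}\,\}.
\]
In particular $\theta^*(\mathcal{J}_M\cap\mathcal{H}_{M^-})\subset\mathcal{J}$, so the right-hand arrow is indeed well defined; moreover $\mathcal{H}_{M^-}/(\mathcal{J}_M\cap\mathcal{H}_{M^-})$ is $R$-free on one $T^{M,*}_v$ per coset (every coset being met, by the surjectivity fact). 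Both arrows send this basis bijectively onto the bases of $\mathcal{H}_M/\mathcal{J}_M$ and of $\mathcal{H}/\mathcal{J}$ identified above, so both are $R$-module isomorphisms, and being ring homomorphisms they are ring isomorphisms.

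The one genuinely delicate point — the obstacle — is the bookkeeping around $W_{M^-}(1)$: it is only the image of a submonoid, not a subgroup, so one really must invoke the decomposition \eqref{Wdec} and the lemma before it to see that it meets every coset, and the identity $W_M(1)\cap{}_1W_{M'_2}=\Lambda(1)\cap{}_1W_{M'_2}$ to pass between cosets in $W(1)$ and cosets in $W_M(1)$. Apart from Proposition~\ref{prop:ideal}, nothing further about the quadratic relations or about the central element $\tau^M$ is needed.
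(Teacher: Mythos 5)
Your proof is correct and rests on the same ingredients as the paper's: Proposition~\ref{prop:ideal} (membership in $\mathcal J$ and $\mathcal J_M$ detected by vanishing coefficient sums over ${}_1W_{M'_2}$-cosets) together with the decomposition \eqref{Wdec} and the identity $W_M(1)\cap{}_1W_{M'_2}=\Lambda(1)\cap{}_1W_{M'_2}$, which is exactly how the paper proves injectivity of the right-hand map. The only difference is presentational — you also deduce surjectivity from the freeness/basis description, whereas the paper gets it by the explicit manipulation $T^{M,*}_{w}=T^{M,*}_{w\lambda_2^{-1}}+T^{M,*}_{w\lambda_2^{-1}}(T^{M,*}_{\lambda_2}-1)$ — so this is essentially the same argument.
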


\begin{proof}  (1) The left map  is   obviously injective. We prove the surjectivity.  Let  $ w\in W_M(1)$.  Let $ \lambda_2 \in {}_1W_{M'_2}\cap \Lambda(1)$ such that $ w \lambda_2^{-1}\in W_{M^-}(1)$ (see \eqref{Wdec}). We have  $T^{M,*}_{ w \lambda_2^{-1}}\in \mathcal{H}_{M^-}$ and $T^{M,*}_{ w}= T^{M,*}_{ w \lambda_2^{-1}}T^{M,*}_ {\lambda_2 }= T^{M,*}_{ w \lambda_2^{-1}}+T^{M,*}_{ w \lambda_2^{-1}}(T^{M,*}_{  \lambda_2 }-1)$. Therefore $T^{M,*}_{ w} \in  \mathcal{H}_{M^-}+\mathcal J_M $. As $w$ is arbitrary, $\mathcal{H}_M= \mathcal{H}_{M^-}+\mathcal J_M $.

(2) The right  map   is surjective:
 let $w\in W(1) $ and $w_2\in {}_1W_{M'_2} $ such that $ww_2^{-1} \in W_{M^-}(1)$ (see \eqref{Wdec}).  Then
 $T^{*}_{\hat w}- T^{*}_{w w_2^{-1}  }  \in \mathcal J$ with the same arguments than in (1), using   Proposition \ref{prop:ideal}. Therefore $\mathcal{H}= \theta^*(\mathcal{H}_{M^-}) + \mathcal J$.
 
We prove the injectivity:  $\theta^*(\mathcal{H}_{M^-}) \cap  \mathcal J= \theta^* (\mathcal{H}_{M^-} \cap  \mathcal J_M)$. Let $\sum_{w\in W_{M^-}(1)} c_w T_{w}^{M,*}$, with $c_w\in \mathbb Z$,  be an element of $\mathcal{H}_{M^-}$. Its image by $\theta^*$ is $\sum_{w\in W (1)} c_w T_{w}^{*}$ where we have set $c_w=0$ for $w\in W(1)\setminus W_{M^-}(1)$. We have $\sum_{w\in W (1)} c_w T_{w}^{*} \in \mathcal J$ if and only if 
 $\sum _{w_2\in {}_1W_{M'_2}} c_{ww_2}=0 $ for all $w\in  W(1)$. If $c_{ww_2}\neq 0$ then $w_2 \in {}_1W_{M'_2}\cap W_{M}(1)$, that is,  $w_2  \in {}_1W_{M'_2}\cap \Lambda(1)$. The sum 
 $\sum _{w_2\in {}_1W_{M'_2}} c_{ww_2}$ is equal to $  \sum _{\lambda_2\in {}_1W_{M'_2}\cap \Lambda(1)} c_{w\lambda_2} $. By Proposition \ref{prop:ideal}, $\sum_{w\in W (1)} c_w T_{w}^{*} \in \mathcal J$ if and only if  $\sum_{w\in W_{M^-}(1)} c_w T_{w}^{M,*}\in  \mathcal J_M$. 
\end{proof} 
 
 We  construct a ring isomorphism 
 $$e^*:  \mathcal{H}_{M}/\mathcal J_M \xrightarrow{\sim}    \mathcal{H}/\mathcal J $$ by using  Proposition \ref{prop:ext}.  For any $w\in W(1)$, $T^{ *}_w +\mathcal J=  e^*(T^{M,*}_{w_{M^-}}+\mathcal J_M)$ where  $w_{M^-}\in W_{M^-}(1)\cap w  \ {}_1 W_{M'_2} $ (see \eqref{Wdec}),  because by  Proposition \ref{prop:ideal}, $T^{ *}_w +\mathcal J= T^{ *}_{w _{M^-}} +\mathcal J$ and  $T^{ *}_{w_{M^-}} +\mathcal J= e^*(T^{M,*}_{w_{M^-}}+\mathcal J_M)$ by construction of $e^*$.
We check that  $e^*$   is induced by $\theta^*$:
\begin{theorem} \label{thm:ext0} The  linear  map  $\mathcal{H}_{M  } \xrightarrow{\theta^*}\mathcal{H}$ induces a ring   isomorphism 
$$e^*:  \mathcal{H}_{M}/\mathcal J_M \xrightarrow{\sim}    \mathcal{H}/\mathcal J.$$
\end{theorem}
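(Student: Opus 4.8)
The plan is to deduce Theorem~\ref{thm:ext0} formally from Propositions~\ref{prop:ideal} and~\ref{prop:ext}, which contain all the real content; what is left is bookkeeping. First I would check that $\theta^*$ carries $\mathcal J_M$ into $\mathcal J$, so that the $\mathbb Z$-linear map $\theta^*$ descends to a $\mathbb Z$-linear map $\overline{\theta^*}\colon \mathcal{H}_M/\mathcal J_M\to \mathcal{H}/\mathcal J$. By Proposition~\ref{prop:ideal} the module $\mathcal J_M=\mathcal J'_M$ is spanned by the elements $T^{M,*}_w-T^{M,*}_{w\lambda_2}$ with $w\in W_M(1)$ and $\lambda_2\in \Lambda(1)\cap {}_1W_{M'_2}$; since $\theta^*(T^{M,*}_u)=T^*_u$ for all $u\in W_M(1)$ by~\eqref{eq:theta*}, such an element is sent to $T^*_w-T^*_{w\lambda_2}$, which lies in $\mathcal J'=\mathcal J$ by the same proposition. (Tensoring with $R$ over $\mathbb Z$ then yields the analogous statement for $\mathcal{H}_R$.)

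Next I would show $\overline{\theta^*}=e^*$. It suffices to compare both maps on the $\mathbb Z$-basis of $\mathcal{H}_M/\mathcal J_M$ formed by the classes $T^{M,*}_w+\mathcal J_M$ with $w$ running over representatives of $W_M(1)/(\Lambda(1)\cap {}_1W_{M'_2})$, the basis recorded after Proposition~\ref{prop:ext}. Fix such a $w$ and choose $w_{M^-}\in W_{M^-}(1)\cap w\,{}_1W_{M'_2}$, as permitted by~\eqref{Wdec}; write $w_{M^-}=ww_2$ with $w_2\in {}_1W_{M'_2}$, necessarily $w_2\in \Lambda(1)\cap {}_1W_{M'_2}$ since $w,w_{M^-}\in W_M(1)$. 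On the target side Proposition~\ref{prop:ideal} gives $T^*_w-T^*_{w_{M^-}}=T^*_w-T^*_{ww_2}\in\mathcal J$, so $\overline{\theta^*}(T^{M,*}_w+\mathcal J_M)=T^*_w+\mathcal J=T^*_{w_{M^-}}+\mathcal J$. On the source side, exactly as in the proof of Proposition~\ref{prop:ext}(1), one has $T^{M,*}_w\equiv T^{M,*}_{w_{M^-}}\pmod{\mathcal J_M}$ with $T^{M,*}_{w_{M^-}}\in\mathcal{H}_{M^-}$; hence by the very construction of $e^*$ — lift along the left isomorphism of Proposition~\ref{prop:ext} to $T^{M,*}_{w_{M^-}}+(\mathcal J_M\cap\mathcal{H}_{M^-})$, then apply the right isomorphism, which on $\mathcal{H}_{M^-}$ is induced by $\theta^*$ — one gets $e^*(T^{M,*}_w+\mathcal J_M)=T^*_{w_{M^-}}+\mathcal J$. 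The two values coincide, so $\overline{\theta^*}=e^*$.

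Finally, $e^*$ is a ring isomorphism because it is the composite of the right ring isomorphism of Proposition~\ref{prop:ext} with the inverse of the left one; combined with the previous step this is the theorem. The only point worth flagging — and the reason one must route the argument through $e^*$ rather than argue directly — is that $\theta^*$ is \emph{not} multiplicative on all of $\mathcal{H}_M$: only its restriction to $\mathcal{H}_{M^-}$ (equivalently to $\mathcal{H}_{M^+}$) respects products, by~\eqref{eq:theta*}. Multiplicativity of $\overline{\theta^*}$ is therefore inherited from that of $e^*$, which itself rests on Proposition~\ref{prop:ext}: the genuinely multiplicative maps $\mathcal{H}_{M^-}\hookrightarrow\mathcal{H}_M$ and $\mathcal{H}_{M^-}\xrightarrow{\theta^*}\mathcal{H}$ become isomorphisms after passing to the quotients by $\mathcal J_M$ and $\mathcal J$ respectively. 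This is the one place where the structure theory of $\mathcal{H}_{M^\pm}$ inside $\mathcal{H}_M$ and $\mathcal{H}$ is genuinely used; everything else is linear algebra with the $T^*$-basis.
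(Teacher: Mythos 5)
Your proposal is correct and follows essentially the same route as the paper: define $e^*$ via the two isomorphisms of Proposition~\ref{prop:ext}, then verify on basis classes that $e^*(T^{M,*}_w+\mathcal J_M)=T^*_w+\mathcal J$ by writing $w=w_{M^-}\lambda_2$ with $\lambda_2\in\Lambda(1)\cap{}_1W_{M'_2}$ and using the congruences $T^{M,*}_w\equiv T^{M,*}_{w_{M^-}}\pmod{\mathcal J_M}$ and $T^*_w\equiv T^*_{w_{M^-}}\pmod{\mathcal J}$. Your preliminary check that $\theta^*(\mathcal J_M)\subset\mathcal J$ and your remark on where multiplicativity comes from are just explicit versions of steps the paper leaves implicit.
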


\begin{proof} Let $w\in W_M(1)$. We have  to show that $T^{ *}_w +\mathcal J=  e^*(T^{M,*}_{w}+\mathcal J_M)$. We saw above that $T^{ *}_w +\mathcal J=  e^*(T^{M,*}_{w_{M^-}}+\mathcal J_M)$ with 
 $w=w_{M^-}\lambda_2$ with $\lambda_2\in {}_1W_{M'_2} \cap W_M(1) $. 
 As $\ell_M(\lambda_2)=0$,  $T^{M,*}_{w}=T^{M,*}_{w_{M^-}} T^{M,*}_{\lambda_2} \in T^{M,*}_{w_{M^-}} +\mathcal J_M$. Therefore
$T^{M,*}_{w_{M^-}}+\mathcal J_M=   T^{M,*}_{w }+\mathcal J_M $, this ends the proof of the theorem.
 \end{proof}

  We wish now to compute $e^*$ in terms of the $T_w$ instead of the $T_w^*$.

 \begin{proposition}\label{prop:extT} Let  $w\in W(1)$. 
 Then, 
 $  T_w+\mathcal J=    
e^*(T^M_{w_M} q_{M_2}(w)+\mathcal J_M),$ 
for any   $w_M\in W_M(1)  \cap w \ {}_1 W_{M'_2}$.
 \end{proposition}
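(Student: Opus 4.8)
The plan is to transport the statement through the isomorphism $e^*$ of Theorem~\ref{thm:ext0}, which is induced by $\theta^*$. Let $\rho\colon\mathcal{H}\to\mathcal{H}_M/\mathcal{J}_M$ be the composite of the projection $\mathcal{H}\to\mathcal{H}/\mathcal{J}$ (legitimate, $\mathcal{J}$ being two-sided by Proposition~\ref{prop:ideal}) with $(e^*)^{-1}$; it is a ring homomorphism, and by Theorem~\ref{thm:ext0} and Proposition~\ref{prop:ideal} one has $\rho(T^*_v)=T^{M,*}_{v_M}+\mathcal{J}_M$ for $v\in W(1)$ and any $v_M\in W_M(1)\cap v\,{}_1W_{M'_2}$; in particular $\rho(T_t)=T^M_t+\mathcal{J}_M$ for $t\in Z_k$. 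Applying $e^*$, the proposition is equivalent to $\rho(T_w)=q_{M_2}(w)\,T^M_{w_M}+\mathcal{J}_M$ for every $w\in W(1)$, whose right-hand side does not depend on the choice of $w_M$: two choices differ by a factor $\lambda_2\in\Lambda(1)\cap{}_1W_{M'_2}$ on the right, for which $\ell_M(\lambda_2)=0$ and $T^M_{\lambda_2}=T^{M,*}_{\lambda_2}\equiv1\pmod{\mathcal{J}_M}$. I prove this identity by induction on $\ell(w)$, after evaluating $\rho$ on generators.

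Let $\tilde s$ be an admissible lift of $s\in S^{\aff}$. If $s\in S^{\aff}_M$, then $\alpha_s\in\Phi_M$, so $\mathcal{G}'_s\subset M'$ and $\tilde s$ may be chosen in $W_{M'}(1)$; writing $T_{\tilde s}=T^*_{\tilde s}+c_{\tilde s}$, using that $T^M_{\tilde s}$ and $T_{\tilde s}$ obey the same quadratic relation in the orthogonal case (\S\ref{S:8.0}) and that $\rho(T_t)=T^M_t+\mathcal{J}_M$ on $Z_k$, one gets $\rho(T_{\tilde s})=T^{M,*}_{\tilde s}+c^M_{\tilde s}+\mathcal{J}_M=T^M_{\tilde s}+\mathcal{J}_M$; moreover $q_{M_2}(\tilde s)=1$ by Lemma~\ref{lemma:0}(2), the $M_2$-component of $\hat s\in M'$ being trivial. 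If $s\in S^{\aff}_{M_2}$, then $\alpha_s\in\Phi_{M_2}$, so $\mathcal{G}'_s\subset M'_2$, $\tilde s\in{}_1W_{M'_2}$, and $T^*_{\tilde s}\equiv1\pmod{\mathcal{J}}$; admissibility of $\tilde s$ makes $c_{\tilde s}=\sum_tc_{\tilde s}(t)T_t$ supported on the image $Z'_{k,s}$ of $Z\cap\mathcal{G}'_s\subset Z\cap M'_2$, where each $T_t\equiv1\pmod{\mathcal{J}}$, so $\rho(T_{\tilde s})=1+\bigl(\sum_tc_{\tilde s}(t)\bigr)+\mathcal{J}_M=q_s+\mathcal{J}_M$; this equals $q_{M_2}(\tilde s)\,T^M_{(\tilde s)_M}+\mathcal{J}_M$, since $q_{M_2}(\tilde s)=q_G(\tilde s)=q_s$ by Lemma~\ref{lemma:0}(1),(2) and $(\tilde s)_M\in\Lambda(1)\cap{}_1W_{M'_2}$.

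For the induction, if $\ell(w)=0$ then $w\in\Omega(1)$, $T_w=T^*_w$, $q_{M_2}(w)=q_G(w)=1$, and $\ell(w)=0$ forces the $W_{M'}$-component of $w$ to be trivial, so $\ell_M(w_M)=0$, $T^{M,*}_{w_M}=T^M_{w_M}$ and $\rho(T_w)=T^M_{w_M}+\mathcal{J}_M$, as wanted. If $\ell(w)\ge1$, choose $s\in S^{\aff}$ with $\ell(sw)=\ell(w)-1$ and an admissible lift $\tilde s$, and put $w'=\tilde s^{-1}w$; then $\ell(w)=1+\ell(w')$, $T_w=T_{\tilde s}T_{w'}$, and $\rho(T_w)=\rho(T_{\tilde s})\,\rho(T_{w'})$ with $\rho(T_{w'})=q_{M_2}(w')\,T^M_{w'_M}+\mathcal{J}_M$ by induction. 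If $s\in S^{\aff}_M$, take $w_M=\tilde s w'_M$, so $\rho(T_w)=q_{M_2}(w')\,T^M_{\tilde s}T^M_{w'_M}+\mathcal{J}_M$; here $q_{M_2}(w)=q_{M_2}(w')$, because $q_{M_2}(\tilde s)=1$ forces $\hat s$ to normalize $\mathcal{U}_{M_2}$ (the argument in the proof of Lemma~\ref{lemma:0}(4)), so $q_{M_2}(\hat s\hat w')=q_{M_2}(\hat w')$ for any lift $\hat w'$ of $w'$; and $T^M_{\tilde s}T^M_{w'_M}=T^M_{w_M}$, because $\ell_M(w_M)=1+\ell_M(w'_M)$, which is deduced from $\ell(w)=1+\ell(w')$ by passing to $W_{M'}$- and $W_{M'_2}$-components (in the orthogonal case $\ell=\ell_{M'}+\ell_{M'_2}$ on $W_{G'}=W_{M'}\times W_{M'_2}$ and $\ell$ is additive over $W_{G'}\rtimes\Omega$, left multiplication by $\tilde s\in W_{M'}(1)$ does not affect the $W_{M'_2}$-component, and $\ell_M$ of an element of $W_M(1)$ equals the $\ell_{M'}$-length of its $W_{M'}$-component). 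If $s\in S^{\aff}_{M_2}$, take $w_M=w'_M$ (valid since $w\,{}_1W_{M'_2}=w'\,{}_1W_{M'_2}$ by normality of ${}_1W_{M'_2}$), so $\rho(T_w)=q_s\,q_{M_2}(w')\,T^M_{w'_M}+\mathcal{J}_M$; it remains to check $q_{M_2}(w)=q_s\,q_{M_2}(w')$, which follows from $q=q_Mq_{M_2}$ and $q_M(w)=q_M(w')$ (both from Lemma~\ref{lemma:0}) together with $q(w)=q_s\,q(w')$ (multiplicativity of $q$ for the reduced product $w=\tilde s w'$); the equality $q_M(w)=q_M(w')$ holds because, writing a lift of $w'$ as $n'_Mn'_2$ with $n'_M\in\mathcal{N}\cap M$ and $n'_2\in\mathcal{N}\cap M'_2$, a lift of $w=\tilde s w'$ can be rewritten with the same $\mathcal{N}\cap M$-component $n'_M$, using that $M$ normalizes $M'_2$.

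The step I expect to require the most care is the identity $\ell_M(\tilde s w'_M)=1+\ell_M(w'_M)$ in the case $s\in S^{\aff}_M$: one has to convert a statement about the $W$-length of $w$ into one about the $\ell_M$-length of the representatives $w_M,w'_M$, which are not $w,w'$ and for which $\ell$ and $\ell_M$ genuinely differ; this conversion is possible only because the orthogonality of $\Delta_M$ with $\Delta\setminus\Delta_M$ makes the building, hence $W_{G'}$, split as a product.
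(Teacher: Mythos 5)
Your proof is correct, and its core is the same as the paper's: everything reduces to the three generator computations (an admissible $\tilde s$ with $s\in S^{\aff}_M$ gives $T^M_{\tilde s}$ since $c_{\tilde s}=c^M_{\tilde s}$ and $q_{M_2}(\tilde s)=1$; an admissible $\tilde s$ with $s\in S^{\aff}_{M_2}$ gives the scalar $q_s=q_{M_2}(\tilde s)$ since $T^*_{\tilde s}\equiv 1$ and $c_{\tilde s}\equiv q_s-1$ modulo $\mathcal J$; length-zero elements give $T^M_{u_M}$), which are exactly the paper's Cases 1--3. Where you diverge is the global organization: the paper chooses a reduced expression adapted to the decomposition \eqref{Wdec}, $w=\tilde s_1\cdots\tilde s_a\,u\,\tilde s_{a+1}\cdots\tilde s_{a+b}$ with all $S^{\aff}_M$-letters first, so that the braid relations make the product collapse at once to $e^*(T^M_{w_M}q_{M_2}(w)+\mathcal J_M)$, with $w_M=\tilde s_1\cdots\tilde s_a u_M$ and $q_{M_2}(w)=q_{M_2}(\tilde s_{a+1}\cdots\tilde s_{a+b})$ read off from Lemma~\ref{lemma:0}; you instead peel off one arbitrary simple reflection at a time and run an induction on $\ell(w)$, which obliges you to verify at each step the compatibilities that the paper's ordering makes automatic: constancy of $q_{M_2}$ under left multiplication by $\mathcal N\cap M'$ and of $q_M$ under left multiplication by $\mathcal N\cap M'_2$ (both from Lemma~\ref{lemma:0}), multiplicativity of $q$ along reduced products, and the transfer $\ell(\tilde s w')=\ell(w')+1\Rightarrow\ell_M(\tilde s w'_M)=\ell_M(w'_M)+1$ for $s\in S^{\aff}_M$. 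That last transfer is indeed the delicate point you flag, and your justification is sound: it rests on $W_{G'}=W_{M'}\times W_{M'_2}$ with $\ell$ additive across the factors and across $\Omega$, together with the fact that for $x\in W_M(1)$ one has $\ell_M(x)=\ell$ of its ${}_1W_{M'}$-component (the paper uses this same fact implicitly when it asserts $\ell_M(u_M)=0$ and when it multiplies $T^M_{\tilde s_1}\cdots T^M_{\tilde s_a}T^M_{u_M}=T^M_{w_M}$). Net effect: the paper's adapted decomposition buys brevity; your induction is insensitive to the choice of reduced word and makes explicit the length- and $q$-bookkeeping the paper leaves tacit, at the cost of the extra case analysis.
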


  \begin{proof}  
  The element $w_M$ is unique modulo right multiplication by an element  
  $\lambda_2\in W_M(1)\cap  {}_1 W_{M'_2}  $  of length $\ell_M(\lambda_2)=0$  and $ T^M_{w_M} q_{M_2}(w)+\mathcal J_M $  does not depend on the choice of $w_M$. We choose  a decomposition (see \eqref{Wdec}):
$$w=\tilde s_1\ldots \tilde s_a u \tilde s_{a+1}\ldots \tilde s_{a+b},\quad \ell(w)=a+b,$$
 for $u\in \Omega(1)$, $\tilde s_i \in {}_1W_{M'}$ lifting $s_i\in S_M ^{\aff}$ for $1\leq i \leq a$ and $\tilde s_i \in {}_1W_{M'_2}$  lifting $s_i\in S_{M_2}^{\aff}$  for $a+1\leq i \leq a+b$, and  we choose  $u_M\in W_M(1)$ such that   $u\in u_M \, {}_1 W_{M'_2}$.  Then $$w_M =\tilde s_1\ldots \tilde s_a u_M \in W_M(1)  \cap w \ {}_1 W_{M'_2}$$ and  $q_{M_2}(w)=q_{M_2}(\tilde s_{a+1}  \ldots  \tilde s_{a+b})$ (Lemma \ref{lemma:0} 4)).  
 We check first the proposition in three simple cases:

Case 1. Let $w=\tilde s \in {}_1W_{M'}$  lifting  $s\in S_{M}^{\aff}$; we have  $T_{\tilde s} +\mathcal J= e^*(T^M_{\tilde s} + \mathcal J_M)$ because $T_{\tilde s}^*-e^*(T_{\tilde s}^{M,*})\in  \mathcal J$, $T_{\tilde s}= T_{\tilde s}^* + c_{\tilde s}  $, $T^M_{\tilde s}= T_{\tilde s}^{M,*} + c_{\tilde s} $ and $1= q_{M_2}(\tilde s) $.  

Case 2. Let  $w=u \in W(1)$ of length $\ell(u)=0$ and $u_M\in W_M(1)$ such that   $u\in u_M \, {}_1 W_{M'_2}$. We have $\ell_M(u_M)=0$ and $q_{M_2}(u )=1$ (Lemma \ref{lemma:0}).
We deduce 
$T_u+\mathcal J= e^*(T^M_{u_M} + \mathcal J_M)$ because $ T^*_u+\mathcal J=T^*_{u_M} + \mathcal J= e^*(T_{u_M}^{M,*}+  \mathcal J_M)$,   and $ T_u = T^*_u,T^M_{u_M} = T^{M,*}_{u_M} $.

 Case 3.   Let $w=\tilde s \in {}_1W_{M'_2}$  lifting    $s\in S_{M_2}^{\aff}$; we have  $T_{\tilde s} +\mathcal J= e^*(q_{M_2}(\tilde s) + \mathcal J_M)$ because $T_{\tilde s}^*-1, c_{\tilde s} - (q_s-1) \in \mathcal J$, $T_{\tilde s}= T_{\tilde s}^* + c_{\tilde s} \in q_s +\mathcal J$ and $q_s= q_{M_2}(\tilde s) $.

In general, the braid relations $T_w =T_{\tilde s_1}\ldots \tilde T_{s_a}T_ u T_{\tilde s_{a+1}}\ldots T_{\tilde s_{a+b}}$ give  a similar product decomposition of $T_w+\mathcal J$,  and  the  simple cases 1, 2, 3 imply that  $T_w+\mathcal J$ is equal to \begin{align*} &e^*(T^M_{\tilde s_1}+\mathcal J_M)\ldots e^*(T^M_{\tilde s_a}+\mathcal J_M) e^*(T^M_{u_M} + \mathcal J_M)e^*(q_{M_2}(\tilde s_{a+1}) + \mathcal J_M)\ldots e^*(q_{M_2}(\tilde s_{a+b}) + \mathcal J_M)\\
&=e^*(T^M_{w_M}q_{M_2}(w)+ \mathcal J_M).
\end{align*} 
The proposition is proved.
    \end{proof}

  Propositions  \ref{prop:ideal},  \ref{prop:ext}\ref{prop:extT}, and  Theorem \ref{thm:ext0} are valid over any commutative ring $R $ (instead of $\mathbb Z$). 

The two-sided ideal of $\mathcal{H}_R$ generated by $T_w^*-1$ for all $w\in {}_1W_{M'_2}$ is $\mathcal J_R=\mathcal J \otimes_{\mathbb Z} R$, 
the two-sided ideal of $\mathcal{H}_{M,R}$ generated by $T_\lambda^*-1$ for all $\lambda\in {}_1W_{M'_2}\cap \Lambda(1)$ is $\mathcal J_{M,R}=\mathcal J_M \otimes_{\mathbb Z} R$, and we get as in Proposition \ref{prop:ext} isomorphisms
$$  \mathcal{H}_{M,R}/\mathcal J_{M,R} \xleftarrow{\sim} \mathcal{H}_{M^-,R}/(\mathcal J_{M,R} \cap \mathcal{H}_{M^-,R} )\xrightarrow{\sim}  \mathcal{H}_R /\mathcal J_R,$$
giving an isomorphism $  \mathcal{H}_{M,R}/\mathcal J_{M,R}\to \mathcal{H}_R /\mathcal J_R$ induced by $\theta^*$.  Therefore, we have an isomorphism from the category of  right $\mathcal{H}_{M,R}$-modules  where $\mathcal J_M$ acts by $0$ onto the category of right $\mathcal{H}_R$-modules  where $\mathcal J$ acts by $0$.

\begin{definition}\label{def:ext}  A right $\mathcal{H}_{M,R}$-module $\mathcal V$  where $\mathcal J_M$ acts by $0$ is called extensible to $\mathcal{H}$. The corresponding $\mathcal{H}_R$-module  where $\mathcal J$ acts by $0$ is called its extension to $\mathcal{H}$ and denoted by $e_{\mathcal{H}}(\mathcal V)$ or $e(\mathcal V)$.\end{definition}

With the element basis $T^*_w$,   $\mathcal V$   is extensible to $\mathcal{H}$ if and only if \begin{equation}\label{eq:condition} vT_{\lambda_2}^{M,*}= v \ \text{for all } \  v\in \mathcal V \ \text{and} \ \lambda_2\in {}_1W_{M'_2} \cap \Lambda(1).\end{equation}
The   $\mathcal{H}$-module structure   on the  $R$-module $e(\mathcal V)=\mathcal V$  is determined by  
\begin{equation}\label{eq:structure}  vT_{w_2}^*= v, \quad  vT_w^*= v T^{M,*}_w , \quad \text{for all } \ v\in  \mathcal V,  w_2\in {}_1W_{M'_2}, w\in W_M(1).\end{equation}
It is also determined by the action of $T_w^*$ for $w\in  {}_1W_{M'_2} \cup W_{M^+}(1)$ (or $w\in  {}_1W_{M'_2} \cup  W_{M^-}(1)$). Conversely,  a right $\mathcal{H}$-module  $\mathcal W$ over $R$ is extended from an $\mathcal{H}_M$-module if and only if 
  \begin{equation}\label{eq:structure2}  vT_{w_2}^*= v,
 \quad \text{for all } \ v\in  \mathcal W,  w_2\in {}_1W_{M'_2}.\end{equation}
 
In terms of the  basis elements $T_w$ instead of $T_w^*$,  this says:
 
  \begin{corollary}\label{cor:extT} A  right $\mathcal{H}_M$-module $\mathcal V$ over $R$ is extensible to $\mathcal{H}$ if and only if \begin{equation}\label{eq:conditionT} vT_{\lambda_2}^{M}= v \ \text{for all } \  v\in \mathcal V \ \text{and} \ \lambda_2\in {}_1W_{M'_2} \cap \Lambda(1).\end{equation}
Then,  the  structure of $\mathcal{H}$-module  on the  $R$-module $e(\mathcal V)=\mathcal V$ is determined by
\begin{equation}\label{eq:structureT}  vT_{w_2}= v q_{w_2}, \quad  vT_w= v T^{M}_w q_{M_2}(w), \quad \text{for all } \ v\in  \mathcal{V},  w_2\in {}_1W_{M'_2}, w\in W_M(1).\end{equation}
($W_{M^+}(1)$ or  $W_{M^-}(1)$ instead of $W_M(1)$ is enough.) 
 A right $\mathcal{H}$-module  $\mathcal W$ over $R$ is extended from an $\mathcal{H}_M$-module if and only if 
  \begin{equation}\label{eq:structure2T}  vT_{w_2}= vq_{w_2},
 \quad \text{for all } \ v\in  \mathcal W,  w_2\in {}_1W_{M'_2}.\end{equation}
\end{corollary}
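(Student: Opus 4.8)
The plan is to deduce all three assertions from the starred versions \eqref{eq:condition}, \eqref{eq:structure}, \eqref{eq:structure2} already proved in \S\ref{S:8.0}, transporting them through the change of basis $T_w\leftrightarrow T^*_w$; the dictionary for this is Proposition \ref{prop:extT}. For \eqref{eq:conditionT}: every $\lambda_2\in {}_1W_{M'_2}\cap\Lambda(1)$ has $\ell_M(\lambda_2)=0$ (recorded in \S\ref{S:8.0}), so in $\mathcal{H}_M$ one has $T^{M,*}_{\lambda_2}=T^M_{\lambda_2}$; hence \eqref{eq:conditionT} is literally \eqref{eq:condition}, and the first assertion is immediate. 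Likewise one notes once and for all that $q_M\equiv 1$ on $\mathcal N\cap M'_2$ by Lemma \ref{lemma:0}(1)--(2), so that $q_{M_2}(w_2)=q(w_2)=q_{w_2}$ for $w_2\in {}_1W_{M'_2}$, and that every $t\in Z_k$ has length $0$, hence $q_{M_2}(t)=1$.

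For the structure formula \eqref{eq:structureT}: by construction the $\mathcal{H}$-module $e(\mathcal V)=\mathcal V$ carries the action obtained from the $\mathcal{H}_M$-action via the ring isomorphism $e^*\colon\mathcal{H}_M/\mathcal J_M\xrightarrow{\sim}\mathcal{H}/\mathcal J$ of Theorem \ref{thm:ext0}, both ideals acting by $0$. Taking $w_M=w$ in Proposition \ref{prop:extT} gives $T_w+\mathcal J=e^*(T^M_w\,q_{M_2}(w)+\mathcal J_M)$, hence $vT_w=vT^M_w\,q_{M_2}(w)$ for $v\in\mathcal V$ and $w\in W_M(1)$; taking $w_M=1$ gives $T_{w_2}+\mathcal J=e^*(q_{M_2}(w_2)+\mathcal J_M)$, hence $vT_{w_2}=v\,q_{M_2}(w_2)=v\,q_{w_2}$ for $w_2\in {}_1W_{M'_2}$. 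That these formulas determine the module, even when $w$ is restricted to $W_{M^{+}}(1)$ or $W_{M^{-}}(1)$, follows with no new input: $\theta^*$ is multiplicative on $\mathcal{H}_{M^{\pm}}$, $\mathcal{H}=\theta^*(\mathcal{H}_{M^{\pm}})+\mathcal J$ by Proposition \ref{prop:ext}, $\mathcal J$ is generated by the elements $T^*_{w_2}-1$, and $\mathcal{H}_M$ is generated as a ring by its submodule with support in $M^{\pm}$ together with $Z_k\subset W_{M^{\pm}}(1)$.

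It remains to prove \eqref{eq:structure2T}, i.e.\ that for a right $\mathcal{H}$-module $\mathcal W$ over $R$ the conditions \eqref{eq:structure2} and \eqref{eq:structure2T} are equivalent; equivalently, that $\mathcal J$ is the two-sided ideal of $\mathcal{H}$ generated by $\{T_{w_2}-q_{M_2}(w_2):w_2\in {}_1W_{M'_2}\}$. The inclusion $\supseteq$ of these generators in $\mathcal J$ is again the $w_M=1$ case of Proposition \ref{prop:extT}. For the reverse inclusion --- the one genuinely new computation --- I would reconstruct each generator $T^*_{w_2}-1$ of $\mathcal J$. First, for $\tilde s$ a lift of $s\in S^{\aff}_{M_2}$, write $T_{\tilde s}=T^*_{\tilde s}+c_{\tilde s}$ with $c_{\tilde s}=\sum_{t\in Z'_{k,s}}c_{\tilde s}(t)T_t$, where $\sum_t c_{\tilde s}(t)=q_s-1=q_{M_2}(\tilde s)-1$ and each $t$ lies in $Z_k\cap{}_1W_{M'_2}$ (since $\mathcal G'_s\subset M'_2$ when $s\in S^{\aff}_{M_2}$); as $q_{M_2}(t)=1$, each $T_t-1=T_t-q_{M_2}(t)$ is a generator, and
$$
T^*_{\tilde s}-1=\bigl(T_{\tilde s}-q_{M_2}(\tilde s)\bigr)-\sum_{t\in Z'_{k,s}}c_{\tilde s}(t)\bigl(T_t-1\bigr)
$$
lies in the ideal. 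Then for general $w_2\in {}_1W_{M'_2}$ I would use that $(W_{M'_2},S^{\aff}_{M_2})$ is a Coxeter system with $\ell|_{W_{M'_2}}=\ell_{M_2}$ (the orthogonal case, \S\ref{S:8.0}) to factor $w_2$ as a length-additive product of such lifts $\tilde s_i$ times an element $t_0\in Z_k\cap{}_1W_{M'_2}$, so that $T^*_{w_2}=\prod_i T^*_{\tilde s_i}\cdot T^*_{t_0}$, and conclude by the telescoping identity $a_1\cdots a_n-1=\sum_i a_1\cdots a_{i-1}(a_i-1)$. The hard part will be exactly this last inclusion: keeping straight the difference between $\ell$ and $\ell_{M_2}$ on $W_{M'_2}$, handling the $Z_k$-ambiguity when lifting $W_{M'_2}$ to $W(1)$, and checking $q_{M_2}(t)=1$ for the central correction terms --- everything else is a bookkeeping translation of the starred statements.
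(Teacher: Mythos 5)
Your proposal is correct and follows exactly the route the paper intends: the corollary is stated as an immediate reformulation of \eqref{eq:condition}, \eqref{eq:structure}, \eqref{eq:structure2} via the change-of-basis formula of Proposition \ref{prop:extT}, which is precisely your dictionary. The only step requiring genuine verification is the converse in the last assertion (that $vT_{w_2}=vq_{w_2}$ for all $w_2$ forces $vT^*_{w_2}=v$), and your reduction to admissible rank-one lifts via $c_{\tilde s}\equiv q_s-1$ on $Z'_{k,s}\subset Z_k\cap{}_1W_{M'_2}$ together with the length-additive factorization and telescoping is exactly the computation the paper leaves implicit.
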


   \subsection{$\sigma^{\mathcal U_M}$ is extensible to $\mathcal{H}$ of extension $e(\sigma^{\mathcal U_M})=e(\sigma)^{\mathcal U}$}\label{S:8.1}
    Let $P=MN$ be a standard parabolic subgroup of $G$ such that 
$\Delta_P $ and $ \Delta\setminus \Delta_P$ are orthogonal,  and  $\sigma$ a smooth $R$-representation of $M$ extensible to $G$.   Let $P_2=M_2N_2$ denote the standard parabolic subgroup of $G$ with  $\Delta_{P_2}= \Delta\setminus \Delta_P $.

Recall that $G=MM'_2$, that  $M\cap M'_2=Z\cap M'_2$ acts trivially on $\sigma $,  $e(\sigma)$ is the representation of $G$ equal to $\sigma$ on $M$ and trivial on $M'_2$.
  We will describe the $\mathcal{H}$-module $e(\sigma)^{\mathcal U}$ in this section. We first consider $e(\sigma)$ as a subrepresentation of $\Ind_P^G\sigma$. 
For $v\in \sigma$, let $f_v\in (\Ind_P^G\sigma)^{M'_2}$ be the unique function with value $v$ on $M'_2$.  Then, the map
\begin{equation}\label{eq:HVt}v\mapsto f_{ v}:\sigma \to \Ind_P^G\sigma 
\end{equation}
 is the natural $G$-equivariant embedding of $e(\sigma)$ in $\Ind_P^G\sigma $. As $\sigma^{\mathcal U_M}=e(\sigma)^{\mathcal U }$ as $R$-modules,  the image of  $e(\sigma)^{\mathcal U }$ in $(\Ind_P^G\sigma)^\mathcal U$ is made out of the $f_v$ for $v\in  \sigma^{\mathcal U_M}$.

 We now recall the explicit description of $(\Ind_P^G\sigma)^\mathcal U$. For each $d\in \mathbb W_{M_2}$, we fix a lift  $\hat d\in {}_1W_{M'_2}$ and for $v\in \sigma^{\mathcal U_M}$ let  $f_{P\hat d\mathcal U, v}\in (\Ind_P^G\sigma)^{\mathcal U}$ for the function with support contained in $P\hat d\mathcal U$ and value $v$ on $\hat d\mathcal U$. As $Z\cap M'_2$ acts trivially on $\sigma$, 
 the function $f_{P\hat d\mathcal U, v}$ does not depend on the choice of the lift $\hat d\in {}_1W_{M'_2}$ of $d$.
  By \cite[Lemma 4.5]{arXiv:1703.04921}: 
  
  \bigskip {\sl The map
 $  \oplus_{d\in \mathbb W_{M_2}}\sigma^{\mathcal U_M} \to (\Ind_P^G\sigma)^{\mathcal U}$ 
given on each $d$-component by $v\mapsto  f_{P\hat d\mathcal U, v}$, is an  $\mathcal{H}_{M^+}$-equivariant isomorphism where $\mathcal{H}_{M^+}$ is seen as a subring of $\mathcal{H}$ via $\theta$, and   induces an $\mathcal{H}_R$-module isomorphism}
\begin{equation}\label{eq:OV}v\otimes h \mapsto  f_{P\mathcal U, v}h:  \sigma^{\mathcal U_M}\otimes_{\mathcal{H}_{M^+}, \theta}\mathcal{H} \to (\Ind_P^G\sigma)^{\mathcal U}.
\end{equation}

In particular  for $v\in \sigma^{\mathcal U _M}$, $v\otimes  T(\hat d)$ does not depend on the choice of the lift $\hat d\in {}_1W_{M'_2}$ of $d$ and 
\begin{equation}\label{eq:ovvP}f_{P\hat d\mathcal U, v} =f_{P\mathcal U, v} T(\hat d).
\end{equation}
 As  $G$ is the disjoint union of $P \hat d \mathcal U$ for $d\in  \mathbb W_{M_2}$, we have $f_v= \sum_{d\in \mathbb W_{M_2}}f_{P\hat d\mathcal U, v}$
 and  $f_v$ is the image of  $v\otimes e _{M_2}$  in \eqref{eq:OV}, where
\begin{equation}\label{eq:e}
e _{M_2}= \sum_{d\in \mathbb W_{M_2}} T(\hat d).
\end{equation}
Recalling \eqref{eq:HVt} we get:
\begin{lemma}\label{lemma:sigmaemb} The map 
$v\mapsto v\otimes e _{M_2}: e( \sigma)^{\mathcal U}\to \sigma^{\mathcal U_M}\otimes_{\mathcal{H}_{M^+}, \theta}\mathcal{H} $ is an $\mathcal{H}_R$-equivariant embedding.
\end{lemma}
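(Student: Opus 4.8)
The plan is to combine the explicit description of $(\Ind_P^G\sigma)^{\mathcal{U}}$ from \cite{arXiv:1703.04921} recalled above with the description of the embedding $e(\sigma)\hookrightarrow \Ind_P^G\sigma$ via the functions $f_v$, and to check compatibility with the $\mathcal{H}$-action purely formally. First I would observe that since $e(\sigma)^{\mathcal{U}} = \sigma^{\mathcal{U}_M}$ as $R$-modules and the $G$-equivariant map \eqref{eq:HVt} is injective, its restriction to $\mathcal{U}$-invariants $v \mapsto f_v$ is an injective $R$-linear map $e(\sigma)^{\mathcal{U}} \to (\Ind_P^G\sigma)^{\mathcal{U}}$. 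Composing with the inverse of the isomorphism \eqref{eq:OV}, the element $f_v$ corresponds to $v \otimes e_{M_2}$ by the computation $f_v = \sum_{d\in\mathbb{W}_{M_2}} f_{P\hat d\mathcal{U},v}$ together with \eqref{eq:ovvP} and the definition \eqref{eq:e} of $e_{M_2}$. So as a map of $R$-modules the statement is immediate; what must be argued is $\mathcal{H}_R$-equivariance.

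For equivariance, the target $\sigma^{\mathcal{U}_M}\otimes_{\mathcal{H}_{M^+},\theta}\mathcal{H}$ carries its tautological right $\mathcal{H}$-action, and \eqref{eq:OV} is an isomorphism of right $\mathcal{H}_R$-modules by the cited result, so it suffices to check that $v \mapsto f_v$ intertwines the $\mathcal{H}$-action on $e(\sigma)^{\mathcal{U}}$ with the $\mathcal{H}$-action on $(\Ind_P^G\sigma)^{\mathcal{U}}$ — but this is automatic, because both are $\mathcal{U}$-invariants of $G$-representations and \eqref{eq:HVt} is $G$-equivariant, hence induces an $\mathcal{H}$-module map on $\mathcal{U}$-invariants (the Hecke action on $\pi^{\mathcal{U}}$ is functorial in the smooth $R[G]$-representation $\pi$). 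Thus the composite $v \mapsto v\otimes e_{M_2}$ is an $\mathcal{H}_R$-equivariant map, and it is injective because $v\mapsto f_v$ is injective and \eqref{eq:OV} is an isomorphism.

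The only genuinely substantive point is the identification of the image of $f_v$ under the inverse of \eqref{eq:OV} as $v\otimes e_{M_2}$, which rests on two facts already established in the excerpt: that $G$ is the disjoint union $\bigsqcup_{d\in\mathbb{W}_{M_2}} P\hat d\mathcal{U}$ (so that a $\mathcal{U}$-invariant function supported on all of $G$ decomposes as the sum of its pieces $f_{P\hat d\mathcal{U},v}$), and the identity \eqref{eq:ovvP} expressing $f_{P\hat d\mathcal{U},v} = f_{P\mathcal{U},v}T(\hat d)$, which transports the decomposition to $v\otimes\sum_d T(\hat d) = v\otimes e_{M_2}$ under \eqref{eq:OV}. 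The independence of $v\otimes T(\hat d)$ on the choice of lift $\hat d\in {}_1W_{M_2'}$ of $d$, needed for $e_{M_2}$ to be well-defined, follows from the fact that $Z\cap M_2'$ acts trivially on $\sigma$ and hence on $\sigma^{\mathcal{U}_M}$, as already remarked.

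I expect no real obstacle here: the statement is essentially a bookkeeping consequence of \cite[Lemma 4.5]{arXiv:1703.04921} (the isomorphism \eqref{eq:OV}) and the elementary description of $e(\sigma)$ inside $\Ind_P^G\sigma$. The mild care required is to keep straight that $\mathcal{H}_M$ is not a subring of $\mathcal{H}$, so the tensor product is over $\mathcal{H}_{M^+}$ via $\theta$ and the $\mathcal{H}$-action on the target is the one coming from the right factor; once that is fixed, functoriality of $(-)^{\mathcal{U}}$ does all the work and no computation with the quadratic or braid relations is needed for this particular lemma.
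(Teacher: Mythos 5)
Your proof is correct and follows exactly the route the paper takes: identify $e(\sigma)^{\mathcal U}$ with the functions $f_v=\sum_{d\in\mathbb W_{M_2}}f_{P\hat d\mathcal U,v}$ inside $(\Ind_P^G\sigma)^{\mathcal U}$, use \eqref{eq:ovvP} and \eqref{eq:e} to see that $f_v$ corresponds to $v\otimes e_{M_2}$ under \eqref{eq:OV}, and get $\mathcal{H}_R$-equivariance for free from the $G$-equivariance of \eqref{eq:HVt} together with the fact that \eqref{eq:OV} is an $\mathcal{H}_R$-isomorphism. Nothing is missing.
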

\begin{remark}
The trivial map $v\mapsto v\otimes 1_\mathcal{H}$  is not an $\mathcal{H}_R$-equivariant embedding.
\end{remark}

We  describe the action of $T(n)$ on $e( \sigma)^{\mathcal U}$ 
for  $n\in  \mathcal N$. By definition for $v\in e(\sigma)^{\mathcal U}$,  \begin{equation}\label{eq:vTn}vT(n)=\sum_{y\in \mathcal U / (\mathcal U \cap n^{-1}\mathcal U n)}  yn^{-1}v.\end{equation}

 \begin{proposition}\label{prop:eu} We have $vT(n)= vT^M(n_M) q_{M_2}(n)$ for any 
  $n_N\in \mathcal N \cap M$ is such that    $n=n_M  (\mathcal N \cap M'_2)$.
\end{proposition}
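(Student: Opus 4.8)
The plan is to compute $vT(n)$ straight from the definition \eqref{eq:vTn}, $vT(n)=\sum_{y\in\mathcal U/(\mathcal U\cap n^{-1}\mathcal U n)}yn^{-1}v$ with the action of $e(\sigma)$, and to run everything through the Iwahori-type factorizations of $\mathcal U$ and of $\mathcal U\cap n^{-1}\mathcal U n$ recorded in the proof of Lemma~\ref{lemma:0}. First I would fix a factorization $n=n_Mn_2$ with $n_M\in\mathcal N\cap M$ and $n_2\in\mathcal N\cap M'_2$, which exists since $\mathcal N=Z(\mathcal N\cap M')(\mathcal N\cap M'_2)$ and $Z\subset M$. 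That the right-hand side $vT^M(n_M)q_{M_2}(n)$ does not depend on the choice of $n_M$ will follow a posteriori, as the left-hand side does not; alternatively two admissible choices differ by an element of $\mathcal N\cap M\cap M'_2$, which has $M$-length $0$ and lies in $Z\cap M'_2$, hence acts trivially on $\sigma^{\mathcal U_M}$.

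By \eqref{eq:UnU}, a system of representatives of $\mathcal U/(\mathcal U\cap n^{-1}\mathcal U n)$ is given by the products $y_{M'}y_2$, where $y_{M'}$ runs over a system of representatives of $\mathcal Y_{M'}/(\mathcal Y_{M'}\cap n^{-1}\mathcal Y_{M'}n)$ and $y_2$ over one of $\mathcal Y_{M'_2}/(\mathcal Y_{M'_2}\cap n^{-1}\mathcal Y_{M'_2}n)$; the number of the latter is $q_{M_2}(n)$ by \eqref{eq:UY} applied to $M_2$. Writing $y_{M'}y_2n^{-1}v=y_{M'}y_2n_2^{-1}n_M^{-1}v$ and using that $y_2,n_2\in M'_2$ act trivially on the space of $e(\sigma)$ (which is that of $\sigma$), we get $y_{M'}y_2n^{-1}v=y_{M'}n_M^{-1}v$, hence
$$vT(n)=\sum_{y_{M'}}\sum_{y_2}y_{M'}n_M^{-1}v=q_{M_2}(n)\sum_{y_{M'}}y_{M'}n_M^{-1}v.$$

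It remains to identify $\sum_{y_{M'}}y_{M'}n_M^{-1}v$ with $vT^M(n_M)$. By \eqref{eq:UY} the inclusion $\mathcal Y_{M'}\subset\mathcal U_M$ identifies $\mathcal Y_{M'}/(\mathcal Y_{M'}\cap n^{-1}\mathcal Y_{M'}n)$ with $\mathcal U_M/(\mathcal U_M\cap n^{-1}\mathcal U_M n)$, so the sum equals $\sum_{y_M}y_Mn_M^{-1}v$ over representatives $y_M$ of $\mathcal U_M/(\mathcal U_M\cap n^{-1}\mathcal U_M n)$. Finally $n_2\in\mathcal N\cap M'_2$ normalizes $\mathcal U_M=Z^1\mathcal Y_{M'}$ (it normalizes $Z^1$ and $\mathcal Y_{M'}$, as in the proof of Lemma~\ref{lemma:0}) and likewise normalizes $n_M^{-1}\mathcal U_M n_M=Z^1\,n_M^{-1}\mathcal Y_{M'}n_M$, since $n_M^{-1}\mathcal Y_{M'}n_M\subset M'$ is centralized by $M'_2$; hence $n^{-1}\mathcal U_M n=n_2^{-1}n_M^{-1}\mathcal U_M n_M n_2=n_M^{-1}\mathcal U_M n_M$, so that $\mathcal U_M\cap n^{-1}\mathcal U_M n=\mathcal U_M\cap n_M^{-1}\mathcal U_M n_M$. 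Therefore $\sum_{y_M}y_Mn_M^{-1}v=vT^M(n_M)$ by \eqref{eq:vTn} applied inside $M$, and $vT(n)=q_{M_2}(n)\,vT^M(n_M)=vT^M(n_M)\,q_{M_2}(n)$, the integer $q_{M_2}(n)$ acting by scalar multiplication.

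The argument is essentially bookkeeping with the Iwahori factorizations and the matching of coset representatives, together with the triviality of $M'_2$ on $e(\sigma)$; all the needed facts are contained in the proof of Lemma~\ref{lemma:0}. The one point demanding care—and the place I would expect to have to be most attentive—is keeping the product decompositions of $\mathcal U$ and of $\mathcal U\cap n^{-1}\mathcal U n$ compatible, so that the sum genuinely splits as a product over the $M'$- and $M'_2$-representatives; but there is no real obstacle.
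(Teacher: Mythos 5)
Your proof is correct and follows essentially the same route as the paper's: decompose $\mathcal U/(\mathcal U\cap n^{-1}\mathcal U n)$ via \eqref{eq:UnU} into $M'$- and $M'_2$-parts, use that $M'_2$ acts trivially on $e(\sigma)$ to collapse the $M'_2$-sum to the factor $q_{M_2}(n)$, and identify the remaining sum with $vT^M(n_M)$. You supply some details the paper leaves implicit (notably that $\mathcal U_M\cap n^{-1}\mathcal U_M n=\mathcal U_M\cap n_M^{-1}\mathcal U_M n_M$, and the independence of the choice of $n_M$), but there is no difference in method.
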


\begin{proof}   
The  description \eqref{eq:UnU} of $ \mathcal U / (\mathcal U \cap n^{-1}\mathcal U n)$ gives $$vT(n)=\sum_{y_1\in \mathcal U_{M} / (\mathcal U_{M} \cap n^{-1}\mathcal U_{M} n)}y_1 \sum_{y_2\in \mathcal U_{M'_2} / (\mathcal U_{M'_2} \cap n^{-1}\mathcal U_{M'_2} n)}  y_2n^{-1} v.$$
 As $M'_2$ acts trivially on $e(\sigma)$, we obtain 
\begin{equation*} vT(n)=q_{M_2}(n) \sum_{y_1\in \mathcal U_{M} / (\mathcal U_{M} \cap n^{-1}\mathcal U_{M} n)}y_1n_{M}^{-1} v=q_{M_2}(n) \, v T^M(n_{M}).
\end{equation*} \end{proof}

\begin{theorem} \label{thm:ouf} Let  $\sigma$  be  a   smooth $R$-representation of $M$. If   $P(\sigma)=G$, then   $\sigma^{\mathcal U_M}$ is extensible to $\mathcal{H}$ of extension $e (\sigma^{\mathcal U_M})= e(\sigma)^{\mathcal U}$. Conversely, if  $\sigma^{\mathcal U_M}$ is extensible to $\mathcal{H}$ and generates $\sigma$, then  $P(\sigma)=G$.
\end{theorem}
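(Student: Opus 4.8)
The plan is to prove Theorem~\ref{thm:ouf} by separating the two implications and exploiting the structural results established in \S\ref{S:8.0}--\S\ref{S:8.1}. For the forward direction, assume $P(\sigma)=G$. By the remark following Lemma~\ref{lemma:2.2}, when $\Delta_P$ and $\Delta_\sigma=\Delta\setminus\Delta_P$ are orthogonal the extension $e(\sigma)$ is simply $\sigma$ with $M'_2$ acting trivially; more generally, using Lemma~\ref{lemma:min} we may replace $(P,\sigma)$ by $(P_{\min,\sigma},\sigma_{\min})$ to reduce to the genuinely orthogonal situation, since $\sigma^{\mathcal{U}_M}$ and $\sigma_{\min}^{\mathcal{U}_{M_{\min}}}$ have the same underlying $R$-module and $e(\sigma)=e(\sigma_{\min})$. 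Then I would verify condition \eqref{eq:conditionT}: for $\lambda_2\in {}_1W_{M'_2}\cap\Lambda(1)$, pick a lift $z\in Z\cap M'_2$; since $Z\cap M'_2$ acts trivially on $\sigma$, Proposition~\ref{prop:eu} (applied with $n=z$, $n_M=1$, so $q_{M_2}(z)=1$ as $z$ normalizes $\mathcal{U}_{M'_2}$... more carefully, one computes $vT^M(z)$ directly, using that the action on $\sigma^{\mathcal{U}_M}$ factors through $\mathcal{H}_M$ and $z$ acts trivially) gives $vT^{M}_{\lambda_2}=v$. This shows $\sigma^{\mathcal{U}_M}$ is extensible. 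It then remains to identify $e(\sigma^{\mathcal{U}_M})$ with $e(\sigma)^{\mathcal{U}}$ as $\mathcal{H}$-modules: both have underlying $R$-module $\sigma^{\mathcal{U}_M}$, and Proposition~\ref{prop:eu} shows that $T(n)$ acts on $e(\sigma)^{\mathcal{U}}$ by $v\mapsto vT^M(n_M)q_{M_2}(n)$, which is exactly formula \eqref{eq:structureT} defining the action on $e(\sigma^{\mathcal{U}_M})$; since the $T(n)=T_w$ for $n\in\mathcal{N}$ span $\mathcal{H}$, the two $\mathcal{H}$-module structures coincide.

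For the converse, suppose $\sigma^{\mathcal{U}_M}$ is extensible to $\mathcal{H}$ and generates $\sigma$ as an $R[M]$-representation. I want to conclude $P(\sigma)=G$, i.e., $Z\cap M'_\alpha$ acts trivially on $\sigma$ for every $\alpha\in\Delta\setminus\Delta_P$. By Lemma~\ref{lemma:ZG'}, $Z\cap G'$ is generated by the $Z\cap M'_\alpha$ for $\alpha\in\Delta$, and the $Z\cap M'_\alpha$ for $\alpha\in\Delta_P$ already act as part of $M'$ (they lie in $M'\cap Z$, which acts through $\sigma$ — these need not be trivial, but they do not concern $P(\sigma)$). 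The point is to show each $z\in Z\cap M'_\alpha$ with $\alpha\notin\Delta_P$ acts trivially. Here extensibility gives, via \eqref{eq:conditionT}, that $vT^{M}_{\lambda}=v$ for $\lambda\in{}_1W_{M'_2}\cap\Lambda(1)$; I would argue that the image in $\mathcal{H}_M$ of an appropriate power (or the element itself, after absorbing the pro-$p$ part $Z^1$) of such $z$ is $T^M_\lambda$ up to units, so $z$ acts trivially on $\sigma^{\mathcal{U}_M}$. Then, since $\sigma^{\mathcal{U}_M}$ generates $\sigma$ and $z\in Z$ is central in $M$, the action of $z$ commutes with that of $M$; an element of $M$ acting trivially on a generating $M$-subrepresentation and commuting with $M$ must act trivially everywhere. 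Hence $z$ acts trivially on all of $\sigma$, giving $\alpha\in\Delta_{P(\sigma)}$, and so $P(\sigma)=G$.

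The main obstacle I anticipate is the bookkeeping in the converse around lifts and the pro-$p$ group $Z^1$: elements of $Z\cap M'_\alpha$ project to $\Lambda(1)=Z/Z^1$, and one must be careful that (i) the relevant $\lambda_2$ in \eqref{eq:conditionT} really do exhaust the images of all $z\in Z\cap M'_\alpha$ for $\alpha\in\Delta\setminus\Delta_P$, using that $M'_2$ is generated by the $M'_\alpha$ with $\alpha\in\Delta\setminus\Delta_P=\Delta_{M_2}$, and (ii) the element $T^{M,*}_{\lambda_2}$ (or $T^M_{\lambda_2}$) is invertible in $\mathcal{H}_M$ — which it is, being of length $0$, so $T^{M}_{\lambda_2}=T^{M,*}_{\lambda_2}$ is a unit — so that "$vT^M_{\lambda_2}=v$" genuinely says "$\lambda_2$ acts trivially" and not merely something weaker. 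A secondary subtlety is that in the converse one does not a priori know $\Delta_P$ and $\Delta\setminus\Delta_P$ are orthogonal, so one cannot directly invoke the setup of \S\ref{S:8.0}; instead the extensibility hypothesis is stated using the ideal $\mathcal J_M$, whose definition in \S\ref{S:8.2} already presupposes that orthogonality. I would handle this by noting that $\sigma^{\mathcal{U}_M}$ extensible means, concretely, that the specified elements act trivially; one extracts from this the triviality of $Z\cap M'_\alpha$ for $\alpha$ orthogonal to $\Delta_P$, which is exactly what is needed, and the non-orthogonal $\alpha$ are handled because for them $M'_\alpha\not\subset M_2$ forces... actually the cleanest route is: extensibility forces $P(\sigma)\supseteq$ (the parabolic cut out by the orthogonal complement), and combined with "generates $\sigma$" one pushes triviality to the full $Z\cap G'$-part outside $\Delta_P$, yielding $P(\sigma)=G$; I would verify this compatibility carefully as it is the crux of the argument.
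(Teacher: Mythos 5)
Your overall strategy matches the paper's: both directions rest on the equivalence ``$\sigma^{\mathcal U_M}$ is extensible to $\mathcal{H}$ $\Leftrightarrow$ $Z\cap M'_2$ acts trivially on $\sigma^{\mathcal U_M}$'', obtained from the computation $vT^M(z)=z^{-1}v$ for $z\in Z\cap M'_2$ (the coset $\mathcal U_M z\mathcal U_M$ is a single left coset because $z$ commutes with $\mathcal Y_{M'}$ and normalizes $Z^1$), together with the identification of the two $\mathcal{H}$-module structures on $e(\sigma)^{\mathcal U}$ via Propositions~\ref{prop:eu} and~\ref{prop:extT}. Two of your hesitations dissolve on inspection: the orthogonality of $\Delta_P$ and $\Delta\setminus\Delta_P$ is a standing hypothesis of \S\ref{S:8.1} (the very notion of extensibility is only defined under it), so no reduction to the $e$-minimal case via Lemma~\ref{lemma:min} is needed --- and such a reduction would in any case change $\mathcal{H}_M$ and hence the meaning of ``extensible''. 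Likewise, the $\lambda_2$ occurring in \eqref{eq:conditionT} are exactly the images in $\Lambda(1)=Z/Z^1$ of the elements of $Z\cap M'_2$, so your choice of lifts covers everything.

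The one step that fails as written is in your converse. Having shown that each $z\in Z\cap M'_2$ fixes the generating subspace $\sigma^{\mathcal U_M}$ pointwise, you conclude that $z$ acts trivially on all of $\sigma$ because ``$z\in Z$ is central in $M$''. For a general connected reductive group this is false: $Z$ is the centralizer of the maximal split torus, not the center of $M$, and it need not be abelian, so $z\in Z\cap M'_2$ need not commute with the $Z$-part of $M=ZM'$ (your claim is correct only in the split case, where $Z=T$). The repair, which is the paper's argument, is to show that the fixed-point subspace $\sigma^{Z\cap M'_2}$ is $M$-stable: elements of $M'$ commute with $Z\cap M'_2$ by orthogonality, while $Z$ merely \emph{normalizes} $Z\cap M'_2$; conjugation by any $m\in M$ therefore permutes the group $Z\cap M'_2$ and so preserves its space of fixed vectors. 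Since this $M$-stable subspace contains $\sigma^{\mathcal U_M}$, which generates $\sigma$, it equals $\sigma$, whence $Z\cap M'_2$ (generated by the $Z\cap M'_\alpha$, $\alpha\in\Delta\setminus\Delta_P$, by Lemma~\ref{lemma:ZG'}) acts trivially and $P(\sigma)=G$. With that substitution your proof is complete and coincides with the paper's.
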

 
 \begin{proof}  (1) 
 The $\mathcal{H}_M$-module  $\sigma^{\mathcal U_M}$ is extensible to $\mathcal{H}$ if and only if $Z\cap  M'_2 $ acts trivially on 
 $ \sigma^{\mathcal U_M}$.   Indeed, for $ v\in  \sigma^{\mathcal U_M}, z_2\in  Z\cap  M'_2 $,  $$ vT^M(z_2)=\sum_{y\in \mathcal U_M / (\mathcal U_M \cap z_2^{-1}\mathcal U _Mz_2)}  yz_2^{-1}v=\sum_{y\in \mathcal Y_M / (\mathcal Y_M \cap z_2^{-1}\mathcal Y _Mz_2)}  yz_2^{-1}v =z_2^{-1}v,$$
 by \eqref{eq:vTn}, then \eqref{eq:UnU}, then the fact that $z_2^{-1} $  commutes with the elements of  $\mathcal Y_M$. 

 (2)   $P(\sigma)=G$   if and only if   $Z \cap  M'_2 $ acts trivially on  $\sigma$ (the group  $Z \cap  M'_2 $ is generated by $Z \cap \mathcal  M'_\alpha $ for $\alpha \in \Delta_{M_2}$ by Lemma~\ref{lemma:ZG'}). The $R$-submodule $\sigma^{Z \cap  M'_2}$ of elements fixed by $Z \cap  M'_2$ is stable by $M$, because $M=ZM'$, the elements of $M'$ commute with those of $Z \cap  M'_2$ and $Z$ normalizes $Z \cap  M'_2$.

(3)  Apply (1) and (2) to get  the theorem except 
the equality $e (\sigma^{\mathcal U_M})= e(\sigma)^{\mathcal U}$ when 
$P(\sigma)=G$ which follows from  Propositions \ref{prop:eu} and \ref{prop:extT}.  \end{proof}

   Let  ${\bf 1 }_M$ denote  the trivial representation of $M$ over $R$ (or ${\bf 1}$ when there is no ambiguity on $M$).  The right $\mathcal{H}_R$-module $({\bf 1 }_G)^{\mathcal U}={\bf 1 }_{\mathcal{H}}$ (or ${\bf 1 }$  if there is no ambiguity)   is  the trivial right $\mathcal{H}_R$-module:  for $w\in W_M(1)$, $T_w=q_w \id $ and $T^{*}_w=\id$ on ${\bf 1 }_{\mathcal{H}}$.

\begin{example}\label{ex:extrivial}    The $\mathcal{H}$-module $(\Ind_{P}^G  {\bf 1 }) ^{\mathcal U}$ is the extension of the $\mathcal{H}_{M_2}$-module $ (\Ind_{M_2\cap B}^{M_2}  {\bf 1  }) ^{\mathcal U_{M_2}}$.  
Indeed,  the representation $\Ind_{P}^G  {\bf 1  }$ of $G$ is trivial on $N_2$, as $G=M M'_2$ and $N_2\subset M'$ (as $\Phi= \Phi_{M}\cup \Phi_{M_2}$). For $g=mm'_2 $ with $m\in M, m'_2\in M'_2$ and   $n_2 \in N_2 $, we have  $Pgn_2 =Pm'_2n_2 =Pn_2  m'_2=Pm'_2=Pg$. The group $M_2\cap B=  M_2\cap P$ is the standard minimal parabolic subgroup of $M_2$ and $(\Ind_{P}^G  {\bf 1  })|_{M_2}=\Ind_{M_2\cap B}^{M_2}  {\bf 1  }$.   Apply Theorem \ref{thm:ouf}:
\end{example}

  \subsection{The $\mathcal{H}_R$-module $e(\mathcal V)\otimes_R (\Ind_Q^G {\bf 1})^\mathcal U $}\label{S:8.4}
Let $P=MN$ be a standard parabolic subgroup of $G$ such that  $\Delta_P$ and $\Delta\setminus \Delta_P$ are orthogonal,  let $\mathcal V$ be  a right  $\mathcal{H}_{M,R}$-module  which is extensible to $\mathcal{H}_R$ of extension $e(\mathcal{V})$ and let $Q $ be a parabolic subgroup of $G$ containing $P$. 

We define on the  $R$-module $e(\mathcal V)\otimes_R (\Ind_Q^G {\bf 1})^\mathcal U $ a structure of   right  $\mathcal{H}_R$-module:

\begin{proposition}\label{lemma:tensor}
\begin{enumerate}
\item The diagonal action of  $T_w^*$ for $w\in W(1)$ on $e(\mathcal V)\otimes_R (\Ind_Q^G {\bf 1})^\mathcal U $
 defines  a structure of   right  $\mathcal{H}_R$-module. 
\item The action of the $T_w$    is also diagonal and satisfies:
 $$((v\otimes f)T_w, (v\otimes f)T^*_w)= ( vT_{uw_{M'}} \otimes fT_{uw_{M'_2}} , vT^*_{uw_{M'}} \otimes fT^*_{uw_{M'_2}}) ,$$ 
where $w=u w_{M'} w_{M'_2}$ with 
$u\in W(1), \ell(u)=0, w_{M'}\in  {}_1W_{M'}, w_{M'_2}\in  {}_1W_{M'_2}$.\end{enumerate}
\end{proposition}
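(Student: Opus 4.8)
The plan is to exhibit the diagonal action explicitly and then check that the defining relations of $\mathcal{H}_R$ (braid and quadratic, in the $T_w^*$-presentation) are respected. First I would fix the decomposition $W(1) = \Omega(1)\, {}_1W_{M'}\, {}_1W_{M'_2}$ coming from the orthogonality of $\Delta_P$ and $\Delta\setminus\Delta_P$ (equation \eqref{Wdec} and the discussion around it), together with the fact that for $w = u\, w_{M'}\, w_{M'_2}$ with $u$ of length $0$ one has $\ell(w) = \ell(u) + \ell_{M'}(w_{M'}) + \ell_{M_2'}(w_{M'_2}) = \ell(w_{M'}) + \ell(w_{M'_2})$, since $S^{\aff} = S_M^{\aff}\sqcup S_{M_2}^{\aff}$ and the two affine Weyl groups $W_{M'}$, $W_{M_2'}$ commute and meet only in $\Omega$. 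On $e(\mathcal{V})$, the element $w_{M'_2}\in {}_1W_{M'_2}$ acts trivially in the $T^*$-basis (by \eqref{eq:structure}), so the diagonal action of $T_w^*$ is forced to be $(v\otimes f)T_w^* = v T^*_{u w_{M'}}\otimes f T^*_{u w_{M'_2}}$; on the $(\Ind_Q^G\charone)^{\mathcal{U}}$ factor the action of $T^*_{w_{M'}}$ is trivial because $M'$ acts trivially on $\Ind_Q^G\charone$ (here I use $\Delta = \Phi_M\cup\Phi_{M_2}$ so that $Qgn = Qg$ for $n\in$ the relevant unipotents, exactly as in Example~\ref{ex:extrivial}). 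This makes the formula in (2) well-defined once one verifies it does not depend on the factorization — the ambiguity in $u, w_{M'}, w_{M'_2}$ is by elements of $\Omega(1)\cap W_{M'}(1)$ etc., which act with length $0$ and, on the respective factors, by the trivial-module normalization; so the product $T^*_{uw_{M'}}\otimes T^*_{uw_{M'_2}}$ is unchanged.

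For part (1), the braid relations are immediate: if $\ell(ww') = \ell(w) + \ell(w')$, then writing $w = u\,w_{M'}\,w_{M'_2}$, $w' = u'\,w'_{M'}\,w'_{M'_2}$ one gets a compatible factorization of $ww'$, with the $M'$-parts multiplying length-additively and likewise the $M'_2$-parts, so the diagonal $T^*$-action multiplies correctly coordinate by coordinate, using the braid relations in $\mathcal{H}_{M,R}$ (for the $e(\mathcal{V})$-factor, via \eqref{eq:structure}) and in $\mathcal{H}_R$ (for the induced-representation factor). The quadratic relations $(T^*_{\tilde s})^2 = $ (the $T^*$-version: $T^*_{\tilde s}T^*_{\tilde s} = -c_{\tilde s}T^*_{\tilde s} + q_s T^*_{\tilde s^2}$, or more cleanly the relation defining the $T^*$-basis) must be checked for $\tilde s$ lifting $s\in S^{\aff} = S_M^{\aff}\sqcup S_{M_2}^{\aff}$. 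If $s\in S_{M_2}^{\aff}$, then $\tilde s\in {}_1W_{M'_2}$ acts trivially on the first factor and via $\mathcal{H}_R$ on the second, where the quadratic relation holds; if $s\in S_M^{\aff}$, then $\tilde s\in{}_1W_{M'}$ acts trivially on the second factor (the $(\Ind_Q^G\charone)^{\mathcal{U}}$-factor, since $M'$ acts trivially there, hence $T^*_{\tilde s}$ is the identity on it — note $q_s$ and $q^M_s$ agree by the last sentence of \S\ref{S:8.0}) and via $\mathcal{H}_{M,R}$ (through $e^*$/the structure \eqref{eq:structure}) on the first, where again the quadratic relation holds. So all defining relations are verified and the diagonal $T^*$-action is a well-defined $\mathcal{H}_R$-module structure.

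For part (2), the formula for $(v\otimes f)T^*_w$ is what was just constructed; the formula for $(v\otimes f)T_w$ follows by expressing $T_w$ in terms of the $T^*$-basis. Concretely, $T_w = T_{uw_{M'}}T_{w_{M'_2}}$ by braid relations, and on $e(\mathcal{V})$ one has $v T_w = v T^M_{w_M}\, q_{M_2}(w)$ by \eqref{eq:structureT} (Corollary~\ref{cor:extT}), i.e. the $T$-action records the extra factor $q_{M_2}(w) = q_{M_2}(w_{M'_2})$ coming from the $M'_2$-part; simultaneously, on $(\Ind_Q^G\charone)^{\mathcal{U}}$ the element $T_{w_{M'}}$ acts by $q^{M_2}(w_{M'}) = q(w_{M'})\cdot$(trivial) — more precisely, since $M'$ acts trivially on $\Ind_Q^G\charone$, $f T_{w_{M'}} = q_{?}(w_{M'})\, f$ — so that in the diagonal $T$-action the two scalar factors recombine, exactly as in the displayed equation. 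I expect the main obstacle to be bookkeeping the normalization scalars: one must be careful which of $q$, $q_M$, $q_{M_2}$ appears on each factor and check (using Lemma~\ref{lemma:0}, parts (1) and (4)) that $q(w) = q_M(w_{M'})\,q_{M_2}(w_{M'_2})$ splits correctly so that the diagonal $T$-action indeed equals $v T_{uw_{M'}}\otimes f T_{uw_{M'_2}}$ and is consistent with the $T^*$-action under the change of basis. Once these scalar identities are pinned down, everything else is a direct verification.
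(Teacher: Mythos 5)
Your part (1) is essentially the paper's argument and is fine: the braid relations are immediate because each factor already carries an $\mathcal{H}_R$-module structure (your detour through compatible factorizations of $ww'$ is unnecessary, since $(v\otimes f)T^*_wT^*_{w'}=vT^*_{ww'}\otimes fT^*_{ww'}$ on the nose), and the quadratic relation for $\tilde s$ lifting $s\in S^{\aff}=S_M^{\aff}\sqcup S_{M_2}^{\aff}$ reduces to the single factor on which the corresponding $T^*$-elements do not act trivially, using Example~\ref{ex:extrivial} and \eqref{eq:structure}. The paper first reduces to $Q=P$ by embedding into $e(\mathcal V)\otimes_R(\Ind_P^G\charone)^{\mathcal{U}}$; you instead argue directly with $Q$, which works.

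The gap is in part (2). Your opening sentence (compute the $T_w$-action by changing basis from the $T^*_w$) is the right idea, but the mechanism you then propose --- bookkeeping the scalars $q$, $q_M$, $q_{M_2}$ via Lemma~\ref{lemma:0} and the splitting $q(w)=q_M(w_{M'})q_{M_2}(w_{M'_2})$ so that ``the two scalar factors recombine'' --- cannot yield the displayed formula, which contains no $q$-factors at all. Concretely, take $\tilde s\in{}_1W_{M'}$ lifting $s\in S_M^{\aff}$: the asserted formula gives $(v\otimes f)T_{\tilde s}=vT_{\tilde s}\otimes f$, whereas letting $T_{\tilde s}$ act on each factor separately gives $vT_{\tilde s}\otimes fT_{\tilde s}=q_s\,(vT_{\tilde s}\otimes f)$, because $fT_{\tilde s}=q_sf$ on the factor extended from an $\mathcal{H}_{M_2}$-module (Corollary~\ref{cor:extT}); the discrepancy $q_s$ is not a unit (it is $0$ when $p=0$ in $R$), and no identity among $q,q_M,q_{M_2}$ removes it. The step that actually produces the scalar-free formula, and which your sketch never isolates, is the support property of $c_{\tilde s}$: writing $T_{\tilde s}=T^*_{\tilde s}+c_{\tilde s}$ with $c_{\tilde s}$ a combination of $T^*_t$, $t\in Z_k\cap{}_1W_{M'}$, every term acts trivially on $(\Ind_Q^G\charone)^{\mathcal{U}}$, so $(v\otimes f)T_{\tilde s}=v(T^*_{\tilde s}+c_{\tilde s})\otimes f=vT_{\tilde s}\otimes f$; symmetrically $(v\otimes f)T_{\tilde s}=v\otimes fT_{\tilde s}$ for $s\in S_{M_2}^{\aff}$, and $T_u=T^*_u$ in length $0$. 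The general case then follows from $T_w=T_uT_{w_{M'}}T_{w_{M'_2}}$ and the braid relations; note that the paper's proof of (2) uses no $q$-computations and no appeal to Lemma~\ref{lemma:0}. Without the $c_{\tilde s}$-support argument your change of basis from $T^*$ to $T$ does not go through, and the ``recombination of scalars'' picture is actually inconsistent with the statement you are proving.
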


\begin{proof} If the lemma is true for $P$ it is also true for $Q$, because 
the $R$-module $e(\mathcal V)\otimes_R (\Ind_Q^G {\bf 1})^\mathcal U $
 naturally embedded in $e(\mathcal V)\otimes_R (\Ind_P^G {\bf 1})^\mathcal U $  is stable by the action of  $\mathcal{H}$ defined in the lemma. So, we suppose $Q=P$.
 
Suppose that $T^*_w$ for $w\in W(1)$ acts on $e(\mathcal V)\otimes_R (\Ind_P^G {\bf 1})^\mathcal U $ as in (1). The   braid relations obviously hold. The quadratic relations hold because $T^*_s$ with $s\in {}_1S^{\aff}$,  acts trivially either on $e(\mathcal V)$ or on $(\Ind_P^G {\bf 1})^\mathcal U $.  Indeed,  ${}_1S^{\aff}= {}_1S_M^{\aff} \cup {}_1S_{M_2}^{\aff}$,   $T^*_s$ for  $s\in 
{}_1S_M^{\aff}$,  acts trivially on  $(\Ind_P^G \bf 1)^\mathcal U$ which is   extended from a $\mathcal{H}_{M_2}$-module (Example \ref{ex:extrivial}), and  $T^*_s$ for   $s\in 
{}_1S_{M'_2}^{\aff}$,  acts trivially on  $ e(\mathcal V)$ which is extended from a $\mathcal{H}_{M }$-module.  This proves  (1).
 
We describe now the action of $T_w$ instead of $T_w^*$ on the $\mathcal{H}$-module $e(\mathcal V)\otimes_R (\Ind_Q^G {\bf 1})^\mathcal U $. Let $w\in W(1)$. We write $ w=u w_{M'} w_{M'_2}= u w_{M'_2} w_{M'}$ with $u\in W(1), \ell(u)=0, w_{M'}\in  {}_1W_{M'}, w_{M'_2}\in  {}_1W_{M'_2}$. We have $\ell (w) =\ell(w_{M'})+\ell(w_{M'_2})$ hence $T_w= T_u T_{w_{M'}}T_{w_{M'_2}}$.
 
 For $w=u$, we have $T_u=T^*_u$ and $  (v\otimes f)T _u= (v\otimes f)T^*_u= vT^*_u\otimes f T^*_u=  vT _u\otimes f T _u$.
 
  For $ w=w_{M'}$, $ (v\otimes f)T^*_w= vT^*_w \otimes f$;   in particular for  $s\in 
{}_1S_M^{\aff}$,  $c_s=\sum_{t\in Z_k\cap {}_1W_{M'}} c_s(t) T^*_t$,  we have $ (v\otimes f)T_s= (v\otimes f)(T^*_s+c_s)=v(T^*_s+c_s)\otimes f=vT_s \otimes f$. Hence $ (v\otimes f)T_w= vT_w \otimes f$.

For $w=w_{M'_2}$, we have similarly  $ (v\otimes f)T^*_w= v \otimes fT^*_w$ and $ (v\otimes f)T_w= v \otimes fT_w$.
   \end{proof} 
   
\begin{example} \label{ex:toto} 
 Let  $\mathcal X$ be a right $\mathcal{H}_R$-module. Then  ${\bf 1 }_{\mathcal{H}}\otimes_R \mathcal X$ where  the  $T^{*}_w $ acts  diagonally is a  $\mathcal{H}_R$-module isomorphic to  $\mathcal X$. But the action of the $T_w$ on ${\bf 1 }_{\mathcal{H}}\otimes_R \mathcal X$ is not diagonal. 
\end{example}

It is known \cite{MR3402357} that 
$(\Ind_{Q'}^G{\bf 1})^{\mathcal U}$  and  $(\St_Q^G)^{\mathcal U}$ are free $R$-modules  and that $ (\St_Q^G)^{\mathcal U}$ is  the cokernel of the natural $\mathcal{H}_R$-map
  \begin{align} \label{eq:StQU}\oplus _{Q\subsetneq Q'}(\Ind_{Q'}^G{\bf 1})^{\mathcal U}\to (\Ind_Q^G{\bf 1} )^{\mathcal U}
 \end{align}
  although the   invariant functor $(-) ^{\mathcal U}$ is only left exact. 

\begin{corollary}\label{cor:VS} The diagonal action of  $T_w^*$ for $w\in W(1)$ on $e(\mathcal V)\otimes_R ( \St_Q^G)^\mathcal U $
 defines  a structure of   right  $\mathcal{H}_R$-module satisfying Proposition \ref{lemma:tensor} (2).
\end{corollary}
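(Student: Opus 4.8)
The plan is to deduce the corollary from Proposition~\ref{lemma:tensor} by pure functoriality, exploiting the presentation of $(\St_Q^G)^{\mathcal U}$ recalled just above: writing $\varphi\colon\bigoplus_{Q\subsetneq Q'}(\Ind_{Q'}^G\mathbf 1)^{\mathcal U}\to(\Ind_Q^G\mathbf 1)^{\mathcal U}$ for the natural $\mathcal{H}_R$-map of \eqref{eq:StQU}, the right $\mathcal{H}_R$-module $(\St_Q^G)^{\mathcal U}$ is its cokernel. Since every parabolic $Q'$ occurring in $\varphi$ contains $Q$, hence $P$, Proposition~\ref{lemma:tensor} equips each $e(\mathcal V)\otimes_R(\Ind_{Q'}^G\mathbf 1)^{\mathcal U}$ and $e(\mathcal V)\otimes_R(\Ind_Q^G\mathbf 1)^{\mathcal U}$ with a right $\mathcal{H}_R$-module structure for which the $T_w^*$ act diagonally; and since $\varphi$ is $\mathcal{H}_R$-equivariant it commutes with every $T_w^*$, so $\id_{e(\mathcal V)}\otimes\varphi$ intertwines the diagonal actions and is a morphism of right $\mathcal{H}_R$-modules.

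Next I would apply the right exact functor $e(\mathcal V)\otimes_R(-)$ to the exact sequence $\bigoplus_{Q\subsetneq Q'}(\Ind_{Q'}^G\mathbf 1)^{\mathcal U}\xrightarrow{\varphi}(\Ind_Q^G\mathbf 1)^{\mathcal U}\to(\St_Q^G)^{\mathcal U}\to 0$. This identifies $e(\mathcal V)\otimes_R(\St_Q^G)^{\mathcal U}$ with the cokernel of $\id_{e(\mathcal V)}\otimes\varphi$, hence exhibits it as a quotient of the right $\mathcal{H}_R$-module $e(\mathcal V)\otimes_R(\Ind_Q^G\mathbf 1)^{\mathcal U}$ by an $\mathcal{H}_R$-submodule. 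It therefore carries a right $\mathcal{H}_R$-module structure, and---the quotient action being induced from one on which $T_w^*$ already acts diagonally---that structure is given by the diagonal action of the $T_w^*$, namely $(v\otimes \bar f)T_w^*=vT_w^*\otimes\bar fT_w^*$ with $\bar f$ the image of $f\in(\Ind_Q^G\mathbf 1)^{\mathcal U}$. This is precisely the structure asserted.

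It remains to carry over the formula of Proposition~\ref{lemma:tensor}(2). For $w\in W(1)$ write $w=u w_{M'}w_{M'_2}$ as there; on $e(\mathcal V)\otimes_R(\Ind_Q^G\mathbf 1)^{\mathcal U}$ that proposition gives $(v\otimes f)T_w=vT_{uw_{M'}}\otimes fT_{uw_{M'_2}}$, together with the analogous identity for $T_w^*$. Pushing these through the surjective $\mathcal{H}_R$-equivariant map $\id_{e(\mathcal V)}\otimes(\text{quotient})\colon e(\mathcal V)\otimes_R(\Ind_Q^G\mathbf 1)^{\mathcal U}\to e(\mathcal V)\otimes_R(\St_Q^G)^{\mathcal U}$ yields $(v\otimes \bar f)T_w=vT_{uw_{M'}}\otimes\bar fT_{uw_{M'_2}}$ and its $T_w^*$-analogue, which is the claim. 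The argument is entirely formal, and I do not expect a genuine obstacle; the one point I would take care to state cleanly is that the $\mathcal{H}_R$-structure on the cokernel is the diagonal one, which is immediate from its being a quotient of $e(\mathcal V)\otimes_R(\Ind_Q^G\mathbf 1)^{\mathcal U}$. (Should one wish to bypass cokernels, one may instead rerun the proof of Proposition~\ref{lemma:tensor} with $(\St_Q^G)^{\mathcal U}$ replacing $(\Ind_Q^G\mathbf 1)^{\mathcal U}$: the only properties of that tensor factor used there are that it is a right $\mathcal{H}_R$-module and that $T_s^*$ acts by the identity on it for all $s\in{}_1S_M^{\aff}$, and the latter holds because it holds for $(\Ind_Q^G\mathbf 1)^{\mathcal U}$, a submodule of $(\Ind_P^G\mathbf 1)^{\mathcal U}$, which is extended from an $\mathcal{H}_{M_2}$-module by Example~\ref{ex:extrivial}.)
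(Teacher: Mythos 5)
Your argument is correct and is essentially the proof the paper intends (the paper leaves it implicit, resting on the presentation \eqref{eq:StQU} of $(\St_Q^G)^{\mathcal U}$ as a cokernel of an $\mathcal{H}_R$-map between modules already covered by Proposition \ref{lemma:tensor}, and the right exactness of $e(\mathcal V)\otimes_R(-)$). Both the cokernel route and your parenthetical alternative of rerunning the proof of Proposition \ref{lemma:tensor} directly are sound.
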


  \section{Hecke module   $I_\mathcal{H}(P,\mathcal V,Q)$ }  \label{S:9.0}
\subsection{Case $\mathcal{V}$ extensible to $\mathcal{H}$} \label{S:9.1} Let $P=MN$ be a standard parabolic subgroup of $G$ such that  $\Delta_P$ and $\Delta\setminus \Delta_P$ are orthogonal, $\mathcal{V}$ a right $\mathcal{H}_{M,R}$-module extensible to $\mathcal{H}_R$ of extension $e(\mathcal{V})$,  and   $Q$ be a  parabolic subgroup  of $G$ containing $P$.  As $Q$ and $M_Q$ determine each other: $Q=M_QU$, we denote also
$\mathcal{H}_{M_Q}=\mathcal{H}_{Q}$ and $\mathcal{H}_{M_Q,R}=\mathcal{H}_{Q,R}$ when $Q\neq P, G$. When $Q=G$ we  drop $G$  and   we denote $e_\mathcal{H}(\mathcal{V})=e(\mathcal{V})$ when $Q=G$.

\begin{lemma} \label{lemma:trans}  $\mathcal V$ is extensible to an $\mathcal{H}_{Q,R}$-module $e_{\mathcal{H}_{Q}}(\mathcal V)$.
\end{lemma}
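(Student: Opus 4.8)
The plan is to deduce the lemma from the extensibility criterion of Corollary~\ref{cor:extT}, now applied inside the group $M_Q$ in place of $G$. The first point to record is that the geometric hypothesis is inherited: since $P\subseteq Q$ we have $\Delta_P\subseteq\Delta_Q=\Delta_{M_Q}$, and as $\Delta_P$ is orthogonal to $\Delta\setminus\Delta_P$ it is a fortiori orthogonal to $\Delta_{M_Q}\setminus\Delta_P$, which is a subset of $\Delta\setminus\Delta_P$. Thus $(M\subset M_Q)$ is precisely the configuration treated in \S\ref{S:8.0}--\S\ref{S:8.2}: one takes $M_Q$ in the role of the ambient reductive group, $M$ in the role of $M$, and the standard Levi subgroup $M_{Q,2}$ of $M_Q$ with simple roots $\Delta_{M_Q}\setminus\Delta_P$ in the role of $M_2$. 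Here $M\cap M_{Q,2}=Z$, so ${}_1W_{M'_{Q,2}}\cap W_M(1)={}_1W_{M'_{Q,2}}\cap\Lambda(1)$, and all the statements of those subsections --- the decomposition~\eqref{Wdec}, the ideals $\mathcal J$, $\mathcal J_M$, Theorem~\ref{thm:ext0}, Proposition~\ref{prop:extT} and Corollary~\ref{cor:extT} --- hold verbatim for this pair, their proofs using only the root-combinatorial orthogonality and the structure of the pro-$p$ Iwahori Hecke ring. By Corollary~\ref{cor:extT} applied in $M_Q$, $\mathcal V$ is extensible to an $\mathcal{H}_{Q,R}$-module exactly when $vT^M_\lambda=v$ for all $v\in\mathcal V$ and all $\lambda\in{}_1W_{M'_{Q,2}}\cap\Lambda(1)$.

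It then remains to see that this last condition is forced by the hypothesis that $\mathcal V$ is extensible to $\mathcal{H}_R$. Since $\Delta_{M_Q}\setminus\Delta_P\subseteq\Delta\setminus\Delta_P$ we have $M_{Q,2}\subseteq M_2$, so $U\cap M_{Q,2}\subseteq U\cap M_2$; as $M'_2$ is normal in $M_2\supseteq M_{Q,2}$, the normal subgroup $M'_{Q,2}$ of $M_{Q,2}$ generated by $U\cap M_{Q,2}$ lies in $M'_2$, whence ${}_1W_{M'_{Q,2}}\cap\Lambda(1)\subseteq{}_1W_{M'_2}\cap\Lambda(1)$. By Corollary~\ref{cor:extT} for the pair $M\subset G$, extensibility of $\mathcal V$ to $\mathcal{H}_R$ means $vT^M_\lambda=v$ for all $v\in\mathcal V$ and all $\lambda\in{}_1W_{M'_2}\cap\Lambda(1)$; restricting to the smaller index set ${}_1W_{M'_{Q,2}}\cap\Lambda(1)$ gives exactly the criterion above. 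Hence $\mathcal V$ is extensible to an $\mathcal{H}_{Q,R}$-module $e_{\mathcal{H}_{Q}}(\mathcal V)$, whose structure, by Corollary~\ref{cor:extT} and Proposition~\ref{prop:extT} applied in $M_Q$, is determined by $vT^{M_Q}_w=(vT^M_{w_M})\,q_{M_{Q,2}}(w)$ for $w\in W_{M_Q}(1)$ and any $w_M\in W_M(1)\cap w\,{}_1W_{M'_{Q,2}}$. Iterating \eqref{Wdec} together with the transitivity of $\theta$ (resp.\ $\theta^*$) one moreover checks that $e_\mathcal{H}(\mathcal V)=e_{\mathcal{H}_{G}}(e_{\mathcal{H}_{Q}}(\mathcal V))$, which will be the form in which the lemma is used later.

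The argument is elementary: its core is the inclusion of index sets ${}_1W_{M'_{Q,2}}\cap\Lambda(1)\subseteq{}_1W_{M'_2}\cap\Lambda(1)$, and nothing else is really at stake. Accordingly the only obstacle is bookkeeping --- one must be confident that no result of \S\ref{S:8.0}--\S\ref{S:8.2} implicitly uses that the ambient group is $G$ rather than an arbitrary connected reductive $F$-group carrying a standard Levi whose simple roots are orthogonal to their complement. Granting that, no further computation is needed.
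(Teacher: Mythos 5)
Your argument is correct and is essentially the paper's own proof: both reduce to the extensibility criterion (Corollary~\ref{cor:extT}, equivalently Definition~\ref{def:ext}) applied with $M_Q$ as the ambient group, and then observe that the relevant set ${}_1W_{M'_{2,Q}}\cap\Lambda(1)$ (equivalently $Z\cap M'_{2,Q}$, in your notation $M'_{Q,2}$) is contained in ${}_1W_{M'_{2}}\cap\Lambda(1)$, so the triviality of the action granted by extensibility to $\mathcal{H}$ restricts to the smaller set. One caution: your closing assertion that $e_{\mathcal{H}}(\mathcal{V})=e_{\mathcal{H}_{G}}(e_{\mathcal{H}_{Q}}(\mathcal{V}))$ is not even well posed in general, since the extension of an $\mathcal{H}_{Q}$-module to $\mathcal{H}$ is only defined when $\Delta_Q$ and $\Delta\setminus\Delta_Q$ are orthogonal --- the remark immediately following the lemma in the paper warns precisely against this --- and it is not needed for the lemma, so it should be dropped.
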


\begin{proof}  This is straightforward. By Corollary \ref{cor:extT}, 
  $\mathcal V$  extensible to $\mathcal{H}$ means that  $T^{M,*}(z)$ acts trivially  on  $\mathcal V$ for all $z \in \mathcal N_{M'_{2}}\cap Z$.  
We  have $M_Q=M M'_{2,Q} $ with   $M'_{2,Q}\subset M_Q\cap M'_2 $ and  
 $ \mathcal N_{M'_{2,Q}} \subset \mathcal N_{M'_{2}} $; hence  $T^{M,*}(z)$ acts trivially  on  $\mathcal V$  for all $z \in \mathcal N_{M'_{2,Q}}\cap Z$ meaning that
$\mathcal V$ is extensible to $\mathcal{H}_{Q}$.  \end{proof} 

\begin{remark} 
 We cannot say   that $e_{\mathcal{H}_{Q}}(\mathcal V)$ is extensible to $\mathcal{H}$ of extension $e(\mathcal{V})$ when
the set of roots
 $\Delta_Q$ and $\Delta \setminus \Delta_Q$ are not  orthogonal (Definition \ref{def:ext}).
 \end{remark}  
 Let $Q'$ be an arbitrary  parabolic subgroup  of $G$ containing $Q$. We are going to define a  $\mathcal{H}_R$-embedding  $\Ind_{\mathcal{H}_{Q'}}^\mathcal{H}(e_{\mathcal{H}_{Q'}}(\mathcal V))  \xrightarrow{\iota(Q,Q') } \Ind_{\mathcal{H}_{Q}}^\mathcal{H}(e_{\mathcal{H}_{Q}}(\mathcal V))= e_{\mathcal{H}_{Q}}(\mathcal V)\otimes_{ \mathcal{H}_{M_Q^+},\theta}\mathcal{H}$  defining a  $\mathcal{H}_R$-homomorphism
      $$\oplus_{Q\subsetneq Q'\subset G}  \Ind_{\mathcal \mathcal{H}_{Q'}}^\mathcal{H}( e_{\\mathcal{H}_{Q'}}
(\mathcal V))\to \Ind_{\mathcal{H}_Q}^\mathcal{H} ( e_{\mathcal{H}_Q}(\mathcal V))$$
 of cokernel isomorphic to   $e(\mathcal V)\otimes_R (\St_Q^G)^\mathcal U$. In the extreme case $(Q,Q')=(P,G)$, the $\mathcal{H}_R$-embedding $e(\mathcal{V}) \xrightarrow{\iota(P,G)}\Ind_{\mathcal{H}_{M}}^\mathcal{H}(\mathcal V)$ is  given in the following lemma where  $f_G$ and  $f_{P\mathcal U}\in (\Ind_P^G {\bf 1})^{\mathcal U}$ of $P\mathcal U$  denote the characteristic functions of $G$ and $P\mathcal U$, $f_G=f_{P\mathcal U}e_{M_2}$ (see \eqref{eq:e}).   
 
 \begin{lemma} \label{lemma:ovv}  There is a natural  $\mathcal{H}_R$-isomorphism   $$v\otimes  1 _{\mathcal{H}}\mapsto v\otimes  f_{P\mathcal U} :\Ind_{\mathcal{H}_{M}}^\mathcal{H}(\mathcal V)= \mathcal V\otimes_{ \mathcal{H}_{M^+},\theta}\mathcal{H}\xrightarrow{\kappa_P}  e(\mathcal V)\otimes_R (\Ind_P^G {\bf 1})^\mathcal U, $$ 
 and compatible $\mathcal{H}_R$-embeddings 
 \begin{align*}&v\mapsto v\otimes   f_G : e(\mathcal V) \xrightarrow{} e(\mathcal V)\otimes_R (\Ind_P^G {\bf 1})^\mathcal U,\\
&v\mapsto v\otimes  e_{M_2}: e(\mathcal V) \xrightarrow{\iota(P,G)} \Ind_{\mathcal{H}_{M}}^\mathcal{H}(\mathcal V). \end{align*}  \end{lemma}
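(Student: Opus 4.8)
The plan is to exhibit $\kappa_P$ explicitly, check that it is a well-defined $\mathcal{H}_R$-isomorphism, and then read off the two embeddings and the compatibility as formal consequences. The inputs I would use are: the description of $(\Ind_P^G\mathbf{1})^{\mathcal{U}}$ coming from \eqref{eq:OV} with $\sigma=\mathbf{1}_M$, namely $(\Ind_P^G\mathbf{1})^{\mathcal{U}}=\bigoplus_{d\in\mathbb{W}_{M_2}}R\,f_{P\hat d\mathcal{U}}$ with $f_{P\hat d\mathcal{U}}=f_{P\mathcal{U}}\,T(\hat d)$ (see \eqref{eq:ovvP}); the fact (Example~\ref{ex:extrivial}) that $(\Ind_P^G\mathbf{1})^{\mathcal{U}}$ is the extension to $\mathcal{H}$ of an $\mathcal{H}_{M_2}$-module, so the $T_w$ and $T^*_w$ act on it by Corollary~\ref{cor:extT} with the roles of $M$ and $M_2$ exchanged; and the $\mathcal{H}_R$-module structure of $e(\mathcal{V})\otimes_R(\Ind_P^G\mathbf{1})^{\mathcal{U}}$ from Proposition~\ref{lemma:tensor}, where the $T^*_w$ act diagonally and the $T_w$ as in~\ref{lemma:tensor}(2).

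To define $\kappa_P$ I would invoke the universal property of $-\otimes_{\mathcal{H}_{M^+},\theta}\mathcal{H}$: it suffices to check that $v\mapsto v\otimes f_{P\mathcal{U}}$ is $\mathcal{H}_{M^+}$-linear from $\mathcal{V}$ into $e(\mathcal{V})\otimes_R(\Ind_P^G\mathbf{1})^{\mathcal{U}}$, regarded through $\theta$ as an $\mathcal{H}_{M^+}$-module, that is, $(vT^M(m))\otimes f_{P\mathcal{U}}=(v\otimes f_{P\mathcal{U}})T(m)$ for $m$ running over a monoid-generating set of $M^+$; one may take $\mathcal{N}_M\cap\mathcal{K}$ together with $Z\cap M^+$. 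Writing the image $w$ of $m$ in $W(1)$ as $w=u\,w_{M'}\,w_{M'_2}$ as in \eqref{Wdec}, Proposition~\ref{lemma:tensor}(2) rewrites the right-hand side as $vT_{uw_{M'}}\otimes f_{P\mathcal{U}}T_{uw_{M'_2}}$. For these $m$ the element $uw_{M'_2}$ lies in $Z^0$ or is a translation in the $M'_2$-direction (dominant in $M_2$ when $m\in Z\cap M^+$), so $T_{uw_{M'_2}}$ fixes the generator $f_{P\mathcal{U}}$; moreover \eqref{eq:structureT}, Lemma~\ref{lemma:0} (which makes the $q_{M_2}$- and $q_M$-factors produced along the way equal to $1$) and the extensibility relation \eqref{eq:conditionT} (which makes $T^M_{w_{M'_2}}$ act as the identity on $\mathcal{V}$) reduce the identity to $vT^M_{uw_{M'}}\otimes f_{P\mathcal{U}}=vT_{uw_{M'}}\otimes f_{P\mathcal{U}}$, which is again \eqref{eq:structureT}. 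The hard part is exactly this bookkeeping: keeping track of the three length functions $\ell,\ell_M,\ell_{M_2}$ in \eqref{Wdec} and of the powers of $q_{M_2}$ appearing in Proposition~\ref{lemma:tensor}(2) and in the extension formula.

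For bijectivity, note first that since $\hat d\in{}_1W_{M'_2}$, Proposition~\ref{lemma:tensor}(2) and \eqref{eq:ovvP} give $\kappa_P(v\otimes T(\hat d))=(v\otimes f_{P\mathcal{U}})T(\hat d)=v\otimes f_{P\hat d\mathcal{U}}$. By \cite[Lemma~4.5]{arXiv:1703.04921} (taking $\sigma=\ind_{\mathcal{U}_M}^M\mathbf{1}$) together with the right-exactness of $-\otimes_{\mathcal{H}_{M^+},\theta}\mathcal{H}$, one has $\mathcal{V}\otimes_{\mathcal{H}_{M^+},\theta}\mathcal{H}=\bigoplus_{d\in\mathbb{W}_{M_2}}\mathcal{V}\otimes T(\hat d)$ as $R$-modules, for an arbitrary right $\mathcal{H}_M$-module $\mathcal{V}$; matching this with $(\Ind_P^G\mathbf{1})^{\mathcal{U}}=\bigoplus_d R\,f_{P\hat d\mathcal{U}}$ exhibits $\kappa_P$ as an $R$-module bijection, hence an $\mathcal{H}_R$-isomorphism. (It is precisely extensibility of $\mathcal{V}$ that makes $v\otimes T(\hat d)$ independent of the choice of lift $\hat d\in{}_1W_{M'_2}$ of $d$: two lifts differ by an element of $Z\cap M'_2$, which is central in $M$ and acts trivially on $\mathcal{V}$ by \eqref{eq:conditionT}, and this element is absorbed across the tensor product.)

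Finally, the embeddings. The function $f_G$ spans the trivial $\mathcal{H}$-submodule of $(\Ind_P^G\mathbf{1})^{\mathcal{U}}$, so $f_G T^*_w=f_G$ for all $w\in W(1)$ and the diagonal action restricts on $e(\mathcal{V})\otimes f_G$ to the $\mathcal{H}$-module $e(\mathcal{V})$; thus $v\mapsto v\otimes f_G$ is an $\mathcal{H}_R$-map $e(\mathcal{V})\to e(\mathcal{V})\otimes_R(\Ind_P^G\mathbf{1})^{\mathcal{U}}$, and it is injective because $f_G=\sum_d f_{P\hat d\mathcal{U}}$ makes $R\,f_G$ a direct summand of the free $R$-module $(\Ind_P^G\mathbf{1})^{\mathcal{U}}$. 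Using $e_{M_2}=\sum_d T(\hat d)$ and $f_G=f_{P\mathcal{U}}e_{M_2}=\sum_d f_{P\hat d\mathcal{U}}$, the previous step gives $\kappa_P(v\otimes e_{M_2})=\sum_d v\otimes f_{P\hat d\mathcal{U}}=v\otimes f_G$; hence $\iota(P,G)\colon v\mapsto v\otimes e_{M_2}$ is $\kappa_P^{-1}$ followed by $v\mapsto v\otimes f_G$. In particular $\iota(P,G)$ is an $\mathcal{H}_R$-embedding and $\kappa_P\circ\iota(P,G)=(v\mapsto v\otimes f_G)$, which is the asserted compatibility.
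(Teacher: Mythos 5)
Your proof is correct and follows essentially the same route as the paper's: you establish $\mathcal{H}_{M^+}$-equivariance of $v\mapsto v\otimes f_{P\mathcal{U}}$ using the decomposition $w=uw_{M'}w_{M'_2}$ and Proposition~\ref{lemma:tensor}(2), pass to $\kappa_P$ by adjunction, get bijectivity by matching the two $\mathbb{W}_{M_2}$-indexed direct sum decompositions, and obtain both embeddings and their compatibility by composing with the trivial-submodule map $1\mapsto f_G=f_{P\mathcal{U}}e_{M_2}$. The only (harmless) variation is that you derive the decomposition $\mathcal{V}\otimes_{\mathcal{H}_{M^+},\theta}\mathcal{H}=\bigoplus_{d}\mathcal{V}\otimes T(\hat d)$ from \cite[Lemma~4.5]{arXiv:1703.04921} plus right-exactness, where the paper cites \cite[\S 4.1, Remark~3.7]{MR3437789} directly.
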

\begin{proof} We show  first that   the  map 
\begin{align}\label{eq:f0}v\mapsto v\otimes f_{P\mathcal U}: \mathcal V \xrightarrow{} e(\mathcal V)\otimes_R (\Ind_P^G {\bf 1})^\mathcal U 
\end{align}
   is  $\mathcal{H}_{M^+}$-equivariant. 
Let $ w\in W_{M^+}(1)$. We write $w=u w_{M'} w_{M'_2} $ as in  Lemma \ref{lemma:tensor} (2), so that $f_{P\mathcal U}T_w=     f_{P\mathcal U} T_{uw_{M'_2}}$. We have  $ f_{P\mathcal U} T_{uw_{M'_2}} =f_{P\mathcal U}$ because  ${}_1 W_{M'}\subset  W_{M^+}(1)\cap  W_{M^-}(1)$ hence   $uw_{M'_2}= w w_{M'}^{-1} \in W_{M^+}(1)$ and 
in  $  {\bf 1}_{\mathcal{H}_M}\otimes _{\mathcal{H}_{M^+},\theta}\mathcal{H}$ we have 
$(1\otimes 1 _{\mathcal{H}})  T_{uw_{M'_2}} = 1T^M_{uw_{M'_2}} \otimes 1 _{\mathcal{H}}$, and  $T^M_{uw_{M'_2}}$ acts trivially in   $  {\bf 1}_{\mathcal{H}_M}$ because  $\ell_M(uw_{M'_2})=0$.
 We deduce 
     $(v\otimes f_{P\mathcal U})T_w=   vT_w \otimes f_{P\mathcal U}T_w=
  v T^M_{w}\otimes f_{P\mathcal U}$.

By adjunction \eqref{eq:f0} gives an   $\mathcal{H}_R$-equivariant  linear map
\begin{equation}\label{eq:ovv1}v\otimes 1_{\mathcal{H}}\mapsto v \otimes f_{P\mathcal U}:
\mathcal V\otimes_{ \mathcal{H}_{M^+},\theta}\mathcal{H}\xrightarrow{\kappa_P} e(\mathcal V)\otimes_R (\Ind_P^G {\bf 1})^\mathcal U.
\end{equation}
We prove that $\kappa_P$ is an isomorphism. Recalling $ \hat d \in \mathcal N \cap M'_2, \tilde d \in {}_1 W_{M'_2}$ lift $d$, one knows that \begin{equation}\label{eq:ovv2}{\mathcal V}\otimes_{\mathcal{H}_{M^+},\theta}\mathcal{H}= \oplus _{d\in \mathbb W_{M_2}} {\mathcal V} \otimes T_{\tilde d}, \quad e(\mathcal V)\otimes_R (\Ind_P^G {\bf 1})^\mathcal U=\oplus _{d\in \mathbb W_{M_2}} \mathcal V   \otimes f_{P\hat d \mathcal U},
\end{equation}
where  each summand is isomorphic to $\mathcal V$. The left equality follows from \S 4.1  and Remark 3.7 in \cite{MR3437789}  recalling that  $w\in \mathbb W_{M_2} $ is   of minimal length in its coset $\mathbb W_Mw=w\mathbb W_M$ as $\Delta_M$ and $\Delta_{M_2}$ are orthogonal; for the second  equality   see \S \ref{S:8.1}  \eqref{eq:ovvP}.   We have  $\kappa_P(v\otimes T_{\tilde d})=(v \otimes f_{P\mathcal U})T_{\tilde d} =  v \otimes f_{P\mathcal U} T_{\tilde d} $  (Lemma \ref{lemma:tensor}). Hence   $\kappa_P$  is an isomorphism.

We consider   the composite map   
$$ v\mapsto v\otimes 1 \mapsto  v\otimes f_{P \mathcal U}e_{M_2}  :e(\mathcal V) \to e(\mathcal V)\otimes_R {\bf 1}_{\mathcal{H}} \to e(\mathcal V) \otimes_R (\Ind_P^G {\bf 1})^\mathcal U,$$
where the right map is  the tensor product $e(\mathcal V)\otimes_R - $ of the $\mathcal{H}_R$-equivariant embedding ${\bf 1}_{\mathcal{H}}\to (\Ind_P^G {\bf 1})^\mathcal U$ sending $1_R$ to $ f_{P \mathcal U}e_{M_2} $ (Lemma \ref{lemma:sigmaemb}); this map
  is injective because  $(\Ind_P^G\mathbf{1})^{\mathcal{U}}/\mathbf{1}$ is a free $R$-module;
  it is $\mathcal{H}_R$-equivariant  for the diagonal action of the $T_w^*$ on the tensor products  (Example \ref{ex:toto} for the first map). By compatibility with (1), we get the $\mathcal{H}_R$-equivariant embedding  $v\mapsto v\otimes e_{M_2}: e(\mathcal V) \xrightarrow{\iota(P,G)} \Ind_{\mathcal{H}_{M}}^\mathcal{H}(\mathcal V)$.
  \end{proof} 
 
 For a general $(Q,Q')$ the  $\mathcal{H}_R$-embedding  
 $\Ind_{\mathcal \mathcal{H}_{Q'}}^\mathcal{H}(e_{\mathcal \mathcal{H}_{Q'}}(\mathcal V))  \xrightarrow{\iota(Q,Q')} \Ind_{\mathcal{H}_{Q}}^\mathcal{H}(e_{\mathcal{H}_{Q}}(\mathcal V))$ is given in the next proposition generalizing Lemma \ref{lemma:ovv}.  The element $e_{M_2}$ of $\mathcal{H}_R$ appearing in the definition of  $\iota(P,G')$    is replaced in the definition of $\iota(Q,Q')$ by an element   $\theta_{Q'}(e_{Q} ^{Q'})\in \mathcal{H}_R$ that we define first. 
 
 Until the end of \S \ref{S:9.0}, we fix an admissible lift  $w\mapsto \hat w: \mathbb W\to  \mathcal N \cap \mathcal K$ (Definition \ref{defn:admissible_lift}) and  $\tilde w$ denotes the image of  $\hat w$ in $W(1)$. We denote  $\mathbb W_{M_{Q}}= \mathbb W_Q$   and by ${}^{\mathbb W_Q}\mathbb W $  the  set of $w\in\mathbb W $ of minimal length in their coset $\mathbb W_Qw$. The group $G$ is the disjoint union of  $Q \hat d \mathcal{U}$ for  $d$ running through $ {}^{\mathbb W_Q}\mathbb W $  \cite[Lemma 2.18 (2)]{arXiv:1703.04921}. 
  \begin{equation}\label{eq:QQ'U}Q'\mathcal{U}=\sqcup_{d\in  {}^{\mathbb W_ Q}\mathbb W_{ Q'}} Q \hat d \mathcal{U},
   \end{equation}  
Set 
  \begin{equation}\label{eq:eQQ'}
  e_{Q} ^{Q'} =\sum_{d \in  {}^{\mathbb W_ Q}\mathbb W_{ Q'}} T^{M_{Q'}}_{\tilde d}.
 \end{equation} 
 We write $e_{Q}^{G} =e_Q$.  We have $e_{P} ^{Q} =\sum_{d\in \mathbb W_{M_{2,Q}}} T_{\tilde d}^{M_{Q}}$.
\begin{remark}
 Note that      ${}^{\mathbb W_M}\mathbb W=\mathbb W_{M_2}$  and $e_P= e_{M_2}$, where $M_2$  is the standard Levi subgroup of $G$ with  $\Delta_{M_2}=\Delta\setminus \Delta_{M}$, as $\Delta_M$ and $\Delta\setminus \Delta_{M}$ are orthogonal.   More generally,  $ {}^{\mathbb W_ Q}\mathbb W_{M_{Q'}}= {}^{\mathbb W_{M_{2,Q}}}\mathbb W_{M_{2,Q'}} $  where $M_{2,Q'}=M_2\cap M_{Q'}$.
\end{remark}
 Note that    $e_{Q} ^{Q'} \in \mathcal{H}_{M^+}\cap  \mathcal{H}_{M^-}$. We consider the linear map 
$$\theta_{Q}^{Q'}: \mathcal{H}_{Q }\to \mathcal \mathcal{H}_{Q'} \quad  T_w^{M_Q }\mapsto T_w^{M_{Q '}} \quad (w\in W_{M_Q}(1)).$$
 We write $\theta_{Q}^{G} =\theta_Q$ so $\theta_{Q} (T_w^{M_Q })=T_w $. When $Q=P $ this is the map $\theta$ defined earlier. Similarly we denote by $\theta_{Q}^{Q',*}$ the linear map sending the $T_w^{M_Q,*}$ to  $T_w^{M_{Q '},*}$ and $\theta_{Q}^{G,*} =\theta_Q^*$.  We have
 \begin{equation}\label{eq:eQQ'2}
\theta_{Q'}(e_{Q} ^{Q'}) =\sum_{d \in   {}^{\mathbb W_ Q}\mathbb W_{ Q'}} T _{\tilde d},\quad   \theta_{Q'}(e_{P} ^{Q'} )=\theta_Q(e_{P} ^{Q}) \theta_{Q'} (e_{Q} ^{Q'}).
 \end{equation} 
 
   \begin{proposition}  \label{prop:ovvv}   There exists an $\mathcal{H}_R$-isomorphism  
 \begin{equation}\label{eq:cal}
 v\otimes 1_{\mathcal{H}} \mapsto v\otimes f_{Q\mathcal U}:
 \Ind_{\mathcal{H}_{Q}}^\mathcal{H}( e_{\mathcal{H}_{Q}}(\mathcal V))=
  e_{\mathcal{H}_{Q}}(\mathcal V) \otimes_{ \mathcal{H}_{M_Q^+},\theta} \mathcal{H} \xrightarrow{\kappa_Q}
   e(\mathcal V)\otimes_R (\Ind_Q^G {\bf 1})^\mathcal U,
    \end{equation}
 and  compatible  $\mathcal{H}_R$-embeddings  \begin{align}\label{eq:canonical2}
&v\otimes f_{Q'\mathcal U} \mapsto v\otimes f_{Q'\mathcal U}:  
 e_{\mathcal{H}_{Q'}}(\mathcal V)\otimes_R (\Ind_{Q'}^G {\bf 1})^\mathcal U \to   e_{\mathcal{H}_{Q}}(\mathcal V)\otimes_R (\Ind_Q^G {\bf 1})^\mathcal U,\\
\label{eq:canonical} &v\otimes 1_{\mathcal{H}}  \mapsto v\otimes \theta_{Q'}(e_{Q} ^{Q'}):  \Ind_{\mathcal \mathcal{H}_{Q'}}^\mathcal{H}( e_{\mathcal{H}_{Q'}}
(\mathcal V))  \xrightarrow{\iota(Q,Q') } \Ind_{ \mathcal{H}_{Q}}^\mathcal{H} ( e_{\mathcal{H}_Q}(\mathcal V)).
\end{align}
  \end{proposition}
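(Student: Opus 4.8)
The plan is to mimic the proof of Lemma~\ref{lemma:ovv}, which is the case $(Q,Q')=(P,G)$, the only change being that one works with the orthogonal pair $(M,M_{2,Q})$ \emph{inside} $M_Q$ in place of $(M,M_2)$ inside $G$, where $M_{2,Q}=M_2\cap M_Q$ and $\Delta_M\perp\Delta_{M_{2,Q}}=\Delta_{M_Q}\setminus\Delta_M$ because $\Delta_M\perp\Delta\setminus\Delta_P$. Throughout I regard $e_{\mathcal{H}_Q}(\mathcal V)$ as the $R$-module $\mathcal V=e(\mathcal V)$ and give $e(\mathcal V)\otimes_R(\Ind_Q^G{\bf 1})^{\mathcal U}$ the $\mathcal{H}_R$-structure of Proposition~\ref{lemma:tensor}, i.e. the restriction of the structure on $e(\mathcal V)\otimes_R(\Ind_P^G{\bf 1})^{\mathcal U}$ along the $\mathcal{H}$-stable inclusion $(\Ind_Q^G{\bf 1})^{\mathcal U}\hookrightarrow(\Ind_P^G{\bf 1})^{\mathcal U}$. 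First I would construct $\kappa_Q$ by checking that $v\mapsto v\otimes f_{Q\mathcal U}$ is right $\mathcal{H}_{M_Q^+}$-equivariant from $e_{\mathcal{H}_Q}(\mathcal V)$ into $e(\mathcal V)\otimes_R(\Ind_Q^G{\bf 1})^{\mathcal U}$ ($\mathcal{H}_{M_Q^+}$ seen in $\mathcal{H}$ via $\theta$), then invoking adjunction. For $w\in W_{M_Q^+}(1)$, Proposition~\ref{lemma:tensor}(1) makes the action diagonal on the $T^*$-basis, so $(v\otimes f_{Q\mathcal U})T^*_w=vT^*_w\otimes f_{Q\mathcal U}T^*_w$; using $(\Ind_Q^G{\bf 1})^{\mathcal U}\cong{\bf 1}_{\mathcal{H}_{M_Q}}\otimes_{\mathcal{H}_{M_Q^+},\theta}\mathcal{H}$ (the case $\sigma={\bf 1}$ of \cite[Proposition~4.4]{arXiv:1703.04921}), under which $f_{Q\mathcal U}$ corresponds to $1\otimes 1_{\mathcal{H}}$, one gets $f_{Q\mathcal U}T^*_w=f_{Q\mathcal U}$ because $T^{M_Q,*}_w$ acts as $\id$ on ${\bf 1}_{\mathcal{H}_{M_Q}}$; and decomposing $w=w_Mw_{2,Q}$ with $w_M\in W_M(1)$, $w_{2,Q}\in{}_1W_{M'_{2,Q}}\subset{}_1W_{M'_2}$ (first equality of \eqref{Wdec} for $(M,M_{2,Q})$ inside $M_Q$), equation \eqref{eq:structure} applied to the $G$-extension $e(\mathcal V)$ and to the $M_Q$-extension $e_{\mathcal{H}_Q}(\mathcal V)$ gives $vT^*_w=vT^{M,*}_{w_M}=v\cdot T^{M_Q,*}_w$. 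Hence $(v\otimes f_{Q\mathcal U})T^*_w=(v\cdot T^{M_Q,*}_w)\otimes f_{Q\mathcal U}$, which is the equivariance, and adjunction then yields the $\mathcal{H}_R$-map $\kappa_Q$ of \eqref{eq:cal}.

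Next I would prove $\kappa_Q$ bijective and deduce the embeddings. By \cite[\S4.1 and Remark~3.7]{MR3437789} (equivalently \cite[Lemma~2.18]{arXiv:1703.04921}) one has $e_{\mathcal{H}_Q}(\mathcal V)\otimes_{\mathcal{H}_{M_Q^+},\theta}\mathcal{H}=\bigoplus_{d\in{}^{\mathbb W_Q}\mathbb W}\mathcal V\otimes T_{\tilde d}$, while $G=\bigsqcup_{d\in{}^{\mathbb W_Q}\mathbb W}Q\hat d\mathcal U$ gives $e(\mathcal V)\otimes_R(\Ind_Q^G{\bf 1})^{\mathcal U}=\bigoplus_{d\in{}^{\mathbb W_Q}\mathbb W}\mathcal V\otimes f_{Q\hat d\mathcal U}$, each summand $\cong\mathcal V$. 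Since ${}^{\mathbb W_Q}\mathbb W={}^{\mathbb W_{M_{2,Q}}}\mathbb W_{M_2}\subset\mathbb W_{M_2}$, each $\tilde d$ lies in ${}_1W_{M'_2}$, so Proposition~\ref{lemma:tensor}(2) gives $\kappa_Q(v\otimes T_{\tilde d})=(v\otimes f_{Q\mathcal U})T_{\tilde d}=v\otimes f_{Q\hat d\mathcal U}$ (the analogue of \eqref{eq:ovvP} for $Q$), so $\kappa_Q$ matches the two decompositions summand by summand and is an isomorphism; the same construction applied to $Q'\supset P$ gives $\kappa_{Q'}$. For \eqref{eq:canonical2} I would take $\id_{e(\mathcal V)}$ tensored with the $\mathcal{H}$-equivariant inclusion $(\Ind_{Q'}^G{\bf 1})^{\mathcal U}\hookrightarrow(\Ind_Q^G{\bf 1})^{\mathcal U}$, $f_{Q'\mathcal U}\mapsto f_{Q'\mathcal U}$, which remains injective because that inclusion has free $R$-cokernel \cite{MR3402357}. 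Finally I would \emph{define} $\iota(Q,Q')$ to be $\kappa_Q^{-1}$ composed with \eqref{eq:canonical2} composed with $\kappa_{Q'}$: this is automatically a well-defined $\mathcal{H}_R$-embedding compatible with \eqref{eq:canonical2} by construction, and the stated formula $v\otimes 1_{\mathcal{H}}\mapsto v\otimes\theta_{Q'}(e_Q^{Q'})$ drops out of $\kappa_{Q'}(v\otimes 1_{\mathcal{H}})=v\otimes f_{Q'\mathcal U}$, the identity $f_{Q'\mathcal U}=\sum_{d\in{}^{\mathbb W_Q}\mathbb W_{Q'}}f_{Q\hat d\mathcal U}=f_{Q\mathcal U}\,\theta_{Q'}(e_Q^{Q'})$ (using \eqref{eq:QQ'U}, the analogue of \eqref{eq:ovvP}, and \eqref{eq:eQQ'2}), and $\kappa_Q(v\otimes\theta_{Q'}(e_Q^{Q'}))=v\otimes f_{Q'\mathcal U}$ (Proposition~\ref{lemma:tensor}, since $\theta_{Q'}(e_Q^{Q'})$ is a sum of $T_{\tilde d}$ with $\tilde d\in{}_1W_{M'_2}$, hence leaves the left factor fixed).

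The delicate step is the equivariance in the construction of $\kappa_Q$: because $\Delta_{M_Q}$ and $\Delta\setminus\Delta_{M_Q}$ need not be orthogonal, there is no clean triple decomposition of $W(1)$ relative to $M_Q$, so the bookkeeping must be organised around the orthogonal sub-pair $(M,M_{2,Q})$, and one has to control—exactly as in Proposition~\ref{prop:eu} and Corollary~\ref{cor:extT}—the $q$-power discrepancies between the $\mathcal{H}$-action of $T_w$ and the $\mathcal{H}_{M_Q}$-action of $T^{M_Q}_w$; working on the $T^*_w$-basis, where those discrepancies disappear, is what keeps the computation transparent. Once this is in place, bijectivity of $\kappa_Q$ and both embeddings are formal consequences of the direct-sum decompositions and the compatibilities already established, so I expect no further difficulty.
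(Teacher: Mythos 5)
Your architecture for the second half of the argument (the two direct-sum decompositions indexed by ${}^{\mathbb W_Q}\mathbb W$, defining $\iota(Q,Q')$ by transporting \eqref{eq:canonical2} through $\kappa_Q$ and $\kappa_{Q'}$, and extracting the stated formula from $f_{Q'\mathcal U}=f_{Q\mathcal U}\,\theta_{Q'}(e_Q^{Q'})$) matches the paper, but your construction of $\kappa_Q$ has a genuine gap. The tensor product in \eqref{eq:cal} is formed over $(\mathcal{H}_{M_Q^+},\theta)$, so the equivariance you must verify before invoking adjunction is $(v\otimes f_{Q\mathcal U})\,\theta(T^{M_Q}_w)=vT^{M_Q}_w\otimes f_{Q\mathcal U}$ for $w\in W_{M_Q^+}(1)$; what you actually compute is an identity for the element $T^*_w=\theta^*(T^{M_Q,*}_w)$. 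For a general $Q$ these are not interchangeable: since $\Delta_{M_Q}$ need not be orthogonal to $\Delta\setminus\Delta_{M_Q}$, one already has $\theta(T^{M_Q,*}_\lambda)=T_\lambda\neq T^*_\lambda=\theta^*(T^{M_Q,*}_\lambda)$ for $\lambda\in\Lambda(1)\cap W_{M_Q^+}(1)$ with $\ell_{M_Q}(\lambda)=0<\ell(\lambda)$. This breaks the argument in two places. First, the step ``$f_{Q\mathcal U}T^*_w=f_{Q\mathcal U}$ because $T^{M_Q,*}_w$ acts as $\id$ on ${\bf 1}_{\mathcal{H}_{M_Q}}$'' is unjustified: to move $T^*_w$ across ${\bf 1}_{\mathcal{H}_{M_Q}}\otimes_{\mathcal{H}_{M_Q^+},\theta}\mathcal{H}$ you would need $T^*_w\in\theta(\mathcal{H}_{M_Q^+})$ with preimage $T^{M_Q,*}_w$, and neither holds in general (the lower-order terms of $T^*_w$ in the $T$-basis need not even be supported on $W_{M_Q}(1)$). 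Second, even granting the displayed identity $(v\otimes f_{Q\mathcal U})T^*_w=vT^{M_Q,*}_w\otimes f_{Q\mathcal U}$, it expresses equivariance with respect to $\theta^*$, not $\theta$, so it does not feed into the adjunction defining a map out of $e_{\mathcal{H}_Q}(\mathcal V)\otimes_{\mathcal{H}_{M_Q^+},\theta}\mathcal{H}$.

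Two repairs are available. The paper's route avoids any new equivariance computation: apply Lemma \ref{lemma:ovv}(2) inside $M_Q$ (where the orthogonality $\Delta_M\perp\Delta_{M_Q}\setminus\Delta_M$ does hold) to embed $e_{\mathcal{H}_Q}(\mathcal V)$ into $\Ind_{\mathcal{H}_M}^{\mathcal{H}_Q}(\mathcal V)$ as $v\mapsto v\otimes e_P^Q$, apply the exact functor $\Ind_{\mathcal{H}_Q}^{\mathcal{H}}$ together with transitivity of parabolic induction to land in $\Ind_{\mathcal{H}_M}^{\mathcal{H}}(\mathcal V)$, and compare the resulting embedding $v\otimes 1_{\mathcal{H}}\mapsto v\otimes\theta_Q(e_P^Q)$ with the restriction of the isomorphism of Lemma \ref{lemma:ovv}(1) to $e(\mathcal V)\otimes_R(\Ind_Q^G{\bf 1})^{\mathcal U}$, which sends $v\otimes f_{Q\mathcal U}$ to the same element; matching images produces $\kappa_Q$. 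Alternatively, your computation can be salvaged by switching to the negative presentation $\Ind_{\mathcal{H}_Q}^{\mathcal{H}}(-)\cong-\otimes_{\mathcal{H}_{M_Q^-},\theta^*}\mathcal{H}$ of \cite{MR3437789}, for which the required equivariance is stated exactly in terms of $\theta^*(T^{M_Q,*}_w)=T^*_w$ for $w\in W_{M_Q^-}(1)$; your $T^*$-basis manipulations then apply as written.
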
  

  \begin{proof} We have  the $\mathcal{H}_{M_{Q},R}$-embedding   
  $$v\mapsto v\otimes e_{P} ^{Q}  : e_{\mathcal{H}_Q}(\mathcal V)\to  \mathcal V \otimes_{ \mathcal{H}_{M ^+},\theta} \mathcal{H}_Q =   \Ind_{\mathcal{H}_{M}}^{\mathcal{H}_Q} (\mathcal V)$$
 by Lemma \ref{lemma:ovv} (2)    as $\Delta_M$ is orthogonal to $\Delta_{M_{Q}}\setminus \Delta_M$.  Applying the parabolic induction  which is  exact, we get the $\mathcal{H}$-embedding  $$v\otimes  1_{\mathcal{H}}\mapsto v\otimes e_{P} ^{Q} \otimes  1_{\mathcal{H}}:  \Ind_{\mathcal{H}_Q }^{\mathcal{H}}(e_{\mathcal{H}_Q }(\mathcal V))\to  \Ind_{\mathcal{H}_Q }^{\mathcal{H}}( \Ind^{\mathcal{H}_Q }_{\mathcal{H}_M } (\mathcal V)).$$
 Note that $T^{M_{Q}}_{\tilde d} \in \mathcal{H}_{M_{Q}^+} $ for $ d\in  \mathbb W_{M_{Q}} $.   By transitivity of the parabolic induction,  it is equal to the   $\mathcal{H}_R$-embedding  
\begin{equation}\label{eq:embe}v\otimes  1_{\mathcal{H}}\mapsto v\otimes \theta_{Q}(e_{P} ^{Q} ) :\Ind_{\mathcal{H}_Q}^{\mathcal{H}}(e_{\mathcal{H}_Q}(\mathcal V))\to  \Ind_{\mathcal{H}_{M}}^{\mathcal{H}} (\mathcal V).
\end{equation}
On the other  hand we have the $\mathcal{H}_R$-embedding 
\begin{equation}\label{eq:embee}
v\otimes f_{Q\mathcal U} \mapsto  v\otimes \theta_Q(e_P^Q):e(\mathcal V)\otimes_R (\Ind_Q^G {\bf 1})^\mathcal U\to  \Ind_{\mathcal{H}_{M}}^{\mathcal{H}} (\mathcal V)\end{equation}   given by the restriction  to     $e(\mathcal V)\otimes_R (\Ind_Q^G {\bf 1})^\mathcal U$  of the $\mathcal{H}_R$-isomorphism  given in  Lemma \ref{lemma:ovv} (1),  from $e(\mathcal V)\otimes_R (\Ind_P^G {\bf 1})^\mathcal U$ to $\mathcal V\otimes_{ \mathcal{H}_{M^+},\theta}\mathcal{H}$ sending $v\otimes f_{P \mathcal U}$ to $v\otimes 1_{\mathcal{H}}$,  noting that    $v\otimes f_{Q\mathcal U}  =
   (v\otimes f_{P \mathcal U}) \theta_Q ( e_P^Q)$ by  Lemma \ref{lemma:tensor},     $f_{Q \mathcal U}= f_{P \mathcal U}\theta_Q (e_ P^Q)$ and $\theta_Q (e_ P^Q)$  acts trivially on $e(\mathcal V)$ (this is true for $T_{\tilde d}$ for  $\tilde d \in {}_1 W_{M'_2}$).  Comparing the  embeddings \eqref{eq:embe} and \eqref{eq:embee}, we get the  $\mathcal{H}_R$-isomorphism  \eqref{eq:cal}.
   
We can replace $Q$ by $Q'$ in    the $\mathcal{H}_R$-homomorphisms \eqref{eq:cal},  \eqref{eq:embe} and \eqref{eq:embee}. With  \eqref{eq:embe} we see $\Ind_{\mathcal \mathcal{H}_{Q'}}^{\mathcal{H}}(e_{\mathcal \mathcal{H}_{Q'}}(\mathcal V))$ and $\Ind_{\mathcal{H}_Q}^{\mathcal{H}}(e_{\mathcal{H}_Q}(\mathcal V))$ as   $\mathcal{H}_R$-submodules  of $\Ind_{\mathcal{H}_{M}}^{\mathcal{H}} (\mathcal V)$.  As seen in \eqref{eq:eQQ'2} we have 
 $\theta_{Q'}(e_P^{Q'})=\theta_Q(e_{P} ^{Q}) \theta_{Q'} (e_{Q} ^{Q'})$. We deduce  the $\mathcal{H}_R$-embedding \eqref{eq:canonical}. 
 
 By \eqref{eq:ovvP} for $Q$ and \eqref{eq:QQ'U},  
   $$f_{Q'U}=\sum_{d \in  {}^{\mathbb W_ Q}\mathbb W_{ Q'}} f_{QU} T_{\tilde d}=  f_{QU} 
\theta_{Q'} (e_{Q} ^{Q'})$$
  in $(\ind_Q^G {\bf 1})^\mathcal{U}$.   We deduce that the $\mathcal{H}_R$-embedding corresponding to  \eqref{eq:canonical} via  $\kappa_Q$ and $\kappa_{Q'}$   is   the   $\mathcal{H}_R$-embedding  \eqref{eq:canonical2}.
     \end{proof}
   We recall that $\Delta_P$ and $\Delta \setminus \Delta_P$ are orthogonal and that $\mathcal{V}$ is extensible to $\mathcal{H}$ of extension $e(\mathcal{V})$. 
  
     \begin{corollary}\label{cor:IHPVQ} The cokernel of the  $\mathcal{H}_R$-map 
     $$\oplus_{Q\subsetneq Q'\subset G}  \Ind_{\mathcal \mathcal{H}_{Q'}}^\mathcal{H}( e_{\\mathcal{H}_{Q'}}
(\mathcal V))\to \Ind_{\mathcal{H}_{Q}}^\mathcal{H} ( e_{\mathcal{H}_{Q}}(\mathcal V))$$
defined  by the $\iota(Q,Q') $, is isomorphic to   $e(\mathcal V)\otimes_R (\St_Q^G)^\mathcal U$ via $\kappa_Q$.
     \end{corollary}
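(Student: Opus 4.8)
The plan is to reduce everything to the exact sequence \eqref{eq:StQU} defining $(\St_Q^G)^{\mathcal{U}}$ by applying the functor $e(\mathcal{V})\otimes_R(-)$, and then to match that sequence term-by-term with the one built from the $\iota(Q,Q')$. First I would observe that Proposition~\ref{prop:ovvv} gives, for every parabolic $Q'$ with $Q\subseteq Q'\subseteq G$, a commutative square: the $\mathcal{H}_R$-isomorphism $\kappa_{Q'}\colon \Ind_{\mathcal{H}_{Q'}}^{\mathcal{H}}(e_{\mathcal{H}_{Q'}}(\mathcal{V}))\xrightarrow{\sim} e(\mathcal{V})\otimes_R(\Ind_{Q'}^G\mathbf{1})^{\mathcal{U}}$ on the source, the isomorphism $\kappa_Q$ on the target, and the vertical maps $\iota(Q,Q')$ on the left and the embedding \eqref{eq:canonical2} (which is simply $e(\mathcal{V})\otimes_R$ applied to the natural inclusion $(\Ind_{Q'}^G\mathbf{1})^{\mathcal{U}}\hookrightarrow(\Ind_Q^G\mathbf{1})^{\mathcal{U}}$) on the right. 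This is exactly the content of the last two sentences of the proof of Proposition~\ref{prop:ovvv}, so the square commutes by construction.

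Summing these squares over all $Q'$ with $Q\subsetneq Q'\subseteq G$, I obtain a commutative diagram whose top row is the $\mathcal{H}_R$-map $\bigoplus_{Q\subsetneq Q'}\Ind_{\mathcal{H}_{Q'}}^{\mathcal{H}}(e_{\mathcal{H}_{Q'}}(\mathcal{V}))\to\Ind_{\mathcal{H}_Q}^{\mathcal{H}}(e_{\mathcal{H}_Q}(\mathcal{V}))$ defined by the $\iota(Q,Q')$, whose bottom row is $\bigoplus_{Q\subsetneq Q'}e(\mathcal{V})\otimes_R(\Ind_{Q'}^G\mathbf{1})^{\mathcal{U}}\to e(\mathcal{V})\otimes_R(\Ind_Q^G\mathbf{1})^{\mathcal{U}}$, and whose vertical maps $\bigoplus\kappa_{Q'}$ and $\kappa_Q$ are isomorphisms. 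Hence the two horizontal maps have isomorphic cokernels. It therefore suffices to identify the cokernel of the bottom row with $e(\mathcal{V})\otimes_R(\St_Q^G)^{\mathcal{U}}$.

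For that last step I would argue as follows. By \cite{MR3402357}, recalled around \eqref{eq:StQU}, each $(\Ind_{Q'}^G\mathbf{1})^{\mathcal{U}}$ and $(\St_Q^G)^{\mathcal{U}}$ is a free $R$-module, and $(\St_Q^G)^{\mathcal{U}}$ is the cokernel in $R$-modules of $\bigoplus_{Q\subsetneq Q'}(\Ind_{Q'}^G\mathbf{1})^{\mathcal{U}}\to(\Ind_Q^G\mathbf{1})^{\mathcal{U}}$. Since $-\otimes_R e(\mathcal{V})$ is right exact and, applied to this sequence whose terms are free $R$-modules, commutes with the formation of this cokernel (the kernel of the surjection $(\Ind_Q^G\mathbf{1})^{\mathcal{U}}\twoheadrightarrow(\St_Q^G)^{\mathcal{U}}$ is a direct summand as $R$-modules, both sides being free), the $R$-module $e(\mathcal{V})\otimes_R(\St_Q^G)^{\mathcal{U}}$ is the cokernel of the bottom row; and by Proposition~\ref{lemma:tensor} and Corollary~\ref{cor:VS} the diagonal $T_w^*$-action makes this identification $\mathcal{H}_R$-equivariant, the bottom-row maps being $\mathcal{H}_R$-equivariant for that action. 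Combining with the previous paragraph gives the claimed isomorphism via $\kappa_Q$. The only point needing a little care—and the part I would expect to be the main obstacle—is checking that the cokernel really is computed compatibly on both the $R$-module and $\mathcal{H}_R$-module levels, i.e. that freeness over $R$ genuinely lets $e(\mathcal{V})\otimes_R(-)$ preserve the exact sequence despite $(-)^{\mathcal{U}}$ being only left exact; but this is guaranteed precisely by the freeness statement from \cite{MR3402357} together with Corollary~\ref{cor:VS}.
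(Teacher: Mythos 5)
Your argument is correct and is exactly the route the paper intends: Proposition~\ref{prop:ovvv} supplies the commuting squares relating $\iota(Q,Q')$ to the maps \eqref{eq:canonical2} via the isomorphisms $\kappa_{Q'}$, $\kappa_Q$, and the cokernel of the bottom row is $e(\mathcal V)\otimes_R(\St_Q^G)^{\mathcal U}$ by \eqref{eq:StQU} and Corollary~\ref{cor:VS}. Your final worry is in fact unnecessary: $e(\mathcal V)\otimes_R(-)$ always preserves cokernels by right exactness alone, so the freeness from \cite{MR3402357} is not needed for this step (it matters elsewhere, e.g.\ for injectivity statements), and the $T_w^*$-equivariance of the surjection in \eqref{eq:StQU} identifies the quotient $\mathcal{H}_R$-structure with the diagonal one of Corollary~\ref{cor:VS}.
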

     
\subsection{Invariants in the tensor product}
We return to the setting where $P=MN$ is a standard  parabolic subgroup of $G$, $\sigma$ is a smooth $R$-representation of $M$  with $P(\sigma)=G$ of extension $e(\sigma)$ to $G$, and $Q$ a parabolic subgroup of $G$ containing $P$. We still assume that $\Delta_P$ and $\Delta\setminus \Delta_P$ are orthogonal.

The $\mathcal{H}_R$-modules $e(\sigma^{\mathcal{U}_M})=e(\sigma)^\mathcal{U}$ are equal  (Theorem \ref{thm:ouf}). We  compute $I_G(P,\sigma,Q)^\mathcal{U}=(e (\sigma)\otimes_R \St_Q^G)^{\mathcal U}$.

\begin{theorem}\label{thm:invariant}
The natural linear maps $e(\sigma)^\mathcal{U}\otimes _R (\Ind_Q^G {\bf 1})^\mathcal{U}\to (e(\sigma)\otimes _R \Ind_Q^G {\bf 1})^\mathcal{U}$ and $e(\sigma)^\mathcal{U}\otimes _R (\St_Q^G)^\mathcal{U}\to (e(\sigma)\otimes _R \St_Q^G)^\mathcal{U}$ are isomorphisms. 
  \end{theorem}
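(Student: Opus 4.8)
The plan is to establish the two stated isomorphisms separately, deducing the Steinberg one from the parabolic induction one. For parabolic induction, I would first use the projection formula to identify $e(\sigma)\otimes_R\Ind_Q^G{\bf 1}$ with $\Ind_Q^G e_Q(\sigma)$, where $e_Q(\sigma)=e(\sigma)|_Q$. Because $\Delta_P$ and $\Delta\setminus\Delta_P$ are orthogonal, the roots of the unipotent radical $N_Q$ of $Q$ lie in $\Phi_{M_2}^+$, so $N_Q\subset M_2'$ and $e_Q(\sigma)$ is inflated from the extension $e_{M_Q}(\sigma)$ of $\sigma$ to $M_Q$ (which exists since $P(\sigma)=G$); moreover $\Delta_M$ is orthogonal to $\Delta_{M_Q}\setminus\Delta_M$, so Theorem~\ref{thm:ouf} applied inside $M_Q$ gives $(e_Q(\sigma))^{\mathcal{U}_{M_Q}}=e_{\mathcal{H}_Q}(\sigma^{\mathcal{U}_M})$. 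Then \cite[Proposition~4.4]{arXiv:1703.04921} identifies $(e(\sigma)\otimes_R\Ind_Q^G{\bf 1})^{\mathcal{U}}=(\Ind_Q^G e_Q(\sigma))^{\mathcal{U}}$ with $\Ind_{\mathcal{H}_Q}^{\mathcal{H}}(e_{\mathcal{H}_Q}(\sigma^{\mathcal{U}_M}))$, while $\kappa_Q$ of Proposition~\ref{prop:ovvv} identifies this same $\mathcal{H}$-module with $e(\sigma)^{\mathcal{U}}\otimes_R(\Ind_Q^G{\bf 1})^{\mathcal{U}}$.

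It then remains to recognise the natural map of the theorem as the composite of these two identifications. For this I would check, first, that it is $\mathcal{H}_R$-equivariant for the diagonal $T_w^*$-action of Proposition~\ref{lemma:tensor}: for $s\in{}_1S_M^{\aff}$ (resp.\ $s\in{}_1S_{M_2}^{\aff}$) the operators $T(n_s(u))$ are given by translations by elements of $M'$ (resp.\ $M_2'$), on which $\Ind_Q^G{\bf 1}$ (resp.\ $e(\sigma)$) is trivial, so on $\mathcal{U}$-fixed vectors they act through a single tensor factor (and on that factor as the corresponding $\mathcal{H}_M$- resp.\ $\mathcal{H}_{M_2}$-operator, by Proposition~\ref{prop:eu}), matching the diagonal action; for length-zero elements the action is visibly diagonal. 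Second, both maps send the $\mathcal{H}$-generator $v\otimes f_{Q\mathcal{U}}$ to the function with support $Q\mathcal{U}$ and value $v$ on $\mathcal{U}$. Hence they agree, which settles the $\Ind_Q^G{\bf 1}$ case.

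For the Steinberg case I would start from the presentation of $I_G(P,\sigma,Q)$ recalled in \S\ref{S:2.22} as the cokernel of the natural maps $\bigoplus_{Q\subsetneq Q'\subseteq G}\Ind_{Q'}^G e_{Q'}(\sigma)\to\Ind_Q^G e_Q(\sigma)$, which is $e(\sigma)\otimes_R(-)$ applied to the Steinberg presentation of $\St_Q^G$. Applying $(-)^{\mathcal{U}}$ and using the parabolic induction case for each $Q'$ — compatibly, because under the identifications the inclusions correspond to the maps $\iota(Q,Q')$ by \eqref{eq:canonical2} — Corollary~\ref{cor:IHPVQ} shows that the cokernel of $\bigoplus_{Q'}(\Ind_{Q'}^G e_{Q'}(\sigma))^{\mathcal{U}}\to(\Ind_Q^G e_Q(\sigma))^{\mathcal{U}}$ is $e(\sigma)^{\mathcal{U}}\otimes_R(\St_Q^G)^{\mathcal{U}}$. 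Comparing this with the exact sequence produced by left exactness of $(-)^{\mathcal{U}}$ applied to the presentation, a short diagram chase reduces the theorem to two statements: (a) $\bigl(\sum_{Q'}\Ind_{Q'}^G e_{Q'}(\sigma)\bigr)^{\mathcal{U}}=\sum_{Q'}(\Ind_{Q'}^G e_{Q'}(\sigma))^{\mathcal{U}}$ inside $(\Ind_Q^G e_Q(\sigma))^{\mathcal{U}}$, and (b) the map $(\Ind_Q^G e_Q(\sigma))^{\mathcal{U}}\to I_G(P,\sigma,Q)^{\mathcal{U}}$ is surjective.

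The heart of the proof — and the main obstacle — is (a) and (b): since $(-)^{\mathcal{U}}$ is only left exact, neither sum-with-invariants nor surjectivity onto a quotient is formal, and this is where the orthogonality hypothesis must genuinely be used. For $\sigma={\bf 1}$ both hold, being precisely the assertion of \cite{MR3402357} that $(\St_Q^G)^{\mathcal{U}}$ is the cokernel \eqref{eq:StQU}. To reduce the general case to that one I would exploit that every $\Ind_{Q'}^G{\bf 1}$ and $\St_Q^G$ is trivial on $M'$ and inflated from $M_2$ (Example~\ref{ex:extrivial}; in particular $\St_Q^G=e(\St_{Q\cap M_2}^{M_2})$), whereas $e(\sigma)$ is trivial on $M_2'$, together with the Iwahori factorization $\mathcal{U}=\mathcal{U}_M\mathcal{U}_{M_2'}$ in which $\mathcal{U}_{M_2'}$ fixes the $e(\sigma)$-factor. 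This should let one transport (a) and (b) to the $M_2$-side, where the invariant modules $(\Ind_{Q'\cap M_2}^{M_2}{\bf 1})^{\mathcal{U}_{M_2}}$ and $(\St_{Q\cap M_2}^{M_2})^{\mathcal{U}_{M_2}}$ are free $R$-modules by \cite{MR3402357}, so that tensoring the $M_2$-side presentation of invariants over $R$ with $\sigma^{\mathcal{U}_M}$ stays exact. The delicate point throughout is controlling the interaction of $R$-tensor products with invariants under the pro-$p$ group $\mathcal{U}$ over an arbitrary $R$; the $R$-freeness of the $M_2$-side invariants from \cite{MR3402357} is exactly what makes this go through. Once (a) and (b) are in hand, the diagram chase of the previous paragraph gives the isomorphism, and the $\mathcal{H}_R$-equivariance established in the parabolic induction case transfers verbatim.
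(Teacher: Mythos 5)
Your treatment of the first map is essentially sound: identifying $e(\sigma)\otimes_R\Ind_Q^G{\bf 1}$ with $\Ind_Q^Ge_Q(\sigma)$, applying \cite[Proposition 4.4]{arXiv:1703.04921}, Theorem \ref{thm:ouf} inside $M_Q$, and $\kappa_Q$ of Proposition \ref{prop:ovvv}, and then checking that the composite agrees with the natural map on the generators $v\otimes f_{Q\mathcal{U}}$ after verifying equivariance for the diagonal action of Proposition \ref{lemma:tensor}, is a legitimate (Hecke-theoretic) route; it is in fact closer to how the paper later proves Theorem \ref{thm:main8} than to the paper's proof of the present theorem, which argues directly with representations over $\mathbb{Z}$ via the Iwahori factorization $\mathcal{U}=\mathcal{U}_{M'}\mathcal{U}_{M_2}=\mathcal{U}_{M'_2}\mathcal{U}_M$ and a permutation-module argument. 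There is no circularity in your inputs, so this half could be made to work.

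The genuine gap is exactly where you locate "the heart", namely (a) and (b) for the Steinberg case. Your proposed justification --- restrict to the $M_2$-side, use that $\mathcal{U}_{M'_2}$ acts trivially on the $e(\sigma)$-factor, and invoke from \cite{MR3402357} that $(\Ind_{Q'\cap M_2}^{M_2}{\bf 1})^{\mathcal{U}_{M_2}}$ and $(\St_{Q\cap M_2}^{M_2})^{\mathcal{U}_{M_2}}$ are free $R$-modules and that \eqref{eq:StQU} is exact, "so that tensoring the $M_2$-side presentation of invariants with $\sigma^{\mathcal{U}_M}$ stays exact" --- does not prove what is needed. Right-exactness of $-\otimes_R\sigma^{\mathcal{U}_M}$ only computes the source $(\St_{Q\cap M_2}^{M_2})^{\mathcal{U}_{M_2}}\otimes_R V^{\mathcal{U}_M}$ of the natural map; the problem is the target, i.e.\ the commutation $(\St_{Q\cap M_2}^{M_2}\otimes_RV)^{\mathcal{U}_{M'_2}}\cong(\St_{Q\cap M_2}^{M_2})^{\mathcal{U}_{M'_2}}\otimes_RV$ for an arbitrary $R$-module $V$ with trivial action, which is equivalent to your (a)+(b) and is not formal: invariants under a pro-$p$ group do not commute with tensoring by a trivial module (the $\Ind$-terms are fine because they are permutation-type modules, but $\St$ is only a quotient of one, and after tensoring with $V$ a connecting $H^1$-term can a priori obstruct both surjectivity (b) and the kernel computation (a)); $R$-freeness of the invariants is irrelevant to this. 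The paper closes exactly this gap with a genuinely stronger input from \cite{MR3263032}, \cite{MR3402357}: the $\mathcal{B}$-equivariant embedding $\St_Q^G\mathbb{Z}\xrightarrow{\iota}C_c^\infty(\mathcal{B},\mathfrak{M})$ with $\mathfrak{M}$ a finite free $\mathbb{Z}$-module inducing an isomorphism on $\mathcal{B}$-invariants; one tensors $\iota$ with $V$, uses the direct-factor Lemma \ref{lemma:facteur}(2) and a surjectivity argument onto $\mathfrak{M}\otimes V$ to get $(\St_Q^G\mathbb{Z}\otimes V)^{\mathcal{U}'}\cong(\St_Q^G\mathbb{Z})^{\mathcal{U}'}\otimes V$ (and likewise for any group between $\mathcal{U}'$ and $\mathcal{B}$), which applied inside $M_2$ to $\mathcal{U}_{M'_2}$, followed by $\mathcal{U}_M$-invariants, yields the theorem. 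Without importing this embedding (or an equivalent structural fact about the integral Steinberg representation), your steps (a) and (b) remain unproved.
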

  
\begin{proof} We need some preliminaries. In  \cite{MR3263032}, \cite{MR3402357}, is introduced a finite free $\mathbb Z$-module $\mathfrak M$ (depending on $\Delta_Q$) and a $\mathcal B$-equivariant embedding $\St_Q^G \mathbb Z \xrightarrow{\iota} C_c^\infty(\mathcal B, \mathfrak M)$ (we indicate the coefficient ring in the Steinberg representation)  which induces an isomorphism  $(\St_Q^G \mathbb Z)^{\mathcal B}\simeq  C_c^\infty(\mathcal B, \mathfrak M) ^{\mathcal B}$. 
\begin{lemma}  \label{lemma:facteur}
\begin{enumerate}
\item $(\Ind_Q^G \mathbb Z)^{\mathcal B}$ is a direct factor of  $\Ind_Q^G \mathbb Z $.
\item $(\St_Q^G \mathbb Z)^{\mathcal B}$  is a direct factor of $\St_Q^G \mathbb Z$.
\end{enumerate}\end{lemma}
\begin{proof} (1)  \cite[Example~2.2]{arXiv:1703.05599}.

(2) As  $\mathfrak M$ is a free $\mathbb Z$-module, $C_c^\infty(\mathcal B, \mathfrak M) ^{\mathcal B}$ is a direct factor of  $C_c^\infty(\mathcal B, \mathfrak M) $. Consequently,  $\iota((\St_Q^G \mathbb Z)^{\mathcal B})= C_c^\infty(\mathcal B, \mathfrak M) ^{\mathcal B}$    is a direct factor of  $\iota(\St_Q^G \mathbb Z)$. As $\iota$ is injective, we get (2).
\end{proof}
We  prove now Theorem \ref{thm:invariant}. We may and do assume that $\sigma$ is $e$-minimal (because $P(\sigma)=P(\sigma_{\min}), e(\sigma)=e(\sigma_{\min})$) so that $\Delta_M$ and $  \Delta \setminus \Delta_M$ are orthogonal and we use the same notation  as  in \S \ref{S:8.0} in particular  $M_2= M_{  \Delta \setminus \Delta_M}$.  Let $V$ be the space of  $e(\sigma)$ on which  $M'_2$ acts trivially. 
The restriction of $\Ind_Q^G \mathbb Z $ to $M_2$ is $\Ind_{Q\cap M_2}^{M_2} \mathbb Z $, that of 
  $\St_{Q }^{G} \mathbb Z $  is $\St_{Q\cap M_2}^{M_2} \mathbb Z $. 
  
As in \cite[Example~2.2]{arXiv:1703.05599},$((\Ind_{Q\cap M_2}^{M_2} \mathbb Z)\otimes V)^{\mathcal{U}_{M'_2}}  \simeq (\Ind_{Q\cap M_2}^{M_2} \mathbb Z)^{\mathcal{U}_{M'_2}} \otimes V$.  We have 
$$(\Ind_{Q\cap M_2}^{M_2} \mathbb Z)^{\mathcal{U}_{M'_2}} =(\Ind_{Q\cap M_2}^{M_2} \mathbb Z)^{\mathcal{U}_{M_2}} =(\Ind_{Q}^{G} \mathbb Z)^{\mathcal{U} } .$$
The first equality follows from $M_2=(Q\cap M_2)\mathbb W_{M_2}\mathcal{U}_{M_2}$,   $\mathcal{U}_{M_2}=Z^1\mathcal{U}_{M'_2}$ and $Z^1$ normalizes $\mathcal{U}_{M'_2}$ and is normalized by $\mathbb W_{M_2}$. The second equality follows from  $\mathcal{U}=\mathcal{U}_{M'} \mathcal{U} _{M_2}$ and $\Ind_Q^G \mathbb Z$ is trivial on $M'$. Therefore 
$((\Ind_{Q }^{G} \mathbb Z)\otimes V)^{\mathcal{U}_{M'_2}}  \simeq (\Ind_{Q }^{G} \mathbb Z)^{\mathcal{U} } \otimes V$. Taking now fixed points under $\mathcal{U}_M$, as $\mathcal{U}=\mathcal{U}_{M'_2}\mathcal{U}_M$, 
$$((\Ind_{Q }^{G} \mathbb Z)\otimes V)^{\mathcal{U} }  \simeq ((\Ind_{Q }^{G} \mathbb Z)^{\mathcal{U} } \otimes V)^{\mathcal{U}_M}= (\Ind_{Q }^{G} \mathbb Z)^{\mathcal{U} } \otimes V ^{\mathcal{U}_M}$$
The equality  uses that the $\mathbb Z$-module   $\Ind_{Q }^{G} \mathbb Z$ is free. We get the first part of the theorem as $(\Ind_{Q }^{G} \mathbb Z)^{\mathcal{U} } \otimes V ^{\mathcal{U}_M}\simeq (\Ind_{Q }^{G} R)^{\mathcal{U} } \otimes _RV ^{\mathcal{U}_M}$.

 Tensoring with $R$
   the usual exact sequence defining $\St_Q^G \mathbb Z$ gives an isomorphism $\St_Q^G \mathbb Z \otimes R \simeq \St_Q^G R$ and in loc.\ cit.\ it is proved that the resulting map $\St_Q^G R \xrightarrow{\iota_R} C^\infty(\mathcal B, \mathfrak M \otimes R)$ is also injective. Their proof in no way uses the ring structure of $R$, and for any $\mathbb Z$-module $V$, tensoring with $V$ gives a $\mathcal B$-equivariant  embedding $\St_Q^G \mathbb Z \otimes V \xrightarrow{\iota_V} C_c^\infty(\mathcal B, \mathfrak M \otimes V)$. 
The natural map $(\St_Q^G \mathbb Z)^{\mathcal B}\otimes V \to \St_Q^G \mathbb Z\otimes V$ is also injective by Lemma \ref{lemma:facteur} (2). Taking $\mathcal B$-fixed points we get inclusions 
 \begin{equation}\label{eq:|B}(\St_Q^G \mathbb Z)^{\mathcal B}\otimes V \to (\St_Q^G \mathbb Z\otimes V)^{\mathcal B} \to C_c^\infty(\mathcal B, \mathfrak M \otimes V)^{\mathcal B} \simeq \mathfrak M \otimes V.
 \end{equation}
The composite map is surjective, so the inclusions are isomorphisms. The image of $\iota_V$ consists of functions which are left $Z^0$-invariant, and  $\mathcal B=Z^0 \mathcal{U}'$ where $\mathcal{U}'=G'\cap \mathcal{U}$.  It follows that $\iota$ yields an isomorphism $(\St_Q^G \mathbb Z)^{\mathcal{U}'}\simeq  C_c^\infty(Z^0\backslash \mathcal B, \mathfrak M) ^{\mathcal{U}'}$ again consisting of the constant functions. So that in particular $(\St_Q^G \mathbb Z)^{\mathcal{U}'}=(\St_Q^G \mathbb Z)^{\mathcal B}$ and reasoning as previously we get isomorphisms
\begin{equation}\label{eq:|U'}(\St_Q^G \mathbb Z)^{\mathcal{U}'}\otimes V \simeq  (\St_Q^G \mathbb Z\otimes V)^{\mathcal{U}'} \simeq \mathfrak M \otimes V.
 \end{equation}
The equality $(\St_Q^G \mathbb Z)^{\mathcal{U}'}=(\St_Q^G \mathbb Z)^{\mathcal B}$ and the  isomorphisms remain true  when we  replace $\mathcal{U}'$ by any  group between  $\mathcal B$  and  $\mathcal{U}' $. 
 We apply these  results to $\St_{Q\cap M_2}^{M_2} \mathbb Z \otimes V$ to  get that the natural map $(\St_{Q\cap M_2}^{M_2} \mathbb Z)^{\mathcal{U}_{M'_2}} \otimes V\to  (\St_{Q\cap M_2}^{M_2} \mathbb Z \otimes V)^{\mathcal{U}_{M'_2}}$ is an isomorphism and also that $(\St_{Q\cap M_2}^{M_2} \mathbb Z)^{\mathcal{U}_{M'_2}}=(\St_{Q\cap M_2}^{M_2} \mathbb Z)^{\mathcal{U}_{M_2}}$. We have $\mathcal{U}=\mathcal{U}_{M'}\mathcal{U}_{M_2}$ so $(\St_{Q}^{G} \mathbb Z)^{\mathcal{U}}=(\St_{Q\cap M_2}^{M_2} \mathbb Z)^{\mathcal{U}_{M_2}}$ and the natural map $(\St_{Q}^{G} \mathbb Z)^{\mathcal{U} } \otimes V\to  (\St_{Q }^{G} \mathbb Z \otimes V)^{\mathcal{U}_{M'_2}}$ is an isomorphism. The $\mathbb Z$-module  $(\St_{Q}^{G} \mathbb Z)^{\mathcal{U} }$ is free and the $V^{\mathcal{U}_M}= V^{\mathcal{U}}$, so taking fixed points under $\mathcal{U}_M$, we get $(\St_{Q}^{G} \mathbb Z)^{\mathcal{U} } \otimes V^\mathcal{U}\simeq   (\St_{Q }^{G} \mathbb Z \otimes V)^{\mathcal{U}}$. We have $\St_{Q }^{G} \mathbb Z \otimes V=\St_{Q }^{G} R \otimes_R V$ and $(\St_{Q}^{G} \mathbb Z)^{\mathcal{U} } \otimes V^\mathcal{U}= (\St_{Q}^{G} R)^{\mathcal{U} } \otimes_R V^\mathcal{U}$. This ends the proof of the theorem.
 \end{proof}  
\begin{theorem} \label{thm:main8} The $\mathcal{H}_R$-modules 
$(e(\sigma) \otimes_R \Ind_Q^G{\bf 1} )^{\mathcal U}=
e(\sigma)^\mathcal{U}\otimes_R(\Ind_Q^G{\bf 1} )^{\mathcal U}$  are equal. The $\mathcal{H}_R$-modules 
$(e (\sigma)\otimes_R \St_Q^G)^{\mathcal U}=e(\sigma )^\mathcal{U}\otimes_R (\St_Q^G )^\mathcal{U}$ are also equal.
\end{theorem}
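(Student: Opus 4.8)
The plan is to upgrade the $R$-module isomorphisms of Theorem~\ref{thm:invariant} to isomorphisms of $\mathcal{H}_R$-modules. Recall that $e(\sigma)^\mathcal{U}\otimes_R(\Ind_Q^G\mathbf{1})^\mathcal{U}$ carries the $\mathcal{H}_R$-structure of Proposition~\ref{lemma:tensor}, namely the diagonal action of the $T_w^*$, that $e(\sigma)^\mathcal{U}\otimes_R(\St_Q^G)^\mathcal{U}$ carries the one of Corollary~\ref{cor:VS}, and that $(e(\sigma)\otimes_R\Ind_Q^G\mathbf{1})^\mathcal{U}$ and $(e(\sigma)\otimes_R\St_Q^G)^\mathcal{U}$ carry the $\mathcal{H}_R$-action coming from the diagonal $G$-action and the functor $(-)^\mathcal{U}$. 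It therefore suffices to check that the natural map $v\otimes f\mapsto v\otimes f$, which is bijective by Theorem~\ref{thm:invariant}, intertwines these two structures.

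Since both sides are right $\mathcal{H}_R$-modules and $T^*_{ww'}=T^*_wT^*_{w'}$ whenever $\ell(ww')=\ell(w)+\ell(w')$, it is enough to verify the intertwining relation for $T^*_w$ when $w$ has length $0$ or when $w=\tilde s$ is an admissible lift of some $s\in S^{\aff}$; a general $T^*_w$ is a product of such factors, obtained by lifting a reduced expression $w=us_1\cdots s_r$ with $u\in\Omega(1)$. If $\ell(w)=0$ then $T^*_w=T(\hat w)$ with $\hat w$ normalizing $\mathcal{U}$, so it acts on $\mathcal{U}$-invariants by $\xi\mapsto\hat w^{-1}\xi$, which is diagonal and matches $vT^*_w\otimes fT^*_w$. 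For $w=\tilde s$ we use the partition $S^{\aff}=S_M^{\aff}\sqcup S_{M_2}^{\aff}$ afforded by the orthogonality of $\Delta_M$ and $\Delta_{M_2}$ (\S\ref{S:8.0}). If $s\in S_M^{\aff}$ we choose an admissible lift $\hat s\in\mathcal{N}\cap M'$; by the product decompositions~\eqref{eq:nUn}--\eqref{eq:UnU} a set of representatives of $\mathcal{U}/(\mathcal{U}\cap\hat s^{-1}\mathcal{U}\hat s)$ may be taken inside $\mathcal{U}_{M'}$, and since $\Ind_Q^G\mathbf{1}$ (resp.\ $\St_Q^G$) is trivial on $M'$, each summand $y\hat s^{-1}(v\otimes f)$ equals $(y\hat s^{-1}v)\otimes f$; hence $(v\otimes f)T(\hat s)=(vT(\hat s))\otimes f$. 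The term $c_{\tilde s}$ is supported on $Z'_{k,s}$, which lies in $M'$, so it too affects only the first factor, giving $(v\otimes f)T^*_{\tilde s}=(vT^*_{\tilde s})\otimes f$; and $fT^*_{\tilde s}=f$, because $fT(\hat s)=q_sf$ (as $q_{M_2}$ is trivial on $M'$ by Lemma~\ref{lemma:0}) and $fc_{\tilde s}=(q_s-1)f$, so the right-hand side is $(vT^*_{\tilde s})\otimes(fT^*_{\tilde s})$, as required. The case $s\in S_{M_2}^{\aff}$ is symmetric, interchanging the roles of $M'$ and $M'_2$ and using that $M'_2$ acts trivially on $e(\sigma)$: there one finds $(v\otimes f)T^*_{\tilde s}=v\otimes fT^*_{\tilde s}$ with $vT^*_{\tilde s}=v$.

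This shows the natural map is $\mathcal{H}_R$-linear, hence an $\mathcal{H}_R$-module isomorphism, which is the first equality. The second equality is obtained by the identical computation with $\St_Q^G$ in place of $\Ind_Q^G\mathbf{1}$: the latter is also a smooth $R[G]$-representation trivial on $M'$ whose $\mathcal{U}$-invariants form a free $R$-module by~\cite{MR3402357}, so Theorem~\ref{thm:invariant} again applies; alternatively it follows from the first equality together with the left exactness of $(-)^\mathcal{U}$ applied to~\eqref{eq:StQU}. I expect the main obstacle to be the bookkeeping in the simple-reflection step --- verifying that for $s\in S_M^{\aff}$ the lift $\hat s$ can genuinely be chosen in $\mathcal{N}\cap M'$ and that both the relevant coset space and the support of $c_{\tilde s}$ lie in $M'$, so that the diagonal $G$-action on $v\otimes f$ affects only the first tensor factor --- which rests on the Iwahori/product decompositions of \S\ref{S:8.0} and the triviality of $q_{M_2}$ on $M'$.
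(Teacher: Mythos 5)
Your proof is correct, but it takes a genuinely different route from the paper. You verify directly, on the Iwahori--Matsumoto generators of $\mathcal{H}$ (length-zero elements and admissible lifts $\tilde s$, $s\in S^{\aff}=S_M^{\aff}\sqcup S_{M_2}^{\aff}$), that the natural bijection of Theorem~\ref{thm:invariant} intertwines the diagonal $T_w^*$-structure of Proposition~\ref{lemma:tensor}/Corollary~\ref{cor:VS} with the $\mathcal{H}$-action on $\mathcal{U}$-invariants of the tensor-product representation; the key inputs are that for $s\in S_M^{\aff}$ the admissible lift, the coset representatives of $\mathcal{U}/(\mathcal{U}\cap \hat s^{-1}\mathcal{U}\hat s)$ and the support of $c_{\tilde s}$ all lie in $M'$, on which $\Ind_Q^G{\bf 1}$ and $\St_Q^G$ are trivial, and symmetrically for $S_{M_2}^{\aff}$ using the triviality of $M'_2$ on $e(\sigma)$ --- all of which is justified by \eqref{eq:nUn}--\eqref{eq:UnU} and the description of admissible lifts, exactly as you anticipate. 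The paper instead argues formally: for $\Ind_Q^G{\bf 1}$ it chains Theorem~\ref{thm:ouf}, the isomorphism $\kappa_Q$ of Proposition~\ref{prop:ovvv}, \cite[Proposition 4.4]{arXiv:1703.04921} and \cite[Lemma~2.5]{arXiv:1703.05599}, and for $\St_Q^G$ it compares cokernels of the maps $\alpha_Q^{\mathcal U}=\beta_Q$ obtained from \eqref{eq:StQU}, using right-exactness of $\otimes_R$ and the already-known $R$-module equality. Your computation is more self-contained and explains concretely why the diagonal $T^*$-action is the correct structure, at the price of redoing the Steinberg case by hand; the paper's route reuses machinery it needs anyway (notably the identification with $\Ind_{\mathcal{H}_Q}^{\mathcal{H}}(e_{\mathcal{H}_Q}(\sigma^{\mathcal{U}_M}))$, which feeds into Theorem~\ref{thm:main}) and disposes of the Steinberg case purely formally. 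One caveat: your parenthetical alternative for the second equality, ``it follows from the first equality together with the left exactness of $(-)^\mathcal{U}$ applied to \eqref{eq:StQU}'', is too quick as stated, since $(-)^\mathcal{U}$ does not preserve the quotient defining $\St_Q^G$; to make that variant work you must, as the paper does, compare the cokernel of $\beta_Q$ with $(e(\sigma)\otimes_R\St_Q^G)^\mathcal{U}$ using the $R$-module isomorphism of Theorem~\ref{thm:invariant}. Since your main argument does not rely on this aside, the proof stands.
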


\begin{proof}  We already know that the $R$-modules are equal (Theorem \ref{thm:invariant}). 
We show that they are equal as $\mathcal{H}$-modules. The $\mathcal{H}_R$-modules 
$  e(\sigma)^\mathcal{U}\otimes_R(\Ind_Q^G{\bf 1} ) =  e_\mathcal{H}(\sigma^{\mathcal{U}_M})^\mathcal{U}\otimes_R(\Ind_Q^G{\bf 1} )^{\mathcal U}$ are equal  (Theorem~\ref{thm:ouf}), they are isomorphic to 
$\Ind_{\mathcal{H}_Q}^\mathcal{H}(e_{\mathcal{H}_Q}(\sigma^{\mathcal{U}_M}))$ (Proposition \ref{prop:ovvv}), to  $(\Ind_Q^G (e_Q(\sigma)))^\mathcal{U}$ \cite[Proposition 4.4]{arXiv:1703.04921} and to $ (e(\sigma)\otimes_R \Ind_Q^G{\bf 1})^\mathcal{U}$  \cite[Lemma~2.5]{arXiv:1703.05599}). We deduce that the $\mathcal{H}_R$-modules
$e (\sigma)^{\mathcal U}\otimes_R (\Ind_{Q} ^{G} {\bf 1})^\mathcal{U}= (e (\sigma)\otimes_R \Ind_{Q} ^{G} {\bf 1})^{\mathcal U}$ are equal. The same is true when $Q$ is replaced by a parabolic subgroup $Q'$ of $G$ containing $Q$.     The representation $e (\sigma)\otimes_R \St_Q^G$  is the cokernel of the natural $R[G]$-map
  \begin{align*} \oplus _{Q\subsetneq Q'}e(\sigma) \otimes_R\Ind_{Q'}^G{\bf 1} \xrightarrow{\alpha_Q} e(\sigma) \otimes_R\Ind_Q^G{\bf 1} 
 \end{align*}
and the $\mathcal{H}_R$-module  
 $e (\sigma)^{\mathcal U}\otimes_R (\St_Q^G)^{\mathcal U}$ is the cokernel of the natural $\mathcal{H}_R$-map
  \begin{align*}  \oplus _{Q\subsetneq Q'}e(\sigma)^\mathcal{U} \otimes_R(\Ind_{Q'}^G{\bf 1})^{\mathcal U}\xrightarrow{\beta_Q} e(\sigma)^\mathcal{U}\otimes_R(\Ind_Q^G{\bf 1} )^{\mathcal U}
 \end{align*}
obtained by tensoring \eqref{eq:StQU} by $e(\sigma)^\mathcal{U}$ over $R$, because the tensor product is right exact.  
The  maps $\beta_Q=\alpha_Q^U$ are equal and the $R$-modules 
$(\sigma)^{\mathcal U}\otimes_R(\St_{Q} ^{G} )^\mathcal{U}= (e (\sigma)\otimes_R \St_{Q} ^{G})^{\mathcal{U} }$ are equal. This implies that the $\mathcal{H}_R$-modules $(\sigma)^{\mathcal U}\otimes_R(\St_{Q} ^{G} )^\mathcal{U}= (e (\sigma)\otimes_R \St_{Q} ^{G})^{\mathcal{U} }$ are equal.   \end{proof}

\begin{remark}\label{rem:surj}
The proof shows that the  representations $e(\sigma)\otimes_R \Ind_Q^G {\bf 1}$ and $e(\sigma)\otimes \St_Q^G$ of $G$ are  generated by their $\mathcal{U}$-fixed vectors if the representation $\sigma$ of $M$ is generated by its $\mathcal{U}_M$-fixed vectors.
Indeed,  the $R$-modules $e(\sigma)^\mathcal{U}=\sigma^{\mathcal{U}_M}, (\Ind_Q^G {\bf 1})^{\mathcal{U}_{M_2'}}=(\Ind_Q^G {\bf 1})^\mathcal{U}$ are equal. If $\sigma^{\mathcal{U}_M}$ generates $\sigma$, then  $e(\sigma)$   is generated by  $e(\sigma)^\mathcal{U}$. 
The representation  $\Ind_Q^G {\bf 1}|_{M'_2}$ is generated by  $(\Ind_Q^G {\bf 1})^{\mathcal{U}}$ (this follows from the lemma below), we have $G=MM'_2$ and $M'_2$ acts trivially on $e(\sigma)$. Therefore   the $R[G]$-module generated by $\sigma^{\mathcal{U}} \otimes_R (\Ind_Q^G {\bf 1})^{\mathcal{U}}$ is $e(\sigma)\otimes_R \Ind_Q^G {\bf 1}$. As $e(\sigma)\otimes_R \St_Q^G$ is a quotient of $e(\sigma)\otimes_R \Ind_Q^G {\bf 1}$,  the $R[G]$-module generated by $\sigma^{\mathcal{U}} \otimes_R (\St_Q^G)^{\mathcal{U}}$ is $e(\sigma)\otimes_R \St_Q^G$.
\end{remark}
 
 \begin{lemma}For any standard  parabolic subgroup $P$ of $G$, the representation  $\Ind_P^G {\bf 1}|_{G'}$  is generated by its $\mathcal{U}$-fixed vectors.  
  \end{lemma}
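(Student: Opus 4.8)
The plan is to realise $\Ind_P^G{\bf 1}$ as the space $C^\infty(P\backslash G)$ of locally constant $R$-valued functions on the compact totally disconnected space $P\backslash G$, with $G'$ acting by right translation, and to let $V_0\subseteq C^\infty(P\backslash G)$ be the $R[G']$-submodule generated by $(\Ind_P^G{\bf 1})^{\mathcal U}$. Recalling $G=\sqcup_{d\in{}^{\mathbb W_P}\mathbb W}P\hat d\,\mathcal U$, with the lifts $\hat d$ chosen in $\mathcal N\cap G'$, the $R$-module $(\Ind_P^G{\bf 1})^{\mathcal U}$ has as basis the characteristic functions $f_{P\hat d\mathcal U}$, and everything reduces to proving $V_0=C^\infty(P\backslash G)$. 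I would split this into three ingredients, the last of which is the crux.

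The first ingredient is a conjugacy observation: every pro-$p$ Iwahori subgroup $\mathcal V$ of $G$ --- i.e.\ every conjugate of $\mathcal U$, equivalently the pro-$p$ radical of the Iwahori subgroup attached to an alcove of the building --- is already a $G'$-conjugate of $\mathcal U$. Indeed, for $z\in Z$ the alcove $z\cdot\mathcal C$ lies in the standard apartment $\mathcal A$, and since $W_{G'}$ is generated by the reflections in the walls of $\mathcal C$ it acts simply transitively on the alcoves of $\mathcal A$; hence $z\cdot\mathcal C=n\cdot\mathcal C$ for some $n\in\mathcal N\cap G'$, so $z\mathcal Uz^{-1}=n\mathcal Un^{-1}$. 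Using $G=ZG'$ and the normality of $G'$, it follows that every $G$-conjugate of $\mathcal U$ is a $G'$-conjugate. Consequently $V_0\supseteq h\cdot(\Ind_P^G{\bf 1})^{\mathcal U}=(\Ind_P^G{\bf 1})^{h\mathcal Uh^{-1}}$ for all $h\in G'$, whence
$$V_0\ \supseteq\ \sum_{\mathcal V}(\Ind_P^G{\bf 1})^{\mathcal V},$$
the sum being over all pro-$p$ Iwahori subgroups $\mathcal V$; in particular $V_0$ contains the characteristic function $f_{Pg\mathcal V}$ of every ``cell'' $Pg\mathcal V$, for all $g\in G$ and all pro-$p$ Iwahori $\mathcal V$.

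The second ingredient is that the cells $Pg\mathcal V$ form a basis of the topology of $P\backslash G$. Given $g\in G$ and a small open compact subgroup $K\subseteq\mathcal U$, I would produce a pro-$p$ Iwahori subgroup $\mathcal V$ with $g\mathcal V\subseteq PgK$, i.e.\ $\mathcal V\subseteq(g^{-1}Pg)K$: writing $g^{-1}Pg=M_1N_1$ for its Levi decomposition with opposite unipotent radical $\overline N_1$, one takes $\mathcal V$ to be the pro-$p$ Iwahori of an alcove lying deep in the direction of $g^{-1}Pg$, so that in the Iwahori factorisation $\mathcal V=(\mathcal V\cap N_1)(\mathcal V\cap M_1)(\mathcal V\cap\overline N_1)$ the factor $\mathcal V\cap\overline N_1$ is contained in $K$; then $\mathcal V\subseteq N_1M_1K\subseteq(g^{-1}Pg)K$, as required. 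Since $C^\infty(P\backslash G)$ is spanned over $R$ by the characteristic functions $f_{PgK}$ of compact open subsets, it now suffices to show every compact open subset of $P\backslash G$ is a finite disjoint union of cells $Pg\mathcal V$: then $f_{PgK}=\sum f_{\text{cell}}\in V_0$, and $V_0=C^\infty(P\backslash G)=\Ind_P^G{\bf 1}$.

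The step I expect to be the main obstacle is exactly this last geometric claim --- equivalently, that finite intersections of cells are again finite disjoint unions of cells, so that the Boolean-algebra manipulations $f_{A\cap B}=f_Af_B$, $f_{A\cup B}=f_A+f_B-f_Af_B$ keep one inside $V_0$. For $G$ of semisimple rank one it is the ultrametric fact that two balls in $P\backslash G\simeq\mathbb P^1(F)$ are nested or disjoint; in general I would reduce to this using the Bruhat stratification of $P\backslash G$ together with the Iwahori factorisation above, treating one Bruhat cell at a time and shrinking the opposite unipotent part by elements of $Z\cap G'$. All the representation-theoretic content sits in the first two ingredients; the difficulty is purely the combinatorics of covering flag varieties by pro-$p$ Iwahori cells.
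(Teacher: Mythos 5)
Your first two ingredients are correct: every pro-$p$ Iwahori subgroup of $G$ is indeed a $G'$-conjugate of $\mathcal{U}$ (your argument via $G=ZG'$, the simple transitivity of $W_{G'}$ on the alcoves of $\mathcal{A}$, and the fact that the stabilizer of $\mathcal{C}$ normalizes $\mathcal{U}$ is fine), and the cells $Pg\mathcal{V}$ do form a basis of the topology of $P\backslash G$. But the proof is not complete: the step you yourself flag as the crux --- that every compact open subset of $P\backslash G$ is a finite disjoint union of cells $Pg\mathcal{V}$, equivalently that finite intersections of cells are again finite disjoint unions of cells --- is precisely where all the content lies, and you do not prove it. The sketched reduction is not a routine verification: a cell $Pg\mathcal{V}$ is a single orbit of a pro-$p$ Iwahori subgroup and is in general not contained in one Bruhat stratum (already for $SL_2$ the $\mathcal{U}$-orbits on $\mathbb{P}^1(F)$ are the two ``halves'' cut out by an edge of the tree, each meeting both Bruhat cells), the intersection of a compact open set with a Bruhat stratum need not be compact since strata are only locally closed, and the inclusion--exclusion manipulations $f_{A\cap B}=f_Af_B$, $f_{A\cup B}=f_A+f_B-f_Af_B$ presuppose that $V_0$ is stable under these products, which is exactly the closure property you are trying to establish. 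The underlying difficulty is that orbits of \emph{varying} pro-$p$ Iwahori subgroups carry no common coset structure, so a finite cover by cells cannot be refined to a partition by cells without a genuine argument.

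The paper's proof avoids this entirely by staying inside one chart and using torus contraction. Since $G=PG'$, every point of $P\backslash G$ is of the form $Ph$ with $h\in G'$, and the sets $PJh$, for $J$ a compact open subgroup of $\overline N$, form a neighborhood basis there; inside the chart $P\overline N h\simeq \overline N$ a compact open subset is a finite disjoint union of cosets $J\bar n$ of a \emph{single} small subgroup $J$, so the refinement problem disappears. It then suffices to get $1_{PJ}$ into the $G'$-span of $(\Ind_P^G\mathbf{1})^{\mathcal{U}}$: by the Iwahori decomposition $P\mathcal{U}=P\mathcal{U}_{\overline N}$, and choosing $t\in T\cap G'$ with $t^{-1}\mathcal{U}_{\overline N}t\subset J$, the right translate of $1_{P\mathcal{U}}$ by $t$ is $1_{P(t^{-1}\mathcal{U}_{\overline N}t)}$, and finitely many further translates by elements of $J\subset\overline N\subset G'$ sum to $1_{PJ}$. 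In effect the paper only ever uses the single family of ``cells'' $P(t^{-1}\mathcal{U}_{\overline N}t)\bar n h$ with $t,\bar n,h\in G'$, whose coset structure makes the spanning argument immediate; if you want to salvage your route, replace your arbitrary cells $Pg\mathcal{V}$ by this one contracted family, at which point your ingredient on $G'$-conjugacy of pro-$p$ Iwahori subgroups is no longer needed.
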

  \begin{proof}  Because $G=PG'$ it suffices to prove that if $J$ is an open compact subgroup of $\overline N$ the characteristic function $1_{PJ}$ of $PJ$ is a finite sum of translates of  $1_{P\mathcal{U}}= 1_{P\mathcal{U}_{\overline N}}$ by $G'$. For $t\in T$ we have $ P\mathcal{U} t=Pt^{-1}\mathcal{U}_{\overline N}t$ and we can choose $t\in T\cap J'$ such that $t^{-1}\mathcal{U}_{\overline N}t \subset J$.
  \end{proof}
 \subsection{General triples}
Let $P=MN$ be a standard parabolic subgroup of $G$. We now investigate situations where $\Delta_P$ and $\Delta\setminus \Delta_P$ are not necessarily orthogonal. 
Let  $ \mathcal V $ a right $\mathcal{H}_{M,R}$-module.

\begin{definition}\label{def:Htriple} Let  $P(\mathcal{V})=M(\mathcal{V}) N(\mathcal{V})$ be the  standard  parabolic subgroup of $G$ with $\Delta_{P(\mathcal{V})}= \Delta_P \cup  \Delta_{ \mathcal V}$ and  $$ \Delta_{ \mathcal V}= \{\alpha \in \Delta \ \text{ orthogonal to  $ \Delta_M$,   $ T^{M,*}(z)$ acts trivially on $\mathcal V$ for all $z\in Z\cap M'_\alpha$}\} .$$
If $Q$ is a parabolic subgroup of $G$ between $P$ and $P(\mathcal{V})$,  the triple $(P,\mathcal{V},Q)$ called an $\mathcal{H}_R$-triple, defines  a right $\mathcal{H}_R$-module $I_\mathcal{H}(P,\mathcal{V},Q)$ equal to
$$\Ind_  {\mathcal{H}_{M(\mathcal{V})}}^\mathcal{H}( e(\mathcal V)\otimes_R (\St_{Q\cap M(\mathcal{V})}^{M(\mathcal{V})})^{\mathcal U_{{M(\mathcal{V})}}})= ( e(\mathcal V)\otimes_R (\St_{Q\cap M(\mathcal{V})}^{M(\mathcal{V})})^{\mathcal U_{{M(\mathcal{V})}}}) \otimes_{\mathcal{H}_{M(\mathcal{V})^+,R},\theta} \mathcal{H}_R$$
where $e(\mathcal V)$ is the extension of $\mathcal{V}$ to  $\mathcal{H}_{M(\mathcal{V})}$. \end{definition}
 
This definition is justified by the fact that  $M(\mathcal{V})$ is the  maximal standard Levi subgroup of $G$ such that  the $\mathcal{H}_{M,R}$-module $\mathcal{V}$ is extensible to $\mathcal{H}_{M(\mathcal{V})}$:

\begin{lemma}\label{lemma:ext}   $\Delta_{\mathcal V}$ is the maximal subset of $\Delta \setminus \Delta_P$ orthogonal to $\Delta_P$ such that  $T^{M,*}_\lambda$ acts trivially on $\mathcal V$ for all $\lambda \in \Lambda(1) \cap {}_1W_{M'_{\mathcal V}}$.  \end{lemma}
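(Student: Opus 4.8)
The plan is to establish, for every subset $I\subseteq\Delta\setminus\Delta_P$ orthogonal to $\Delta_P$, that the condition ``$T^{M,*}_\lambda$ acts trivially on $\mathcal V$ for all $\lambda\in\Lambda(1)\cap{}_1W_{M'_I}$'' is equivalent to $I\subseteq\Delta_{\mathcal V}$. Granting this, the subsets for which the condition holds are exactly the subsets of $\Delta_{\mathcal V}$; and since $\Delta_{\mathcal V}$ is itself orthogonal to $\Delta_P$ and contained in $\Delta\setminus\Delta_P$ (a root is never orthogonal to itself, so $\Delta_{\mathcal V}\cap\Delta_P=\emptyset$), it is the largest of them, which is the assertion. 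Note that for $I=\Delta_{\mathcal V}$ one has $M'_I=M'_{\mathcal V}$, so that the condition for $I=\Delta_{\mathcal V}$ is exactly the one in the statement.

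For the equivalence I would first reinterpret the condition. Put $L=M_{\Delta_P\cup I}$, so that the hypothesis on $I$ makes $\Delta_L=\Delta_P\sqcup I$ an orthogonal decomposition, and the material of \S\ref{S:8.0}--\S\ref{S:8.2} applies verbatim with $L$ in place of $G$ and $M_I$ in place of $M_2$; in particular $Z$, $Z^1$, $\Lambda(1)$ are unchanged, and by \S\ref{S:8.0} (applied to $L$) every $\lambda\in\Lambda(1)\cap{}_1W_{M'_I}$ has $\ell_M$-length $0$, so that $T^{M,*}_\lambda=T^M_\lambda$ there. Corollary~\ref{cor:extT} applied to $L$ then says precisely that the condition holds if and only if $\mathcal V$ is extensible to $\mathcal{H}_L$ --- recovering, for this $I$, the description of $M(\mathcal V)$ announced before the lemma.

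Next I would make this ``local'' in the roots of $I$. The subgroup $\Lambda(1)\cap{}_1W_{M'_I}$ of $W(1)$ is the image in $\Lambda(1)=Z/Z^1$ of $Z\cap M'_I$: one inclusion is clear, and conversely if an element of $W(1)$ is represented both by $z\in Z$ and by $n\in\mathcal N\cap M'_I$, then $z^{-1}n\in Z^1\subseteq Z$, whence $n\in Z\cap M'_I$. By Lemma~\ref{lemma:ZG'} applied to $M_I$, the group $Z\cap M'_I$ is generated by the subgroups $Z\cap M'_\alpha$ with $\alpha\in I$, so $\Lambda(1)\cap{}_1W_{M'_I}$ is generated by their images. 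Since all elements of $\Lambda(1)\cap{}_1W_{M'_I}$ have $\ell_M$-length $0$, the braid relations make $\lambda\mapsto T^M_\lambda$ a group homomorphism from $\Lambda(1)\cap{}_1W_{M'_I}$ into the units of $\mathcal{H}_M$; hence $T^M_\lambda$ acts as the identity on $\mathcal V$ for all $\lambda$ in this subgroup if and only if $T^{M,*}(z)=T^M(z)$ acts trivially on $\mathcal V$ for every $\alpha\in I$ and every $z\in Z\cap M'_\alpha$. Because $I$ is orthogonal to $\Delta_M$, this last condition is precisely the statement that every $\alpha\in I$ lies in $\Delta_{\mathcal V}$ in the sense of Definition~\ref{def:Htriple}, i.e.\ $I\subseteq\Delta_{\mathcal V}$; this proves the equivalence, and hence the lemma.

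The whole argument is essentially bookkeeping once Corollary~\ref{cor:extT} and Lemma~\ref{lemma:ZG'} are invoked; the only step that needs a little care is the passage from the group $\Lambda(1)\cap{}_1W_{M'_I}$ to its generators $\bar z$ with $z\in Z\cap M'_\alpha$, $\alpha\in I$, which is legitimate precisely because every element of that group has vanishing $\ell_M$-length, so that the operators $T^M_\lambda$ multiply as in a group.
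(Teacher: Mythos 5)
Your proof is correct and uses the same ingredients as the paper's (much terser) argument: Lemma~\ref{lemma:ZG'} to reduce to the generators $Z\cap M'_\alpha$, and the observation that elements of $\Lambda(1)\cap{}_1W_{M'_I}$ have $\ell_M$-length $0$, so that $T^{M,*}_\lambda=T^M_\lambda$ and $\lambda\mapsto T^M_\lambda$ is multiplicative. The extra detail you supply (identifying $\Lambda(1)\cap{}_1W_{M'_I}$ as the image of $Z\cap M'_I$, and the reinterpretation via Corollary~\ref{cor:extT}) is harmless bookkeeping that the paper leaves implicit.
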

\begin{proof}  For $J\subset \Delta$ let $M_J$ denote the standard Levi subgroup of $G$ with $\Delta_{M_J}=J$. The group $Z\cap M'_J$ is  generated by the  $Z\cap M'_\alpha$ for all $\alpha \in J$ (Lemma \ref{lemma:ZG'}). 
When $J$ is orthogonal to $\Delta_M$ and $\lambda\in \Lambda_{M'_J}(1)$,  $\ell_M( \lambda)=0$ where $\ell_M$ is the length associated to $S_M^{\aff}$, and   the map $\lambda \mapsto T_\lambda^{M,*}=T_\lambda^M: \Lambda_{M'_J}(1)\to \mathcal{H}_M$ is multiplicative.
 \end{proof}

The following  is the natural generalisation of Proposition \ref{prop:ovvv} and Corollary \ref{cor:IHPVQ}.  
 Let $Q'$ be  a parabolic subgroup of $G$ with $Q\subset Q'\subset P(\mathcal{V})$. Applying the results of \S \ref{S:9.1} to $M(\mathcal{V})$ and its standard parabolic subgroups $Q\cap M(\mathcal{V})\subset Q'\cap M(\mathcal{V})$,  we have an  $\mathcal{H}_{M(\mathcal{V}),R}$-isomorphism
  \begin{align*} 
  \Ind_{\mathcal{H}_{Q}}^{\mathcal{H}_{M(\mathcal{V})}}( e_{\mathcal{H}_{Q}}(\mathcal V))=
  e_{\mathcal{H}_{Q}}(\mathcal V) \otimes_{ \mathcal{H}_{M_Q^+},\theta} \mathcal{H}_{M(\mathcal{V}),R}&\xrightarrow{\kappa_{Q\cap M(\mathcal{V})}}
   e(\mathcal V)\otimes_R (\Ind_{Q\cap M(\mathcal{V})}^{M(\mathcal{V})} {\bf 1})^{\mathcal U_{M(\mathcal{V})}}\\
   v\otimes 1_{\mathcal{H}} &\mapsto v\otimes f_{Q\mathcal U\cap M(\mathcal{V})}:
 \end{align*}
and an  $\mathcal{H}_{M(\mathcal{V}),R}$-embedding
 \begin{align*}\Ind_{\mathcal{H}_{Q'}}^{\mathcal{H}_{M(\mathcal{V})}}( e_{\mathcal{H}_{Q'}}
(\mathcal V))&\xrightarrow{\iota(Q\cap M(\mathcal{V}),Q'\cap M(\mathcal{V})) }  \Ind_{\mathcal{H}_Q}^{\mathcal{H}_{M(\mathcal{V})}}( e_{\mathcal{H}_Q}
(\mathcal V))\\
v \otimes 1_{\mathcal{H}_{M(\mathcal{V})}}&\mapsto v \otimes \theta_{Q' }^{P(\mathcal{V})} (e_{ Q }^{Q' }).
 \end{align*}
 Applying the  parabolic  induction
 $\Ind_  {\mathcal{H}_{M(\mathcal{V})}}^\mathcal{H}$ which is exact and transitive, we obtain   an $\mathcal{H}_R$-isomorphism $\kappa_Q=\Ind_  {\mathcal{H}_{M(\mathcal{V})}}^\mathcal{H}(\kappa_{Q\cap M(\mathcal{V})})$, 
 \begin{align}\label{eq:kQ}
 \Ind_{\mathcal{H}_{Q}}^\mathcal{H} (e_{\mathcal{H}_{Q}}(\mathcal{V})  )\xrightarrow{\kappa_Q} \Ind_  {\mathcal{H}_{M(\mathcal{V})}}^\mathcal{H}( e(\mathcal V)\otimes_R (\Ind_{Q\cap M(\mathcal{V})}^{M(\mathcal{V})} {\bf 1}_{M_Q})^{\mathcal U_{{M(\mathcal{V})}}})
  \end{align}
 $$
 v \otimes 1_\mathcal{H}\mapsto v\otimes f_{Q\mathcal{U}_{M(\mathcal{V})}}\otimes 1_\mathcal{H} \quad \quad \quad \quad \quad \quad \quad 
 $$
and
 an $\mathcal{H}_R$-embedding $\iota(Q,Q') = \Ind_{\mathcal{H}_{M(\mathcal{V})}}^\mathcal{H} (\iota(Q,Q')^{M(\mathcal{V})})$
\begin{equation}\label{eq:iotaQQ'}v \otimes 1_{\mathcal{H} }\mapsto v \otimes \theta_{Q' } (e_{ Q }^{Q' }):\Ind_{\mathcal \mathcal{H}_{Q'}}^\mathcal{H}(e_{\mathcal{H}_{Q'}}(\mathcal V))  \xrightarrow{\iota(Q,Q') } \Ind_{\mathcal{H}_{Q}}^\mathcal{H}(e_{\mathcal{H}_{Q}}(\mathcal V)).\end{equation} 
   Applying  Corollary \ref{cor:IHPVQ} we obtain:
 
 \begin{theorem} \label{thm:IHP}Let  $(P,\mathcal V,Q)$ be an $\mathcal{H}_R$-triple. Then,  the  cokernel of the  $\mathcal{H}_R$-map
   $$\oplus_{Q\subsetneq Q'\subset P(\mathcal{V})}  \Ind_{\mathcal{H}_{Q'}}^\mathcal{H}( e_{\mathcal{H}_{Q'}}
(\mathcal V))\to \Ind_{\mathcal{H}_{Q}}^\mathcal{H} ( e_{\mathcal{H}_{Q}}(\mathcal V)),$$
defined  by the  $\iota(Q,Q') $ is isomorphic to   $ I_\mathcal{H}(P,\mathcal{V},Q)$ via the $\mathcal{H}_R$-isomorphism $\kappa_Q$.
 \end{theorem}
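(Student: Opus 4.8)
The plan is to deduce the statement from Corollary~\ref{cor:IHPVQ} applied \emph{inside} the group $M(\mathcal{V})$, and then to push everything forward through the exact functor $\Ind_{\mathcal{H}_{M(\mathcal{V})}}^\mathcal{H}$. First I would record that $\Delta_P$ and $\Delta_{M(\mathcal{V})}\setminus\Delta_P=\Delta_{\mathcal V}$ are orthogonal: by Definition~\ref{def:Htriple} every element of $\Delta_{\mathcal V}$ is orthogonal to $\Delta_M=\Delta_P$, and no root is isotropic, so $\Delta_{\mathcal V}\cap\Delta_P=\emptyset$; moreover, by Lemma~\ref{lemma:ext} the $\mathcal{H}_{M,R}$-module $\mathcal V$ is extensible, in the sense of Definition~\ref{def:ext} with ambient group $M(\mathcal{V})$, to an $\mathcal{H}_{M(\mathcal{V}),R}$-module $e(\mathcal V)$. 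Hence all of \S\ref{S:9.1}, in particular Proposition~\ref{prop:ovvv} and Corollary~\ref{cor:IHPVQ}, is available with $M(\mathcal{V})$ playing the role of $G$ and $Q\cap M(\mathcal{V})$ the role of $Q$; this is exactly the setting in which $\kappa_{Q\cap M(\mathcal{V})}$ and the embeddings $\iota(Q\cap M(\mathcal{V}),Q'\cap M(\mathcal{V}))$ were constructed just before the statement.

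Next I would invoke the standard bijection $Q'\leftrightarrow Q'':=Q'\cap M(\mathcal{V})$ between parabolic subgroups $Q'$ of $G$ with $Q\subsetneq Q'\subset P(\mathcal{V})$ and parabolic subgroups of $M(\mathcal{V})$ with $Q\cap M(\mathcal{V})\subsetneq Q''\subset M(\mathcal{V})$. Since $\Delta_{Q'}\subset\Delta_{P(\mathcal{V})}=\Delta_{M(\mathcal{V})}$ forces $M_{Q'}\subset M(\mathcal{V})$, we get $M_{Q''}=M_{Q'}$, so that $\mathcal{H}_{Q''}=\mathcal{H}_{Q'}$ and $e_{\mathcal{H}_{Q''}}(\mathcal V)=e_{\mathcal{H}_{Q'}}(\mathcal V)$. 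Under this identification, Corollary~\ref{cor:IHPVQ} for $M(\mathcal{V})$ asserts that $\kappa_{Q\cap M(\mathcal{V})}$ identifies the cokernel of
\[
\bigoplus_{Q\subsetneq Q'\subset P(\mathcal{V})} \Ind_{\mathcal{H}_{Q'}}^{\mathcal{H}_{M(\mathcal{V})}}\bigl(e_{\mathcal{H}_{Q'}}(\mathcal V)\bigr)\longrightarrow \Ind_{\mathcal{H}_{Q\cap M(\mathcal{V})}}^{\mathcal{H}_{M(\mathcal{V})}}\bigl(e_{\mathcal{H}_{Q\cap M(\mathcal{V})}}(\mathcal V)\bigr),
\]
built from the $\iota(Q\cap M(\mathcal{V}),Q'\cap M(\mathcal{V}))$, with $e(\mathcal V)\otimes_R(\St_{Q\cap M(\mathcal{V})}^{M(\mathcal{V})})^{\mathcal{U}_{M(\mathcal{V})}}$.

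Finally I would apply $\Ind_{\mathcal{H}_{M(\mathcal{V})}}^\mathcal{H}$, which is exact and hence commutes with passing to cokernels, and use transitivity of parabolic induction to rewrite $\Ind_{\mathcal{H}_{M(\mathcal{V})}}^\mathcal{H}\circ\Ind_{\mathcal{H}_{Q'}}^{\mathcal{H}_{M(\mathcal{V})}}=\Ind_{\mathcal{H}_{Q'}}^\mathcal{H}$, and likewise with $Q$ in place of $Q'$. By the very way \eqref{eq:kQ} and \eqref{eq:iotaQQ'} were set up, this carries the $\iota(Q\cap M(\mathcal{V}),Q'\cap M(\mathcal{V}))$ to the $\iota(Q,Q')$ of \eqref{eq:iotaQQ'} and $\kappa_{Q\cap M(\mathcal{V})}$ to the $\kappa_Q$ of \eqref{eq:kQ}, while the target of the cokernel becomes $\Ind_{\mathcal{H}_{M(\mathcal{V})}}^\mathcal{H}\bigl(e(\mathcal V)\otimes_R(\St_{Q\cap M(\mathcal{V})}^{M(\mathcal{V})})^{\mathcal{U}_{M(\mathcal{V})}}\bigr)=I_\mathcal{H}(P,\mathcal{V},Q)$ by Definition~\ref{def:Htriple}, giving the claim. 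The only genuinely delicate points are the compatibilities of the various $\kappa$-isomorphisms and $\iota$-embeddings with $\Ind_{\mathcal{H}_{M(\mathcal{V})}}^\mathcal{H}$ — but those have already been discharged when \eqref{eq:kQ} and \eqref{eq:iotaQQ'} were introduced — together with the exactness of parabolic induction for pro-$p$ Iwahori Hecke modules, which is the one external input and the place where the real care is needed.
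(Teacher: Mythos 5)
Your proposal is correct and is essentially the paper's own argument: the paper likewise obtains the statement by applying Corollary~\ref{cor:IHPVQ} with $M(\mathcal{V})$ in place of $G$ and $Q\cap M(\mathcal{V})$, $Q'\cap M(\mathcal{V})$ in place of $Q$, $Q'$, and then pushing the resulting cokernel presentation through the exact, transitive functor $\Ind_{\mathcal{H}_{M(\mathcal{V})}}^{\mathcal{H}}$, using that $\kappa_Q$ and $\iota(Q,Q')$ are by construction the images under this functor of $\kappa_{Q\cap M(\mathcal{V})}$ and $\iota(Q\cap M(\mathcal{V}),Q'\cap M(\mathcal{V}))$.
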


Let  $\sigma$  be  a   smooth $R$-representation of $M$ and  $Q$  a parabolic subgroup of $G$ with $P\subset Q\subset P(\sigma)$. 
\begin{remark}
The $\mathcal{H}_R$-module $ I_\mathcal{H} (P ,\sigma^{\mathcal{U}_M} ,Q)$ is defined if $\Delta_Q \setminus \Delta_P$ and $\Delta_P$ are orthogonal because $Q\subset P(\sigma)\subset P(\sigma^{\mathcal{U}_M})$ (Theorem \ref{thm:ouf}).
\end{remark}
We denote here by $P_{\min}=M_{\min}N_{\min}$  the minimal standard parabolic subgroup  of $G$ contained in $P$  such that $\sigma=e_P( \sigma|_{M_{\min}} )$ (Lemma  \ref {lemma:min}, we drop the index $\sigma$).  The sets of roots   $\Delta_{P_{\min}}$ and $\Delta_{P(\sigma|_{M_{\min}})}\setminus \Delta_{P_{\min}}$ are orthogonal (Lemma  \ref{lemma:2.2}). The groups $P(\sigma)=P( \sigma|_{M_{\min}})$,  the representations $e(\sigma)=e(\sigma|_{M_{\min}})$ of $M(\sigma)$, the representations $I_G(P,\sigma,Q)=I_G(P_{\min},\sigma|_{M_{\min}}  ,Q)=\Ind_{P(\sigma)}^G (e(\sigma ) \otimes_R \St_{Q }^{P(\sigma)})$ of $G$, and the $R$-modules $\sigma ^{\mathcal{U}_{M_{\min}}}=\sigma^{\mathcal{U}_M}$ are equal. From Theorem \ref{thm:ouf},   $$P(\sigma) \subset P(\sigma ^{\mathcal{U}_{M_{\min}}}), \quad e_{\mathcal{H}_{M(\sigma)}} (\sigma^{\mathcal U_{M_{\min}}})= e(\sigma)^{\mathcal U_{M(\sigma)}}, $$  and $P(\sigma^{\mathcal U_{M(\sigma)}})= P(\sigma)$ if    $\sigma^{\mathcal U_{M(\sigma)}}$  generates the representation $\sigma|_{M_{\min}}$. The $\mathcal{H}_R$-module
$$ I_\mathcal{H} (P_{\min},\sigma ^{\mathcal{U}_{M_{\min}}},Q)=\Ind_  {\mathcal{H}_{M(\sigma ^{\mathcal{U}_{M_{\min}}})}}^\mathcal{H}( e(\sigma ^{\mathcal{U}_{M_{\min}}})\otimes_R (\St_{Q } ^{P(\sigma ^{\mathcal{U}_{M_{\min}}})})^{\mathcal{U}_{M(\sigma ^{\mathcal{U}_{M_{\min}}})}})$$
is defined because $\Delta_{P_{\min}}$ and $\Delta_{P(\sigma ^{\mathcal{U}_{M_{\min}}})}\setminus \Delta_{P_{\min}}$ are orthogonal and
 $P\subset Q\subset P(\sigma) \subset P(\sigma ^{\mathcal{U}_{M_{\min}}})$. 
 
 \begin{remark} 
 If  $\sigma^{\mathcal U_{M(\sigma)}}$  generates the representation $\sigma|_{M_{\min}}$ (in particular if $R=C$ and $\sigma$ is irrreducible), then  $P(\sigma)=P(\sigma ^{\mathcal{U}_{M_{\min}}})$ hence
 $$ I_\mathcal{H} (P_{\min},\sigma ^{\mathcal{U}_{M_{\min}}},Q)= \Ind_{\mathcal{H}_{M(\sigma)}}^\mathcal{H} (e_{\mathcal{H}_{M(\sigma)}} (\sigma^{\mathcal U_{M_{\min}}})\otimes_R (\St_{Q\cap M(\sigma)}^{M(\sigma)} )^{\mathcal{U}_{M(\sigma)}}).$$
 \end{remark}  

 Applying Theorem \ref{thm:main8}  to $(P_{\min} \cap M(\sigma), \sigma|_{M_{\min}}, Q\cap M(\sigma))$, 
  the  $\mathcal{H}_{M(\sigma),R}$-modules 
  \begin{equation}\label{eq:main8}e_{\mathcal{H}_{M(\sigma)}} (\sigma^{\mathcal U_{M_{\min}}})\otimes_R (\St_{Q\cap M(\sigma)}^{M(\sigma)})^{\mathcal{U}_{M(\sigma)}}=(e_{M(\sigma)}(\sigma ) \otimes_R \St_{Q\cap M(\sigma)}^{M(\sigma)})^{\mathcal{U}_{M(\sigma)}}
  \end{equation} are equal. We have the   $\mathcal{H}_R$-isomorphism \cite[Proposition 4.4]{arXiv:1703.04921}:
\begin{align*} 
 I_G(P,\sigma,Q)^\mathcal{U}=(\Ind_{P(\sigma)}^G (e(\sigma ) \otimes_R \St_{Q }^{P(\sigma)}))^{\mathcal{U}}&\xrightarrow{ov}  \Ind_{\mathcal{H}_{M(\sigma)}}^\mathcal{H} ((e(\sigma ) \otimes_R \St_{Q\cap M(\sigma)}^{M(\sigma)})^{\mathcal{U}_{M(\sigma)}})\\
f_{P(\sigma)\mathcal{U},x}&\mapsto x\otimes 1_{\mathcal{H} } \quad (x\in (e(\sigma ) \otimes_R \St_{Q\cap M(\sigma)}^{M(\sigma)})^{\mathcal{U}_{M(\sigma)}}).\end{align*}
We deduce:
  
 \begin{theorem}\label{thm:main} Let  $(P,\sigma,Q)$ be a $R[G]$-triple.  Then, we have  the   $\mathcal{H}_R$-isomorphism
$$ I_G(P,\sigma,Q)^\mathcal{U}\xrightarrow{ov}\Ind_{\mathcal{H}_{M(\sigma)}}^\mathcal{H} (e_{\mathcal{H}_{M(\sigma)}} (\sigma^{\mathcal U_{M_{\min}}})\otimes_R (\St_{Q\cap M(\sigma)}^{M(\sigma)} )^{\mathcal{U}_{M(\sigma)}}).$$
In particular, 
$$ I_G(P,\sigma,Q)^\mathcal{U}\simeq \begin{cases} I_\mathcal{H} (P_{\min},\sigma ^{\mathcal{U}_{M_{\min}}},Q) &\ \text{if} \ P(\sigma)=P(\sigma ^{\mathcal{U}_{M_{\min}}})\\
 I_\mathcal{H} (P ,\sigma ^{\mathcal{U}_{M }},Q) &\ \text{if} \ P=P_{\min},  P(\sigma)=P(\sigma ^{\mathcal{U}_M})\end{cases}.
 $$
 \end{theorem}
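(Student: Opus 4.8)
The plan is to synthesize the ingredients assembled just above the statement. First I would reduce: by Lemma~\ref{lemma:2.3} one may replace $(P,\sigma,Q)$ by $(P_{\min},\sigma|_{M_{\min}},Q)$ without changing $I_G(P,\sigma,Q)$, and $\sigma|_{M_{\min}}$ is $e$-minimal, so Lemma~\ref{lemma:2.2} makes $\Delta_{P_{\min}}$ and $\Delta_{P(\sigma)}\setminus\Delta_{P_{\min}}$ orthogonal. Consequently Theorem~\ref{thm:main8}, applied inside $M(\sigma)=M_{P(\sigma)}$ to the triple $(P_{\min}\cap M(\sigma),\,\sigma|_{M_{\min}},\,Q\cap M(\sigma))$ --- for which $P(\sigma|_{M_{\min}})=M(\sigma)$ when $P(-)$ is formed inside $M(\sigma)$ --- yields the equality \eqref{eq:main8} of $\mathcal{H}_{M(\sigma),R}$-modules, the module $e_{M(\sigma)}(\sigma)^{\mathcal{U}_{M(\sigma)}}$ being identified with $e_{\mathcal{H}_{M(\sigma)}}(\sigma^{\mathcal{U}_{M_{\min}}})$ via Theorem~\ref{thm:ouf} inside $M(\sigma)$. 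Then I would put $\tau := e(\sigma)\otimes_R\St_{Q\cap M(\sigma)}^{M(\sigma)}$, viewed as a smooth $R$-representation of $M(\sigma)$ inflated to $P(\sigma)$, and invoke the Ollivier--Vign\'eras isomorphism $ov$ of \cite[Proposition~4.4]{arXiv:1703.04921}: since $I_G(P,\sigma,Q)=\Ind_{P(\sigma)}^G\tau$, this gives $I_G(P,\sigma,Q)^{\mathcal{U}}\xrightarrow{ov}\Ind_{\mathcal{H}_{M(\sigma)}}^{\mathcal{H}}\bigl(\tau^{\mathcal{U}_{M(\sigma)}}\bigr)$, and substituting \eqref{eq:main8} into the target rewrites the right-hand side as $\Ind_{\mathcal{H}_{M(\sigma)}}^{\mathcal{H}}\bigl(e_{\mathcal{H}_{M(\sigma)}}(\sigma^{\mathcal{U}_{M_{\min}}})\otimes_R(\St_{Q\cap M(\sigma)}^{M(\sigma)})^{\mathcal{U}_{M(\sigma)}}\bigr)$, which is the first displayed isomorphism.

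For the two special cases I would merely unwind Definition~\ref{def:Htriple}. If $P(\sigma)=P(\sigma^{\mathcal{U}_{M_{\min}}})$, then $M(\sigma)=M(\sigma^{\mathcal{U}_{M_{\min}}})$ and $(P_{\min},\sigma^{\mathcal{U}_{M_{\min}}},Q)$ is an $\mathcal{H}_R$-triple: indeed $\Delta_{P(\sigma^{\mathcal{U}_{M_{\min}}})}\setminus\Delta_{P_{\min}}$ is orthogonal to $\Delta_{P_{\min}}$ by construction (Definition~\ref{def:Htriple}), and $P_{\min}\subset Q\subset P(\sigma)=P(\sigma^{\mathcal{U}_{M_{\min}}})$ because $Q\subset P(\sigma)\subset P(\sigma^{\mathcal{U}_{M_{\min}}})$ by Theorem~\ref{thm:ouf}. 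By Definition~\ref{def:Htriple} the module $I_\mathcal{H}(P_{\min},\sigma^{\mathcal{U}_{M_{\min}}},Q)$ is then exactly the target of the $ov$ isomorphism just obtained, which gives the first alternative; the second alternative, $P=P_{\min}$ with $P(\sigma)=P(\sigma^{\mathcal{U}_M})$, is the first one specialised to $M=M_{\min}$, where $\sigma^{\mathcal{U}_{M_{\min}}}=\sigma^{\mathcal{U}_M}$ and $I_\mathcal{H}(P,\sigma^{\mathcal{U}_M},Q)=I_\mathcal{H}(P_{\min},\sigma^{\mathcal{U}_{M_{\min}}},Q)$.

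I do not anticipate a genuine obstacle here: all the substantive work has been done earlier --- the identification of pro-$p$ Iwahori invariants of a parabolic induction as an induced Hecke module (\cite[Proposition~4.4]{arXiv:1703.04921}), the orthogonality structure of $e$-minimal representations (Lemma~\ref{lemma:2.2}), the extension construction together with the equality $e(\sigma^{\mathcal{U}_M})=e(\sigma)^{\mathcal{U}}$ (Theorem~\ref{thm:ouf}), and the commutation of $(-)^{\mathcal{U}}$ with tensoring $e(\sigma)$ by $\St_Q^G$ (Theorem~\ref{thm:main8}). The one point that demands care is the bookkeeping: verifying that the parabolic and Levi subgroups $P(\sigma)$, $M(\sigma)$, $P(\sigma^{\mathcal{U}_{M_{\min}}})$, $M(\sigma^{\mathcal{U}_{M_{\min}}})$ line up under the two hypotheses, that $(P_{\min},\sigma^{\mathcal{U}_{M_{\min}}},Q)$ really is an $\mathcal{H}_R$-triple so that $I_\mathcal{H}$ is defined, and that \eqref{eq:main8} is an identity of $\mathcal{H}_{M(\sigma),R}$-modules rather than merely of $R$-modules --- precisely the strengthening over Theorem~\ref{thm:invariant} that Theorem~\ref{thm:main8} provides, and the reason the argument is stated at the level of $M(\sigma)$ rather than directly over $G$.
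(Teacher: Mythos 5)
Your proposal is correct and follows essentially the same route as the paper: reduce to the $e$-minimal case via Lemma~\ref{lemma:2.3} and Lemma~\ref{lemma:2.2}, apply Theorem~\ref{thm:main8} inside $M(\sigma)$ to the triple $(P_{\min}\cap M(\sigma),\sigma|_{M_{\min}},Q\cap M(\sigma))$ together with Theorem~\ref{thm:ouf} to obtain \eqref{eq:main8}, compose with the Ollivier--Vign\'eras isomorphism of \cite[Proposition~4.4]{arXiv:1703.04921}, and unwind Definition~\ref{def:Htriple} for the two special cases. The bookkeeping points you flag (that the $\mathcal{H}_R$-triple is well defined and that \eqref{eq:main8} is an identity of Hecke modules, not just of $R$-modules) are exactly the ones the paper relies on.
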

  
  \subsection{Comparison of the parabolic induction and coinduction}\label{S:comp} 
 Let  $P=MN$ be a standard parabolic subgroup of $G$,   $\mathcal V$ a right  $\mathcal{H}_R$-module and $Q $ a parabolic subgroup  of $G$ with  $ Q  \subset P(\mathcal V)$.  When $R=C$, in \cite{arXiv:1406.1003_accepted}, we associated to $(P,\mathcal{V},Q)$  an  $\mathcal{H}_R$-module using   the  parabolic coinduction   $$\Coind_{\mathcal{H}_M}^\mathcal{H}(-)=\Hom_{\mathcal{H}_{ M ^-, \theta^*}}(\mathcal{H}, -):\Mod_R(\mathcal{H}_M)\to \Mod_R(\mathcal{H})$$ instead of  the parabolic induction  $\Ind_{\mathcal{H}_M}^\mathcal{H}(-)=- \otimes_{\mathcal{H}_{M^+}, \theta}\mathcal{H}$. The index $\theta^*$ in the parabolic coinduction means that $\mathcal{H}_{M_{Q}^-} $ embeds  in $\mathcal{H}$ by  $\theta_Q^*$.  Our  terminology is different from the one  in \cite{arXiv:1406.1003_accepted} where  the parabolic coinduction is called induction.
  For  a parabolic subgroup  $Q'$ of $G$ with  $ Q\subset Q'  \subset P(\mathcal V)$,  there is a natural inclusion  of $\mathcal{H}_R$-modules  \cite[Proposition 4.19]{arXiv:1406.1003_accepted} \begin{equation}\label{eq:canonicalH}
 \Hom_{\mathcal{H}_{ M_{Q'}^-, \theta^*}}(\mathcal{H}, e_{\mathcal{H}_{Q'} }(\mathcal V))\xrightarrow{i(Q,Q')} \mathcal \Hom_{\mathcal{H}_{M_{Q}^-, \theta^*}}(\mathcal{H},e_{\mathcal{H}_{Q}}(\mathcal V)).
\end{equation}
because  $\theta^*(\mathcal{H}_{ M_{Q}^-})\subset \theta^*(\mathcal{H}_{ M_{Q'}^-})$ as $W_{M_Q^-}(1)\subset W_{M_{Q'}^-}(1)$,   and  
$vT_w^{ M_{Q'}*}=v T^{M_Q *}_w$ for $w\in W_{M_Q^-}(1)$  and $v\in \mathcal V$.
   
 \begin{definition} \label{def:coind} Let   $ CI_{\mathcal{H}}(P,\mathcal V,Q)$
denote  the cokernel of the map 
$$\oplus_{Q \subsetneq  Q'\subset P(\mathcal V)} \ \Hom_{\mathcal{H}_{ M_{Q'}^-, \theta^*}}(\mathcal{H}, e_{\mathcal{H}_{Q'} }(\mathcal V))\to \mathcal \Hom_{\mathcal{H}_{M_{Q}^-, \theta^*}}(\mathcal{H},e_{\mathcal{H}_{Q}}(\mathcal V))
$$
 defined by the  $\mathcal{H}_R$-embeddings $i(Q,Q')$. 
  \end{definition}
 When $R=C$, we  showed that the $\mathcal{H}_C$-module $\mathcal CI_{\mathcal{H}}(P,\mathcal V,Q)$ is simple when  $ \mathcal V$ is simple and supersingular (Definition \ref{def:supersingularH}), and that any simple   $\mathcal{H}_C$-module is of this form for a  
  $\mathcal{H}_C$-triple $(P,\mathcal{V},Q)$ where $ \mathcal V$ is simple and supersingular, $P,Q$ and the isomorphism class of  $\mathcal{V}$ are unique~\cite{arXiv:1406.1003_accepted}.   The aim of this section is to compare the $\mathcal{H}_R$-modules $I_\mathcal{H}(P,\mathcal{V},Q)$ with the $\mathcal{H}_R$-modules $CI_\mathcal{H}(P,\mathcal{V},Q)$ and to show that the classification   is also valid with   the $\mathcal{H}_C$-modules $\mathcal I_{\mathcal{H}}(P,\mathcal V,Q)$.

\bigskip  It is already known that a parabolically coinduced module is a parabolically induced module  and vice versa \cite{arXiv:1406.1003_accepted} \cite{MR3437789}. To make it more precise we need to introduce  notations.

   We lift the  elements  $w$ of the finite Weyl group $\mathbb W$ to $\hat w \in \mathcal N_G\cap \mathcal K$ as in  \cite[IV.6]{MR3600042}, \cite[Proposition 2.7]{arXiv:1703.04921}: they satisfy the braid relations $\hat w _1\hat w_2 =(w_1 w_2)\hat {} $ when $\ell(w_1)+\ell(w_2)=\ell(w_1w_2)$ and when $s\in S$,  $\hat s$ is admissible, in particular    lies in $ {}_1 W_{G'}$.

Let $\mathbf w, \mathbf w_M,  \mathbf w^M $ denote respectively  the longest elements in $\mathbb W, \mathbb W_M$ and  $\mathbf w   \mathbf w_M$. We have 
  $\mathbf w=\mathbf w^{-1} =  \mathbf w^M \mathbf w_M, \mathbf w_M=  \mathbf w_M^{-1}$,  $\hat{ \mathbf w}= \hat{ \mathbf w}^M \hat{ \mathbf w}_M$,   $$\mathbf w^M( \Delta_M)= -  \mathbf w  ( \Delta_M) \subset \Delta, \quad\mathbf w^M(\Phi^+ \setminus \Phi^+_M)=\mathbf w (\Phi^+ \setminus \Phi^+_M) .$$ 
  Let $\mathbf w.M$ be the standard Levi subgroup of $G$ with $\Delta_{\mathbf w.M}=\mathbf w^M( \Delta_M)$ and $\mathbf w.P$ the standard parabolic subgroup of $G$ with Levi $\mathbf w.M$. 
  We have 
  $$\mathbf w.M=  \hat{ \mathbf w}^M M ( \hat{ \mathbf w}^M)^{-1}=  \hat{ \mathbf w}  M ( \hat{ \mathbf w} )^{-1}, \quad \mathbf w^{\mathbf w.M}=  \mathbf w_M\mathbf w = ( \mathbf w^M)^{-1}.$$The conjugation $w\mapsto  \mathbf w^M w (  \mathbf w^M)^{-1}$ in $W$ gives  a group isomorphism
  $ W_M\to W_{\mathbf w.M} $ sending $S^{\aff}_M$ onto $S^{\aff}_{\mathbf w.M}$,   respecting the finite Weyl subgroups   $ \mathbf w^M \mathbb W_M (  \mathbf w^M)^{-1}= \mathbb W_{\mathbf w.M}=\mathbf w \mathbb W_M\mathbf w^{-1}$,  and echanging $W_{M^+}$ and $W_{(\mathbf w.M)^-} =\mathbf w   W_{M^+}\mathbf w^{-1}$.    The conjugation by $ \tilde{ \mathbf w}^M$ restricts to a  group isomorphism $W_M(1)\to W_{\mathbf w.M}(1)$ sending $W_{M^+}(1)$ onto $ W_{(\mathbf w.M)^-}(1)$.      The linear isomorphism
\begin{equation}\label{eq:isoH} \mathcal{H}_{ M}\xrightarrow{\iota ( \tilde{ \mathbf w}^M) }\mathcal{H}_{\mathbf w. M} \quad T^M_{w} \mapsto T^{\mathbf w.M}_{ \tilde{ \mathbf w}^M w ( \tilde{ \mathbf w}^M)^{-1}} \ \text{for} \  w\in W_M(1),
\end{equation}
is a ring isomorphism between the pro-$p$-Iwahori Hecke rings of $M$ and $\mathbf w. M$.   It sends  the positive part  $\mathcal{H}_{ M^+}$ of $\mathcal{H}_M$ onto the negative part $ \mathcal{H}_{(\mathbf w. M)^-}$ of $\mathcal{H}_{\mathbf w. M}$ \cite[Proposition 2.20]{MR3437789}.  We have $\tilde{ \mathbf w}=\tilde{ \mathbf w}_M\tilde{ \mathbf w}^{\mathbf w.M}=\tilde{ \mathbf w}^{M}\tilde{ \mathbf w}_M$, 
$(\tilde{ \mathbf w}^M )^{-1}=\tilde{ \mathbf w}^{\mathbf w.M}t_M$ where $t_M= \tilde{ \mathbf w}^2\tilde{ \mathbf w}_M^{-2} \in Z_k$. 

\begin{definition}
The {\bf  twist  $\tilde{ \mathbf w} ^M.\mathcal V$  of $\mathcal{V}$ by $\tilde{ \mathbf w} ^M$} is
the right $\mathcal{H}_{\mathbf w. M}$-module deduced from  the right $\mathcal{H}_{ M}$-module $\mathcal V$ by functoriality: as $R$-modules $\tilde{ \mathbf w} ^M.\mathcal V=\mathcal V $ and  for $v\in \mathcal V, w\in W_M(1)$ we have $ vT^{\mathbf w.M}_{ \tilde{ \mathbf w}^M w ( \tilde{ \mathbf w}^M)^{-1}}=v T_w^M$. 
\end{definition} 
We can  define the  twist  $\tilde{ \mathbf w} ^M.\mathcal V$  of $\mathcal{V}$ with the $T_w^{M,*}$ instead of $T_w^{M}$.
 \begin{lemma}  For $v\in \mathcal V, w\in W_M(1)$  we have  $ vT^{\mathbf w.M,*}_{ \tilde{ \mathbf w}^M w ( \tilde{ \mathbf w}^M)^{-1}}=v T_w^{M,*} $ in  $\tilde{ \mathbf w} ^M.\mathcal V$. \end{lemma}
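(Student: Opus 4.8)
The plan is to reduce the statement to a single compatibility of the ring isomorphism $\iota(\tilde{\mathbf w}^M)\colon\mathcal{H}_{M}\xrightarrow{\sim}\mathcal{H}_{\mathbf w.M}$ of \eqref{eq:isoH} with the starred bases, namely that $\iota(\tilde{\mathbf w}^M)(T^{M,*}_w)=T^{\mathbf w.M,*}_{\tilde{\mathbf w}^M w(\tilde{\mathbf w}^M)^{-1}}$ for every $w\in W_M(1)$. Granting this, the lemma follows at once: by construction $\tilde{\mathbf w}^M.\mathcal V$ is $\mathcal V$ regarded as a right $\mathcal{H}_{\mathbf w.M}$-module through $\iota(\tilde{\mathbf w}^M)$, so $vT^{\mathbf w.M,*}_{\tilde{\mathbf w}^M w(\tilde{\mathbf w}^M)^{-1}}=v\cdot\iota(\tilde{\mathbf w}^M)(T^{M,*}_w)=vT^{M,*}_w$ for all $v\in\mathcal V$ and $w\in W_M(1)$.

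To prove that compatibility I would first record the combinatorial properties of the conjugation $\gamma\colon w\mapsto\tilde{\mathbf w}^M w(\tilde{\mathbf w}^M)^{-1}$, a group isomorphism $W_M(1)\to W_{\mathbf w.M}(1)$ covering the conjugation $W_M\to W_{\mathbf w.M}$. By the paragraph preceding \eqref{eq:isoH} this latter isomorphism carries $S^{\aff}_M$ onto $S^{\aff}_{\mathbf w.M}$, hence $\gamma$ is an isomorphism of the two affine Coxeter systems and preserves the length functions $\ell_M$ and $\ell_{\mathbf w.M}$; in particular it takes length-$0$ elements to length-$0$ elements and length-additive factorisations to length-additive factorisations. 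Since $Z^0$ and $Z^1$ are normal in $\mathcal N$ and $\tilde{\mathbf w}^M\in\mathcal N$, $\gamma$ also restricts to an automorphism of $Z_k=Z^0/Z^1$, compatibly with the formula $\iota(\tilde{\mathbf w}^M)(T^M_t)=T^{\mathbf w.M}_{\gamma(t)}$ for $t\in Z_k$ that is part of the definition of $\iota(\tilde{\mathbf w}^M)$.

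Then I would verify $\iota(\tilde{\mathbf w}^M)(T^{M,*}_w)=T^{\mathbf w.M,*}_{\gamma(w)}$ on the building blocks of the starred basis. When $\ell_M(w)=0$ there is nothing to do, since $T^{M,*}_w=T^M_w$ and $T^{\mathbf w.M,*}_{\gamma(w)}=T^{\mathbf w.M}_{\gamma(w)}$. When $\tilde s$ lifts $s\in S^{\aff}_M$, I would apply $\iota(\tilde{\mathbf w}^M)$ to the quadratic relation $(T^M_{\tilde s})^2=q^M_sT^M_{\widetilde{s}^2}+c^M_{\tilde s}T^M_{\tilde s}$, compare it with the quadratic relation attached to the lift $\gamma(\tilde s)$ of $\gamma(s)\in S^{\aff}_{\mathbf w.M}$, and---using that $\tilde s^2$ lies in $Z_k$ while $c^M_{\tilde s}T^M_{\tilde s}$ is supported on the disjoint coset $Z_k\tilde s$---read off from the uniqueness of basis expansions that $q^M_s=q^{\mathbf w.M}_{\gamma(s)}$ and $\iota(\tilde{\mathbf w}^M)(c^M_{\tilde s})=c^{\mathbf w.M}_{\gamma(\tilde s)}$, whence $\iota(\tilde{\mathbf w}^M)(T^{M,*}_{\tilde s})=T^{\mathbf w.M}_{\gamma(\tilde s)}-c^{\mathbf w.M}_{\gamma(\tilde s)}=T^{\mathbf w.M,*}_{\gamma(\tilde s)}$. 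For a general $w\in W_M(1)$, I would choose a factorisation $w=u\tilde s_1\cdots\tilde s_k$ with $u$ of length $0$, each $\tilde s_i$ lifting an element of $S^{\aff}_M$ and $\ell_M(w)=k$; then $T^{M,*}_w=T^{M,*}_uT^{M,*}_{\tilde s_1}\cdots T^{M,*}_{\tilde s_k}$ by definition of the starred basis, the factorisation $\gamma(w)=\gamma(u)\gamma(\tilde s_1)\cdots\gamma(\tilde s_k)$ is again length-additive because $\gamma$ preserves length, and multiplying the identities already obtained yields $\iota(\tilde{\mathbf w}^M)(T^{M,*}_w)=T^{\mathbf w.M,*}_{\gamma(w)}$, as required.

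The step I expect to be the real obstacle is the identification $\iota(\tilde{\mathbf w}^M)(c^M_{\tilde s})=c^{\mathbf w.M}_{\gamma(\tilde s)}$ of the parameters of the quadratic relations, which live in $R[Z_k]$; but it is forced by $\iota(\tilde{\mathbf w}^M)$ being a ring isomorphism whose underlying bijection of index sets is $\gamma$, together with the facts, noted above, that $\gamma$ respects the two length functions and restricts correctly to $Z_k$. Everything else in the argument is routine bookkeeping with the defining relations of the starred basis.
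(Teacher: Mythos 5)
Your proposal is correct and follows essentially the same route as the paper: both reduce the lemma to the compatibility $\iota(\tilde{\mathbf w}^M)(T^{M,*}_w)=T^{\mathbf w.M,*}_{\tilde{\mathbf w}^M w(\tilde{\mathbf w}^M)^{-1}}$, establish it for lifts of elements of $S^{\aff}_M$ by matching the $c$-parameters of the quadratic relations through the ring isomorphism $\iota(\tilde{\mathbf w}^M)$, and then extend to all of $W_M(1)$ via the braid relations. Your write-up merely makes explicit the length-preservation of the conjugation and the uniqueness-of-expansion argument that the paper leaves implicit.
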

   \begin{proof} By the ring isomorphism $\mathcal{H}_{ M}\xrightarrow{\iota ( \tilde{ \mathbf w}^M) }\mathcal{H}_{\mathbf w. M}$,  we have $ c^{\mathbf w.M}_{\tilde{ \mathbf w}^M \tilde  s ( \tilde{ \mathbf w}^M)^{-1}}= c^{M}_{ \tilde  s  }$ when $\tilde s\in W_M(1)$ lifts  $s\in S_M^{\aff} $. So the equality of the lemma is true for $w=\tilde s $. Apply the braid relations to get the equality for all $w\in W_M(1)$.
   \end{proof} 
We return to the $\mathcal{H}_R$-module $\Hom_{\mathcal{H}_{ M ^-, \theta^*}}(\mathcal{H}, V)$ parabolically coinduced from $\mathcal{V}$. It  has a natural direct decomposition indexed by the set $ \mathbb W^{ \mathbb W _M}$ of elements $d$ in the finite Weyl group  $\mathbb W $ of minimal length in  the coset $d  \mathbb W _M$. Indeed it is known that 
the linear map   $$f\mapsto (f(T_{\tilde d}))_{d\in \mathbb W^{ \mathbb W _M}}:  \Hom_{\mathcal{H}_{ M ^-},\theta^*}(\mathcal{H}, \mathcal V)\to \oplus _{d\in \mathbb W^{ \mathbb W _M}} \mathcal V$$
 is an isomorphism. For  $v\in \mathcal V$ and $d\in \mathbb W^{ \mathbb W _M}$,   there is a unique element 
 $$f_{\tilde d,v} \in  \Hom_{\mathcal{H}_{ M ^-},\theta^*}(\mathcal{H}, \mathcal V) \ \text{ satisfying   $f(T_{\tilde d})=v$ and  $f(T_{\tilde d'} ) =0$ for $d'\in \mathbb W^{ \mathbb W _M}\setminus \{d\}$}.$$  It is  known that  the map 
$v \mapsto f_{ \tilde {\mathbf w}^M,v}: \tilde {\mathbf w}^M.\mathcal V \to  \Hom_{\mathcal{H}_{ M ^-},\theta^*}(\mathcal{H}, \mathcal V)$ 
is $\mathcal{H}_{(\mathbf w.M)^+ }$-equivariant:   $f_{ \tilde {\mathbf w}^M,vT^{\mathbf w. M}
 _  w  } = f_{ \tilde {\mathbf w}^M,v} T_ w  $ for all $v\in \mathcal{V}, w  \in W_{\mathbf w .M^+}(1) $. By adjunction, this $\mathcal{H}_{(\mathbf w.M)^+ }$-equivariant map gives an $\mathcal{H}_R$-homomorphism from  an induced module to a coinduced module: \begin{equation}\label{eq:Abe} v\otimes 1_{\mathcal{H}}\mapsto  f_{ \tilde {\mathbf w}^M,v}: \tilde {\mathbf w}^M.\mathcal V \otimes_{\mathcal{H}_{(\mathbf w.M)^+}, \theta}\mathcal{H}\xrightarrow{\mu_P}\Hom_{\mathcal{H}_{ M ^-},\theta^*}(\mathcal{H},\mathcal V).
\end{equation}
This is an isomorphism  \cite{arXiv:1406.1003_accepted}, \cite{MR3437789} .
 
 The naive guess that a variant $\mu_Q$ of $\mu_P$  induces an $\mathcal{H}_R$-isomorphism between the $\mathcal{H}_R$-modules $I_\mathcal{H}(\mathbf w.P,\tilde {\mathbf w}^M.\mathcal V,\mathbf w.Q)$ and $ CI_{\mathcal{H}}(P,\mathcal V,Q)$ turns out to be true.  The proof  is the aim of the rest of this section. 

\bigskip  The  $\mathcal{H}_R$-module 
 $I_\mathcal{H}(\mathbf w.P,\tilde {\mathbf w}^M.\mathcal V,\mathbf w.Q)$ is well defined because  the parabolic subgroups of $G$ containing  $\mathbf w. P$ and contained in $P(\tilde {\mathbf w}^M.\mathcal V) $ are $\mathbf w. Q$ for $P\subset Q \subset P(\mathcal{V})$, as follows from:

\begin{lemma}\label{lemma:delt} $\Delta_{\tilde{ \mathbf w}^M.\mathcal{V}}=- {\mathbf w} (\Delta_{\mathcal{V}})$.
\end{lemma}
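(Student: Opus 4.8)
The plan is to exhibit $\alpha \mapsto -\mathbf{w}(\alpha)$ as a bijection $\Delta\to\Delta$ carrying $\Delta_{\mathcal V}$ bijectively onto $\Delta_{\tilde{\mathbf w}^M.\mathcal V}$, by checking that it matches the two conditions defining these sets in Definition~\ref{def:Htriple}: orthogonality to the relevant $\Delta_M$, and triviality of the action of the relevant elements $T^{*}(z)$.

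First I would dispose of the combinatorics. As $\mathbf w$ is the longest element of $\mathbb W$ we have $\mathbf w(\Delta)=-\Delta$, so $\alpha\mapsto-\mathbf w(\alpha)$ is a bijection of $\Delta$; it preserves orthogonality because $\mathbf w$ acts by isometries for a $\mathbb W$-invariant inner product. Combined with the identity $\mathbf w^M(\Delta_M)=-\mathbf w(\Delta_M)=\Delta_{\mathbf w.M}$ recalled just before the statement, this shows that, for $\alpha\in\Delta$, $\alpha$ is orthogonal to $\Delta_M$ if and only if $\beta:=-\mathbf w(\alpha)\in\Delta$ is orthogonal to $\Delta_{\mathbf w.M}$. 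I would also note that for such $\alpha$ one has $\mathbf w^M(\alpha)=\mathbf w\mathbf w_M(\alpha)=\mathbf w(\alpha)=-\beta$, since $\mathbf w_M$ fixes pointwise the subspace orthogonal to the roots of $\Delta_M$.

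Next I would transport the group data by conjugation by $\hat{\mathbf w}^M\in\mathcal N_G\cap\mathcal K$. Since conjugation by $\hat{\mathbf w}^M$ sends $U_{\pm\alpha}$ to $U_{\pm\mathbf w^M(\alpha)}=U_{\mp\beta}$ and $M'_\gamma=M'_{-\gamma}$, it sends $M'_\alpha$ to $M'_\beta$; as $\hat{\mathbf w}^M$ normalizes $Z$, it sends $Z\cap M'_\alpha$ bijectively onto $Z\cap M'_\beta$. Given $z\in Z\cap M'_\alpha$, let $\lambda_z$ be its image in $\Lambda(1)\subset W_M(1)$; then $z':=\hat{\mathbf w}^M z(\hat{\mathbf w}^M)^{-1}\in Z\cap M'_\beta$ has image $\tilde{\mathbf w}^M\lambda_z(\tilde{\mathbf w}^M)^{-1}$ in $W_{\mathbf w.M}(1)$, so by the definition of the twist $\tilde{\mathbf w}^M.\mathcal V$ in its $T^{*}$-form, established just above, we get $v\,T^{\mathbf w.M,*}(z')=v\,T^{\mathbf w.M,*}_{\tilde{\mathbf w}^M\lambda_z(\tilde{\mathbf w}^M)^{-1}}=v\,T^{M,*}_{\lambda_z}=v\,T^{M,*}(z)$ for all $v\in\mathcal V$. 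Hence $T^{M,*}(z)$ acts trivially on $\mathcal V$ for every $z\in Z\cap M'_\alpha$ if and only if $T^{\mathbf w.M,*}(z')$ acts trivially on $\tilde{\mathbf w}^M.\mathcal V$ for every $z'\in Z\cap M'_\beta$.

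Putting the two steps together and unwinding Definition~\ref{def:Htriple}, I obtain $\alpha\in\Delta_{\mathcal V}\iff -\mathbf w(\alpha)\in\Delta_{\tilde{\mathbf w}^M.\mathcal V}$, that is, $\Delta_{\tilde{\mathbf w}^M.\mathcal V}=-\mathbf w(\Delta_{\mathcal V})$. The one delicate point, which I would treat with care, is the passage through the pro-$p$-Iwahori extension $W(1)$: one must verify that conjugation by $\hat{\mathbf w}^M$ on $\mathcal N_G$ induces, via the quotient map, conjugation by $\tilde{\mathbf w}^M$ on $W_M(1)$ with values in $W_{\mathbf w.M}(1)$, and that this is exactly the recipe used to define the twist; this relies on the fixed, compatible choice of admissible lifts from \S\ref{S:2.3}. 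Everything else is bookkeeping with root-system isometries.
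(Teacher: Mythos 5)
Your proof is correct and takes essentially the same route as the paper's (much terser) argument: the two facts you isolate --- that a simple root orthogonal to $\Delta_M$ is fixed by $\mathbf w_M$, so $\mathbf w^M$ acts on it as $\mathbf w$ and sends $\Delta$ to $-\Delta$, and that conjugation by $\hat{\mathbf w}^M$ carries the groups $Z\cap M'_\alpha$ to $Z\cap M'_{-\mathbf w(\alpha)}$ compatibly with the $T^*$-form of the twist --- are exactly what the paper invokes before declaring the rest straightforward. Your write-up just supplies the bookkeeping the paper omits.
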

\begin{proof} Recall that  $\Delta_{\mathcal V}$ is the set of simple roots $\alpha \in \Delta \setminus \Delta_M$ orthogonal to $\Delta_M$ and  $T^{M,*}(z)$ acts trivially on $\mathcal V$  for all  $z\in Z\cap M'_\alpha$, and the corresponding standard parabolic subgroup $P_{\mathcal V}=M_{\mathcal V}N_{\mathcal V}$. The $Z\cap M'_\alpha$ for $\alpha \in \Delta_{\mathcal V}$ generate the group    $ Z\cap M'_{\mathcal V}$.  A root $\alpha \in \Delta \setminus \Delta_M$ orthogonal to $\Delta_M$ is fixed by ${ \mathbf w}_M$ so ${ \mathbf w}^M(\alpha)=\mathbf w (\alpha)$ and 
 $${\hat { \mathbf w}}^M M_\mathcal{V}  ( {\hat{ \mathbf w}}^M)^{-1}= {\hat { \mathbf w}}  M_\mathcal{V}  ( {\hat{ \mathbf w}})^{-1}.$$
  The proof of Lemma \ref{lemma:delt} is straightforward as $\Delta= -{\mathbf w}(\Delta)$, $ \Delta_{ \mathbf w. M}= -{\mathbf w}(\Delta_M)$.
  \end{proof}
 
 Before going further, we check the commutativity of the extension with the twist. As $Q=M_QU$ and $M_Q$ determine each other we denote ${\mathbf w}_{M_Q}= { \mathbf w}_{ Q},  {\mathbf w}^{M_Q}= {\mathbf w}^{ Q}$  when $Q\neq P,G$.
  \begin{lemma} \label{lemma:eH} $e_{\mathcal{H}_{\mathbf w.Q}}( \tilde{ \mathbf w}^M.\mathcal V)= \tilde{ \mathbf w}^{ Q}. e_{\mathcal{H}_{Q}}(\mathcal V)$.
  \end{lemma}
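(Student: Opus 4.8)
The plan is to unwind both sides of the claimed identity $e_{\mathcal{H}_{\mathbf w.Q}}(\tilde{\mathbf w}^M.\mathcal V)=\tilde{\mathbf w}^{Q}.e_{\mathcal{H}_{Q}}(\mathcal V)$ to their defining formulas in terms of the basis $(T_w^{*})$ and compare them. First I would recall, using Definition~\ref{def:ext} together with \eqref{eq:structure}, that $e_{\mathcal{H}_{Q}}(\mathcal V)$ is, as an $R$-module, just $\mathcal V$, with $T_{w_2}^{M_Q,*}$ acting trivially for $w_2\in{}_1W_{M'_{\mathcal V}}$ (here $M'_{\mathcal V}$ plays the role of $M'_2$ inside $M_Q$, since $\Delta_{M_Q}$ is the orthogonal union of $\Delta_M$ and $\Delta_{\mathcal V}$ by the definition of $P(\mathcal V)$ and Lemma~\ref{lemma:ext}) and $T_w^{M_Q,*}$ acting as $T_w^{M,*}$ for $w\in W_M(1)$. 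Then I would apply the ring isomorphism $\iota(\tilde{\mathbf w}^{Q})\colon\mathcal{H}_{M_Q}\to\mathcal{H}_{\mathbf w.M_Q}$ of \eqref{eq:isoH} and the $\mathbf w.Q$-version of the twist: $\tilde{\mathbf w}^{Q}.e_{\mathcal{H}_{Q}}(\mathcal V)$ is the $R$-module $\mathcal V$ with $T^{\mathbf w.M_Q,*}_{\tilde{\mathbf w}^{Q}w(\tilde{\mathbf w}^{Q})^{-1}}$ acting as $T_w^{M_Q,*}$ acts on $e_{\mathcal{H}_Q}(\mathcal V)$ (using the $T^{*}$-form of the twist, valid by the Lemma just preceding Lemma~\ref{lemma:delt}).

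The core of the argument is then a bookkeeping check that the two $\mathcal{H}_{\mathbf w.Q}$-module structures on the common underlying $R$-module $\mathcal V$ agree on the basis $(T^{\mathbf w.M_Q,*}_x)_{x\in W_{\mathbf w.M_Q}(1)}$. By Corollary~\ref{cor:extT} and \eqref{eq:structure2}, an $\mathcal{H}_{\mathbf w.M_Q}$-module is the extension of an $\mathcal{H}_{\mathbf w.M}$-module precisely when $T^{\mathbf w.M_Q,*}_{x_2}$ acts trivially for $x_2\in{}_1W_{(\mathbf w.M)'_{\mathbf w.\mathcal V}}$, i.e.\ for $x_2$ in the image of ${}_1W_{M'_{\mathcal V}}$ under conjugation by $\tilde{\mathbf w}^{Q}$; since $\Delta_{\tilde{\mathbf w}^M.\mathcal V}=-\mathbf w(\Delta_{\mathcal V})$ by Lemma~\ref{lemma:delt}, and conjugation by $\tilde{\mathbf w}^{Q}$ carries $W_{M'_{\mathcal V}}(1)$ onto $W_{(\mathbf w.M_Q)'_{\mathbf w.\mathcal V}}(1)$, this triviality for $\tilde{\mathbf w}^{Q}.e_{\mathcal{H}_Q}(\mathcal V)$ is exactly the triviality of $T^{M_Q,*}_{w_2}$ on $e_{\mathcal{H}_Q}(\mathcal V)$ for $w_2\in{}_1W_{M'_{\mathcal V}}$, which holds by construction. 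Hence $\tilde{\mathbf w}^{Q}.e_{\mathcal{H}_Q}(\mathcal V)$ is extensible from $\mathcal{H}_{\mathbf w.M}$, and it remains to identify the underlying $\mathcal{H}_{\mathbf w.M}$-module with $\tilde{\mathbf w}^M.\mathcal V$: for $w\in W_M(1)$, the element $\tilde{\mathbf w}^{Q}w(\tilde{\mathbf w}^{Q})^{-1}=\tilde{\mathbf w}^{M}w(\tilde{\mathbf w}^{M})^{-1}$ lies in $W_{\mathbf w.M}(1)$ (the two conjugators differing by an element of $W_M(1)$ of length $0$ that normalizes $W_M(1)$, as $\tilde{\mathbf w}^{Q}=\tilde{\mathbf w}^{M}\cdot(\text{stuff in }W_{M'_{\mathcal V}}(1)\text{ up to }Z_k)$), and on it $T^{\mathbf w.M,*}$ acts as $T^{M,*}_w$ on $e_{\mathcal{H}_Q}(\mathcal V)=\mathcal V$, which is precisely the definition of $\tilde{\mathbf w}^M.\mathcal V$.

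The main obstacle I anticipate is purely notational: keeping straight the three conjugators $\tilde{\mathbf w}^{M}$, $\tilde{\mathbf w}^{Q}$, $\tilde{\mathbf w}^{M_Q}$ and verifying that they induce the same map on $W_M(1)$ and on ${}_1W_{M'_{\mathcal V}}$, using that $\mathbf w_{M_Q}$ fixes every $\alpha\in\Delta_{\mathcal V}$ (being orthogonal to $\Delta_M$) so that $\mathbf w^{Q}(\alpha)=\mathbf w(\alpha)$ on those roots — this is the analogue of the computation in the proof of Lemma~\ref{lemma:delt}, giving $\hat{\mathbf w}^{Q}M'_{\mathcal V}(\hat{\mathbf w}^{Q})^{-1}=\hat{\mathbf w}\,M'_{\mathcal V}\,\hat{\mathbf w}^{-1}$. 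Once these identifications of conjugation actions are in hand, the equality of the two module structures is immediate from Corollary~\ref{cor:extT}. A short closing sentence would note that, since both sides are defined purely in terms of the $T^{*}$-basis and the quadratic/braid relations, the identity is an identity of $\mathcal{H}_{\mathbf w.Q,R}$-modules over any commutative ring $R$.
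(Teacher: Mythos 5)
Your proposal follows essentially the same route as the paper's proof: both sides have $\mathcal V$ as underlying $R$-module, one computes the action of $T^{\mathbf w.Q,*}_x$ on each side and reduces the equality to the two conjugation identities ${}_1 W_{{\mathbf w}.M'_2}=\tilde{\mathbf w}^{Q}\,{}_1W_{M'_2}\,(\tilde{\mathbf w}^{Q})^{-1}$ and $(\tilde{\mathbf w}^{Q})^{-1}x\,\tilde{\mathbf w}^{Q}=(\tilde{\mathbf w}^{M})^{-1}x\,\tilde{\mathbf w}^{M}$ for $x\in W_{\mathbf w.M}(1)$, which follow from the orthogonality of $\Delta_P$ and $\Delta_Q\setminus\Delta_P$. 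Two small corrections: the second factor inside $M_Q$ is the Levi with simple roots $\Delta_Q\setminus\Delta_P$ (equal to $\Delta_{\mathcal V}$ only when $Q=P(\mathcal V)$), and $\tilde{\mathbf w}^{M}(\tilde{\mathbf w}^{Q})^{-1}$ is a lift of $\mathbf w_{M_2}$, an element of ${}_1W_{M'_2}$ of positive length whose conjugation action on $W_M(1)$ is trivial, not a length-zero element of $W_M(1)$.
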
  
 \begin{proof} As $R$-modules $\mathcal{V}=e_{\mathcal{H}_{\mathbf w.Q}}( \tilde{ \mathbf w}^M.\mathcal V)= \tilde{ \mathbf w}^{ Q}. e_{\mathcal{H}_{Q}}(\mathcal V)$. A direct computation shows that  the Hecke element  $T^{{ \mathbf w}.Q,*}_w$ acts  in  the $\mathcal{H}_R$-module $ e_{\mathcal{H}_{\mathbf w.Q}}( \tilde{ \mathbf w}^M.\mathcal V)$, by the identity if $w\in  \tilde  { \mathbf w}^Q {}_1W_{M'_2} ({ \mathbf w}^Q)^{-1}$ and by $T^{M,*}_{(\tilde { \mathbf w}^Q)^{-1} w \tilde { \mathbf w}^Q}$ if $w\in  \tilde  { \mathbf w}^Q {}_1W_{M'_2} ({ \mathbf w}^Q)^{-1}$ where $M_2$ denotes the standard Levi subgroup with $\Delta_{M_2}=\Delta_Q\setminus  \Delta_P$. Whereas in  the $\mathcal{H}_R$-module $\tilde{ \mathbf w}^{ Q}. e_{\mathcal{H}_{Q}}(\mathcal V)$,  the Hecke element  $T^{{ \mathbf w}.Q,*}_w$ acts by the identity if $w\in {}_1 W_{{\mathbf w}.M'_2}$ and by $T^{M,*}_{(\tilde { \mathbf w}^M)^{-1} w \tilde { \mathbf w}^M}$ if  $w\in  W_{{\mathbf w}.M}(1)$. So the lemma means that 
$${}_1 W_{{\mathbf w}.M'_2}= \tilde  { \mathbf w}^Q {}_1W_{M'_2} ({ \mathbf w}^Q)^{-1}, \quad (\tilde { \mathbf w}^Q)^{-1} w \tilde { \mathbf w}^Q =(\tilde { \mathbf w}^M)^{-1} w \tilde { \mathbf w}^M \ \text{if} \ w\in  W_{{\mathbf w}.M}(1).$$
 These properties are easily proved using  that   ${}_1W_{G'}$ is normal in $W(1)$ and that the sets of roots $\Delta_P$ and $\Delta_Q\setminus  \Delta_P$  are orthogonal: 
 ${ \mathbf w}_{ Q}=
{\mathbf w}_{M_2}{\mathbf w}_M $, the elements  ${\mathbf w}_{M_2}$ and ${\mathbf w}_M$ normalise $  W_M $ and $W_{M_2}$,  the elements of  $\mathbb W_{M_2}$ commutes with the elements of $\mathbb W_M$.     \end{proof} 
We return to our guess. The variant  $\mu_Q$ of $\mu_P$ is obtained by combining the commutativity of the extension with the twist   and the isomorphism  \ref{eq:Abe}  applied to $(Q, e_{\mathcal{H}_{Q}}(\mathcal V))$ instead of $(P,\mathcal{V})$. The  $\mathcal{H}_R$-isomorphism  $\mu_Q$  is:
 \begin{equation}\label{eq:muQ} v\otimes 1_{\mathcal{H}}\mapsto  f_{ \tilde {\mathbf w}^M,v}: \Ind_{\mathcal{H}_{  \mathbf w.M_Q }}^\mathcal{H}(e_{\mathcal{H}_{\mathbf w.Q}}( \tilde{ \mathbf w}^M.\mathcal V))\xrightarrow{\mu_Q}\Hom_{\mathcal{H}_{ M_Q ^-},\theta^*}(\mathcal{H},e_{\mathcal{H}_{Q}}( \mathcal V)).
\end{equation}

Our guess is that  $\mu_Q$  induces an $\mathcal{H}_R$-isomorphism from the cokernel
 of the  $\mathcal{H}_R$-map
 $$\oplus_{Q\subsetneq  Q'\subset P( \mathcal{V})}  \Ind_{\mathcal{H}_{\mathbf w.Q'}}^\mathcal{H}( e_{\mathcal{H}_{\mathbf w.Q'}}
(\tilde {\mathbf w}^M.\mathcal{V}))\to \Ind_{\mathcal{H}_{\mathbf w.Q}}^\mathcal{H} ( e_{\mathcal{H}_{\mathbf w.Q}}(\tilde {\mathbf w}^M.\mathcal{V}))$$
defined  by the  $\mathcal{H}_R$-embeddings $\iota(\mathbf w.Q,\mathbf w.Q')$, isomorphic to $ I_{\mathcal{H}}(\mathbf w .P, \tilde { \mathbf w}^M\mathcal V,\mathbf w. \overline Q)$ via $\kappa_{{\mathbf w}.Q}$ (Theorem \ref{thm:IHP}),  onto the cokernel   $ CI_{\mathcal{H}}(P,\mathcal V,Q)$
  the $\mathcal{H}_R$-map 
$$\oplus_{Q \subsetneq  Q'\subset P(\mathcal V)} \ \Hom_{\mathcal{H}_{ M_{Q'}^-, \theta^*}}(\mathcal{H}, e_{\mathcal{H}_{Q'} }(\mathcal V))\to \mathcal \Hom_{\mathcal{H}_{M_{Q}^-, \theta^*}}(\mathcal{H},e_{\mathcal{H}_{Q}}(\mathcal V))
$$
 defined by the  $\mathcal{H}_R$-embeddings $i(Q,Q')$.
This is true if  $i(Q,Q')$  corresponds to $\iota(\mathbf w.Q,\mathbf w.Q')$ via the isomorphisms  $\mu_{Q'}$ and $\mu_Q$. This is the content of the next proposition.

\begin{proposition} \label{prop:comp} For all $Q\subsetneq Q'\subset P(\mathcal V)$ we have $$i(Q,Q') \circ \mu_{Q'} = \mu_Q \circ \iota(\mathbf w.Q,\mathbf w.Q').$$
\end{proposition}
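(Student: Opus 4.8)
The plan is to verify the claimed identity of $\mathcal{H}_R$-maps by evaluating both sides on a generating element $v\otimes 1_{\mathcal{H}}$ of the induced module $\Ind_{\mathcal{H}_{\mathbf w.M_{Q'}}}^{\mathcal{H}}(e_{\mathcal{H}_{\mathbf w.Q'}}(\tilde{\mathbf w}^M.\mathcal V))$, with $v\in\mathcal V$, and chasing through the explicit formulas. On the left side, $\mu_{Q'}$ sends $v\otimes 1_{\mathcal{H}}$ to the element $f^{Q'}_{\tilde{\mathbf w}^{M},v}\in\Hom_{\mathcal{H}_{M_{Q'}^-},\theta^*}(\mathcal{H},e_{\mathcal{H}_{Q'}}(\mathcal V))$ characterized by $f(T_{\tilde d})=v$ for $d=\tilde{\mathbf w}^{M}$ (the longest element giving the chosen coset representative in $\mathbb W^{\mathbb W_{M_{Q'}}}$) and $f(T_{\tilde d'})=0$ for the other $d'$; then $i(Q,Q')$ is simply the inclusion coming from $\theta^*(\mathcal{H}_{M_Q^-})\subset\theta^*(\mathcal{H}_{M_{Q'}^-})$ together with $vT_w^{M_{Q'},*}=vT_w^{M_Q,*}$ for $w\in W_{M_Q^-}(1)$, so $i(Q,Q')(f^{Q'}_{\tilde{\mathbf w}^M,v})$ is the element of $\Hom_{\mathcal{H}_{M_Q^-},\theta^*}(\mathcal{H},e_{\mathcal{H}_Q}(\mathcal V))$ with the same prescribed values. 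On the right side, $\iota(\mathbf w.Q,\mathbf w.Q')$ sends $v\otimes 1_{\mathcal{H}}$ to $v\otimes\theta_{\mathbf w.Q'}(e_{\mathbf w.Q}^{\mathbf w.Q'})$ by \eqref{eq:iotaQQ'}, where $e_{\mathbf w.Q}^{\mathbf w.Q'}=\sum_{d\in{}^{\mathbb W_{\mathbf w.Q}}\mathbb W_{\mathbf w.Q'}}T^{M_{\mathbf w.Q'}}_{\tilde d}$, and then $\mu_Q$ is applied. So the task is to show $\mu_Q\bigl(v\otimes\theta_{\mathbf w.Q'}(e_{\mathbf w.Q}^{\mathbf w.Q'})\bigr)$ equals the element just described.

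The key step is a combinatorial bookkeeping of double cosets. First I would use the conjugation isomorphism $w\mapsto\mathbf w^M w(\mathbf w^M)^{-1}$, which carries $W_{M_Q}$ to $W_{\mathbf w.M_Q}$, exchanges positive and negative parts, and (by the argument already used in Lemma \ref{lemma:eH}) respects the relevant subgroups; combined with Lemma \ref{lemma:delt} and Lemma \ref{lemma:eH}, this translates the right-hand data on $\mathbf w.M(\mathcal V)$ back into data on $M(\mathcal V)$. The heart of the matter is the identity relating the two sets of coset representatives: the elements $d$ appearing in $e_{\mathbf w.Q}^{\mathbf w.Q'}$ correspond, after multiplication by $\mathbf w^{M_{Q'}}$ on the appropriate side, exactly to the coset representatives in $\mathbb W^{\mathbb W_{M_Q}}$ that do \emph{not} already lie in $\mathbb W^{\mathbb W_{M_{Q'}}}$ — equivalently, to the image of the injection describing the copies of $\mathcal V$ that are "new" when passing from the $Q'$-coinduced module to the $Q$-coinduced module. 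Using the braid relations $T_w=T_u T_{w_{M'}} T_{w_{M'_2}}$ and the fact that $\theta_{Q}(e_P^Q)$ acts trivially on $e(\mathcal V)$ (as noted in the proof of Proposition \ref{prop:ovvv}), one computes that $\mu_Q(v\otimes\theta_{\mathbf w.Q'}(e_{\mathbf w.Q}^{\mathbf w.Q'}))$ is the $\Hom$-element whose value at $T_{\tilde d}$ for each $d$ in that corresponding set is $v$ and whose value at the remaining basis elements is $0$ — which is precisely $i(Q,Q')(f^{Q'}_{\tilde{\mathbf w}^M,v})$. Finally, since all maps in sight are $\mathcal{H}_R$-linear and the induced module is generated by the elements $v\otimes 1_{\mathcal{H}}$, agreement on these generators forces the identity $i(Q,Q')\circ\mu_{Q'}=\mu_Q\circ\iota(\mathbf w.Q,\mathbf w.Q')$.

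The main obstacle, I expect, is the precise matching of the indexing sets $\mathbb W^{\mathbb W_{M_Q}}$, $\mathbb W^{\mathbb W_{M_{Q'}}}$, ${}^{\mathbb W_{\mathbf w.Q}}\mathbb W_{\mathbf w.Q'}$, and the longest-element representatives $\mathbf w^{M_Q}$, $\mathbf w^{M_{Q'}}$, under the conjugation and inversion maps; once one pins down that for $d\in{}^{\mathbb W_{M_Q}}\mathbb W_{M_{Q'}}$ the element $\tilde{\mathbf w}^{M_{Q'}}\tilde d$ (suitably interpreted) reduces mod the relevant subgroup to an element of $\mathbb W^{\mathbb W_{M_Q}}$, and that this gives a bijection onto $\mathbb W^{\mathbb W_{M_Q}}\setminus\mathbb W^{\mathbb W_{M_{Q'}}}$ compatible with the $f_{\tilde d,v}$'s, the rest is a direct though somewhat lengthy unwinding of definitions. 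The orthogonality of $\Delta_P$ and $\Delta\setminus\Delta_P$ (inherited by $\Delta_{M(\mathcal V)}$ in place of $\Delta$ here) is what makes these coset computations tractable, just as in the proof of Lemma \ref{lemma:eH}.
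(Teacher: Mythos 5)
Your setup is the same as the paper's: evaluate both composites on the generators $v\otimes 1_{\mathcal H}$, use $\mathcal{H}_R$-linearity, and match the two indexing sets of coset representatives (the paper indeed proves ${}^{\mathbb W_{M_{\mathbf w.Q}}}\mathbb W_{M_{\mathbf w.Q'}}=\{\mathbf w^{Q}(\mathbf w^{Q'}d)^{-1}\mid d\in\mathbb W_{M_{Q'}}^{\mathbb W_{M_Q}}\}$ and that the admissible lifts behave well under these products). But there is a genuine gap at the point you dismiss as ``a direct though somewhat lengthy unwinding of definitions.'' When one actually computes $\mu_Q^{-1}\circ i(Q,Q')\circ\mu_{Q'}$ using the explicit inverse $\mu_Q^{-1}(f)=\sum_{d}f(T_{\tilde d})\otimes T^*_{\tilde{\mathbf w}^{M_Q}\tilde d^{-1}}$, the answer comes out as $v\otimes\sum_{d}q_d\,T^*_{\tilde{\mathbf w}^{Q}(\tilde{\mathbf w}^{Q'}\tilde d)^{-1}}$ --- a sum in the $T^*$-basis weighted by powers of $q$ --- whereas $\iota(\mathbf w.Q,\mathbf w.Q')(v\otimes 1_{\mathcal H})=v\otimes\sum_d T_{\tilde d}$ is a plain sum in the $T$-basis. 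These are \emph{different} elements of $\mathcal H$, and your proposal gives no mechanism for why the images in $e_{\mathcal{H}_{\mathbf w.Q}}(\tilde{\mathbf w}^M.\mathcal V)\otimes_{\mathcal{H}_{M^+_{\mathbf w.Q}},\theta}\mathcal H$ coincide. In particular, the value of $\mu_Q(v\otimes\theta_{\mathbf w.Q'}(e_{\mathbf w.Q}^{\mathbf w.Q'}))$ at the relevant $T_{\tilde d}$ is not simply $v$: factors $q_d$ and the correction terms $c_{\tilde s}$ from the quadratic relations intervene, and they do not cancel for free.

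The missing idea is the congruence
\[
\sum_{d\in{}^{\mathbb W_J}\mathbb W}T_d\;\equiv\;\sum_{d\in{}^{\mathbb W_J}\mathbb W}q_{\mathbf w_J d\mathbf w}\,T^*_d
\]
modulo the right ideal generated by $T^*_w-1$ for $w\in\mathbb W_J$, in a finite (generic) Hecke ring. The paper reduces the proposition to exactly this statement (after stripping off the orthogonal factor $\Delta_P$ and passing to the finite subring attached to $\mathbb W_{M_{Q'_2}}$), and proves it by a separate argument: the identity $\sum_{w\in\mathbb W}T_w=\sum_{w\in\mathbb W}T_wT^*_s$, the relation $q_{d\mathbf w}T^*_d=T_{d\mathbf w}T^*_{\mathbf w}$, and the congruence $c_{\tilde s}\equiv q_s-1$ for admissible lifts. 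None of this is coset bookkeeping; it is a Hecke-algebra identity that is only true modulo the extension ideal, and it is the actual content of the proposition. Your proof would need to supply it.
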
    

We postpone to section \S \ref{S:comdia} the rather long proof of the proposition.  
\begin{corollary} \label{cor:isoCI}The $\mathcal{H}_R$-isomorphism $\mu_Q\circ \kappa_{{\mathbf w}.Q}^{-1}$ induces an $\mathcal{H}_R$-isomorphism
 $$ I_{\mathcal{H}}(\mathbf w .P, \tilde { \mathbf w}^M\mathcal V,\mathbf w. \overline Q) \to  CI_{\mathcal{H}}(P,\mathcal V,Q).$$
\end{corollary}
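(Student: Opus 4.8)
\textbf{Proof proposal for Corollary \ref{cor:isoCI}.}
The plan is to deduce the corollary from Proposition \ref{prop:comp} by passing to cokernels. First I would recall the two defining exact sequences. By Theorem \ref{thm:IHP} applied to the triple $(\mathbf w.P, \tilde{\mathbf w}^M.\mathcal V, \mathbf w.Q)$, the module $I_{\mathcal{H}}(\mathbf w.P, \tilde{\mathbf w}^M.\mathcal V, \mathbf w.Q)$ is, via the $\mathcal{H}_R$-isomorphism $\kappa_{\mathbf w.Q}$, the cokernel of the $\mathcal{H}_R$-map
$$\bigoplus_{Q\subsetneq Q'\subset P(\mathcal V)} \Ind_{\mathcal{H}_{\mathbf w.Q'}}^{\mathcal{H}}\bigl(e_{\mathcal{H}_{\mathbf w.Q'}}(\tilde{\mathbf w}^M.\mathcal V)\bigr)\longrightarrow \Ind_{\mathcal{H}_{\mathbf w.Q}}^{\mathcal{H}}\bigl(e_{\mathcal{H}_{\mathbf w.Q}}(\tilde{\mathbf w}^M.\mathcal V)\bigr)$$
with components $\iota(\mathbf w.Q,\mathbf w.Q')$ (here I use that the parabolic subgroups between $\mathbf w.P$ and $P(\tilde{\mathbf w}^M.\mathcal V)$ are exactly the $\mathbf w.Q'$ for $Q\subseteq Q'\subseteq P(\mathcal V)$, by Lemma \ref{lemma:delt}). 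On the other hand, $CI_{\mathcal{H}}(P,\mathcal V,Q)$ is by Definition \ref{def:coind} the cokernel of the $\mathcal{H}_R$-map
$$\bigoplus_{Q\subsetneq Q'\subset P(\mathcal V)} \Hom_{\mathcal{H}_{M_{Q'}^-,\theta^*}}(\mathcal{H}, e_{\mathcal{H}_{Q'}}(\mathcal V))\longrightarrow \Hom_{\mathcal{H}_{M_Q^-,\theta^*}}(\mathcal{H}, e_{\mathcal{H}_Q}(\mathcal V))$$
with components $i(Q,Q')$.

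Next I would assemble the vertical isomorphisms. For each $Q'$ with $Q\subseteq Q'\subseteq P(\mathcal V)$, the map $\mu_{Q'}$ of \eqref{eq:muQ} is an $\mathcal{H}_R$-isomorphism $\Ind_{\mathcal{H}_{\mathbf w.M_{Q'}}}^{\mathcal{H}}(e_{\mathcal{H}_{\mathbf w.Q'}}(\tilde{\mathbf w}^M.\mathcal V))\xrightarrow{\sim}\Hom_{\mathcal{H}_{M_{Q'}^-,\theta^*}}(\mathcal{H},e_{\mathcal{H}_{Q'}}(\mathcal V))$; taking the direct sum over the $Q'\supsetneq Q$ gives an isomorphism between the two source terms, while $\mu_Q$ itself gives an isomorphism between the two target terms. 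Proposition \ref{prop:comp} asserts precisely that these vertical isomorphisms intertwine the two horizontal maps: $i(Q,Q')\circ\mu_{Q'}=\mu_Q\circ\iota(\mathbf w.Q,\mathbf w.Q')$ for every $Q'$. Hence the commuting square of $\mathcal{H}_R$-modules, with isomorphisms on the vertical arrows, induces a canonical isomorphism on cokernels. Composing with the inverse of $\kappa_{\mathbf w.Q}$ on the source side and reading off the target as $CI_{\mathcal{H}}(P,\mathcal V,Q)$, we obtain the desired $\mathcal{H}_R$-isomorphism $I_{\mathcal{H}}(\mathbf w.P,\tilde{\mathbf w}^M.\mathcal V,\mathbf w.\overline Q)\to CI_{\mathcal{H}}(P,\mathcal V,Q)$, visibly equal to the map induced by $\mu_Q\circ\kappa_{\mathbf w.Q}^{-1}$ on cokernels.

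The only genuine input is Proposition \ref{prop:comp}, whose proof is deferred to \S\ref{S:comdia}; everything else here is the formal "cokernel of a commuting square with isomorphic columns" argument, together with the bookkeeping identifications of the index sets of parabolics via Lemma \ref{lemma:delt} and of the extension-twist compatibility via Lemma \ref{lemma:eH}. So the main obstacle is not in this corollary at all but in verifying the compatibility of the embeddings $i(Q,Q')$ and $\iota(\mathbf w.Q,\mathbf w.Q')$ under $\mu$; the one point to be slightly careful about here is simply that $\mu_Q$ and the $\mu_{Q'}$ are genuinely $\mathcal{H}_R$-linear (not merely $R$-linear), which is already recorded in \eqref{eq:muQ}, so that the induced map on cokernels is a map of $\mathcal{H}_R$-modules rather than merely of $R$-modules.
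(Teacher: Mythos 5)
Your proposal is correct and follows exactly the paper's route: the authors also deduce the corollary formally from Proposition \ref{prop:comp}, identifying $I_{\mathcal{H}}(\mathbf w.P,\tilde{\mathbf w}^M.\mathcal V,\mathbf w.\overline Q)$ as the cokernel of the $\iota(\mathbf w.Q,\mathbf w.Q')$ via $\kappa_{\mathbf w.Q}$ (Theorem \ref{thm:IHP}) and $CI_{\mathcal{H}}(P,\mathcal V,Q)$ as the cokernel of the $i(Q,Q')$, then passing to cokernels of the commuting square with the $\mu_{Q'}$ as vertical isomorphisms. As you note, all the real work lives in Proposition \ref{prop:comp}, whose proof the paper defers to \S\ref{S:comdia}.
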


\subsection{Supersingular $\mathcal{H}_R$-modules, classification of simple $\mathcal{H}_C$-modules}\label{S:9.5}

We   recall  first the notion of supersingularity based on the action of  center of  $\mathcal{H} $. 

The center of $\mathcal{H} $  \cite[Theorem 1.3]{MR3271250} contains a subalgebra $\mathcal Z_{T^{ +} }$ isomorphic to $\mathbb Z[T^{ +}/T_1]$ where $T^{ +} $ is the monoid of dominant elements of $T$  and $T_1$ is the pro-$p$-Sylow subgroup of the maximal compact subgroup of $T$.  

Let $t\in T$ of image $\mu_t\in W(1)$ and let $(E_o (w))_{w\in W (1)}$ denote the alcove walk basis of $\mathcal{H} $ associated to  a closed Weyl chamber $o$ of $\mathbb W $. The element  $$E_o ( C(\mu_t))= \sum_{\mu'} E_o(\mu') $$  is the sum over the elements in $ \mu'$ in the  conjugacy class $ C(\mu_t)$ of $\mu_t$ in $W(1)$. It is a central element of $\mathcal{H} $ and does not depend on the  choice of $o$.  We write also $z (t)=E_o ( C(\mu_t))$.    
   
 \begin{definition} \label{def:supersingularH} A non-zero right $\mathcal{H}_{ R}$-module $\mathcal{V}$ is called supersingular when,  for any $v\in \mathcal{V}$ and any non-invertible $t\in T^{ +}$, there exists a positive integer $n \in \mathbb N$ such that $ v(z (t))^n=0$.  If one can choose $n $  independent on  $(v,t)$, then $\mathcal{V}$ is called uniformly supersingular.\end{definition}

\begin{remark} 
 One can choose $n $  independent on  $(v,t)$  when $\mathcal{V}$ is finitely generated as a right $\mathcal{H}_{ R}$-module. If $R$ is a field and  $\mathcal{V}$ is simple we can take $n=1$.
 
   When $G$ is compact modulo the center, $T^+=T$, and any non-zero $\mathcal{H}_{ R}$-module is supersingular.  
   \end{remark}

  The induction functor $\Ind_{\mathcal{H}_M}^\mathcal{H}:\Mod (\mathcal{H}_{M,R})\to \Mod (\mathcal{H}_R)$ has a left 
 adjoint $\mathcal L_{\mathcal{H}_M}^\mathcal{H}$ and a right adjoint $\mathcal R_{\mathcal{H}_M}^\mathcal{H}$ \cite{MR3437789}: for $\mathcal{V}\in \Mod (\mathcal{H}_R)$, 
\begin{equation}\label{eq:LR}\mathcal L_{\mathcal{H}_M}^\mathcal{H}(\mathcal{V})=  \tilde { \mathbf w}^{ \mathbf w.M} \circ (\mathcal{V}\otimes_{\mathcal{H}_{( \mathbf w.M)^-}, \theta^*}\mathcal{H}_{ \mathbf w.M}), \quad \mathcal R_{\mathcal{H}_M}^\mathcal{H}(\mathcal{V})=\Hom_{\mathcal{H}_{M^+},\theta}(\mathcal{H}_M,\mathcal{V}).
\end{equation}

In the  left adjoint, $\mathcal{V}$ is seen as a   right $\mathcal{H}_{( \mathbf w.M)^- }$-module via the ring homomorphism $\theta^*_{\mathbf w.M}\colon \mathcal{H}_{ (\mathbf w.M)^-}\to\mathcal{H}$; in the right adjoint, $\mathcal{V}$ is seen as a   right  $\mathcal{H}_{ M^+}$-module via the  ring homomorphism $\theta_M \colon\mathcal{H}_{ M^+}\to\mathcal{H}$ (\S \ref{S:2.3}).  

\begin{proposition}\label{pro:ssLR} Assume   that  $\mathcal{V}$  is a supersingular right $\mathcal{H}_{R}$-module and that $p$ is nilpotent in $\mathcal{V}$. Then $\mathcal L_{\mathcal{H}_M}^\mathcal{H} (\mathcal{V})= 0$, and if $\mathcal{V}$ is uniformly supersingular   $\mathcal R_{\mathcal{H}_M}^\mathcal{H} (\mathcal{V})=0$.\end{proposition}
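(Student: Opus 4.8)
The plan is to reduce the vanishing statements for $\mathcal{L}_{\mathcal{H}_M}^\mathcal{H}$ and $\mathcal{R}_{\mathcal{H}_M}^\mathcal{H}$ to the defining supersingularity condition by a careful study of how the central elements $z(t)$, $t\in T^+$ noninvertible, act after applying the transition maps $\theta^*_{\mathbf w.M}$ and $\theta_M$. Recall from \eqref{eq:LR} that $\mathcal{L}_{\mathcal{H}_M}^\mathcal{H}(\mathcal{V}) = \tilde{\mathbf w}^{\mathbf w.M}\circ(\mathcal{V}\otimes_{\mathcal{H}_{(\mathbf w.M)^-},\theta^*}\mathcal{H}_{\mathbf w.M})$. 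Since twisting by $\tilde{\mathbf w}^{\mathbf w.M}$ is an equivalence, it suffices to show $\mathcal{V}\otimes_{\mathcal{H}_{(\mathbf w.M)^-},\theta^*}\mathcal{H}_{\mathbf w.M}=0$. The key observation will be that for a dominant noninvertible $t\in T^+$ with image $\mu_t\in W(1)$, the element $T^*_{\mu_t}$ lies in $\mathcal{H}_{(\mathbf w.M)^-}$ once one conjugates suitably (or, equivalently, that one can pick the Weyl chamber $o$ so that $E_o(C(\mu_t))$ is a sum of $T^*_{\mu'}$ with $\mu'$ in the negative part), so that in the tensor product $v\otimes h = v(z(t))\otimes(\text{something})\cdot h'$ up to units — more precisely, I expect to use that $z(t)$ acts on the right factor $\mathcal{H}_{\mathbf w.M}$ invertibly (it is built from the invertible $\tau^{\mathbf w.M}$-type elements localized), while on $\mathcal{V}$ some power of $z(t)$ kills every element. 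Combining: for any $v\otimes h$, writing $h = z(t)^{-n}\cdot(z(t)^n h)$ in $\mathcal{H}_{\mathbf w.M}$ (legitimate since the relevant central element is invertible in the localization $\mathcal{H}_{\mathbf w.M}$), we get $v\otimes h = v(z(t)^n)\otimes z(t)^{-n}h = 0$ for $n$ large, using supersingularity of $\mathcal{V}$.

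The main obstacle I anticipate is the precise bookkeeping needed to justify that the relevant central element of $\mathcal{H}_{\mathbf w.M}$ — the image under $\theta^*$-compatible identifications of $z(t)$ — is indeed invertible in $\mathcal{H}_{\mathbf w.M}$, and that it acts on $\mathcal{V}$ (viewed through $\theta^*_{\mathbf w.M}$) as $z(t)$ does. This requires matching the central subalgebra $\mathcal{Z}_{T^+}$ of $\mathcal{H}$ with the corresponding central elements of $\mathcal{H}_{\mathbf w.M}$ under $\theta^*$, and here the results recalled in \S\ref{S:2.3} about $\theta$, $\theta^*$ respecting products on the monoid parts $\mathcal{H}_{M^\epsilon}$, together with the fact that $\mathcal{H}_{\mathbf w.M}$ is the localization of $\mathcal{H}_{(\mathbf w.M)^-}$ at $(\tau^{\mathbf w.M})^{-1}$, are exactly what is needed. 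One must check that for $t$ dominant noninvertible, $\mu_t$ (or its $\mathbf w$-conjugate) lies in $W_{(\mathbf w.M)^-}(1)$ or at least that $z(t)$ can be written inside $\theta^*(\mathcal{H}_{(\mathbf w.M)^-})$; the hypothesis that $t$ is \emph{noninvertible} in $T^+$ is what makes $z(t)$ (a power of) a non-unit there while still being a unit after full localization to $\mathcal{H}_{\mathbf w.M}$.

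For the right adjoint, $\mathcal{R}_{\mathcal{H}_M}^\mathcal{H}(\mathcal{V}) = \Hom_{\mathcal{H}_{M^+},\theta}(\mathcal{H}_M,\mathcal{V})$, the argument is dual: an element $\phi$ of this Hom-space is determined by its values, and for $t\in T^+$ noninvertible the central element $z(t)\in\mathcal{H}_{M^+}$ (landing in $\mathcal{H}$ via $\theta$) acts on the source $\mathcal{H}_M$ as a power of the invertible $\tau^M$, hence invertibly, while uniform supersingularity gives a single $n$ with $v(z(t))^n=0$ for all $v\in\mathcal{V}$. Then $\phi(h) = \phi(z(t)^{-n}\cdot z(t)^n h)$; pushing $z(t)^n$ through the $\mathcal{H}_{M^+}$-linearity and using that $\phi$ takes values in $\mathcal{V}$ where $(z(t))^n$ acts as $0$ forces $\phi=0$. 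The reason uniform supersingularity is needed here (and not for $\mathcal{L}$) is that in the tensor product one only needs, for each fixed element $v\otimes h$, a single exponent depending on that element, whereas in the Hom-module one must simultaneously kill $\phi(h)$ for all $h$, so one needs an $n$ working uniformly for all of $\mathcal{V}$. I would organize the write-up as: (i) reduce $\mathcal{L}$ to the tensor product; (ii) identify the central element and its invertibility in the localization; (iii) conclude for $\mathcal{L}$; (iv) run the dual argument for $\mathcal{R}$, flagging where uniformity enters; using throughout the structure results of \S\ref{S:2.3} and the definition of $z(t)$ and supersingularity in \S\ref{S:9.5}.
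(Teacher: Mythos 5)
There is a genuine gap at the heart of your argument: you cannot move $z(t)=E_o(C(\mu_t))$ across the balanced tensor product $\mathcal{V}\otimes_{\mathcal{H}_{(\mathbf w.M)^-},\theta^*}\mathcal{H}_{\mathbf w.M}$, nor invert it in $\mathcal{H}_{\mathbf w.M}$. The element $z(t)$ is the sum of $E_o(\mu')$ over the \emph{entire} conjugacy class of $\mu_t$ in $W(1)$; that class contains elements not lying in $W_{(\mathbf w.M)^-}(1)$ (indeed not in $W_{\mathbf w.M}(1)$ at all), so $z(t)$ is not in $\theta^*(\mathcal{H}_{(\mathbf w.M)^-})$, and it is not an element of $\mathcal{H}_{\mathbf w.M}$ either --- the tensor product is only a right $\mathcal{H}_{\mathbf w.M}$-module, so the expression ``$h=z(t)^{-n}(z(t)^nh)$ in $\mathcal{H}_{\mathbf w.M}$'' is not meaningful. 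A telltale sign is that your argument never uses the hypothesis that $p$ is nilpotent in $\mathcal{V}$; without that hypothesis the statement is false (when $p$ is invertible in $R$, supersingularity of $\mathcal{V}$ does not force the left adjoint to vanish), so no correct proof can avoid it.

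The missing ingredient is the congruence $(E_{o}(C(\mu)))^{n}E_o(\mu)-E_o(\mu)^{n+1}\in p\mathcal{H}$ of \cite[Lemma 6.5]{Vigneras-prop-III} (property (3) in the paper's proof). Choosing $\mu\in\Lambda_T^+(1)$ central in $W_M(1)$ and strictly $N$-positive, and $o$ anti-dominant so that $E_o(\mu)=T_\mu$ and $E_o(\mathbf w.\mu)=T^*_{\mathbf w.\mu}$, supersingularity gives $vz(t)^n=0$, and the congruence converts this into $vT_\mu^{n+1}\in p\mathcal{V}$ and $v(T^*_{\mathbf w.\mu})^{n+1}\in p\mathcal{V}$ --- membership in $p\mathcal{V}$, not vanishing. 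Now $T^*_{\mathbf w.\mu}=\theta^*(T^{\mathbf w.M,*}_{\mathbf w.\mu})$ genuinely does lie in the image of $\mathcal{H}_{(\mathbf w.M)^-}$, and $T^{\mathbf w.M,*}_{\mathbf w.\mu}=T^{\mathbf w.M}_{\mathbf w.\mu}$ is invertible in the localization $\mathcal{H}_{\mathbf w.M}$, so one deduces $v\otimes 1_{\mathcal{H}_{\mathbf w.M}}\in p\,(\mathcal{V}\otimes_{\mathcal{H}_{(\mathbf w.M)^-},\theta^*}\mathcal{H}_{\mathbf w.M})$; hence the whole tensor product is contained in $p$ times itself, and only now does nilpotence of $p$ finish the argument. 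The right-adjoint case is analogous, using $T_\mu=\theta(T^M_\mu)$ with $T^M_\mu\in\mathcal{H}_{M^+}$ invertible in $\mathcal{H}_M$: your diagnosis of why uniformity is needed there is correct, but the same repair (congruence mod $p$, then nilpotence of $p$) is indispensable in that case as well.
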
  
\begin{proof}  This is a  consequence  of  three known properties:  
\begin{enumerate}
\item $\mathcal{H}_M$ is the  localisation of $\mathcal{H}_{M^+}$ (resp.\ $\mathcal{H}_{M^-}$) at $T^M_\mu$ for any element $\mu  \in \Lambda_T(1)$, central in $W_M(1)$  and strictly $N$-positive (resp.\ $N$-negative), and $T^M_\mu=T^{M,*}_\mu$. See  \cite[Theorem  1.4]{MR3437789}. 
\item When $o$ is anti-dominant, $E_o(\mu)=T_{\mu}$ if $\mu\in \Lambda^+(1)$ and  $E_o(\mu)=T^*_{\mu}$ if $\mu\in \Lambda^-(1)$.
\item Let an integer   $n>0$ and   $\mu\in \Lambda(1)$ such that  the $\mathbb W$-orbit of $v(\mu)\in X_*(T)\otimes \mathbb Q$ (Definition in \S \ref{S:2.1})   and  of $\mu$ have the same number of elements. Then  $$(E_{o} ( C(\mu)))^{n} E_o (\mu) - E_o (\mu)^{n+1} \in p \mathcal{H}. $$
 See \cite[Lemma 6.5]{Vigneras-prop-III}, where the  hypotheses  are  given  in the proof (but not written in the lemma). 
\end{enumerate}

 Let $\mu\in  \Lambda_T^+(1)$ satisfying  (1) for $M^+$ and (3),  similarly let  $\mathbf w. \mu\in \Lambda_T^-(1)$ satisfying (1) for $(\mathbf w.M)^-$ and (3).   For $(R,\mathcal{V})$ as in the proposition, let $v\in \mathcal{V}$  and $n>0$ such that $vE_{o} ( C(\mu))^n= vE_{o} ( C(\mathbf w.\mu))^n=0$. Multiplying by $E_o (\mu)$ or $E_o(\mathbf w.\mu)$, and applying (3)  and (2) for $o$ anti-dominant  we get: $$vE_o(\mu^{n+1})= vT_\mu^{n+1} \in p \mathcal{V}, \quad vE_o((\mathbf w.\mu)^{n+1})= v(T^*_{\mathbf w.\mu})^{n+1}\in p \mathcal{V} .$$
The  proposition follows from: $vT_\mu^{n+1}, v(T^*_{\mathbf w.\mu})^{n+1}$ in $ p \mathcal{V}$ (as explained in  \cite[Proposition 5.17]{arXiv:1612.01312} when $p=0$ in $R$). From $ v(T^*_{\mathbf w.\mu})^{n+1}$ in $p \mathcal{V} $, we get   $v \otimes (T^{\mathbf w.M,*}_{\mathbf w.\mu})^{n+1}=v(T^*_{\mathbf w.\mu})^{n+1}\otimes 1_{\mathcal{H}_{ \mathbf w.M}}$  in $p\mathcal{V}\otimes_{\mathcal{H}_{( \mathbf w.M)^-}, \theta^*}\mathcal{H}_{ \mathbf w.M}$. As $T^{\mathbf w.M,*}=T^{\mathbf w.M}$ is invertible in $\mathcal{H}_{ \mathbf w.M}$ we get  $v \otimes 1_{\mathcal{H}_{ \mathbf w.M}}$ in $  p\mathcal{V}\otimes_{\mathcal{H}_{( \mathbf w.M)^-}, \theta^*}\mathcal{H}_{ \mathbf w.M}$. As $v$ was arbitrary, $\mathcal{V}\otimes_{\mathcal{H}_{( \mathbf w.M)^-}, \theta^*}\mathcal{H}_{ \mathbf w.M}\subset p \mathcal{V}\otimes_{\mathcal{H}_{( \mathbf w.M)^-}, \theta^*}\mathcal{H}_{ \mathbf w.M}$. If $p$ is nilpotent in $\mathcal{V}$, then $ \mathcal{V}\otimes_{\mathcal{H}_{( \mathbf w.M)^-}, \theta^*}\mathcal{H}_{ \mathbf w.M}=0$. Suppose now that  there exists  $n>0$ such that  $\mathcal{V}(z (t))^n=0$   for any   non-invertible $t\in T^{ +}$, then $\mathcal{V} T_\mu^{n+1} \subset p \mathcal{V} $ where $\mu = \mu_t$; hence $\varphi (h)=\varphi (h T^M_{\mu^{-n-1}} )T_\mu^{n+1}$ in $p\mathcal{V}$ for an arbitrary
 $\varphi\in    \Hom_{\mathcal{H}_{M^+},\theta}(\mathcal{H}_M,\mathcal{V})$ and an arbitrary $h\in \mathcal{H}_M$.  We deduce $ \Hom_{\mathcal{H}_{M^+},\theta}(\mathcal{H}_M,\mathcal{V}) \subset    \Hom_{\mathcal{H}_{M^+},\theta}(\mathcal{H}_M,p\mathcal{V})$. If $p$ is nilpotent in $\mathcal{V}$, then $ \Hom_{\mathcal{H}_{M^+},\theta}(\mathcal{H}_M,\mathcal{V})=0$.
  \end{proof}

 Recalling that  $\tilde { \mathbf w}^M.\mathcal V$ is obtained  by functoriality from $\mathcal{V}$ and the  ring isomorphism  $\iota ( \tilde{ \mathbf w}^M)$ defined in  \eqref{eq:isoH}, the equivalence between $\mathcal{V}$ supersingular   and  $\tilde { \mathbf w}^M\mathcal V$  supersingular follows from:

\begin{lemma}
\begin{enumerate}
\item Let $t\in T$. Then $t$ is dominant for $U_M$ if and only  if $\hat{ \mathbf w}^M t ( \hat{ \mathbf w}^M)^{-1} \in T $ is dominant for $U_{\mathbf w.M}$.
\item The $R$-algebra isomorphism   $\mathcal{H}_{M,R} \xrightarrow {\iota ( \tilde{ \mathbf w}^M)}\mathcal{H}_{ \mathbf w .M, R}$, 
 $ T^M_{w}\mapsto T^{\mathbf w.M}_{ \tilde{ \mathbf w}^M w ( \tilde{ \mathbf w}^M)^{-1}}$ for $w\in W_M(1) $ sends $z^M(t)$ to $z^{\mathbf w.M}(\hat{ \mathbf w}^M t ( \hat{ \mathbf w}^M)^{-1})$ for $t\in T$ dominant for $U_M$.\end{enumerate}
\end{lemma}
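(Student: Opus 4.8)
The plan is to prove the two assertions in turn: (1) is a short root–theoretic check, and (2) reduces to the compatibility of the alcove walk basis with the ring isomorphism $\iota(\tilde{\mathbf{w}}^M)$, followed by the remark that summing over a conjugacy class is preserved under conjugation.

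\emph{Part (1).} An element $t\in T$ is dominant for $U_M$ exactly when $v_F(\alpha(t))\geq 0$ for every root $\alpha$ of $T$ in $\Lie U_M$, that is, for every $\alpha\in\Phi_M^+$; similarly for $\mathbf{w}.M$, with $\Phi_{\mathbf{w}.M}^+$. First I would record that $\mathbf{w}^M$ carries $\Phi_M^+$ bijectively onto $\Phi_{\mathbf{w}.M}^+$: since $\Delta_{\mathbf{w}.M}=\mathbf{w}^M(\Delta_M)\subset\Delta$ one has $\Phi_{\mathbf{w}.M}=\mathbf{w}^M(\Phi_M)$, and writing $\mathbf{w}^M=\mathbf{w}\mathbf{w}_M$ gives $\mathbf{w}^M(\Phi_M^+)=\mathbf{w}(\mathbf{w}_M(\Phi_M^+))=\mathbf{w}(\Phi_M^-)\subset\Phi^+$, hence $\mathbf{w}^M(\Phi_M^+)\subset\Phi_{\mathbf{w}.M}\cap\Phi^+=\Phi_{\mathbf{w}.M}^+$, which is an equality for cardinality reasons. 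Now conjugation by $\hat{\mathbf{w}}^M$ on $T$ acts on characters by $\beta(\hat{\mathbf{w}}^M t(\hat{\mathbf{w}}^M)^{-1})=((\mathbf{w}^M)^{-1}\beta)(t)$ for $\beta\in X^*(T)$; as $\beta$ runs over $\Phi_{\mathbf{w}.M}^+$ the character $(\mathbf{w}^M)^{-1}\beta$ runs over $\Phi_M^+$, so $\hat{\mathbf{w}}^M t(\hat{\mathbf{w}}^M)^{-1}$ is dominant for $U_{\mathbf{w}.M}$ if and only if $t$ is dominant for $U_M$.

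\emph{Part (2).} Recall $z^M(t)=E_o^M(C^M(\mu_t))=\sum_{\mu'\in C^M(\mu_t)}E_o^M(\mu')$, the sum over the $W_M(1)$-conjugacy class of $\mu_t$, and that this element does not depend on the closed Weyl chamber $o$ of $\mathbb{W}_M$. The key step is the claim that $\iota(\tilde{\mathbf{w}}^M)$ sends the alcove walk basis element $E_o^M(w)$ to $E_{\mathbf{w}^M(o)}^{\mathbf{w}.M}\bigl(\tilde{\mathbf{w}}^M w(\tilde{\mathbf{w}}^M)^{-1}\bigr)$ for all $w\in W_M(1)$ and all $o$; this follows by unwinding the inductive definition of the alcove walk basis, which refers only to the Coxeter data of $(W_{M'},S_M^{\aff})$, the length function, the reference chamber, and the quadratic coefficients $c_{\tilde s}$ — all respected by $\iota(\tilde{\mathbf{w}}^M)$, since it conjugates by $\tilde{\mathbf{w}}^M$, identifies $S_M^{\aff}$ with $S_{\mathbf{w}.M}^{\aff}$ preserving lengths, carries $o$ to $\mathbf{w}^M(o)$, and matches the starred basis elements and the $c_{\tilde s}$ as recorded in the lemma above. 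Granting this, I would apply $\iota(\tilde{\mathbf{w}}^M)$ to $z^M(t)$ for one convenient $o$: the image is $\sum_{\mu'\in C^M(\mu_t)}E_{\mathbf{w}^M(o)}^{\mathbf{w}.M}\bigl(\tilde{\mathbf{w}}^M\mu'(\tilde{\mathbf{w}}^M)^{-1}\bigr)$. Conjugation by $\tilde{\mathbf{w}}^M$ is a group isomorphism $W_M(1)\xrightarrow{\sim}W_{\mathbf{w}.M}(1)$, so it sends $C^M(\mu_t)$ onto the $W_{\mathbf{w}.M}(1)$-conjugacy class of $\tilde{\mathbf{w}}^M\mu_t(\tilde{\mathbf{w}}^M)^{-1}$, which is $\mu_{t'}$ with $t'=\hat{\mathbf{w}}^M t(\hat{\mathbf{w}}^M)^{-1}$ by compatibility of conjugation on $\mathcal{N}_G$ and on $W(1)=\mathcal{N}_G/Z^1$. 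Hence the image equals $E_{\mathbf{w}^M(o)}^{\mathbf{w}.M}(C^{\mathbf{w}.M}(\mu_{t'}))=z^{\mathbf{w}.M}(t')$ by chamber-independence; and $t'$ is dominant for $U_{\mathbf{w}.M}$ by (1), which is precisely what makes $z^{\mathbf{w}.M}(t')$ the relevant central element.

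\emph{The main obstacle} is the functoriality claim in (2): that the alcove walk bases of $\mathcal{H}_M$ and $\mathcal{H}_{\mathbf{w}.M}$ correspond under $\iota(\tilde{\mathbf{w}}^M)$. This is essentially bookkeeping, since the alcove walk basis is built purely from the combinatorics of the affine Weyl system and the braid and quadratic relations, so any ring isomorphism arising from an isomorphism of that combinatorial datum — here conjugation by $\tilde{\mathbf{w}}^M$ — automatically transports it; but it is the only point in the argument that is not purely formal, and if one prefers not to re-examine the construction one may instead invoke its functoriality from the literature together with the already established compatibility of $\iota(\tilde{\mathbf{w}}^M)$ with the starred basis and with the coefficients $c_{\tilde s}$.
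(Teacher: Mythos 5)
Your proof is correct and takes essentially the same route as the paper: part (1) is the observation that conjugation by $\hat{\mathbf w}^M$ stabilizes $T$ and carries $U_M$ (equivalently $\Phi_M^+$) to $U_{\mathbf w.M}$ (equivalently $\Phi_{\mathbf w.M}^+$), and part (2) rests on the compatibility of $\iota(\tilde{\mathbf w}^M)$ with the alcove walk bases — which the paper simply quotes from \cite[Proposition~2.20]{MR3437789} for the antidominant orientation — combined with the fact that conjugation by $\tilde{\mathbf w}^M$ carries the $W_M(1)$-conjugacy class of $\mu_t$ to that of $\mu_{t'}$. The only differences are presentational: you write out the root computation the paper asserts at group level, and you state the basis compatibility for an arbitrary orientation by functoriality, whereas the antidominant case cited in the paper (which is preserved under $\mathbf w^M$) already suffices.
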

   
\begin{proof}    The conjugation by $\hat{ \mathbf w}^M $ stabilizes $T$, sends $U_M$ to $U_{\mathbf w.M}$ and sends the $\mathbb W_{M}$-orbit  of $t\in T$ to the 
$\mathbb W_{{\mathbf w}.M}$-orbit of $\hat{ \mathbf w}^M t ( \hat{ \mathbf w}^M)^{-1}$, as  ${\mathbf w}^M  \mathbb W_M  ({\mathbf w}^M)^{-1}=\mathbb W_{{\mathbf w}.M}$.
It is known that $\iota ( \tilde{ \mathbf w}^M)$ respects the antidominant  alcove walk bases    \cite[Proposition~2.20]{MR3437789}:   it sends $E^M(w)$ to $E^{\mathbf w.M} ( \tilde{ \mathbf w}^M w ( \tilde{ \mathbf w}^M)^{-1})$  for $w\in W_M(1) $.   \end{proof}

  We deduce:  
  \begin{corollary} \label{cor:9.29}Let $\mathcal{V}$ be a right $\mathcal{H}_{M,R}$-module. Then  $\mathcal{V}$ is supersingular if and only if the right $\mathcal{H}_{{ \mathbf w}.M,R}$-module $\tilde { \mathbf w}^M\mathcal V$ is supersingular.
  \end{corollary}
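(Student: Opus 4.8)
The plan is to deduce the corollary formally from the two-part Lemma just stated, using nothing beyond the definition of the twist $\tilde{\mathbf w}^M.\mathcal V$ and of supersingularity (Definition~\ref{def:supersingularH}). First I would record the basic compatibility: by construction $\tilde{\mathbf w}^M.\mathcal V$ has underlying $R$-module $\mathcal V$, and since $\iota(\tilde{\mathbf w}^M)(T^M_w)=T^{\mathbf w.M}_{\tilde{\mathbf w}^M w(\tilde{\mathbf w}^M)^{-1}}$ while $v\,T^{\mathbf w.M}_{\tilde{\mathbf w}^M w(\tilde{\mathbf w}^M)^{-1}}=v\,T^M_w$, linearity gives $v\cdot\iota(\tilde{\mathbf w}^M)(h)=v\cdot h$ for all $v\in\mathcal V$ and all $h\in\mathcal H_{M,R}$; in particular $\tilde{\mathbf w}^M.\mathcal V=0$ if and only if $\mathcal V=0$.

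Next, for $t\in T$ dominant for $U_M$ and non-invertible I would set $t'=\hat{\mathbf w}^M t(\hat{\mathbf w}^M)^{-1}$. Part~(1) of the Lemma says $t'$ is dominant for $U_{\mathbf w.M}$, and since conjugation by $\hat{\mathbf w}^M$ is an automorphism of the torus $T$ carrying $U_M$ to $U_{\mathbf w.M}$, the element $t'$ is again non-invertible and $t\mapsto t'$ is a bijection from the set of non-invertible $U_M$-dominant elements of $T$ onto the set of non-invertible $U_{\mathbf w.M}$-dominant ones. Part~(2) of the Lemma gives $z^{\mathbf w.M}(t')=\iota(\tilde{\mathbf w}^M)(z^M(t))$, whence, combining with the first paragraph, $v\cdot\bigl(z^{\mathbf w.M}(t')\bigr)^{n}=v\cdot\bigl(z^M(t)\bigr)^{n}$ in $\mathcal V=\tilde{\mathbf w}^M.\mathcal V$ for every $v$ and every $n\ge 1$.

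Finally I would compare the two supersingularity conditions term by term: the statement ``for all $v\in\mathcal V$ and every non-invertible $U_M$-dominant $t$ there is $n$ with $v(z^M(t))^{n}=0$'' is, via the displayed equality and the bijection $t\leftrightarrow t'$, literally the same as ``for all $v\in\tilde{\mathbf w}^M.\mathcal V$ and every non-invertible $U_{\mathbf w.M}$-dominant $t'$ there is $n$ with $v(z^{\mathbf w.M}(t'))^{n}=0$''. Together with the equivalence of non-vanishing noted above, this is exactly the claim; the same argument, with a single $n$ transported through the identity, also yields the uniform version.

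I do not anticipate a real obstacle: the substance is entirely contained in the Lemma, and the corollary is a transport of structure along the ring isomorphism $\iota(\tilde{\mathbf w}^M)$. The only step needing a line of care is the bookkeeping that $t\mapsto\hat{\mathbf w}^M t(\hat{\mathbf w}^M)^{-1}$ preserves non-invertibility and restricts to a bijection between the monoids of $U_M$- and $U_{\mathbf w.M}$-dominant elements, which follows from the fact that this conjugation is an automorphism of $T$ sending the $M$-dominance cone to the $\mathbf w.M$-dominance cone.
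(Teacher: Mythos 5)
Your proposal is correct and follows the same route as the paper: the corollary is deduced as a transport of structure along $\iota(\tilde{\mathbf w}^M)$ using exactly the two parts of the preceding Lemma (conjugation preserves dominance, and $\iota(\tilde{\mathbf w}^M)$ carries $z^M(t)$ to $z^{\mathbf w.M}(\hat{\mathbf w}^M t(\hat{\mathbf w}^M)^{-1})$). You merely spell out the bookkeeping (non-vanishing, preservation of non-invertibility, the term-by-term comparison) that the paper leaves implicit.
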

  
 Assume $R=C$. The supersingular simple $\mathcal{H}_{M,C}$-modules  are classified in \cite{Vigneras-prop-III}. By Corollaries \ref{cor:isoCI} and  \ref{cor:9.29},  the classification of the simple  $\mathcal{H}_C $-modules in \cite{arXiv:1406.1003_accepted} remains valid with the $\mathcal{H}_C$-modules $I_{\mathcal{H}}(P,\mathcal V,Q)$ instead of $ \mathcal CI_{\mathcal{H}}(P,\mathcal V,Q)$:
  
 \begin{corollary}[Classification of simple $\mathcal{H}_C$-modules] \label{cor:suH} Assume $R=C$.  Let   $(P,\mathcal V,Q)$ be a $\mathcal{H}_C$-triple where $ \mathcal V $  is simple and supersingular. Then, the $\mathcal{H}_C$-module $ \mathcal I_{\mathcal{H}}(P,\mathcal V,Q)$ is  simple. A simple $\mathcal{H}_C$-module is isomorphic to
$ \mathcal I_{\mathcal{H}}(P,\mathcal{V},Q)$ for  a  $\mathcal{H}_C$-triple  $(P,\mathcal{V},Q)$ where $ \mathcal V $  is simple and supersingular,  $P,Q$ and the isomorphism class of $\mathcal{V}$ are unique.
\end{corollary}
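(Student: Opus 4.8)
The plan is to prove Corollary~\ref{cor:suH} by transporting the known classification of simple $\mathcal{H}_C$-modules from \cite{arXiv:1406.1003_accepted}, where it is phrased in terms of the coinduced modules $CI_{\mathcal{H}}(P,\mathcal V,Q)$, to the induced modules $I_{\mathcal{H}}(P,\mathcal V,Q)$ via the dictionary established in \S\ref{S:comp}. First I would recall the statement from \cite{arXiv:1406.1003_accepted}: every simple $\mathcal{H}_C$-module is isomorphic to $CI_{\mathcal{H}}(P,\mathcal V,Q)$ for an $\mathcal{H}_C$-triple $(P,\mathcal V,Q)$ with $\mathcal V$ simple and supersingular, the module $CI_{\mathcal{H}}(P,\mathcal V,Q)$ is itself simple, and the data $P$, $Q$, and the isomorphism class of $\mathcal V$ are uniquely determined. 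The task is then purely one of reindexing the parameters through the twist $\mathcal V\mapsto\tilde{\mathbf w}^M.\mathcal V$ and the parabolic relabelling $P\mapsto\mathbf w.P$, $Q\mapsto\mathbf w.\overline Q$.

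The key input is Corollary~\ref{cor:isoCI}, which gives an $\mathcal{H}_R$-isomorphism
$$I_{\mathcal{H}}(\mathbf w.P,\tilde{\mathbf w}^M\mathcal V,\mathbf w.\overline Q)\;\xrightarrow{\ \sim\ }\;CI_{\mathcal{H}}(P,\mathcal V,Q).$$
Combining this with the classification cited above: given an $\mathcal{H}_C$-triple $(P,\mathcal V,Q)$ with $\mathcal V$ simple supersingular, $CI_{\mathcal{H}}(P,\mathcal V,Q)$ is simple, hence so is the isomorphic module $I_{\mathcal{H}}(\mathbf w.P,\tilde{\mathbf w}^M\mathcal V,\mathbf w.\overline Q)$; by Corollary~\ref{cor:9.29} the twisted module $\tilde{\mathbf w}^M\mathcal V$ is again simple and supersingular, and Lemma~\ref{lemma:delt} guarantees that $(\mathbf w.P,\tilde{\mathbf w}^M\mathcal V,\mathbf w.\overline Q)$ is a genuine $\mathcal{H}_C$-triple (the parabolic subgroups between $\mathbf w.P$ and $P(\tilde{\mathbf w}^M\mathcal V)$ are exactly the $\mathbf w.Q'$ for $P\subset Q'\subset P(\mathcal V)$, as noted after Lemma~\ref{lemma:delt}). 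This shows that every $\mathcal{H}_C$-module of the form $I_{\mathcal{H}}(P_1,\mathcal V_1,Q_1)$ with $\mathcal V_1$ simple supersingular is simple, and conversely, starting from an arbitrary simple $\mathcal{H}_C$-module $\mathcal S$, the classification of \cite{arXiv:1406.1003_accepted} writes $\mathcal S\cong CI_{\mathcal{H}}(P,\mathcal V,Q)\cong I_{\mathcal{H}}(\mathbf w.P,\tilde{\mathbf w}^M\mathcal V,\mathbf w.\overline Q)$, exhibiting $\mathcal S$ in the desired form.

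For uniqueness, suppose $I_{\mathcal{H}}(P_1,\mathcal V_1,Q_1)\cong I_{\mathcal{H}}(P_2,\mathcal V_2,Q_2)$ with both $\mathcal V_i$ simple supersingular. The parameters $(P_i,Q_i)$ are standard parabolic data, so I would recover the $CI$-parameters by inverting the correspondence $(P,Q,\mathcal V)\leftrightarrow(\mathbf w.P,\mathbf w.\overline Q,\tilde{\mathbf w}^M.\mathcal V)$: the map $P\mapsto\mathbf w.P$ on standard parabolics is a bijection, the map $Q\mapsto\mathbf w.\overline Q$ (for $P\subset Q\subset P(\mathcal V)$) is injective, and the twist $\mathcal V\mapsto\tilde{\mathbf w}^M.\mathcal V$ is an equivalence of categories, hence reflects isomorphism classes. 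Thus an isomorphism $I_{\mathcal{H}}(P_1,\mathcal V_1,Q_1)\cong I_{\mathcal{H}}(P_2,\mathcal V_2,Q_2)$ yields, via Corollary~\ref{cor:isoCI} applied in both directions, an isomorphism $CI_{\mathcal{H}}(P_1',\mathcal V_1',Q_1')\cong CI_{\mathcal{H}}(P_2',\mathcal V_2',Q_2')$ between the preimage triples; the uniqueness part of \cite{arXiv:1406.1003_accepted} forces $P_1'=P_2'$, $Q_1'=Q_2'$, $\mathcal V_1'\cong\mathcal V_2'$, and transporting back gives $P_1=P_2$, $Q_1=Q_2$, $\mathcal V_1\cong\mathcal V_2$.

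The main obstacle is bookkeeping rather than conceptual: one must check carefully that the assignment $Q\mapsto\mathbf w.\overline Q$ is the correct bijection between $\{Q:P\subset Q\subset P(\mathcal V)\}$ and $\{Q_1:\mathbf w.P\subset Q_1\subset P(\tilde{\mathbf w}^M.\mathcal V)\}$ — this is where Lemma~\ref{lemma:delt} and the relations $\mathbf w.M=\hat{\mathbf w}M(\hat{\mathbf w})^{-1}$, $\mathbf w^{\mathbf w.M}=(\mathbf w^M)^{-1}$ from \S\ref{S:comp} enter — and that the uniqueness clause of the source classification is being applied to the right triple. Everything else is a formal consequence of Corollaries~\ref{cor:isoCI} and~\ref{cor:9.29} together with the fact that a category equivalence preserves and reflects simplicity and isomorphism classes. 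No new estimates or constructions are needed.
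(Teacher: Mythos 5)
Your proposal is correct and follows the same route the paper intends: the paper derives Corollary~\ref{cor:suH} precisely by combining the classification of \cite{arXiv:1406.1003_accepted} in terms of $CI_{\mathcal{H}}(P,\mathcal V,Q)$ with the isomorphism of Corollary~\ref{cor:isoCI} and the preservation of supersingularity under twisting from Corollary~\ref{cor:9.29}. Your write-up simply makes explicit the bookkeeping (bijectivity of the relabelling $(P,\mathcal V,Q)\mapsto(\mathbf w.P,\tilde{\mathbf w}^M.\mathcal V,\mathbf w.\overline Q)$ and the fact that the twist reflects simplicity and isomorphism classes) that the paper leaves implicit.
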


\subsection{A commutative diagram} \label{S:comdia}
We prove in this section Proposition \ref{prop:comp}. For  $Q\subset Q'\subset P(\mathcal{V})$
 we show by an explicit computation that
\begin{equation*} \mu_Q^{-1}\circ i(Q,Q') \circ \mu_{Q'}:
\Ind_{\mathcal{H}_{  \mathbf w.Q '}}^\mathcal{H}(e_{\mathcal{H}_{\mathbf w.Q'}}( \tilde{ \mathbf w}^M.\mathcal V))\to\Ind_{\mathcal{H}_{  \mathbf w.Q }}^\mathcal{H}(e_{\mathcal{H}_{\mathbf w.Q}}( \tilde{ \mathbf w}^M.\mathcal V)).
\end{equation*}
is equal to $\iota(\mathbf w.Q,\mathbf w.Q')$. The  $R$-module $e_{\mathcal{H}_{\mathbf w.Q'}}( \tilde{ \mathbf w}^M.\mathcal V)\otimes 1_\mathcal{H}$ generates the $\mathcal{H}_R$-module $e_{\mathcal{H}_{\mathbf w.Q'}}( \tilde{ \mathbf w}^M.\mathcal V)\otimes_{\mathcal{H}_{{ \mathbf w}.Q', R}, \theta^+} \mathcal{H}_R=\Ind_{\mathcal{H}_{  \mathbf w.Q '}}^\mathcal{H}(e_{\mathcal{H}_{\mathbf w.Q'}}( \tilde{ \mathbf w}^M.\mathcal V))$  and by \eqref{eq:iotaQQ'})
\begin{equation}\label{iota(v1)} \iota(\mathbf w.Q,\mathbf w.Q') (v \otimes 1_{\mathcal{H}} ) = v \otimes   \sum_{d \in  {}^{\mathbb W_{M_{\mathbf w.  Q}}}\mathbb W_{M_{\mathbf w. Q'}}} T_{\tilde d}\end{equation} for $v\in \mathcal{V}$ seen as an element of 
 $e_{\mathcal{H}_{\mathbf w.Q'}}( \tilde{ \mathbf w}^M.\mathcal V)$ in the LHS and an element of $e_{\mathcal{H}_{\mathbf w.Q}}( \tilde{ \mathbf w}^M.\mathcal V)$ in the RHS.

  \begin{lemma}\label{lemma:jeannette}  
$( \mu_Q^{-1}\circ i(Q,Q') \circ \mu_{Q'})(v \otimes 1_{\mathcal{H}} )  =v \otimes  \sum_{d\in \mathbb W_{M_{Q'}}^{\mathbb W_{M_Q}}}  \, q_d \,T^*_{ \tilde {\mathbf w}^{Q} (\tilde{ \mathbf w}^{Q' }\tilde d)^{-1}}$.
  \end{lemma}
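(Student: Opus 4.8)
The plan is to compute the composite $\mu_Q^{-1}\circ i(Q,Q')\circ\mu_{Q'}$ by tracking the image of a generator $v\otimes 1_\mathcal{H}$ through each of the three maps, using only the explicit formulas recalled in the excerpt. First I would apply $\mu_{Q'}$: by \eqref{eq:muQ} applied to $(Q',e_{\mathcal{H}_{Q'}}(\mathcal{V}))$, the element $v\otimes 1_\mathcal{H}$ maps to $f_{\tilde{\mathbf w}^M,v}\in\Hom_{\mathcal{H}_{M_{Q'}^-},\theta^*}(\mathcal{H},e_{\mathcal{H}_{Q'}}(\mathcal{V}))$, the unique element supported (in the alcove-walk-style decomposition indexed by $\mathbb W^{\mathbb W_{M_{Q'}}}$) on the component $d=\tilde{\mathbf w}^{M_{Q'}}$. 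Then I would apply $i(Q,Q')$, which is the natural inclusion coming from $\theta^*(\mathcal{H}_{M_Q^-})\subset\theta^*(\mathcal{H}_{M_{Q'}^-})$; concretely it restricts a $\mathcal{H}_{M_{Q'}^-}$-semilinear $\mathcal{H}$-map to a $\mathcal{H}_{M_Q^-}$-semilinear one, so the resulting element of $\Hom_{\mathcal{H}_{M_Q^-},\theta^*}(\mathcal{H},e_{\mathcal{H}_Q}(\mathcal{V}))$ must be re-expanded in the finer decomposition indexed by $\mathbb W^{\mathbb W_{M_Q}}$. The key combinatorial input is the coset decomposition $\mathbb W^{\mathbb W_{M_Q}}=\mathbb W^{\mathbb W_{M_{Q'}}}\cdot\mathbb W_{M_{Q'}}^{\mathbb W_{M_Q}}$ (minimal-length representatives compose, since $\Delta_{M_Q}\subset\Delta_{M_{Q'}}$), together with the length-additivity $\ell(\tilde d_1\tilde d_2)=\ell(\tilde d_1)+\ell(\tilde d_2)$ which makes $T_{\tilde d_1\tilde d_2}=T_{\tilde d_1}T_{\tilde d_2}$; evaluating the restricted functional on $T_{\tilde e}$ for $e\in\mathbb W^{\mathbb W_{M_Q}}$ then picks out exactly the terms with $e\in\tilde{\mathbf w}^{M_{Q'}}\mathbb W_{M_{Q'}}^{\mathbb W_{M_Q}}$.

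Next I would apply $\mu_Q^{-1}$, again via \eqref{eq:muQ} but now for $(Q,e_{\mathcal{H}_Q}(\mathcal{V}))$: the functional $f_{\tilde{\mathbf w}^M,v}$ on the $Q$-side corresponds under $\mu_Q$ to $v\otimes 1_\mathcal{H}$, so a functional supported on component $e$ with value $w$ corresponds to $w\otimes(\text{something rewriting }T_{\tilde e}\text{ back into }\Ind_{\mathcal{H}_{\mathbf w.M_Q}}^\mathcal{H})$. This is where the factor $q_d$ appears: re-indexing the decomposition of the coinduced module in terms of $\tilde{\mathbf w}^{M_Q}$ rather than $\tilde{\mathbf w}^{M_{Q'}}$ forces one to write $\tilde e=\tilde{\mathbf w}^{M_{Q'}}\tilde d=\tilde{\mathbf w}^{M_Q}\cdot\big((\tilde{\mathbf w}^{M_Q})^{-1}\tilde{\mathbf w}^{M_{Q'}}\tilde d\big)$ and the passage between $T$ and $T^*$ bases on the ``wrong-side'' segment $\tilde{\mathbf w}^{Q}(\tilde{\mathbf w}^{Q'}\tilde d)^{-1}$ contributes the index $q_d=q(\tilde d)$ (using the quadratic/braid relations and the fact that on the extended module $e_{\mathcal{H}_{\mathbf w.Q}}(\tilde{\mathbf w}^M.\mathcal{V})$ the elements $T^*_{w_2}$ with $w_2\in{}_1W_{M'_2}$ act by the identity, while $T_{w_2}$ acts by $q_{w_2}$, cf. Corollary \ref{cor:extT}). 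Carrying this bookkeeping through yields precisely
\[
(\mu_Q^{-1}\circ i(Q,Q')\circ\mu_{Q'})(v\otimes 1_\mathcal{H})=v\otimes\sum_{d\in\mathbb W_{M_{Q'}}^{\mathbb W_{M_Q}}}q_d\,T^*_{\tilde{\mathbf w}^{Q}(\tilde{\mathbf w}^{Q'}\tilde d)^{-1}},
\]
which is the claimed formula once one checks that the element on the right genuinely lies in $e_{\mathcal{H}_{\mathbf w.Q}}(\tilde{\mathbf w}^M.\mathcal{V})\otimes_{\mathcal{H}_{\mathbf w.M_Q^+},\theta}\mathcal{H}$ (i.e. the Hecke factors have support in $(\mathbf w.M_Q)^+$), which follows from $e_{Q'}^{\mathbf w.M_{Q'}}\in\mathcal{H}_{M^+}\cap\mathcal{H}_{M^-}$ and the corresponding statement for $\mathbf w.Q$.

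The main obstacle I expect is the sign/side bookkeeping in the twist: $\mu_P$ (and hence $\mu_Q$) is built from $\tilde{\mathbf w}^M$-conjugation, which swaps $\mathcal{H}_{M^+}$ with $\mathcal{H}_{(\mathbf w.M)^-}$, so the induced module on the $\mathbf w.Q$-side is glued along the \emph{negative} part while $i(Q,Q')$ naturally lives on the negative part of the coinduced side — one must be scrupulous about which occurrences of $\tilde{\mathbf w}^{Q}$ versus $\tilde{\mathbf w}^{Q'}$ versus $\tilde{\mathbf w}^M$ appear, and verify the identities relating $(\tilde{\mathbf w}^{Q'})^{-1}\tilde{\mathbf w}^{Q}$ to elements of $W_{M_{Q'}}(1)$ using $\mathbf w^M=\mathbf w^Q\mathbf w^{M_{Q'}\to M}$-type factorizations (analogous to Lemma \ref{lemma:eH}). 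The appearance of $q_d$ rather than $q_{\mathbf w.d}$ or some mixed quantity is the delicate point and is where Lemma \ref{lemma:0}(4) — constancy of $q_M$ on $\mathcal{N}_{\mathcal{C}}$-and-$M'_2$-cosets — together with the fact that $d\in\mathbb W_{M_{Q'}}^{\mathbb W_{M_Q}}$ lies in the ``$M_2$-direction'' (so its length and $q$-value are unchanged by $\mathbf w$-conjugation within the orthogonal factor) gets used. Once Lemma \ref{lemma:jeannette} is established, Proposition \ref{prop:comp} follows by comparing with \eqref{iota(v1)}: both sides are $\mathcal{H}_R$-maps determined by their values on generators $v\otimes 1_\mathcal{H}$, and the formula in the lemma is, after the identity $T^*_{\tilde{\mathbf w}^{Q}(\tilde{\mathbf w}^{Q'}\tilde d)^{-1}}$-vs-$T_{\tilde d}$ rewriting on the extended module, exactly $\sum_{d}T_{\tilde d}$ as in \eqref{iota(v1)}.
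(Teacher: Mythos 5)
Your route is the paper's: compute the composite on the generator $v\otimes 1_{\mathcal{H}}$ by (i) identifying $\mu_{Q'}(v\otimes 1_{\mathcal{H}})$ as the functional supported on the single component $\mathbf w^{Q'}$ of $\mathbb W^{\mathbb W_{M_{Q'}}}$, (ii) using $\mathbb W^{\mathbb W_{M_Q}}=\mathbb W^{\mathbb W_{M_{Q'}}}\,\mathbb W_{M_{Q'}}^{\mathbb W_{M_Q}}$ to locate the support of $i(Q,Q')(f_{\tilde{\mathbf w}^{M_{Q'}},v})$ in the finer decomposition, and (iii) applying the explicit inverse of $\mu_Q$. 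The final formula is correct.

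Two points in the middle of your argument are misplaced and would derail a detailed write-up. First, you never pin down $\mu_Q^{-1}$: the proof needs the explicit formula \eqref{eq:muQ-1}, $\mu_Q^{-1}(f)=\sum_{d\in\mathbb W^{\mathbb W_{M_Q}}}f(T_{\tilde d})\otimes T^*_{\tilde{\mathbf w}^{M_Q}\tilde d^{-1}}$ (quoted from \cite[Lemma 2.22]{arXiv:1406.1003_accepted}), and this formula carries no $q$-factors and involves no $T$-to-$T^*$ conversion — the terms $T^*_{\tilde{\mathbf w}^{Q}(\tilde{\mathbf w}^{Q'}\tilde d)^{-1}}$ are read off from it directly. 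Second, and consequently, the factor $q_d$ does not arise from a ``passage between $T$ and $T^*$ bases'' on the induced side; it arises when evaluating the coinduced functional: writing $T_{\tilde w}=T_{\tilde{\mathbf w}^{Q'}}T_{\tilde d}$ with $T_{\tilde d}=\theta^*(T^{M_{Q'}}_{\tilde d})$ (braid relations plus $\theta^*(c^{M_{Q'}}_{\tilde s})=c_{\tilde s}$ for $s\in S_{M_{Q'}}$), the $\mathcal{H}_{M_{Q'}^-}$-semilinearity of $f_{\tilde{\mathbf w}^{M_{Q'}},v}$ gives $f(T_{\tilde w})=vT^{M_{Q'}}_{\tilde d}$, and Corollary \ref{cor:extT} applied to $e_{\mathcal{H}_{Q'}}(\mathcal{V})$ gives $vT^{M_{Q'}}_{\tilde d}=q_d v$. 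The $T$-versus-$T^*$ rewriting on the extended module that you invoke is the content of the \emph{subsequent} comparison with $\iota(\mathbf w.Q,\mathbf w.Q')$ — i.e.\ the congruence \eqref{eq:finalrs}, which requires the generic Hecke-ring argument and is not the immediate rewriting your last paragraph suggests; likewise Lemma \ref{lemma:0}(4) plays no role in the lemma itself.
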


\begin{proof}   $\mu_{Q'}(v \otimes 1_{\mathcal{H}} )$ is the unique homomorphism
 $ f_{\tilde {\mathbf w}^{M_{Q'}}, v}
\in  \Hom_{\mathcal{H}_{ M_{Q'} ^-},\theta^*}(\mathcal{H}, e_{\mathcal{H}_{Q'}}( \mathcal V))$ sending $T_{\tilde {\mathbf w}^{Q'}}$ to  $v$ and vanishing on $T_{\tilde d'}$ for $d'\in \mathbb W^{ \mathbb W _{M_{Q'}}}\setminus \{ {\mathbf w}^{Q'} \}$ by  \eqref{eq:muQ}. By \eqref{eq:canonicalH},
$i(Q,Q')$ is the natural embedding of $ \Hom_{\mathcal{H}_{ M_{Q'}^-, \theta^*}}(\mathcal{H}, e_{\mathcal{H}_{Q'} }(\mathcal V)) $ in $ \mathcal \Hom_{\mathcal{H}_{M_{Q}^-, \theta^*}}(\mathcal{H},e_{\mathcal{H}_{Q}}(\mathcal V))$  therefore $i(Q,Q')( f_{\tilde {\mathbf w}^{M_{Q'}}, v})$  is the unique homomorphism $ \mathcal \Hom_{\mathcal{H}_{M_{Q}^-, \theta^*}}(\mathcal{H},e_{\mathcal{H}_{Q}}(\mathcal V))$  sending $T_{\tilde {\mathbf w}^{Q'}}$ to  $v$ and vanishing on $T_{\tilde d'}$ for $d'\in \mathbb W^{ \mathbb W _{M_{Q'}}}\setminus \{ {\mathbf w}^{Q'} \}$. As $\mathbb W^{\mathbb W_{M_Q}}= \mathbb W^{\mathbb W_{ Q'}}\mathbb W_{M_{Q'}}^{\mathbb W_{M_Q}}$, this homomorphism vanishes on  $T_{\tilde w} $ for 
$w$ not in  $ \mathbf w^{M_{Q'} }\mathbb W_{M_{Q'}}^{\mathbb W_{M_Q}}$.
By \cite[Lemma 2.22]{arXiv:1406.1003_accepted}, the inverse of $\mu_Q$ is the $\mathcal{H}_R$-isomorphism:
 \begin{equation}\label{eq:muQ-1}
 \Hom_{\mathcal{H}_{ M_Q ^-},\theta^*}(\mathcal{H},e_{\mathcal{H}_{Q}}( \mathcal V))\xrightarrow{\mu_Q^{-1}} \Ind_{\mathcal{H}_{  \mathbf w.M_Q }}^\mathcal{H}(e_{\mathcal{H}_{\mathbf w.Q}}( \tilde{ \mathbf w}^M.\mathcal V)) \end{equation}
$$\quad  \quad  \quad  \quad  \quad  \quad  \quad  \quad  \quad f\mapsto \sum_{d\in \mathbb W^{\mathbb W_{M}}} f (T_{\tilde d})\otimes T^*_{ \tilde {\mathbf w}^M \tilde d^{-1}} , 
$$
where $ \mathbb W^{\mathbb W_{M}}$ is the set of $d\in  \mathbb W$ with minimal length in the coset $d\mathbb W_{M}$. We deduce the explicit formula:
 $$( \mu_Q^{-1}\circ i(Q,Q') \circ \mu_{Q'})(v \otimes 1_{\mathcal{H}} )  =  \sum_{w\in \mathbb W^{\mathbb W_{M_Q}}}  i(Q,Q')(f^{Q'}_{\tilde {\mathbf w}^{M_{Q'}}, v})(T_{\tilde w})\otimes T^*_{ \tilde {\mathbf w}^{M_Q} \tilde w^{-1}} .
 $$
 Some terms are zero:  the terms for $w\in  \mathbb W^{\mathbb W_{M_Q}}$ not in  $\mathbf w^{M_{Q'} }\mathbb W_{M_{Q'}}^{\mathbb W_{M_Q}}$. We analyse the other terms for
$ w$ in $  \mathbb W^{\mathbb W_{M_Q}}\cap \mathbf w^{M_{Q'} }\mathbb W_{M_{Q'}}^{\mathbb W_{M_Q}}$; this set is  $\mathbf w^{M_{Q'} } \mathbb W_{M_{Q'}}^{\mathbb W_{M_Q}}$. Let $w=\mathbf w^{M_{Q'} } d, d\in \mathbb W_{M_{Q'}}^{\mathbb W_{M_Q}}$, and 
 $\tilde w=\tilde{ \mathbf w}^{M_{Q'} }\tilde d$ with $ \tilde d\in {}_1 W_{G'}$ lifting $d $. 
By the braid relations $T_{\tilde w}=T_{\tilde {\mathbf w}^{M_{Q'}}  }T_{\tilde d}$. We have      $T_{ \tilde d} = \theta^*(T^{M_{Q'}}_{ \tilde d} )$ by the braid relations because  $d\in \mathbb W_{ M_{Q'}}$,  $S_{M_{Q'}} \subset S^{\aff}$ and $\theta^*( c^{M_{Q'}}_{ \tilde s}) =c _{ \tilde s}$  for $s\in S_{M_{Q'}} $.  
As $\mathbb W_{ M_{Q'}}\subset  W_{ M_{Q'} ^-}\cap W_{ M_{Q'} ^+}$, we deduce:   \begin{align*} i(Q,Q')(f^{Q'}_{\tilde {\mathbf w}^{M_{Q'}}, v})(T_{\tilde w})&= i(Q,Q')(f^{Q'}_{\tilde {\mathbf w}^{M_{Q'}}, v})(T_{\tilde {\mathbf w}^{M_{Q'}}}T_{ \tilde d}) = i(Q,Q')(f^{Q'}_{\tilde {\mathbf w}^{M_{Q'}}, v})(T_{\tilde {\mathbf w}^{M_{Q'}}  }) T^{M_{Q'}}_{\tilde d}\\
&= vT^{M_{Q'}}_{\tilde d} = q_d v.
\end{align*}
Corollary \ref{cor:extT} gives the last equality.
  \end{proof}
  
The  formula  for $( \mu_Q^{-1}\circ i(Q,Q') \circ \mu_{Q'})(v \otimes 1_{\mathcal{H}} )$ given in Lemma \ref{lemma:jeannette} is different  from the formula  \eqref{iota(v1)} for $\iota(\mathbf w.Q,\mathbf w.Q') (v \otimes 1_{\mathcal{H}} )$.
 It  needs  some work to prove that they are equal.
   
 A first reassuring  remark is that  $  {}^{\mathbb W_{M_{\mathbf w.  Q}}}\mathbb W_{M_{\mathbf w. Q'}}= \{ \mathbf w  d^{-1} \mathbf w \ | \ d\in \mathbb W_{M_{Q'}}^{\mathbb W_{M_Q}}\}$, so the two summation sets have the same number of elements. But better,
$$  {}^{\mathbb W_{M_{\mathbf w.  Q}}}\mathbb W_{M_{\mathbf w. Q'}}= \{  \mathbf w^{Q}   (\mathbf w^{Q'}d)^{-1} \ | \ d\in \mathbb W_{M_{Q'}}^{\mathbb W_{M_Q}}\}$$
because  $ \mathbf w_{Q'}  \mathbb W_{M_{Q'}}^{\mathbb W_{M_Q}}  \mathbf w_{Q} =  \mathbb W_{M_{Q'}}^{\mathbb W_{M_Q}}$. To prove the latter equality, we apply  the criterion:   $w\in  \mathbb W_{M_{Q'}}$ lies in $ \mathbb W_{M_{Q'}}  {}^{\mathbb W_{M_Q }}$ if and only if  $w(\alpha)>0$ for all $\alpha \in \Delta_Q$ noticing that  $ d\in  \mathbb W_{M_{Q'}}^{\mathbb W_{M_Q}} $ implies $ \mathbf w_{Q} (\alpha)\in - \Delta_Q$, $d \mathbf w_{Q} (\alpha) \in -\Phi_{M_{Q'}}$,  $\mathbf w_{Q'} d \mathbf w_{Q} (\alpha)>0$.
Let $x_d = \mathbf{w}^Q   (\mathbf{w}^{Q'} d)^{-1}$. We have  $\tilde {\mathbf w}^{M_Q} (\tilde{ \mathbf w}^{M_{Q'} }\tilde d)^{-1}=\tilde x_d$  because the lifts $\tilde w$ of the elements $w\in \mathbb W$ satisfy the braid relations  and $\ell(x_d)=\ell( \mathbf w_{Q} d^{-1} \mathbf w_{Q'} )= \ell (\mathbf w_{Q'} )- \ell ( \mathbf w_{Q} d^{-1})= \ell (\mathbf w_{Q'} )- \ell ( \mathbf w_{Q}) -\ell ( d^{-1}) =  \ell (\mathbf w_{Q'} )- \ell ( \mathbf w_{Q}) -\ell ( d )=  -\ell (\mathbf w^{Q'})+ \ell ( \mathbf w^{Q}) -\ell ( d )$. 
We have $q_d=  q_{ \mathbf w_{\mathbf w.Q}x_d\mathbf w_{\mathbf w.Q'}} $ because  $\mathbf w d^{-1} \mathbf w= \mathbf w_{\mathbf w.Q}x_d\mathbf w_{\mathbf w.Q'}$, and 
 $q_d=q_{ d^{-1}}= q_{\mathbf w d^{-1} \mathbf w}$.
 So
 $$ \sum_{d\in \mathbb W_{M_{Q'}}^{\mathbb W_{M_Q}}}   q_d T^*_{ \tilde {\mathbf w}^{Q} (\tilde{ \mathbf w}^{Q' }\tilde d)^{-1}} = \sum_{x_d \in  {}^{\mathbb W_{M_{\mathbf w.  Q}}}\mathbb W_{M_{\mathbf w. Q'}}}  q_{ \mathbf w_{\mathbf w.Q}x_d\mathbf w_{\mathbf w.Q'}}T^*_{ \tilde x_d}.$$ 
In the RHS, only $ \tilde {\mathbf w}^M.\mathcal{V},{\mathbf w}.Q,{\mathbf w}.Q'$ appear. The same holds true in the formula  \eqref{iota(v1)}. The map
 $(P,\mathcal{V},Q,Q') \mapsto ({\mathbf w}.P,\tilde {\mathbf w}^M.\mathcal{V},{\mathbf w}.Q,{\mathbf w}.Q')$ is a bijection of the set of triples $(P,\mathcal{V},Q,Q')$ where $P=MN,Q,Q'$ are standard parabolic subgroups of $G$, $\mathcal{V}$ a right $\mathcal{H}_R$-module, $Q\subset Q'\subset P(\mathcal{V})$ by Lemma  \ref{lemma:delt}.  So we can  replace $({\mathbf w}.P,\tilde {\mathbf w}^M.\mathcal{V},{\mathbf w}.Q,{\mathbf w}.Q')$   by $(P,\mathcal{V},Q,Q')$.  Our task is  reduced to prove   in  $e_{\mathcal{H}_{Q}}(\mathcal{V}) \otimes_{\mathcal{H}_{M^+_Q}, \theta} \mathcal{H}_R$: 
\begin{equation}\label{eq:finalrs}
 v \otimes   \sum_{d \in  {}^{\mathbb W_{M_{  Q}}}\mathbb W_{M_{ Q'}}} T_{\tilde d}= v \otimes   \sum_{d \in  {}^{\mathbb W_{M_{  Q}}}\mathbb W_{M_{  Q'}}}q_{\mathbf w_{Q}d\mathbf w_{Q'}} T^*_{\tilde d}. 
\end{equation}
  A second  simplification is possible: we can replace   $Q\subset Q'$  by the standard  parabolic subgroups
  $Q_2\subset Q'_2 $   of $G$  with $\Delta_{Q_2}= \Delta_{Q}\setminus \Delta_P$ and $\Delta_{Q'_2}= \Delta_{Q'}\setminus \Delta_P$, because  $\Delta_P$ and  $\Delta_{P(\mathcal{V})}\setminus \Delta_P$ are orthogonal.  Indeed, 
  $\mathbb W_{M_{ Q'}} = \mathbb W_{M} \times  \mathbb W_{M_{Q'_2}}$ and $\mathbb W_{M_{ Q}}=\mathbb W_{M} \times  \mathbb W_{M _{Q_2}}$ are direct products, the longest elements $\mathbf w_{Q'}=\mathbf w_{M}\mathbf w_{Q'_2}, \mathbf w_{Q}=\mathbf w_{M}\mathbf w_{Q_2}$ are direct products and 
  $${}^{\mathbb W_{M_{  Q}}}\mathbb W_{M_{ Q'}}={}^{\mathbb W_{M_{  Q _2}}}\mathbb W_{M_{ Q' _2}}, \quad \mathbf w_{Q}d\mathbf w_{Q'}=   \mathbf w_{Q_2 } d \mathbf w_{ Q'_2} .$$

Once this is done, we use the properties of $e_{\mathcal{H}_{Q}}(\mathcal{V})$:    $vh\otimes 1_\mathcal{H}= v\otimes \theta_Q(h)$ for $h\in \mathcal{H}_{M^+_{Q_2}}$, and    $T^{Q,*}_w$  acts trivially on $e_{\mathcal{H}_{Q}}(\mathcal{V})$ for $w\in {}_1W_{M_{Q_2}'} \cup (\Lambda(1)\cap {}_1 W_{M_{Q'_2}'})$.
Set ${}_1\mathbb{W}_{M'_{Q'_2}} = \{w\in {}_1W_{M'_{Q'_2}}\mid \text{$w$ is a lift of some element in $\mathbb{W}_{M_{Q'_2}}$}\}$ and ${}_1\mathbb{W}_{M'_{Q_2}}$ similarly.
Then $Z_k\cap {}_1\mathbb{W}_{M'_{Q'_2}}\subset (\Lambda(1)\cap {}_1W_{M'_{Q'_2}})\cap {}_1W_{M^+_{Q_2}}$ and ${}_1\mathbb{W}_{M'_{Q_2}}\subset {}_1W_{M'_{Q_2}}\cap {}_1W_{M^+_{Q_2}}$.
This implies that  \eqref{eq:finalrs}  where $Q\subset Q'$ has been replaced by $Q_2\subset Q'_2$ follows from a congruence
\begin{equation}\label{eq:finalrss} \sum_{d \in  {}^{\mathbb W_{M_{  Q_2}}}\mathbb W_{M_{ Q'_2}}} T_{\tilde d}\equiv \sum_{d \in  {}^{\mathbb W_{M_{  Q_2}}}\mathbb W_{M_{  Q'_2}}}q_{\mathbf w_{Q_2}d\mathbf w_{Q'_2}} T^*_{\tilde d}.
\end{equation}
in the finite subring $H({}_1\mathbb{W}_{M_{ Q'_2}})$ of $\mathcal{H}$ generated by $\{T_w\mid w\in {}_1\mathbb{W}_{M'_{Q'_2}}\}$ modulo the  the right ideal  $\mathcal J_2$ with generators  $\{\theta_Q(T^{Q,*}_w)-1\mid  w\in (Z_k\cap {}_1\mathbb{W}_{M'_{Q'_2}}) \cup {}_1 \mathbb{W}_{M'_{Q_2}} \}$.
 
Another simplification  concerns  $T^*_{\tilde d}$ modulo $\mathcal J_2$ for $d\in \mathbb W_{M_{  Q'_2}}$. We recall that for any reduced decomposition $d=s_1\ldots s_n$ with $s_i \in S\cap  \mathbb W_{M_{Q'_2}}$ we have $T^*_{\tilde d}=  (T_{\tilde s_1}-c_{\tilde s_1} ) \ldots (T_{\tilde s_n}-c_{\tilde s_n} )$  where  the  $\tilde s_i$ are admissible. For $\tilde s$ admissible, by \eqref{eq:cs}
$$ c_{\tilde s}\equiv  q_{s}-1 .$$
Therefore $$ T_d^* \equiv  (T_{\tilde s_1}-q_{s_1}+1 ) \dotsm (T_{\tilde s_n}-q_{ s_n}+1 ).$$
Let $\mathcal{J}'\subset \mathcal{J}_2$ be the ideal of $H({}_1\mathbb{W}_{M'_{Q'_2}})$ generated by $\{T_t - 1\mid t\in Z_k\cap {}_1W_{M'_{Q'_2}}\}$.
Then the ring  $H({}_1\mathbb{W}_{M'_{ Q'_2}})/\mathcal{J}'$ and its right  ideal $\mathcal{J}_2/\mathcal{J}'$ are the specialisation of    the generic finite ring $H(\mathbb W_{M_{ Q'_2}})^{g}$  over $\mathbb Z[(q_s)_{s\in S_{M_{  Q'_2}}}]$ where the $q_s$ for $s\in S_{M_{  Q'_2}}=S\cap \mathbb W_{M_{ Q'_2}} $ are indeterminates, and of its right ideal  $\mathcal J_2^{g}$ with the same generators. The similar congruence modulo $\mathcal J_2^{g}$ in $H(\mathbb W_{M_{ Q'_2}})^{g}$ (the generic congruence)  implies the congruence \eqref{eq:finalrss} by specialisation.

We will prove the generic congruence in a  more general setting where $H$ is the generic Hecke ring of  a finite Coxeter system$( \mathbb W, S)$ and   parameters  $(q_s)_{s\in S}$ such that $q_s=q_{s'}$ when $s,s'$ are conjugate in  $\mathbb W$. The Hecke ring $H $ is a $\mathbb Z[(q_s)_{s\in S}] $-free module of basis $(T_w)_{w\in  \mathbb W}$ satisfying the braid relations and the quadratic relations $T_s^2=q_s+(q_s-1)T_s$ for $s\in S$. The other basis $(T^*_w)_{w\in  \mathbb W}$ satisfies the braid relations and the quadratic relations $(T^*_s)^2=q_s-(q_s-1)T^*_s$ for $s\in S$, and is related to the first basis by $T_s^*= T_s-(q_s-1)$  for $s\in S$, and more generally $T_w T^*_{w^{-1}}= T^*_{w^{-1}} T_w= q_w$ for $w\in \mathbb W$ \cite[Proposition 4.13]{MR3484112}. 

Let $J\subset S$ and $\mathcal J$ is the right ideal of $H$ with generators  $T_w^*-1$ for all $w$ in the group $ \mathbb W_J$ generated by $J$. 

\begin{lemma}\label{lemma:ba}  A basis of $\mathcal J$ is $(T^*_{w_1}-1)T^*_{w_2}$ for $w_1\in \mathbb W_J\setminus \{1\}, w_2\in {}^{\mathbb W_J}\mathbb W $, and  adding $T^*_{w_2}$ for $w_2\in {}^{\mathbb W_J}\mathbb W$ gives a basis of $H$. In particular, 
$\mathcal J$ is a direct factor of $\mathcal{H}$.
\end{lemma}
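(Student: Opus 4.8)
The plan is to work over the base ring $A=\mathbb Z[(q_s)_{s\in S}]$, over which $H$ is free with basis $(T^*_w)_{w\in\mathbb W}$, and to exploit the standard factorisation $\mathbb W=\mathbb W_J\cdot{}^{\mathbb W_J}\mathbb W$: every $w\in\mathbb W$ is uniquely $w=w_1w_2$ with $w_1\in\mathbb W_J$, $w_2\in{}^{\mathbb W_J}\mathbb W$ and $\ell(w)=\ell(w_1)+\ell(w_2)$, so that $T^*_w=T^*_{w_1}T^*_{w_2}$ by the braid relations. Thus the family proposed in the lemma, namely the $T^*_{w_2}$ ($w_2\in{}^{\mathbb W_J}\mathbb W$, the case $w_1=1$) together with the $(T^*_{w_1}-1)T^*_{w_2}=T^*_{w_1w_2}-T^*_{w_2}$ ($w_1\in\mathbb W_J\setminus\{1\}$), is deduced from the basis $(T^*_w)_{w\in\mathbb W}$ by a change of basis which, after grouping the indices according to whether $w_1=1$, is lower block unitriangular over $A$ (note that $w_1w_2\notin{}^{\mathbb W_J}\mathbb W$ as soon as $w_1\neq1$, by uniqueness of the factorisation). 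Hence this family is an $A$-basis of $H$; in particular the $A$-submodule $\mathcal J_0$ spanned by the subfamily with $w_1\neq1$ is a direct factor of $H$, a complement being the span of $\{T^*_{w_2}\mid w_2\in{}^{\mathbb W_J}\mathbb W\}$. All three assertions then follow once I identify $\mathcal J_0$ with $\mathcal J$.

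One inclusion is clear: $(T^*_{w_1}-1)T^*_{w_2}\in\mathcal J$ because $T^*_{w_1}-1\in\mathcal J$ for $w_1\in\mathbb W_J$ and $\mathcal J$ is a right ideal, so $\mathcal J_0\subseteq\mathcal J$. Conversely, taking $w_2=1$ shows that $\mathcal J_0$ contains all the generators $T^*_{w_1}-1$ ($w_1\in\mathbb W_J$) of $\mathcal J$, so it suffices to check that $\mathcal J_0$ is itself a right ideal; as $H$ is generated as a ring by $(T^*_s)_{s\in S}$, this amounts to verifying $(T^*_{w_1}-1)T^*_{w_2}T^*_s\in\mathcal J_0$ for all $w_1\in\mathbb W_J\setminus\{1\}$, $w_2\in{}^{\mathbb W_J}\mathbb W$ and $s\in S$.

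For this I would invoke the classical dichotomy for distinguished right coset representatives: for $w_2\in{}^{\mathbb W_J}\mathbb W$ and $s\in S$, either (a) $\ell(w_2s)=\ell(w_2)+1$ and $w_2s\in{}^{\mathbb W_J}\mathbb W$, or (b) $\ell(w_2s)=\ell(w_2)-1$, and then $w_2s\in{}^{\mathbb W_J}\mathbb W$ automatically, or (c) $\ell(w_2s)=\ell(w_2)+1$ and $w_2s=tw_2$ for a unique $t\in J$, necessarily with $\ell(tw_2)=\ell(w_2)+1$; this is an easy consequence of the exchange condition. In case (a) one has $T^*_{w_2}T^*_s=T^*_{w_2s}$; in case (c), $T^*_{w_2}T^*_s=T^*_{w_2s}=T^*_tT^*_{w_2}$; in case (b), writing $T^*_{w_2}=T^*_{w_2s}T^*_s$ and using the quadratic relation $(T^*_s)^2=q_s-(q_s-1)T^*_s$ gives $T^*_{w_2}T^*_s=q_sT^*_{w_2s}-(q_s-1)T^*_{w_2}$. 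Left-multiplying by $T^*_{w_1}$ and using $\ell(w_1v)=\ell(w_1)+\ell(v)$ for $v\in{}^{\mathbb W_J}\mathbb W$ (and, in case (c), also the quadratic relations inside $\mathbb W_J$ to expand $T^*_{w_1}T^*_t$ according to $\ell(w_1t)=\ell(w_1)\pm1$) one rewrites $(T^*_{w_1}-1)T^*_{w_2}T^*_s$ as an $A$-linear combination of expressions $(T^*_v-1)T^*_{w_2'}=T^*_{vw_2'}-T^*_{w_2'}$ with $v\in\mathbb W_J\setminus\{1\}$, $w_2'\in{}^{\mathbb W_J}\mathbb W$, up to a leftover multiple of some $T^*_{w_2'}$. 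The content of the verification, carried out case by case, is that these leftover coefficients of $T^*_{w_2'}$ always cancel, so that the result lies in $\mathcal J_0$. I expect this bookkeeping --- in particular case (c) crossed with the two sub-cases for $\ell(w_1t)$, where the quadratic relations produce three terms whose $T^*_{w_2}$-parts must be checked to sum to zero --- to be the only real obstacle; it is elementary but requires care. Granting it, $\mathcal J_0$ is a right ideal, hence $\mathcal J=\mathcal J_0$, and the lemma follows.
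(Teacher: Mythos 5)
Your argument is correct, but it identifies $\mathcal J$ with the span $\mathcal J_0$ of the proposed family by a genuinely different mechanism than the paper. You prove that $\mathcal J_0$ is a right ideal: it visibly sits inside $\mathcal J$ and contains the generators $T^*_{w_1}-1$ (take $w_2=1$), so it suffices to check stability under right multiplication by each $T^*_s$, $s\in S$, and for this you need Deodhar's trichotomy for ${}^{\mathbb W_J}\mathbb W$ together with quadratic-relation computations both for $s\in S$ (your case (b)) and inside $\mathbb W_J$ (your case (c)). The paper argues in the opposite direction and more economically: $\mathcal J$ is spanned by the elements $(T^*_{w_1}-1)T^*_{w}$ for $w\in\mathbb W$, one factors $T^*_w=T^*_{u_1}T^*_{w_2}$ along the decomposition $\mathbb W=\mathbb W_J\,{}^{\mathbb W_J}\mathbb W$, and then reduces $(T^*_{w_1}-1)T^*_{u_1}$, by induction on $\ell(u_1)$ entirely inside the parabolic subalgebra spanned by the $T^*_{u}$, $u\in\mathbb W_J$ (the same manipulation as in the proof of Proposition \ref{prop:ideal}), to a combination of the generators $(T^*_{v_1}-1)$; no statement about how ${}^{\mathbb W_J}\mathbb W$ interacts with $S$ is needed, so Deodhar's lemma never enters. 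What your route buys is a self-contained proof that the complemented submodule $\mathcal J_0$ is $H$-stable, at the cost of the case analysis over all $s\in S$; what the paper's buys is that all quadratic-relation bookkeeping is confined to $\mathbb W_J$. The verification you flag as the remaining obstacle does close up: in case (c) with $\ell(w_1t)=\ell(w_1)-1$ one gets $(T^*_{w_1}-1)T^*_tT^*_{w_2}=\bigl(q_t(T^*_{w_1t}-1)-(q_t-1)(T^*_{w_1}-1)-(T^*_t-1)\bigr)T^*_{w_2}$, the constant terms cancelling since $q_t-(q_t-1)-1=0$, and the other cases are as you describe; so there is no gap. Both proofs use the same unitriangular change-of-basis observation for the assertion that adjoining the $T^*_{w_2}$ yields a basis of $H$ (the paper simply declares this clear).
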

\begin{proof} The elements $(T^*_{w_1}-1)T^*_{w }$ for $w_1\in \mathbb W_J, w \in {} \mathbb W $ generate $\mathcal J$.   We write  $w=u_1w_2$ with unique elements $u_1\in \mathbb W_J, w_2\in {}^{\mathbb W_J}\mathbb W $, and $T^*_w=T^*_{u_1}T^*_{w_2}$. Therefore,  $(T^*_{w_1}-1)T^*_{u_1 }T^*_{w_2 }$. 
By an induction on the length of $u_1$, one proves that $(T^*_{w_1}-1)T^*_{u_1 }$ is a linear combination of $(T^*_{v_1}-1)$ for $v_1\in  \mathbb W_J$ as in the proof of Proposition \ref{prop:ideal}.
It is clear that the elements $(T^*_{w_1}-1)T^*_{w_2}$ and $ T^*_{w_2}$ for  $w_1\in \mathbb W_J\setminus \{1\}, w_2\in {}^{\mathbb W_J}\mathbb W $ form a basis of $H$. 
 \end{proof}
    
 Let $\mathbf w_J$ denote the longest element of  $ \mathbb W_J$ and  $\mathbf w=  \mathbf w_S$. 
 \begin{lemma} In the generic Hecke ring $H$,  the congruence modulo  $\mathcal J$ $$ \sum_{d \in  {}^{\mathbb W_J}\mathbb W } T_{d}\equiv  \sum_{d \in  {}^{\mathbb W_J}\mathbb W }q_{\mathbf w_Jd\mathbf w}  T^*_{d} $$
holds true. 
\end{lemma}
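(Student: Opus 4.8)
The plan is to introduce the element $x_{J}=\sum_{v\in\mathbb{W}_{J}}T_{v}\in H$, to show it annihilates $\mathcal J$ and is injective on the complement of $\mathcal J$ furnished by Lemma~\ref{lemma:ba}, and thus to reduce the congruence to a single equality $x_{J}A=x_{J}B$, where $A=\sum_{d\in{}^{\mathbb{W}_{J}}\mathbb{W}}T_{d}$ and $B=\sum_{d\in{}^{\mathbb{W}_{J}}\mathbb{W}}q_{\mathbf w_{J}d\mathbf w}\,T^{*}_{d}$ are the two sides of the statement. The first ingredient I would establish is the auxiliary identity
\[
\sum_{w\in\mathbb{W}}T_{w}=\sum_{w\in\mathbb{W}}q_{w\mathbf w}\,T^{*}_{w}
\]
in $H$. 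Setting $x=\sum_{w\in\mathbb{W}}T_{w}$, the standard pairing of $\mathbb{W}$ into pairs $\{w,sw\}$ with $\ell(sw)=\ell(w)+1$ together with the quadratic relation gives $T_{s}x=q_{s}x$ for every $s\in S$. Writing $x=\sum_{w}b_{w}T^{*}_{w}$ and comparing the coefficient of $T^{*}_{w}$ on the two sides of $T_{s}x=q_{s}x$ (using $T_{s}T^{*}_{w}=T^{*}_{sw}+(q_{s}-1)T^{*}_{w}$ when $\ell(sw)>\ell(w)$ and $T_{s}T^{*}_{w}=q_{s}T^{*}_{sw}$ otherwise) yields the recursion $b_{sw}=q_{s}^{-1}b_{w}$ whenever $\ell(sw)>\ell(w)$; since $T_{w}$ is unitriangular with respect to length in the $T^{*}$-basis one has $b_{\mathbf w}=1$, and running the recursion along a reduced expression gives $b_{w}=q_{\mathbf w}q_{w}^{-1}=q_{w\mathbf w}$. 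Applying the same computation inside the parabolic sub-Hecke-ring $H_{J}$, which embeds in $H$ compatibly with both bases because $J\subset S$, yields $x_{J}=\sum_{v\in\mathbb{W}_{J}}q_{v\mathbf w_{J}}\,T^{*}_{v}$.

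Next I would verify the two properties of $x_{J}$. The pairing argument inside $\mathbb{W}_{J}$ gives $x_{J}T_{s}=q_{s}x_{J}$ for $s\in J$, hence $x_{J}T^{*}_{s}=x_{J}\bigl(T_{s}-(q_{s}-1)\bigr)=x_{J}$, and then $x_{J}T^{*}_{v}=x_{J}$ for all $v\in\mathbb{W}_{J}$ by induction on $\ell(v)$; since $\mathcal J=\sum_{v\in\mathbb{W}_{J}}(T^{*}_{v}-1)H$, this gives $x_{J}\mathcal J=0$. For $d\in{}^{\mathbb{W}_{J}}\mathbb{W}$ the formula for $x_{J}$ and the length-additivity $\ell(vd)=\ell(v)+\ell(d)$ give
\[
x_{J}T^{*}_{d}=\sum_{v\in\mathbb{W}_{J}}q_{v\mathbf w_{J}}\,T^{*}_{vd}.
\]
As the cosets $\mathbb{W}_{J}d$ (for $d\in{}^{\mathbb{W}_{J}}\mathbb{W}$) are pairwise disjoint and the coefficient of $T^{*}_{d}$ in $x_{J}T^{*}_{d}$ is the nonzero monomial $q_{\mathbf w_{J}}$, and the coefficient ring $\mathbb{Z}[(q_{s})_{s\in S}]$ is a domain, $x_{J}$ is injective on $H_{0}:=\bigoplus_{d\in{}^{\mathbb{W}_{J}}\mathbb{W}}\mathbb{Z}[(q_{s})_{s\in S}]\,T^{*}_{d}$.

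By Lemma~\ref{lemma:ba} we have $H=\mathcal J\oplus H_{0}$, so writing $h=h_{1}+h_{0}$ with $h_{1}\in\mathcal J$ and $h_{0}\in H_{0}$ one has $x_{J}h=x_{J}h_{0}$, whence $x_{J}h=0$ holds if and only if $h\in\mathcal J$; consequently the congruence to be proved is equivalent to $x_{J}A=x_{J}B$. Now $(v,d)\mapsto vd$ is a length-additive bijection $\mathbb{W}_{J}\times{}^{\mathbb{W}_{J}}\mathbb{W}\xrightarrow{\ \sim\ }\mathbb{W}$, so $x_{J}A=\sum_{v,d}T_{v}T_{d}=\sum_{w\in\mathbb{W}}T_{w}$. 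For $x_{J}B=\sum_{d}q_{\mathbf w_{J}d\mathbf w}\,x_{J}T^{*}_{d}$, substituting the displayed formula for $x_{J}T^{*}_{d}$ and using $q_{\mathbf w_{J}d\mathbf w}=q_{\mathbf w}q_{\mathbf w_{J}}^{-1}q_{d}^{-1}$ and $q_{v\mathbf w_{J}}=q_{\mathbf w_{J}}q_{v}^{-1}$ (both valid since the relevant lengths add, and $q_{w\mathbf w}=q_{\mathbf w}q_{w}^{-1}$ for $w\le\mathbf w$), the coefficient of $T^{*}_{vd}$ simplifies to $q_{\mathbf w}q_{vd}^{-1}=q_{vd\,\mathbf w}$, so $x_{J}B=\sum_{w\in\mathbb{W}}q_{w\mathbf w}\,T^{*}_{w}$. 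The auxiliary identity of the first paragraph shows this equals $\sum_{w\in\mathbb{W}}T_{w}=x_{J}A$, which completes the argument.

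The main obstacle is the auxiliary identity $\sum_{w\in\mathbb{W}}T_{w}=\sum_{w\in\mathbb{W}}q_{w\mathbf w}\,T^{*}_{w}$, equivalently the computation of the column sums of the $R$-matrix; everything downstream is just coset and length bookkeeping. A secondary point needing care is the verification that $x_{J}$ kills $\mathcal J$ and is injective on $H_{0}$, since this is exactly what makes ``$x_{J}(A-B)=0$'' equivalent to ``$A-B\in\mathcal J$''.
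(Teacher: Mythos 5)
Your argument is correct, but it is organized quite differently from the paper's. The paper first rewrites $q_{\mathbf w_J}q_{\mathbf w_J d\mathbf w}T^*_d=T_{\mathbf w_J}T_{\mathbf w_J d\mathbf w}T^*_{\mathbf w}$ and reindexes ${}^{\mathbb W_J}\mathbb W=\mathbf w_J\,{}^{\mathbb W_J}\mathbb W\,\mathbf w$, cancels the scalar $q_{\mathbf w_J}\equiv T_{\mathbf w_J}$ in the free module $H/\mathcal J$ to reduce the congruence to $\sum_d T_d\equiv\sum_d T_dT^*_s$ for each $s\in S$, proves this as an exact equality when $J=\emptyset$ (equation \eqref{eq:cong}), and finally transports it to general $J$ by factoring $\sum_{w\in\mathbb W}T_w=\bigl(\sum_{u\in\mathbb W_J}T_u\bigr)\sum_dT_d$ and cancelling $\sum_{u\in\mathbb W_J}q_u$ in $H/\mathcal J$. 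You instead show that left multiplication by $x_J=\sum_{v\in\mathbb W_J}T_v$ kills the right ideal $\mathcal J$ and is injective on the complement $H_0$ supplied by Lemma \ref{lemma:ba}, so that the congruence becomes equivalent to the single exact identity $x_JA=x_JB$ in $H$, which you verify against the closed formula $\sum_wT_w=\sum_wq_{w\mathbf w}T^*_w$. The two proofs rest on the same ingredients --- Lemma \ref{lemma:ba} and the full-group ($J=\emptyset$) identity; indeed your auxiliary identity is exactly \eqref{eq:cong} iterated along a reduced word for $\mathbf w$, since $T_wT^*_{\mathbf w}=T_wT^*_{w^{-1}}T^*_{w\mathbf w}=q_wT^*_{w\mathbf w}$ --- but your packaging of ``left multiplication by $x_J$ is faithful modulo $\mathcal J$'' is arguably cleaner: it dispenses with the paper's Step 1 manipulations and with working in the quotient, at the cost of needing the explicit $T^*$-expansion $x_J=\sum_vq_{v\mathbf w_J}T^*_v$ and the monomial bookkeeping $q_{\mathbf w_Jd\mathbf w}q_{v\mathbf w_J}=q_{vd\mathbf w}$, both of which you justify correctly by length-additivity. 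One small point worth making explicit in a final write-up: the recursion $b_{sw}=q_s^{-1}b_w$ involves formal division by indeterminates, but read downward from $b_{\mathbf w}=1$ it is $b_w=q_sb_{sw}$, pure multiplication, so no divisibility issue arises in $\mathbb Z[(q_s)_{s\in S}]$.
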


\begin{proof}   
Step 1. We show:
$$ {}^{\mathbb W_J}\mathbb W={ \mathbf w_J  } {}^{\mathbb W_J}\mathbb W  \,\mathbf w,\quad  q_{\mathbf  w_J}q_{\mathbf w_Jd\mathbf w}T_d^*=   T_{\mathbf  w_J}T_{\mathbf  w_J d{\mathbf w} } T^*_{\mathbf w }.$$
The equality between the groups  follows from the characterisation of $ {}^{\mathbb W_J}\mathbb W$ in $\mathbb W$: an element $d\in \mathbb W$ has minimal length in $\mathbb W_J d$ if and only if  $\ell(u d)=\ell (u)+\ell(d)$ for  all $u\in \mathbb W_J$. An easy computation shows that  $\ell(u { \mathbf w_J  }d\mathbf w)=\ell (u)+\ell({ \mathbf w_J  }d\mathbf w)$ for all $u\in \mathbb W_J, d\in  {}^{\mathbb W_J}\mathbb W$ (both sides are equal to $\ell (u)+\ell(\mathbf w)- \ell(({ \mathbf w_J  })-\ell(d)$). The second equality  follows from $q_{\mathbf  w_J} q_{\mathbf  w_J d\mathbf w}= q_{ d\mathbf w}$ because 
$(\mathbf  w_J)^2=1  $ and  $\ell (\mathbf w_J) +\ell (\mathbf  w_J d\mathbf w )=\ell( d\mathbf w)$ (both sides are $\ell( \mathbf w) -\ell( d)$) and from
   $ q_{ d\mathbf w}T^*_d = T_{ d\mathbf w}T^*_{\mathbf w d^{-1}} T^*_d=  T_{ d\mathbf w} T^*_{\mathbf w }$.
   We also have $T_{d\mathbf{w}} = T_{\mathbf{w}_J}T_{\mathbf{w}_Jd\mathbf{w}}$.
   
Step 2.    The multiplication by $q_{\mathbf  w_J}$ on the quotient $H /\mathcal J $ is injective (Lemma \ref{lemma:ba}) and $q_{\mathbf  w_J}   \equiv T _{\mathbf  w_J} $. By Step 1, 
$q_{\mathbf w_Jd\mathbf w}T_d^* \equiv  T_{\mathbf  w_J d{\mathbf w} } T^*_{\mathbf w }$ and 
$$ \sum_{d \in  {}^{\mathbb W_J}\mathbb W }q_{\mathbf w_Jd\mathbf w}  T^*_{d}  \equiv   \sum_{d \in  {}^{\mathbb W_J}\mathbb W } T_{d}T^*_{\mathbf w }.$$
 The congruence 
\begin{equation}\label{eq:congJ} \sum_{d \in  {}^{\mathbb W_J}\mathbb W } T_{d} \equiv   \sum_{d \in  {}^{\mathbb W_J}\mathbb W } T_{d}T^*_{s }
\end{equation}
 for all  $s\in S$ implies the lemma because $T^*_{\mathbf w }=T^*_{s_1 }\ldots T^*_{s_n}$  for any reduced decomposition  $\mathbf w=s_1\ldots s_n$ with $s_i\in S$.

Step 3. When $J=\emptyset$,  the congruence \eqref{eq:congJ}  is an equality:  \begin{equation}\label{eq:cong} \sum_{w \in  \mathbb W } T_{w} =   \sum_{w \in  \mathbb W } T_{w}T^*_{s }.
\end{equation}
 It holds true because
 $
  \sum_{w\in \mathbb{W}}T_w = \sum_{w < ws}T_w(T_s + 1)
 $
 and
 $
  (T_s + 1)T_s^* = T_sT_s^* + T_s^* = q_s + T_s^* = T_s + 1
 $.
   
 Step 4.  Conversely the congruence \eqref{eq:congJ} follows from \eqref{eq:cong} because
 $$\sum_{w \in  \mathbb W } T_{w}=(\sum_{u\in W_J} T_u)\sum_{d \in  {}^{\mathbb W_J}\mathbb W } T_{d} \equiv (\sum_{u\in W_J} q_u)\sum_{d \in  {}^{\mathbb W_J}\mathbb W } T_{d}$$
(recall $  q_u=T_{u^{-1}}^*T_u \equiv T_u$) and we can simplify by $\sum_{u\in W_J} q_u$ in $H/\mathcal J$. 
  \end{proof}
This ends the proof of Proposition \ref{prop:comp}.

   \section{Universal  representation  $I_\mathcal{H}(P,\mathcal V,Q)\otimes_\mathcal{H} R[\mathcal U\backslash G] $ }\label{S:10}
  The invariant functor $(-)^\mathcal{U}$ by the pro-$p$ Iwahori subgroup $\mathcal U$ of $G$ has a left adjoint
    $$- \otimes_{\mathcal{H}_R} R[\mathcal U\backslash G] : \Mod_R( \mathcal{H}) \to \Mod_R^\infty (G).$$ 
The smooth $R$-representation   $\mathcal{V} \otimes_{\mathcal{H}_R}R[\mathcal U\backslash G] $ of $G$ constructed from the right $\mathcal{H}_R$-module $\mathcal{V}$ is called universal.  We write 
$$R[\mathcal U\backslash G]=\mathbb X .$$

\begin{question}\label{que:non0} Does $\mathcal{V}\neq 0$ implies $\mathcal{V} \otimes_{\mathcal{H}_R} \mathbb X\neq 0$   ? or does $ v\otimes 1_{\mathcal{U} }=0 $ for $v\in \mathcal{V}$ implies $v=0$ ?  
 We have no counter-example.  If $R$ is a field and the $\mathcal{H}_{R}$-module $\mathcal{V}$ is simple, the two questions are equivalent:  $\mathcal{V} \otimes_{\mathcal{H}_R} \mathbb X\neq 0$ if and only if the map $v\mapsto   v\otimes 1_{\mathcal{U}}$ is injective. When $R=C$,    $\mathcal{V} \otimes_{\mathcal{H}_R} \mathbb X\neq 0$ for all simple $\mathcal{H}_{C}$-modules $\mathcal{V}$ if this is true for $\mathcal{V}$ simple supersingular (this is a consequence of Corollary \ref{cor:10.13}). 
\end{question}

The functor $- \otimes_{\mathcal{H}_R} \mathbb X$ satisfies a few good properties:   it has a right adjoint and is compatible with the parabolic induction and  the left adjoint (of the parabolic induction). Let  $P=MN$ be a  standard parabolic subgroup and $\mathbb X_M=R[\mathcal{U}_M \backslash  M]$. We have  functor isomorphisms 
\begin{align}&(- \otimes_{\mathcal{H}_R} \mathbb X)\circ   \mathcal Ind_{\mathcal{H}_M}^\mathcal{H} \to \Ind_P^G \circ (-\otimes_{\mathcal{H}_R} \mathbb X_M),\\
 \label{eq:JT}&(-)_N\circ (- \otimes_{\mathcal{H}_R} \mathbb X) \to(-\otimes_{\mathcal{H}_R} \mathbb X_M) \circ  \mathcal L_{\mathcal{H}_M}^\mathcal{H}.
\end{align}
The first one is \cite[formula 4.15]{arXiv:1703.04921}, the second one is obtained by left adjunction from the isomorphism $ \mathcal Ind_{\mathcal{H}_M}^\mathcal{H} \circ (-)^{\mathcal{U}_M}\to (-)^\mathcal{U}\circ \Ind_P^G $ \cite[formula (4.14)]{arXiv:1703.04921}.  If  $\mathcal{V}$ is a right $\mathcal{H}_R$-supersingular module and $p$ is nilpotent in $\mathcal{V}$, then $ \mathcal{L}_{\mathcal{H}_M}^\mathcal{H} (\mathcal{V})=0$ if $M\neq G$ (Proposition \ref{pro:ssLR}). Applying \eqref{eq:JT} we deduce:

 \begin{proposition}\label{pro:p0} If $p $  is nilpotent in $\mathcal{V}$ and  $\mathcal{V}$  supersingular,
 then $\mathcal{V} \otimes_{\mathcal{H}_R} \mathbb X$ is left cuspidal.
 \end{proposition}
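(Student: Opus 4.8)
The plan is to deduce left cuspidality directly from the definition and the two functorial properties quoted just before the statement. Recall that a smooth $R$-representation $V$ of $G$ is left cuspidal precisely when $V_N = 0$ (the $N$-coinvariants vanish, i.e.\ the Jacquet module with respect to $P$ is zero) for every proper standard parabolic subgroup $P = MN$ of $G$; this is the ``left'' variant of cuspidality because $(-)_N$ is the left adjoint of $\Ind_P^G$. So it suffices to show $(\mathcal{V} \otimes_{\mathcal{H}_R} \mathbb X)_N = 0$ for all proper standard $P = MN$.

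First I would fix such a $P$ and apply the functor isomorphism \eqref{eq:JT}, which gives
$$
(\mathcal{V} \otimes_{\mathcal{H}_R} \mathbb X)_N \;\cong\; \mathcal L_{\mathcal{H}_M}^\mathcal{H}(\mathcal{V}) \otimes_{\mathcal{H}_{M,R}} \mathbb X_M .
$$
Thus the $N$-coinvariants of the universal representation attached to $\mathcal{V}$ is itself a universal representation of $M$, attached to the $\mathcal{H}_{M,R}$-module $\mathcal L_{\mathcal{H}_M}^\mathcal{H}(\mathcal{V})$. Next I would invoke Proposition \ref{pro:ssLR}: since $\mathcal{V}$ is supersingular and $p$ is nilpotent in $\mathcal{V}$, we have $\mathcal L_{\mathcal{H}_M}^\mathcal{H}(\mathcal{V}) = 0$ whenever $M \neq G$, which is exactly the case here because $P$ is proper. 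Feeding $\mathcal L_{\mathcal{H}_M}^\mathcal{H}(\mathcal{V}) = 0$ into the displayed isomorphism yields $(\mathcal{V} \otimes_{\mathcal{H}_R} \mathbb X)_N = 0$.

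Since $P$ was an arbitrary proper standard parabolic, and every parabolic subgroup of $G$ is conjugate to a standard one (so its Jacquet module vanishes iff the corresponding standard one does), this shows $\mathcal{V} \otimes_{\mathcal{H}_R} \mathbb X$ has all proper Jacquet modules zero, i.e.\ it is left cuspidal. There is essentially no obstacle here: the real content is already packaged in Proposition \ref{pro:ssLR} (the vanishing of the left adjoint of Hecke-induction on supersingular modules with $p$ nilpotent) and in the compatibility \eqref{eq:JT} of $-\otimes_{\mathcal{H}_R}\mathbb X$ with taking $N$-coinvariants; the only point worth a sentence is to record that one may restrict to standard $P$ by conjugacy. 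One should also note that the hypothesis ``$\mathcal{V}$ uniformly supersingular'' is \emph{not} needed, since only the left-adjoint statement of Proposition \ref{pro:ssLR} is used, and that requires merely supersingularity together with $p$ nilpotent.
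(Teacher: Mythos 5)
Your argument is exactly the paper's: the proof given there consists of the sentence preceding the statement, namely apply the functor isomorphism \eqref{eq:JT} to identify $(\mathcal{V}\otimes_{\mathcal{H}_R}\mathbb X)_N$ with $\mathcal L_{\mathcal{H}_M}^{\mathcal{H}}(\mathcal{V})\otimes_{\mathcal{H}_{M,R}}\mathbb X_M$, and then invoke Proposition \ref{pro:ssLR} to get $\mathcal L_{\mathcal{H}_M}^{\mathcal{H}}(\mathcal{V})=0$ for $M\neq G$. Your observations that only the left-adjoint half of Proposition \ref{pro:ssLR} (hence plain, not uniform, supersingularity) is needed and that one may restrict to standard parabolics are both correct and consistent with the paper.
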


\begin{remark}\label{rem:or} For a non-zero smooth $R$-representation   $\tau $ of $M$, $\Delta_\tau$ is  orthogonal to $\Delta_P$ if $\tau$ is left cuspidal. 
  Indeed, we recall from \cite[II.7 Corollary~2]{MR3600042} that  $\Delta_{ \tau}$  is not orthogonal to $\Delta_P$ if and only if it exists a proper standard parabolic subgroup $X $  of $M$ such that $\sigma$ is trivial on the unipotent radical of $X$; moreover $\tau$ is a subrepresentation of $\Ind_X^M (\tau|_X)$, so  the image of $\tau$ by the left adjoint of $\Ind_X^M$ is not $0$. 
\end{remark} 
 From now on,     $\mathcal{V}$ is a non-zero right $\mathcal{H}_{M,R}$-module and $$\sigma=\mathcal{V} \otimes_{\mathcal{H}_{M,R}} \mathbb X_M.$$
In general,  when $\sigma\neq 0$, let  $P_{\perp}(\sigma)$ be the standard parabolic subgroup    of  $   G$  with  $\Delta_{P_\perp(\sigma)}= \Delta_P\cup  \Delta_{\perp,\sigma}$  where $ \Delta_{\perp,\sigma}$ is the set of simple roots $\alpha \in \Delta_\sigma$ orthogonal to $\Delta_P$.
 
\begin{proposition}\label{prop:orthtriple} 
\begin{enumerate}
\item $P(\mathcal{V})\subset P_{\perp}(\sigma)$ if $\sigma\neq 0$. 
\item $   P(\mathcal{V})= P_{\perp}(\sigma)$ if the map $v\mapsto v\otimes 1_{\mathcal{U}_M}$ is injective.
\item $   P(\mathcal{V})=P(\sigma)$ if the map $v\mapsto v\otimes 1_{\mathcal{U}_M}$ is injective, $p $  nilpotent in $\mathcal{V}$ and $\mathcal{V}$  supersingular.
\item $   P(\mathcal{V})=P(\sigma)$ if $\sigma\neq 0 $, $R$ is a field of characteristic $p$ and $\mathcal{V}$ simple supersingular.
\end{enumerate}\end{proposition}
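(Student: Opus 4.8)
The plan is to derive all four statements from a single observation: for a simple root $\alpha$ orthogonal to $\Delta_M$ and $z\in Z\cap M'_\alpha$, the left action of $z$ on the image of $\mathcal{V}$ in $\sigma=\mathcal{V}\otimes_{\mathcal{H}_{M,R}}\mathbb{X}_M$ is controlled by the right $\mathcal{H}_M$-action of $T^{M,*}(z)$ on $\mathcal{V}$. Indeed, such a $z$ has image $v(z)\in X_*(T)\otimes\mathbb Q$ proportional to $\alpha^{\vee}$, so $v_F(\beta(z))=\beta(v(z))=0$ for every $\beta\in\Phi_M$; hence $z$ normalizes $\mathcal{U}_M$, its image in $W_M(1)$ has $\ell_M$-length $0$, $T^M(z)=T^{M,*}(z)$ is invertible in $\mathcal{H}_M$, and the right $\mathcal{H}_M$-action of $T^M(z)$ on any space of $\mathcal{U}_M$-invariants is, by the analogue of \eqref{eq:vTn}, just $w\mapsto z^{-1}w$. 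Since the unit map $\mathcal{V}\to\sigma^{\mathcal{U}_M}$, $v\mapsto v\otimes 1_{\mathcal{U}_M}$, of the adjunction $(-)\otimes_{\mathcal{H}_{M,R}}\mathbb{X}_M\dashv(-)^{\mathcal{U}_M}$ is $\mathcal{H}_{M,R}$-equivariant, we obtain for all $v\in\mathcal{V}$
\[
z^{-1}\cdot(v\otimes 1_{\mathcal{U}_M})=(v\,T^{M,*}(z))\otimes 1_{\mathcal{U}_M},
\]
and $1_{\mathcal{U}_M}$ generates $\mathbb{X}_M$ over $M$, so $\mathcal{V}\otimes 1_{\mathcal{U}_M}$ generates $\sigma$ over $M$.

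For (1), suppose $\alpha\in\Delta_{\mathcal V}$. Then $T^{M,*}(z)$ acts trivially on $\mathcal{V}$ for $z\in Z\cap M'_\alpha$, so by the displayed identity, and since $Z\cap M'_\alpha$ is a group, $Z\cap M'_\alpha$ fixes the generating subspace $\mathcal{V}\otimes 1_{\mathcal{U}_M}$ of $\sigma$. The fixed space $\sigma^{Z\cap M'_\alpha}$ is $M$-stable, by the argument used in the proof of Theorem~\ref{thm:ouf}: $M=ZM'$, the elements of $M'$ commute with those of $M'_\alpha$ because $\alpha\perp\Delta_M$, and $Z$ normalizes $Z\cap M'_\alpha$. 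Hence $Z\cap M'_\alpha$ acts trivially on all of $\sigma$, i.e.\ $\alpha\in\Delta_\sigma$; as also $\alpha\perp\Delta_M=\Delta_P$ this gives $\alpha\in\Delta_{\perp,\sigma}$. Thus $\Delta_{\mathcal V}\subseteq\Delta_{\perp,\sigma}$, i.e.\ $P(\mathcal{V})\subseteq P_{\perp}(\sigma)$. For (2), conversely, if $\alpha\in\Delta_{\perp,\sigma}$ then $Z\cap M'_\alpha$ acts trivially on $\sigma$, so the displayed identity reads $(v\,T^{M,*}(z))\otimes 1_{\mathcal{U}_M}=v\otimes 1_{\mathcal{U}_M}$; when $v\mapsto v\otimes 1_{\mathcal{U}_M}$ is injective this forces $v\,T^{M,*}(z)=v$ for all $v$, so $\alpha\in\Delta_{\mathcal V}$. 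Combined with (1) this gives $P(\mathcal{V})=P_{\perp}(\sigma)$.

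For (3), by (2) we have $P(\mathcal{V})=P_{\perp}(\sigma)$, so it suffices to show $P_{\perp}(\sigma)=P(\sigma)$, i.e.\ that $\Delta_\sigma$ is automatically orthogonal to $\Delta_P$. This is where the remaining hypotheses enter: applying Proposition~\ref{pro:p0} with $M$ in the role of $G$ (and $\mathbb{X}_M$ for $\mathbb{X}$), the nilpotence of $p$ in $\mathcal{V}$ and the supersingularity of $\mathcal{V}$ show that $\sigma=\mathcal{V}\otimes_{\mathcal{H}_{M,R}}\mathbb{X}_M$ is left cuspidal; moreover $\sigma\neq 0$, since $\mathcal{V}\neq 0$ and $v\mapsto v\otimes 1_{\mathcal{U}_M}$ is injective. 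By Remark~\ref{rem:or}, a nonzero left cuspidal representation $\sigma$ of $M$ has $\Delta_\sigma$ orthogonal to $\Delta_P$, whence $P(\sigma)=P_{\perp}(\sigma)=P(\mathcal{V})$. For (4), when $R$ is a field of characteristic $p$ we have $p=0$ in $\mathcal{V}$ and $\mathcal{V}$ is supersingular by hypothesis, so by (3) it is enough to check that $v\mapsto v\otimes 1_{\mathcal{U}_M}$ is injective; its kernel is an $\mathcal{H}_{M,R}$-submodule of $\mathcal{V}$, hence $0$ or $\mathcal{V}$ by simplicity, and the case $\mathcal{V}$ is impossible because $\mathcal{V}\otimes 1_{\mathcal{U}_M}$ generates $\sigma$ while $\sigma\neq 0$. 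Then (3) applies and $P(\mathcal{V})=P(\sigma)$.

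The main obstacle is the displayed identity of the first paragraph: this is the one place where the $M$-representation structure of $\sigma$ and the $\mathcal{H}_{M,R}$-module structure of $\mathcal{V}$ must be matched carefully, and it rests on the elementary but essential point that $\alpha\perp\Delta_M$ forces every $z\in Z\cap M'_\alpha$ to normalize $\mathcal{U}_M$ and to have $\ell_M$-length $0$, so that $T^M(z)=T^{M,*}(z)$ and its action on $\mathcal{U}_M$-invariants is the plain translation by $z^{-1}$. Once that is in hand, (1)--(2) are bookkeeping with the definitions of $\Delta_{\mathcal V}$, $\Delta_\sigma$ and $\Delta_{\perp,\sigma}$, while (3)--(4) only assemble Proposition~\ref{pro:p0}, Remark~\ref{rem:or}, and the $M$-stability of fixed subspaces.
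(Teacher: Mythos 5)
Your proof is correct and follows essentially the same route as the paper: the same key identity $z^{-1}(v\otimes 1_{\mathcal{U}_M})=(vT^{M,*}(z))\otimes 1_{\mathcal{U}_M}$ for $z\in Z\cap M'_\alpha$ drives (1) and (2), and (3)--(4) are assembled from Proposition~\ref{pro:p0}, Remark~\ref{rem:or}, and the simplicity of $\mathcal{V}$ exactly as in the text. You in fact supply two details the paper leaves implicit, namely the $M$-stability of $\sigma^{Z\cap M'_\alpha}$ needed to pass from the generating set $\mathcal{V}\otimes 1_{\mathcal{U}_M}$ to all of $\sigma$ in (1), and the kernel-is-a-submodule argument replacing the appeal to Question~\ref{que:non0} in (4).
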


 \begin{proof} (1) $   P(\mathcal{V})\subset P_{\perp}(\sigma)$ means that
$ Z\cap M'_{\mathcal V} \ \text{ acts trivially on  } \ \mathcal V\otimes  1_{\mathcal{U}_M} , $
where $M_{\mathcal{V}}$ is the standard Levi subgroup such that $\Delta_{M_{\mathcal{V}}} = \Delta_{\mathcal{V}}$.
Let  $z\in Z\cap M'_{\mathcal{V}}$ and $v\in \mathcal V$. As  $\Delta_M$ and $\Delta_{\mathcal V}$  are orthogonal, we have $T ^{M,*}(z)= T ^{M}(z)$  and $\mathcal{U}_Mz\mathcal{U}_M=\mathcal{U}_M z$.  We have $v\otimes   1_{\mathcal{U}_M}= v T ^{M}(z)\otimes   1_{\mathcal{U}_M}=v\otimes T ^{M}(z)  1_{\mathcal{U}_M}=v\otimes  {\bf 1}_{\mathcal{U}_Mz}= v\otimes  z^{-1} 1_{\mathcal{U}_M}= z^{-1}(v\otimes   1_{\mathcal{U}_M})$.
 
(2)   If $ v\otimes  1_{\mathcal{U}_M}=0 $ for $v\in \mathcal{V}$ implies $v=0$, then $\sigma\neq 0$ because $\mathcal{V}\neq 0$. By (1) $P(\mathcal V) \subset  P_{\perp}(\sigma) $. As in  the proof of (1), for $z\in  Z\cap M'_{\perp, \sigma}$ we have  $ v T ^{M,*}(z)\otimes   1_{\mathcal{U}_M}= v T ^{M}(z)\otimes   1_{\mathcal{U}_M}= v \otimes   1_{\mathcal{U}_M}$ and our hypothesis implies  $v T ^{M,*}(z)=v$ hence $P(\mathcal V) \supset  P_{\perp}(\sigma) $.

(3)  Proposition \ref{pro:p0}, Remark \ref{rem:or} and (2).

(4) Question \ref{que:non0} and (3).
   \end{proof}
 Let  $Q$ be a parabolic subgroup of $G$   with $P\subset Q\subset P(\mathcal{V})$.  
 In this chapter  we   will compute  $I_\mathcal{H}(P,\mathcal V,Q)\otimes_\mathcal{H} R[\mathcal{U}\backslash  G]$ where   $I_\mathcal{H}(P,\mathcal{V},Q)=\Ind_{\mathcal{H}_{M(\mathcal{V})}}^\mathcal{H} (e(\mathcal{V}) \otimes (\Ind_Q^{P(\mathcal{V})} {\bf 1})^{\mathcal{U}_{M(\mathcal{V})} } )$ (Theorem \ref{thm:10.10}).
 The smooth $R$-representation $I_G(P,\sigma,Q)$  of $G$ is well defined: it is $0$ if $\sigma=0$ and  $\Ind_{P(\sigma)}^G (e(\sigma)\otimes \St_Q^{P(\sigma)})$ if $\sigma\neq 0$ because $(P,\sigma,Q)$ is an $R[G]$-triple by Proposition \ref{pro:p0}. We   will  show that  the universal representation  $I_\mathcal{H}(P,\mathcal V,Q)\otimes_\mathcal{H} R[\mathcal{U}\backslash  G]$ is  isomorphic to   $I_G(P,\sigma,Q)$,  if  $P(\mathcal{V})=P(\sigma)$  and $p=0$, or if $\sigma=0$ (Corollary \ref{cor:10.12}). In particular, when $R=C$ and $I_\mathcal{H}(P,\mathcal V,Q)\otimes_\mathcal{H} R[\mathcal{U}\backslash  G] \simeq I_G(P,\sigma,Q)$ when $\mathcal{V}$ is supersingular

\subsection{$Q=G$} \label{S:Q=G}  We consider first the case  $Q=G$.   We are in the simple situation where $\mathcal{V}$ is extensible to $\mathcal{H}$ and $P(\mathcal V)=P(\sigma)=G$,  $I_\mathcal{H}(P,\mathcal V,G) = e(\mathcal V)$ and  $I_G(P,\sigma,G) =e(\sigma)$. We recall that $\Delta \setminus \Delta_P$ is orthogonal to $\Delta_P$ and that  $M_2$
denotes  the  standard Levi subgroup   of $G$ with $\Delta_{M_2}=\Delta \setminus \Delta_P$.

The $\mathcal{H}_{ R}$-morphism $e (\mathcal V)\to e ( \sigma)^\mathcal{U}=\sigma^{\mathcal{U}_M}$ sending $v $ to $v\otimes  1_{\mathcal{U}_M}$ for $v\in  \mathcal V$, gives by adjunction an $R[G]$-homomorphism 
$$
v\otimes {\bf 1}_{\mathcal{U}} \mapsto v\otimes  1_{\mathcal{U}_M} :  e (\mathcal V) \otimes_{\mathcal{H}_{ R}} \mathbb X   \xrightarrow{\Phi^G}  e ( \sigma),$$
  If $\Phi^G$ is an isomorphism, then $e (\mathcal V) \otimes_{\mathcal{H}_{ R}} \mathbb X  $ is the extension to $G$ of  $ (e (\mathcal V) \otimes_{\mathcal{H}_{ R}} \mathbb X  )|_M$, meaning that $
M'_2  $  acts trivially on $e (\mathcal V) \otimes_{\mathcal{H}_{ R}} \mathbb X $.    The converse is true:
 \begin{lemma}  If $
M'_2  $  acts trivially on $e (\mathcal V) \otimes_{\mathcal{H}_{ R}} \mathbb X,$ then $\Phi^G$  is an  isomorphism.\end{lemma}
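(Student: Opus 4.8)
The plan is to show that $\Phi^{G}$ is surjective (which is formal) and then, using the hypothesis, that it is injective, by passing to $N$-coinvariants. Surjectivity does not use the hypothesis: the space underlying $e(\sigma)$ is $\sigma=\mathcal{V}\otimes_{\mathcal{H}_{M,R}}\mathbb{X}_{M}$, which is generated over $R[M]$ — hence over $R[G]$, since $M\subseteq G$ — by the vectors $v\otimes 1_{\mathcal{U}_{M}}$, $v\in\mathcal{V}$; each of these equals $\Phi^{G}(v\otimes 1_{\mathcal{U}})$, so it lies in the $R[G]$-submodule $\operatorname{Im}\Phi^{G}$, and $\Phi^{G}$ is onto.

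Next I would reduce to $M$. By hypothesis $M'_{2}$ acts trivially on $E:=e(\mathcal{V})\otimes_{\mathcal{H}_{R}}\mathbb{X}$, so $E=e_{G}(E|_{M})$; likewise $e(\sigma)=e_{G}(\sigma)$ by construction. Since $G=MM'_{2}$, a $G$-equivariant map between representations on which $M'_{2}$ acts trivially is the $G$-equivariant extension of its restriction to $M$, and is an isomorphism if and only if that restriction is. So it suffices to prove that $\varphi:=\Phi^{G}|_{M}\colon E|_{M}\to\sigma$ is an isomorphism of $R[M]$-representations; by the previous step $\varphi$ is already surjective.

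Now I would compute $E|_{M}$. As $N\subseteq M'_{2}$ acts trivially on $E$, the $N$-coinvariants functor fixes $E$, so $E|_{M}=E_{N}$, and applying \eqref{eq:JT} to $e(\mathcal{V})$ yields an $R[M]$-isomorphism $E|_{M}\cong\mathcal{L}^{\mathcal{H}}_{\mathcal{H}_{M}}(e(\mathcal{V}))\otimes_{\mathcal{H}_{M,R}}\mathbb{X}_{M}$. I claim $\mathcal{L}^{\mathcal{H}}_{\mathcal{H}_{M}}(e(\mathcal{V}))=\mathcal{V}$. Because $\Delta_{M}$ and $\Delta\setminus\Delta_{M}$ are orthogonal, $\mathbf{w}=\mathbf{w}_{M}\mathbf{w}_{M_{2}}$, hence $\mathbf{w}^{M}=\mathbf{w}_{M_{2}}$ fixes $\Delta_{M}$ and $\mathbf{w}.M=M$; by \eqref{eq:LR} this gives $\mathcal{L}^{\mathcal{H}}_{\mathcal{H}_{M}}(e(\mathcal{V}))=\tilde{\mathbf{w}}^{M}\cdot\bigl(e(\mathcal{V})\otimes_{\mathcal{H}_{M^{-}},\theta^{*}}\mathcal{H}_{M}\bigr)$. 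By \eqref{eq:structure} the right $\mathcal{H}_{M^{-}}$-module underlying $e(\mathcal{V})$ via $\theta^{*}$ is the restriction of the $\mathcal{H}_{M}$-module $\mathcal{V}$; since $\mathcal{H}_{M}$ is the localisation of $\mathcal{H}_{M^{-}}$ at an element that already acts invertibly on $\mathcal{V}$, the tensor product is $\mathcal{V}$ itself. Finally the twist $\tilde{\mathbf{w}}^{M}\cdot(-)$ acts trivially on the extensible module $\mathcal{V}$: the automorphism $\iota(\tilde{\mathbf{w}}^{M})$ of $\mathcal{H}_{M}$ is conjugation by $\hat{\mathbf{w}}^{M}=\hat{\mathbf{w}}_{M_{2}}\in M'_{2}$, which commutes with $\mathcal{N}\cap M'$ and, since $M'_{2}$ is normal in $G$, moves each $z\in Z$ by an element of $Z\cap M'_{2}$; as $\ell_{M}$ vanishes on $\Lambda(1)\cap{}_{1}W_{M'_{2}}$ and $T^{M}_{\lambda_{2}}$ acts as the identity on $\mathcal{V}$ there, $T^{M}_{w}$ and $T^{M}_{\tilde{\mathbf{w}}^{M}w(\tilde{\mathbf{w}}^{M})^{-1}}$ act identically on $\mathcal{V}$ for every $w\in W_{M}(1)$. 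Hence $E|_{M}\cong\mathcal{V}\otimes_{\mathcal{H}_{M,R}}\mathbb{X}_{M}=\sigma$.

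It remains to conclude that $\varphi$ is \emph{this} isomorphism — a surjection between abstractly isomorphic $R[M]$-modules need not be bijective, so this is the delicate point and the main obstacle. I would check it by tracing the naturality of \eqref{eq:JT} (which itself comes, by adjunction, from the Ollivier isomorphism of \cite{arXiv:1703.04921}) against the defining property of $\Phi^{G}$, namely that the composite $e(\mathcal{V})\xrightarrow{\ \mathrm{unit}\ }E^{\mathcal{U}}\subseteq E^{\mathcal{U}_{M}}\xrightarrow{\ \varphi^{\mathcal{U}_{M}}\ }\sigma^{\mathcal{U}_{M}}$ is $v\mapsto v\otimes 1_{\mathcal{U}_{M}}$; this identifies $\varphi$ with the isomorphism just constructed, so that $\varphi$, hence $\Phi^{G}$, is an isomorphism. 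A convenient alternative, if that bookkeeping is awkward, is to construct directly an $R[M]$-section $\sigma\to E|_{M}$ of $\varphi$, adjoint to the $\mathcal{H}_{M}$-map $\mathcal{V}\to E^{\mathcal{U}_{M}}$, $v\mapsto v\otimes 1_{\mathcal{U}}$ — whose $\mathcal{H}_{M}$-equivariance uses \eqref{eq:structure} together with the triviality of $\mathcal{U}_{N},\mathcal{U}_{\overline N}\subseteq M'_{2}$ on $E$ — and to note that its image contains the $R[M]$-generators $v\otimes 1_{\mathcal{U}}$ of $E|_{M}$, which forces it, hence $\varphi$, to be bijective.
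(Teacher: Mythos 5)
Your proposal is correct, and the argument that actually closes the proof --- the ``convenient alternative'' at the end --- is precisely the paper's proof: one checks that $v\mapsto v\otimes 1_{\mathcal U}:\mathcal V\to (e(\mathcal V)\otimes_{\mathcal{H}_R}\mathbb X)^{\mathcal U_M}$ is $\mathcal H_M$-equivariant (the paper does this via Theorem \ref{thm:ouf} applied to $E=e(\mathcal V)\otimes_{\mathcal{H}_R}\mathbb X$, which is legitimate exactly because $M'_2$ acts trivially on $E$), takes the adjoint $R[M]$-map $\sigma\to E$, observes it is inverse to $\Phi^G$ on the generators $v\otimes 1_{\mathcal U}$, and concludes by the generation statement coming from $G=MM'_2$ and the trivial $M'_2$-action. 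Your primary route --- computing $E|_M=E_N$ via \eqref{eq:JT} and showing $\mathcal L_{\mathcal H_M}^{\mathcal H}(e(\mathcal V))\cong\mathcal V$ (using $\mathbf w.M=M$ in the orthogonal case, localisation, and triviality of the twist on extensible modules) --- is a genuinely different and instructive computation, and your verification that the twist by $\tilde{\mathbf w}^M$ is trivial on an extensible $\mathcal V$ is sound. But you correctly diagnose its weak point: it only identifies $E|_M$ with $\sigma$ abstractly, and upgrading the surjection $\varphi$ to that isomorphism requires the naturality chase you do not carry out; the section construction is what actually supplies this, so in the end the two routes converge on the paper's argument. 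The only step I would ask you to make explicit is the $\mathcal H_M$-equivariance of $v\mapsto v\otimes 1_{\mathcal U}$: triviality of $\mathcal U_N$ and $\mathcal U_{\overline N}$ on $E$ gives $E^{\mathcal U}=E^{\mathcal U_M}$, but to compare the $T_w^{M,*}$-action on $E^{\mathcal U_M}$ with the $T_w^{*}$-action on $E^{\mathcal U}$ you still need Theorem \ref{thm:ouf} (or Proposition \ref{prop:eu}) applied to $E$, not just to $e(\mathcal V)$.
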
 

 \begin{proof}  Suppose that  $
M'_2  $  acts trivially on $e (\mathcal V) \otimes_{\mathcal{H}_{ R}} \mathbb X $. Then $e (\mathcal V) \otimes_{\mathcal{H}_{ R}} \mathbb X  $ is the extension   to $G$ of   $ (e (\mathcal V) \otimes_{\mathcal{H}_{ R}} \mathbb X  )|_M$,  and by Theorem \ref{thm:ouf}, $(e (\mathcal V) \otimes_{\mathcal{H}_{ R}} \mathbb X )^\mathcal{U}$ is the extension of  $(e (\mathcal V) \otimes_{\mathcal{H}_{ R}} \mathbb X  )^{\mathcal{U}_M}$. Therefore \begin{equation*} (v  \otimes  1_{\mathcal{U}})T^{ *}_w=
(v  \otimes  1_{\mathcal{U}})T^{M, *}_w \quad 
\text{for all} \ v\in \mathcal V, w\in W_M(1).
\end{equation*}
 As $\mathcal{V}$ is extensible to $\mathcal{H}$, the natural map
 $v\mapsto  v\otimes   1_{\mathcal{U}} :\mathcal V\xrightarrow{\Psi} (e (\mathcal V) \otimes_{\mathcal{H}_{ R}} \mathbb X  )^{\mathcal{U}_M} $  
is $\mathcal{H}_M$-equivariant, i.e.:
 $$ v T^{M,*}_w\otimes   1_{\mathcal{U}}= (v  \otimes 1_\mathcal{U})T^{M, *}_w \quad 
\text{for all} \ v\in \mathcal V, w\in W_M(1).$$
because   (\eqref{eq:structure})  $v T^{M,*}_w\otimes   1_{\mathcal{U}}= v T^{ *}_w \otimes   1_{\mathcal{U}}  =  v \otimes  T^{ *}_w = (v  \otimes   1_{\mathcal{U}})T^{ *}_w$ in $e (\mathcal V) \otimes_{\mathcal{H}_{ R}} \mathbb X $. 

We recall that $- \otimes_{\mathcal{H}_{ M,R}} \mathbb X_M $ is the left adjoint of $(-)^{\mathcal{U}_M}$.  
  The adjoint $R[ M]$-homomorphism 
$\sigma=\mathcal{V} \otimes_{\mathcal{H}_{ M,R}} \mathbb X_M   \to e (\mathcal V) \otimes_{\mathcal{H}_{ R}} \mathbb X  $
 sends  $ v\otimes  1_{\mathcal{U}_M}$ to $v\otimes {\bf 1}_{\mathcal{U}} $ for all $v\in  \mathcal V$. 
 The $R[M]$-module generated by the $v\otimes {\bf 1}_{\mathcal{U}} $ for all $v\in  \mathcal V$ is equal to $e (\mathcal V) \otimes_{\mathcal{H}_{ R}} \mathbb X  $ because $M'_2$ acts trivially. Hence we obtained  an inverse of  $\Phi^G$.
   \end{proof}
 
Our next move is to determine if $M'_2$ acts trivially on $e (\mathcal V) \otimes_{\mathcal{H}_{ R}} \mathbb X $. It is equivalent to  see if  $M'_2$ acts trivially on $e (\mathcal V)   \otimes  1_{\mathcal{U}}$ as this set generates the representation $e (\mathcal V) \otimes_{\mathcal{H}_{ R}} \mathbb X $ of $G$ and 
$M'_2$  is a normal subgroup of $G$ as $M'_2$ and $M$ commute and   $G=ZM' M'_2$. Obviously,
 $\mathcal{U}\cap M'_2$ acts trivially on $e (\mathcal V)   \otimes  1_{\mathcal{U}}$. The group of double classes  $(\mathcal{U}\cap M'_2)\backslash M'_2/(\mathcal{U}\cap M'_2)$  is generated by the lifts  $\hat s\in \mathcal N \cap M'_2$ of the  simple affine roots $s $ of $W_{M'_2}$. Therefore,   $M'_2$ acts trivially on $e (\mathcal V) \otimes_{\mathcal{H}_{ R}} \mathbb X $ if and only if for any simple affine root $s \in  S_{M'_2}^{\aff}$ of $W_{M'_2}$, any $\hat s\in \mathcal N \cap M'_2$ lifting $s $ acts trivially on  $e (\mathcal V)   \otimes  1_{\mathcal{U}}$. 
 
\begin{lemma}Let $v\in \mathcal V, s \in  S_{M'_2}^{\aff}$ and $\hat s\in \mathcal N \cap M'_2$ lifting  $s $. We have 
$$(q_s+1)( v\otimes  1_{\mathcal{U}} - \hat s ( v\otimes  1_{\mathcal{U}}) )= 0.$$ \end{lemma}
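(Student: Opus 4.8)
The plan is to compute the action of $\hat s$ on $v \otimes 1_{\mathcal{U}}$ using the Hecke-theoretic relation between $T_{\tilde s}$ and the group element $\hat s$, exploiting that $v$ is supported at the identity double coset so that $T_{\tilde s}$ acts on $v \otimes 1_{\mathcal{U}}$ through the averaging operator over $\mathcal{U}/(\mathcal{U} \cap \hat s^{-1}\mathcal{U}\hat s)$. Concretely, in $\mathbb{X} = R[\mathcal{U}\backslash G]$ the element $1_{\mathcal{U}}$ is the characteristic function of $\mathcal{U}$, and $(v\otimes 1_{\mathcal{U}})T_{\tilde s} = v \otimes (1_{\mathcal{U}} T_{\tilde s})$; the function $1_{\mathcal{U}} T_{\tilde s}$ is the sum of the characteristic functions of the $q_s$ cosets $\mathcal{U} x$ inside $\mathcal{U}\hat s\mathcal{U}$, so when we apply $g\mapsto g^{-1}$ to pass to the representation-theoretic picture we get $\hat s(v\otimes 1_{\mathcal{U}})$ contributing (after dealing with the $\mathcal{U}$-action and the other cosets).

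First I would recall the standard identity in $\mathbb{X}$: for $s\in S^{\aff}$ and any admissible lift, one has $1_{\mathcal{U}\hat s\mathcal{U}} = \hat s\cdot 1_{\mathcal{U}} + (\text{terms involving }1_{\mathcal{U} z \hat s}\text{ for }z\in Z'_{k,s})$, coming from the decomposition $\mathcal{U}\hat s\mathcal{U} = \bigsqcup \mathcal{U}\hat s u$ with $u$ ranging over $\mathcal{U}_{\alpha_s}/(\text{something})$, and that $c_{\tilde s}$ encodes exactly the correction terms. More precisely, using $T_{\tilde s} = T^*_{\tilde s} + c_{\tilde s}$ and that $T^*_{\tilde s}$ applied to $v\otimes 1_{\mathcal{U}}$ relates to $\hat s(v \otimes 1_{\mathcal{U}})$ cleanly, while $c_{\tilde s}$ acts through $Z'_{k,s}$ which lies in $M'_2$ (here $s\in S^{\aff}_{M'_2}$, so $Z'_{k,s}\subset Z\cap M'_2$) — and $Z\cap M'_2$ acts on $\mathcal{V}$ via $T^M(z)$, which since $\mathcal{V}$ is extensible to $\mathcal{H}$ acts trivially on $\mathcal{V}$ and hence $c_{\tilde s}$ acts on $v\otimes 1_{\mathcal{U}}$ by multiplication by $q_s - 1$ (using \eqref{eq:cs} and that $T_t$ acts as the identity on $v\otimes 1_{\mathcal{U}}$ for $t\in Z'_{k,s}$).

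Next I would combine the two quadratic relations. On one hand $v \otimes 1_{\mathcal{U}}$ is a $\mathcal{U}$-fixed vector, so $\hat s(v\otimes 1_{\mathcal{U}})$ together with $v\otimes 1_{\mathcal{U}}$ spans a small space on which $T_{\tilde s}$ acts; from $1_{\mathcal{U}}T_{\tilde s}$ being supported in $\mathcal{U} \cup \mathcal{U}\hat s\mathcal{U}$ and the explicit double-coset count, one extracts $(v\otimes 1_{\mathcal{U}})T_{\tilde s} = \hat s(v\otimes 1_{\mathcal{U}}) + c_{\tilde s}\cdot(v\otimes 1_{\mathcal{U}})$-type formula, i.e.\ $(v\otimes 1_{\mathcal{U}})T^*_{\tilde s} = \hat s(v\otimes 1_{\mathcal{U}})$ up to the $Z'_{k,s}$-twist which is again trivial. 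On the other hand, applying the quadratic relation $T_{\tilde s}^2 = q_s T_{\widetilde{s^2}} + c_{\tilde s}T_{\tilde s}$ to $v\otimes 1_{\mathcal{U}}$, using $T_{\widetilde{s^2}}$ acts as $q_{s^2}=1$-ish scalar (here $\widetilde{s^2}\in Z_k$, acting trivially since it lies in $M'_2$-relevant part) and substituting the computed action of $T_{\tilde s}$, yields a quadratic identity for $\hat s$ acting on $v\otimes 1_{\mathcal{U}}$. Writing $x = v\otimes 1_{\mathcal{U}}$, one finds both $\hat s x$ and the scalar relation $\hat s(\hat s x) = x$ (since $\hat s^2 \in Z\cap M'_2$ acts trivially), and combining gives $(q_s+1)(x - \hat s x) = 0$ after clearing the coefficients.

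The main obstacle I anticipate is bookkeeping the discrepancy between $\hat s$ acting in the $G$-representation and $T_{\tilde s}$ (or $T^*_{\tilde s}$) acting via the Hecke algebra: one must carefully track the $Z'_{k,s}$-coset corrections in the decomposition of $\mathcal{U}\hat s\mathcal{U}$, verify these central elements indeed lie in $Z\cap M'_2$ (using that $s\in S^{\aff}_{M'_2}$ so $\mathcal{G}'_s\subset M'_2$ and hence $Z'_s\subset Z\cap M'_2$), and confirm that $\mathcal{V}$ being extensible to $\mathcal{H}$ forces $T^M(z)$, hence $T^{M,*}(z)$, to act as the identity for all such $z$ (Corollary \ref{cor:extT}). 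Once that is in place, the relation $(q_s+1)(v\otimes 1_{\mathcal{U}} - \hat s(v\otimes 1_{\mathcal{U}})) = 0$ drops out of the quadratic relation $T_{\tilde s}^2 = q_s\,\mathrm{id} + (q_s-1)T_{\tilde s}$ by substituting the action of $T_{\tilde s}$ as $\hat s$-plus-scalar and simplifying; the only real care needed is that the coefficient $q_s+1$ (not $q_s-1$) emerges, which comes from factoring $T_{\tilde s}^2 - q_s = (T_{\tilde s}+1)(T_{\tilde s} - q_s) + (q_s-1)(T_{\tilde s}+1)$-type manipulations analogous to Step 3 of the last lemma in \S\ref{S:comdia}.
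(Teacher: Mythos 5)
There is a genuine gap at the pivotal step of your plan, namely the claim that $T_{\tilde s}$ (or $T^*_{\tilde s}$) acts on $v\otimes 1_{\mathcal{U}}$ as ``$\hat s$ plus a scalar correction controlled by $c_{\tilde s}$''. In $e(\mathcal V)\otimes_{\mathcal{H}_R}\mathbb X$ only the relation $vh\otimes x=v\otimes hx$ is available, and $T_{\tilde s}1_{\mathcal{U}}=1_{\mathcal{U}\hat s\mathcal{U}}$ is the sum of $q_s$ distinct group translates $u^{-1}\hat s^{-1}1_{\mathcal{U}}$, $u$ running over $\mathcal{U}/(\hat s^{-1}\mathcal{U}\hat s\cap\mathcal{U})$; only the $u=1$ term involves $\hat s^{-1}$, and the remaining $q_s-1$ terms are translates by nontrivial unipotent-type elements, not by elements of $Z'_{k,s}\hat s$, so they are not encoded by $c_{\tilde s}$. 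Your proposed ``standard identity'' $1_{\mathcal{U}\hat s\mathcal{U}}=\hat s\cdot 1_{\mathcal{U}}+\sum 1_{\mathcal{U} z\hat s}$ is therefore false, and the formula you want to extract, $(v\otimes 1_{\mathcal{U}})T^*_{\tilde s}=\hat s(v\otimes 1_{\mathcal{U}})$ up to a trivial $Z'_{k,s}$-twist, cannot hold in general: since extensibility gives $vT^*_{\tilde s}=v$, it would force $\hat s(v\otimes 1_{\mathcal{U}})=v\otimes 1_{\mathcal{U}}$ unconditionally, i.e.\ triviality of $M'_2$ with no hypothesis on $q_s+1$, contradicting Example \ref{ex:contr} (where $q_s+1=0$ and ${\bf 1}_\mathcal{H}\otimes_{\mathcal{H}_R}\mathbb X\neq{\bf 1}_G$). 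For the same reason the quadratic relation gives nothing when pushed across the tensor sign: $v\otimes T_{\tilde s}^21_\mathcal{U}=vT_{\tilde s}^2\otimes 1_\mathcal{U}=q_s^2\,v\otimes 1_\mathcal{U}$ matches the right-hand side identically, so no information about $\hat s(v\otimes 1_{\mathcal{U}})$ can be produced by purely Hecke-algebraic manipulations; the $G$-translate by $\hat s$ simply does not factor through the $\mathcal{H}$-action.

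The missing idea (and the paper's actual argument) is a comparison of two different coset expansions together with a non-homomorphic bijection between opposite unipotent quotients. One computes $T_{s}(\hat s\,1_{\mathcal{U}})=1_{\mathcal{U}\hat s\mathcal{U}\hat s^{-1}}=\sum_{u^{op}}u^{op}1_{\mathcal{U}}$ and $T_{s}(\hat s^2 1_{\mathcal{U}})=1_{\mathcal{U}\hat s^{-1}\mathcal{U}}=\sum_{u}u\hat s\,1_{\mathcal{U}}$, with $u$, $u^{op}$ in the two quotients $\mathcal{U}/(\hat s^{-1}\mathcal{U}\hat s\cap\mathcal{U})$ and $\hat s\mathcal{U}\hat s^{-1}/(\hat s\mathcal{U}\hat s^{-1}\cap\mathcal{U})$. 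Passing to the finite reductive group $G_{k,s}$, the rank-one Bruhat decomposition gives a bijection $u\leftrightarrow u^{op}$ between the non-identity elements with $\overline u^{op}\in\overline u\,\hat s_k B'_{k,s}$; since $(Z^0\cap M'_2)\mathcal{U}$ acts trivially on $v\otimes 1_{\mathcal{U}}$, this yields $u\hat s(v\otimes 1_{\mathcal{U}})=u^{op}(v\otimes 1_{\mathcal{U}})$ for all non-identity pairs, so subtracting the two expansions cancels everything except the two identity terms and gives equation \eqref{eq:for}. Only then does one move $T_s$ (and $T_{s^{-2}}$) across the tensor sign, using $vT_s=q_sv$ and $vT_{s^{-2}}=v$, to obtain $(q_s+1)\bigl(v\otimes 1_{\mathcal{U}}-\hat s(v\otimes 1_{\mathcal{U}})\bigr)=0$. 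Your correct observations ($Z'_{k,s}\subset Z\cap M'_2$ acts trivially, $c_{\tilde s}$ acts by $q_s-1$, $\hat s^2$ acts trivially) all enter, but only as auxiliary facts inside this group-theoretic cancellation; they do not by themselves produce the factor $q_s+1$.
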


 \begin{proof} We compute:
\begin{align*}T_{s} (\hat s  {\bf 1}_{\mathcal{U}}) &= \hat s (T_{ s}{\bf 1}_{\mathcal{U}}) = 1_{\mathcal{U} \hat s\mathcal{U} (\hat s)^{-1}}      =\sum _{u }\hat s u (\hat s)^{-1}{\bf 1}_{\mathcal{U}} = \sum _{u^{op} }u^{op} {\bf 1}_{\mathcal{U}} ,\\
   T_{ s} (\hat s^2  {\bf 1}_{\mathcal{U}})&= \hat s^2 (T_{ s}{\bf 1}_{\mathcal{U}})  =  {\bf 1}_{\mathcal{U} \hat s\mathcal{U} (\hat s)^{-2}}={\bf 1}_{\mathcal{U}( \hat s )^{-1}\mathcal{U}}  =\sum _{u } u\hat s{\bf 1}_{\mathcal{U}}.\end{align*}  
   for $u$ in the group $ \mathcal{U}/(\hat s^{-1}\mathcal{U} \hat s \cap \mathcal{U})$ and $u^{op}$ in the group $\hat s \mathcal{U} (\hat s)^{-1}/(\hat s \mathcal{U} (\hat s)^{-1} \cap \mathcal{U})$; the reason is that $\hat s^2$ normalizes $\mathcal{U}$, $\mathcal{U} \hat s\mathcal{U} \hat s^{-1}$ is the disjoint union of   the sets $\mathcal{U} \hat s u^{-1} (\hat s)^{-1}$   and  $\mathcal{U}( \hat s)^{-1}\mathcal{U} $  is the disjoint  union of the sets $\mathcal{U} ( \hat s)^{-1} u^{-1} $.  
   We introduce now    a natural  bijection 
   \begin{equation}\label{eq:bij}u\to u^{op}:  \mathcal{U}/(\hat s^{-1}\mathcal{U} \hat s \cap \mathcal{U})\to  \hat s \mathcal{U} (\hat s)^{-1}/(\hat s \mathcal{U} (\hat s)^{-1} \cap \mathcal{U})
   \end{equation}  which is not a group homomorphism. We recall the finite reductive group $G_{k,s}$  quotient of the parahoric subgroup $\mathfrak K_s$ of $G$ fixing the face  fixed by $s$ of the alcove $\mathcal C$. The Iwahori groups $Z^0\mathcal{U}$ and $Z^0\hat s\mathcal{U} (\hat s)^{-1} $  are contained in $ \mathfrak K_s$ and  their images in $G_{s,k}$ are opposite Borel subgroups $Z_k U_{s,k}$ and $Z_kU_{s,k}^{op}$. Via the  surjective maps 
 $u\mapsto \overline u: \mathcal{U}\to U_{s,k}$ and $u^{op}\mapsto \overline u^{op}:\hat s\mathcal{U} (\hat s)^{-1} \to U_{s,k}^{op}$   we   identify the groups   $\mathcal{U}/(\hat s^{-1}\mathcal{U} \hat s \cap \mathcal{U}) \simeq U_{s,k}$ and similarly $\hat s \mathcal{U} (\hat s)^{-1}/ (\hat s \mathcal{U} (\hat s)^{-1}\cap \mathcal{U}) \simeq U_{s,k}^{op}$.  Let $G'_{k,s}$ be the group generated by $U_{s,k}$ and $U_{s,k}^{op}$, and let $B'_{s,k}=G'_{k,s}\cap  Z_k  U_{s,k}=(G'_{k,s}\cap  Z_k)  U_{s,k} $.   We   suppose  (as we can) that  $\hat s\in \mathfrak K_s$ and that its image $\hat s_k$ in $G_{s,k}$  lies in    $G'_{k,s}$. We have  $\hat s_kU_{s,k} (\hat s_k) ^{-1} =U_{s,k}^{op}$  and the  Bruhat decomposition  $G'_{k,s}= B'_{k,s} \sqcup  U_{k,s}\hat s_k B'_{k,s} $ implies the existence of a canonical bijection  
 $\overline u^{op} \to \overline u   : ( U_{k,s}^{op} - \{1\})\to (U_{k,s}- \{1\}) $ respecting the cosets $\overline u^{op}B'_{k,s}=\overline u \hat s_k B'_{k,s}$.  Via the preceding identifications we get the wanted bijection \eqref{eq:bij}.

For $v\in  e(\mathcal{V})$ and  $z\in Z^0\cap M'_2$ we have $vT_z=v$, 
 $z  1_{\mathcal{U}}= T_z  1_{\mathcal{U}}$ and $v\otimes T_z  1_{\mathcal{U}}=vT_z\otimes  1_\mathcal{U}$ therefore $Z^0\cap M'_2$ acts trivially on $\mathcal V\otimes   1_{\mathcal{U}}$. The action of the group $(Z^0\cap M'_2)\mathcal{U}$ on  $\mathcal V\otimes   1_{\mathcal{U}}$ is also trivial.   As the image of $Z^0\cap M'_2$ in $G_{s,k}$ contains $Z_k\cap G'_{s,k}$,
 $$u\hat s (v\otimes    1_{\mathcal{U}})=
u ^{op}(v\otimes   1_{\mathcal{U}})$$ when $u$ and $u^{op}$ are not units and correspond via the bijection  \eqref{eq:bij}. 
   So we have \begin{align}\label{eq:for}v \otimes T_{ s} (\hat s   1_{\mathcal{U}}) - (v\otimes  1_{\mathcal{U}}) = v\otimes T_{ s} (\hat s^2    1_{\mathcal{U}}) -  v\otimes \hat s  1_{\mathcal{U}} 
  \end{align}
  We can move $T_s$ on the other side of $\otimes$ and as $vT_s=q_s v$ (Corollary \ref{cor:extT}), we can replace $T_s$ by $q_s$.
 We have $v  \otimes \hat s^2 1_\mathcal{U}=v  \otimes T_{s^{-2}} 1_\mathcal{U}$ because  $\hat s^2\in Z^0\cap M'_2$ normalizes $\mathcal{U}$; as we can move $T_{s^{-2}}$ on the other side of $\otimes$ and as $vT_{s^{-2}}=v$ we can forget $\hat s^2 $.  So \eqref{eq:for} is equivalent to
  $(q_s+1)( v\otimes 1_\mathcal{U} - \hat s ( v\otimes 1_\mathcal{U}) )= 0.$ \end{proof}
 
 Combining  the two lemmas we obtain:

\begin{proposition} \label{prop:QG} When $ \mathcal V $ is extensible to $\mathcal{H}$ and  has no $q_{s}+1$-torsion for any  $s \in S_{M'_2}^{\aff}$, then $
M'_2  $  acts trivially on $e (\mathcal V) \otimes_{\mathcal{H}_{ R}} \mathbb X $ and 
  $\Phi^G $  is an $R[G]$-isomorphism. 
 \end{proposition}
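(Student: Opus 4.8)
The plan is to deduce the proposition by combining the two lemmas just proved. By the first of them it suffices to show that $M'_2$ acts trivially on the universal representation $e(\mathcal{V})\otimes_{\mathcal{H}_R}\mathbb{X}$; granting that, the first lemma directly furnishes the inverse of $\Phi^G$, so $\Phi^G$ is an $R[G]$-isomorphism. Now $e(\mathcal{V})\otimes_{\mathcal{H}_R}\mathbb{X}$ is generated as an $R[G]$-module by the vectors $v\otimes 1_{\mathcal{U}}$ with $v\in\mathcal{V}$, the subgroup $M'_2$ is normal in $G$, and $M'_2$ is generated by $\mathcal{U}\cap M'_2$ together with lifts $\hat s\in\mathcal{N}\cap M'_2$ of the simple affine reflections $s\in S_{M'_2}^{\aff}$ (as recorded in the discussion preceding the two lemmas). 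Since $\mathcal{U}\cap M'_2$ manifestly fixes each $v\otimes 1_{\mathcal{U}}$, the assertion reduces to showing that every such $\hat s$ fixes every $v\otimes 1_{\mathcal{U}}$.

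So one fixes $s\in S_{M'_2}^{\aff}$, a lift $\hat s$, and $v\in\mathcal{V}$, and sets $\delta=v\otimes 1_{\mathcal{U}}-\hat s(v\otimes 1_{\mathcal{U}})\in e(\mathcal{V})\otimes_{\mathcal{H}_R}\mathbb{X}$. The second lemma gives exactly $(q_s+1)\,\delta=0$. The one step that is not purely formal — and where the hypothesis on $\mathcal{V}$ is used — is the cancellation of the integer $q_s+1$, that is, the passage from $(q_s+1)\delta=0$ to $\delta=0$. I expect this to be the main obstacle: it amounts to checking that $e(\mathcal{V})\otimes_{\mathcal{H}_R}\mathbb{X}$ has no $q_s+1$-torsion, which one wants to derive from the absence of $q_s+1$-torsion in $\mathcal{V}$ (note that $q_s$ is a power of $p$, so $q_s+1$ is prime to $p$). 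One cannot simply invoke flatness of $\mathbb{X}=\mathbb{Z}[\mathcal{U}\backslash G]$ over $\mathcal{H}$, which is not available at this level of generality; instead one argues with the explicit $\mathcal{H}$-module structure of $\mathbb{X}$, using that $\mathbb{X}$ is free over $\mathbb{Z}$, so that after reducing to $R=\mathbb{Z}$ the $q_s+1$-torsion of $e(\mathcal{V})\otimes_{\mathcal{H}}\mathbb{X}$ is controlled by the image of $\mathrm{Tor}_1^{\mathcal{H}}(\mathcal{V}/(q_s+1)\mathcal{V},\mathbb{X})$, which must be shown to vanish in $e(\mathcal{V})\otimes_{\mathcal{H}}\mathbb{X}$.

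Granting $\delta=0$ for all $v$, $s$ and $\hat s$, the reduction of the first paragraph is complete: $M'_2$ acts trivially on $e(\mathcal{V})\otimes_{\mathcal{H}_R}\mathbb{X}$. By the first lemma $\Phi^G$ is then an $R[G]$-isomorphism. (Alternatively: surjectivity of $\Phi^G$ is automatic, since $e(\sigma)$ is generated over $R[G]$ by the image of $\sigma=\mathcal{V}\otimes_{\mathcal{H}_{M,R}}\mathbb{X}_M$, hence by the vectors $v\otimes 1_{\mathcal{U}_M}=\Phi^G(v\otimes 1_{\mathcal{U}})$, while injectivity is exactly what the first lemma provides.) This proves the proposition.
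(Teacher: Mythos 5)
Your outline coincides with the paper's proof up to the decisive point: reduce, via the first lemma, to showing that $M'_2$ acts trivially on $e(\mathcal{V})\otimes_{\mathcal{H}_R}\mathbb{X}$; reduce further (the $v\otimes 1_{\mathcal{U}}$ generate, $M'_2$ is normal in $G$, and $(\mathcal{U}\cap M'_2)\backslash M'_2/(\mathcal{U}\cap M'_2)$ is generated by the lifts $\hat s$ of $s\in S_{M'_2}^{\aff}$) to showing that each $\hat s$ fixes each $v\otimes 1_{\mathcal{U}}$; then invoke the second lemma, which gives $(q_s+1)\bigl(v\otimes 1_{\mathcal{U}}-\hat s(v\otimes 1_{\mathcal{U}})\bigr)=0$. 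All of this is exactly the paper's argument.

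The gap is the final step. You write ``Granting $\delta=0$\dots'', but passing from $(q_s+1)\delta=0$ to $\delta=0$ is precisely where the hypothesis that $\mathcal{V}$ has no $(q_s+1)$-torsion must be used, and you do not carry it out: the paper concludes at this point simply by ``combining the two lemmas'', i.e.\ it cancels $q_s+1$ on the strength of that hypothesis (in the paper's main applications $R$ has characteristic $p$, so $q_s+1\equiv 1$ is invertible and the cancellation is immediate), whereas you declare the cancellation to be the main obstacle and then only gesture at a justification. The justification you sketch is, moreover, not viable as stated: you aim to prove that the whole module $e(\mathcal{V})\otimes_{\mathcal{H}_R}\mathbb{X}$ has no $(q_s+1)$-torsion via $\mathrm{Tor}_1^{\mathcal{H}}(\mathcal{V}/(q_s+1)\mathcal{V},\mathbb{X})$, but freeness of $\mathbb{X}$ over $\mathbb{Z}$ gives no flatness over $\mathcal{H}$, the Tor term is neither computed nor shown to die in the tensor product, and the ``reduction to $R=\mathbb{Z}$'' is unexplained (also note $\mathbb{X}=R[\mathcal{U}\backslash G]$, not $\mathbb{Z}[\mathcal{U}\backslash G]$). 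What is actually needed is far less than torsion-freeness of the universal module --- only that the specific elements $v\otimes 1_{\mathcal{U}}-\hat s(v\otimes 1_{\mathcal{U}})$, which are killed by $q_s+1$, vanish --- and your text neither proves this nor follows the paper in deducing it directly from the stated hypothesis. As written, the proposal reproduces the paper's reduction but leaves the proposition's key conclusion unproved.
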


  Proposition \ref{prop:QG}  for the trivial character ${\bf 1}_\mathcal{H}$, says that 
 ${\bf 1}_\mathcal{H}\otimes_{\mathcal{H}_{ R}} \mathbb X $ is the trivial representation $ {\bf 1}_G$ of $G$   when $ q_{s}+1$ has no torsion in  $R$ for all $s\in S^{\aff}$. This is  proved  in \cite[Lemma 2.28]{arXiv:1703.04921} by a different method. 
 The following counter-example shows that this is not true for all $R$. 
 
  \begin{example}\label{ex:contr}  Let $G=GL(2,F)$ and $R$ an algebraically closed field where $q_{s_0}+1 = q_{s_1} + 1=0$ and $S_\aff = \{s_0,s_1\}$. (Note that $q_{s_0} = q_{s_1}$ is the order of the residue field of $R$.)
Then the dimension of   ${\bf 1}_\mathcal{H}\otimes_{\mathcal{H}_{ R}} \mathbb X $ is infinite, in particular ${\bf 1}_\mathcal{H}\otimes_{\mathcal{H}_{ R}} \mathbb X \neq {\bf 1}_G$.

Indeed,   the  Steinberg representation $\St_G=(\Ind_B^G{\bf 1}_Z)/ {\bf 1}_G$ of $G$ is an  indecomposable representation of length $2$ containing an irreducible infinite dimensional representation $\pi$   with $\pi^{\mathcal{U}}=0$  of quotient the character $(-1)^{\val \circ \det }$.  This follows from the proof of Theorem 3 and from Proposition  24 in \cite{MR1026328}. The kernel of the quotient map $\St_G \otimes (-1)^{\val \circ \det }\to {\bf 1}_G $ is infinite dimensional without a non-zero $\mathcal{U}$-invariant vector.  As the characteristic of $R$ is not $p$, the functor of $\mathcal{U}$-invariants is exact hence $(\St_G  \otimes (-1)^{\val \circ \det })^\mathcal{U} = {\bf 1}_\mathcal{H}$. As  $- \otimes_{\mathcal{H}_{ R}} R[\mathcal{U}  \backslash  G]$ is the left adjoint of $(-)^{\mathcal{U} }$ there is a non-zero homomorphism $${\bf 1}_\mathcal{H}\otimes_{\mathcal{H}_{ R}} \mathbb X \to  \St_G \otimes (-1)^{\val \circ \det } $$ with image  generated by its $\mathcal{U}$-invariants. The homomorphism is therefore surjective.  
    \end{example}

\subsection{$\mathcal{V}$ extensible to $\mathcal{H}$}
Let $P=MN$ be a standard parabolic subgroup of $G$ with $\Delta_P$ and $\Delta\setminus \Delta_P$ orthogonal. We  still suppose that  the $\mathcal{H}_{M,R}$-module $\mathcal{V}$ is extensible to $\mathcal{H}$, but  now $P\subset Q \subset G$. So we have  $I_\mathcal{H}(P,\mathcal V,Q)= e(\mathcal V)\otimes_R (\St_Q^G)^\mathcal{U} $ and $I_G(P,\sigma,Q)= e(\sigma)\otimes_R \St_Q^G $ where $\sigma=\mathcal V \otimes_{\mathcal{H}_{M,R}} \mathbb X_M$.  We compare the images by $-\otimes_{\mathcal{H}_R} \mathbb X$ of 
the $\mathcal{H}_R$-modules  $ e(\mathcal V)\otimes_R (\Ind_Q^G {\bf 1})^\mathcal{U}$ and $  e(\mathcal V)\otimes_R (\St_Q^G)^\mathcal{U}$  with the smooth $R$-representations $e(\sigma)\otimes \Ind_Q^G {\bf 1}$ and  $ e(\sigma)\otimes \St_Q^G$ of $G$.

As  $-\otimes_{\mathcal{H}_R} \mathbb X$ is left adjoint of $(-)^\mathcal{U}$, the   $\mathcal{H}_R$-homomorphism $v\otimes f \mapsto v\otimes  1_{\mathcal{U}_M}\otimes f: e(\mathcal{V})\otimes_R (\Ind_Q^G {\bf 1})^\mathcal{U}\to  (e(\sigma)\otimes_R \Ind_Q^G {\bf 1})^\mathcal{U}$ gives by adjunction  an $R[G]$-homomorphism
 $$v\otimes f\otimes 1_\mathcal{U} \mapsto v\otimes 1_{\mathcal{U}_M}\otimes f:(e(\mathcal{V})\otimes_R (\Ind_Q^G {\bf 1})^\mathcal{U}) \otimes_{\mathcal{H}_{ R}} \mathbb X   \xrightarrow{\Phi_Q^G} e(\sigma)\otimes_R \Ind_Q^G {\bf 1}.$$ 
 When $Q=G$  we have $\Phi_G^G=\Phi^G$. By Remark \ref{rem:surj}, $\Phi_Q^G$ is surjective.  Proposition \ref{prop:QG} applies with $M_Q$ instead of $G$ and gives
 the  $R[M_Q]$-homomorphism 
 $$v\otimes 1_{\mathcal{U}_{M_Q}} \mapsto v\otimes 1_{\mathcal{U}_M}: e_{\mathcal{H}_Q}(\mathcal{V})\otimes_{\mathcal{H}_{Q, R}} \mathbb X_{M_Q} \xrightarrow{\Phi^Q}e_Q(\sigma).$$

\begin{proposition} \label{prop:fourhom} The $R[G]$-homomorphism $\Phi_Q^G$ is an isomorphism if $\Phi^Q$ is an isomorphism, in particular if $ \mathcal V $  has no $q_{s}+1$-torsion for any  $s \in S_{M'_2 \cap M_Q}^{\aff}$.
\end{proposition}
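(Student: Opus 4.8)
The plan is to deduce the isomorphy of $\Phi_Q^G$ from that of $\Phi^Q$ by exploiting the ``orthogonal case'' structure. Recall that $\Delta_P$ and $\Delta_{M_2}=\Delta\setminus\Delta_P$ are orthogonal, so $G=MM'_2$, the representation $e(\sigma)$ is trivial on $M'_2$, and likewise $e(\mathcal V)$ as an $\mathcal{H}_R$-module is killed by $T_w^*-1$ for $w\in{}_1W_{M'_2}$. By Example \ref{ex:extrivial} the restriction of $\Ind_Q^G{\bf 1}$ to $M_2$ is $\Ind_{Q\cap M_2}^{M_2}{\bf 1}$, and $(\Ind_Q^G{\bf 1})^{\mathcal{U}}$ is the $\mathcal{H}$-module extended from the $\mathcal{H}_{M_2}$-module $(\Ind_{Q\cap M_2}^{M_2}{\bf 1})^{\mathcal{U}_{M_2}}$. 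First I would use Proposition \ref{prop:ovvv} (its $\kappa_Q$) to identify the $\mathcal{H}_R$-module $e(\mathcal V)\otimes_R(\Ind_Q^G{\bf 1})^\mathcal{U}$ with the induced module $\Ind_{\mathcal{H}_Q}^{\mathcal{H}}(e_{\mathcal{H}_Q}(\mathcal V))$; then the compatibility of $-\otimes_{\mathcal{H}_R}\mathbb X$ with parabolic induction (formula \eqref{eq:JT}'s companion, i.e. $(-\otimes_{\mathcal{H}_R}\mathbb X)\circ\Ind_{\mathcal{H}_M}^\mathcal{H}\to\Ind_P^G\circ(-\otimes_{\mathcal{H}_R}\mathbb X_M)$ with $M$ replaced by $M_Q$) turns $(e(\mathcal V)\otimes_R(\Ind_Q^G{\bf 1})^\mathcal{U})\otimes_{\mathcal{H}_R}\mathbb X$ into $\Ind_Q^G\big(e_{\mathcal{H}_Q}(\mathcal V)\otimes_{\mathcal{H}_{Q,R}}\mathbb X_{M_Q}\big)$.

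Next I would match this up with $e(\sigma)\otimes_R\Ind_Q^G{\bf 1}$. Since $e(\sigma)=e_G(\sigma)$ is inflated from $M$ and trivial on $M'_2\supset M'$, while $\Ind_Q^G{\bf 1}$ is trivial on $M'$, the representation $e(\sigma)\otimes_R\Ind_Q^G{\bf 1}$ restricted to $M_Q$ is $e_Q(\sigma)\otimes_R\Ind_{Q\cap M_2}^{M_2}{\bf 1}$, and applying Theorem \ref{thm:ouf} (in the form used in \S\ref{S:Q=G}) one sees that $e(\sigma)\otimes_R\Ind_Q^G{\bf 1}$ is nothing but $\Ind_Q^G(e_Q(\sigma))$ with $e_Q(\sigma)$ here meaning the representation $e_{\mathcal{H}_Q}(\mathcal V)\otimes_{\mathcal{H}_{Q,R}}\mathbb X_{M_Q}$'s target, namely $e_Q(\sigma)$ as in the definition preceding Proposition \ref{prop:fourhom}. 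The key point is that under these two identifications the map $\Phi_Q^G$ becomes precisely $\Ind_Q^G(\Phi^Q)$: both are characterized by $v\otimes f\otimes 1_\mathcal{U}\mapsto v\otimes 1_{\mathcal{U}_M}\otimes f$ and the adjunction and transitivity isomorphisms are functorial, so this is a diagram chase once the identifications are in place. Since parabolic induction $\Ind_Q^G$ is an exact, faithful functor, $\Ind_Q^G(\Phi^Q)$ is an isomorphism as soon as $\Phi^Q$ is; hence $\Phi_Q^G$ is an isomorphism.

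Finally, the hypothesis guaranteeing that $\Phi^Q$ is an isomorphism is Proposition \ref{prop:QG} applied inside $M_Q$ in place of $G$: there the role of $M_2$ is played by the standard Levi subgroup of $M_Q$ with roots $\Delta_Q\setminus\Delta_P$, i.e. $M_2\cap M_Q$, whose affine Weyl group has simple reflections $S^{\aff}_{M'_2\cap M_Q}$. So if $\mathcal V$ has no $q_s+1$-torsion for any $s\in S^{\aff}_{M'_2\cap M_Q}$, Proposition \ref{prop:QG} gives that $(M'_2\cap M_Q)$ acts trivially on $e_{\mathcal{H}_Q}(\mathcal V)\otimes_{\mathcal{H}_{Q,R}}\mathbb X_{M_Q}$ and that $\Phi^Q$ is an $R[M_Q]$-isomorphism, completing the proof.

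I expect the main obstacle to be the bookkeeping in the second paragraph: one must check carefully that the two chains of canonical isomorphisms --- on the Hecke side via $\kappa_Q$ and the induction--$\mathbb X$ compatibility, and on the representation side via the restriction-to-$M_Q$ and Theorem \ref{thm:ouf} --- are compatible with the explicit formulas defining $\Phi_Q^G$ and $\Phi^Q$, so that $\Phi_Q^G$ really is $\Ind_Q^G(\Phi^Q)$ and not merely abstractly isomorphic to it; the naturality of adjunctions makes this routine but notationally heavy. Everything else (exactness and faithfulness of $\Ind_Q^G$, the torsion criterion via Proposition \ref{prop:QG}) is immediate from the cited results.
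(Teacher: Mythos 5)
Your proposal is correct and follows essentially the same route as the paper: the paper's proof also rewrites $\Phi_Q^G$ as $\Ind_Q^G(\Phi^Q)$ conjugated by the identifications $\kappa_Q$ (Proposition \ref{prop:ovvv}), the compatibility of $-\otimes_{\mathcal{H}_R}\mathbb X$ with parabolic induction, and the isomorphism $\Ind_Q^G(e_Q(\sigma))\simeq e(\sigma)\otimes_R\Ind_Q^G{\bf 1}$, checking equality with $\Phi_Q^G$ on the generating vectors $v\otimes f_{Q\mathcal{U}}\otimes 1_{\mathcal{U}}$, and then deduces the torsion criterion from Proposition \ref{prop:QG} applied to $M_Q$ in place of $G$. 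Your diagram chase and the appeal to functoriality of $\Ind_Q^G$ (exactness/faithfulness is not even needed, only that a functor preserves isomorphisms) are exactly the paper's argument.
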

\begin{proof}  The proposition follows from another construction of $\Phi_Q^G$  that we now describe. 
Proposition \ref{prop:ovvv} gives the   $\mathcal{H}_R$-module isomorphism 
$$v\otimes f_{Q\mathcal{U}}\mapsto v\otimes 1_{\mathcal{H}}:(e(\mathcal{V})\otimes_R (\Ind_Q^G {\bf 1})^\mathcal{U})\to  \Ind_{\mathcal{H}_Q}^\mathcal{H} (e_{\mathcal{H}_Q}(\mathcal{V}))=e_{\mathcal{H}_Q}(\mathcal{V}) \otimes_{\mathcal{H}_{M_{Q,R}^+},\theta} \mathcal{H}.$$  
We have the  $R[G]$-isomorphism \cite[Corollary~4.7]{arXiv:1703.04921}
 $$v\otimes 1_{\mathcal{H}} \otimes1_{\mathcal{U}}\mapsto f_{Q\mathcal{U}, v\otimes 1_{\mathcal{U}_{M_Q}}}:\Ind_{\mathcal{H}_Q}^\mathcal{H} (e_{\mathcal{H}_Q}(\mathcal{V})\otimes_{\mathcal{H}_{ R}} \mathbb X ) \to \Ind_Q^G(e_{\mathcal{H}_Q}(\mathcal{V})\otimes_{\mathcal{H}_{Q, R}} \mathbb X_{M_Q} )$$ 
 and the   $R[G]$-isomorphism ***
 $$f_{Q\mathcal{U},v\otimes 1_{\mathcal{U}_M}} \mapsto v\otimes 1_{\mathcal{U}_M}\otimes f_{Q\mathcal{U}}:\Ind_Q^G(e_Q(\sigma))\to e(\sigma)\otimes \Ind_Q^G {\bf 1} .$$  
 From   $\Phi^Q$ and these three homomorphisms, there exists a unique $R[G]$-homomorphism 
$$(e(\mathcal{V})\otimes_R (\Ind_Q^G {\bf 1})^\mathcal{U}) \otimes_{\mathcal{H}_{ R}} \mathbb X   \to  e(\sigma)\otimes_R \Ind_Q^G {\bf 1}$$  sending   $v\otimes f_{Q\mathcal{U}} \otimes1_{\mathcal{U}}$ to $v\otimes 1_{\mathcal{U}_M}\otimes f_{Q\mathcal{U}}$.  We deduce: this homomorphism is equal to $\Phi_Q^G$,  $\mathcal{V} \otimes 1_{Q\mathcal{U}} \otimes1_{\mathcal{U}}$ generates  $(e(\mathcal{V})\otimes_R (\Ind_Q^G {\bf 1})^\mathcal{U}) \otimes_{\mathcal{H}_{ R}} \mathbb X $, 
if $\Phi^Q$ is an isomorphism then $\Phi_Q^G$ is an isomorphism. 
By Proposition \ref{prop:QG}, if $\mathcal{V}$ has no $q_{s}+1$-torsion for any  $s \in S_{M'_2 \cap M_Q}^{\aff}$, then  $\Phi^Q$ and $\Phi_Q^G$ are   isomorphisms. \end{proof}

We recall that the $\mathcal{H}_{M,R}$-module $ \mathcal V $ is extensible to $\mathcal{H}$.
\begin{proposition}\label{prop:extX} The $R[G]$-homomorphism $\Phi_{Q}^G$ induces an $R[G]$-homomorphism
$$(e(\mathcal V)\otimes_R (\St_Q^G)^\mathcal{U}) \otimes_{\mathcal{H}_{ R}} \mathbb X  \to e(\sigma)\otimes_R \St_Q^G,$$
It is an isomorphism if $\Phi_{Q'}^G$  is an $R[G]$-isomorphism for all  parabolic subgroups $Q'$ of $G$ containing $Q$, in particular if $\mathcal{V}$ has no $q_{s}+1$-torsion for any  $s \in S_{M'_2 }^{\aff}$.\end{proposition}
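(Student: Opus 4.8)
The proof of Proposition~\ref{prop:extX} follows the by-now familiar pattern of reducing a Steinberg statement to an $\Ind$ statement via the defining exact sequences, exploiting the right-exactness of $-\otimes_{\mathcal{H}_R}\mathbb X$ and $-\otimes_{\mathcal{H}_M}\mathbb X_M$. First I would note that $e(\mathcal V)\otimes_R(\St_Q^G)^{\mathcal U}$ is, by Corollary~\ref{cor:VS} together with \eqref{eq:StQU}, the cokernel of the $\mathcal{H}_R$-map $\oplus_{Q\subsetneq Q'}e(\mathcal V)\otimes_R(\Ind_{Q'}^G\mathbf 1)^{\mathcal U}\to e(\mathcal V)\otimes_R(\Ind_Q^G\mathbf 1)^{\mathcal U}$, and similarly $e(\sigma)\otimes_R\St_Q^G$ is the cokernel of $\oplus_{Q\subsetneq Q'}e(\sigma)\otimes_R\Ind_{Q'}^G\mathbf 1\to e(\sigma)\otimes_R\Ind_Q^G\mathbf 1$. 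Applying the right-exact functor $-\otimes_{\mathcal{H}_R}\mathbb X$ to the first sequence and using that it commutes with finite direct sums, one gets that $(e(\mathcal V)\otimes_R(\St_Q^G)^{\mathcal U})\otimes_{\mathcal{H}_R}\mathbb X$ is the cokernel of $\oplus_{Q\subsetneq Q'}(e(\mathcal V)\otimes_R(\Ind_{Q'}^G\mathbf 1)^{\mathcal U})\otimes_{\mathcal{H}_R}\mathbb X\to (e(\mathcal V)\otimes_R(\Ind_Q^G\mathbf 1)^{\mathcal U})\otimes_{\mathcal{H}_R}\mathbb X$. The maps $\Phi_{Q'}^G$ for $Q\subseteq Q'$ then assemble into a morphism of the two cokernel presentations, so $\Phi_Q^G$ descends to the desired $R[G]$-homomorphism on the Steinberg level; this gives the first assertion.

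For the isomorphism statement I would invoke the naturality just established: the $\Phi_{Q'}^G$ (for all $Q\subseteq Q'\subseteq G$) form a commutative ladder between the two defining complexes, and if each vertical arrow is an isomorphism then so is the induced map on cokernels, by a routine diagram chase (five lemma, or just the universal property of cokernels applied twice). The only point needing a word is that the relevant maps $e(\mathcal V)\otimes_R(\Ind_{Q'}^G\mathbf 1)^{\mathcal U}\to e(\mathcal V)\otimes_R(\Ind_Q^G\mathbf 1)^{\mathcal U}$ on the $\mathcal{H}$-side correspond, after applying $-\otimes_{\mathcal H_R}\mathbb X$, to the maps $e(\sigma)\otimes_R\Ind_{Q'}^G\mathbf 1\to e(\sigma)\otimes_R\Ind_Q^G\mathbf 1$ on the $G$-side under $\Phi_{Q'}^G$ and $\Phi_Q^G$; this is precisely the compatibility recorded in Proposition~\ref{prop:ovvv} (the embeddings $\iota(Q,Q')$ and the inclusions $\Ind_{Q'}^G\mathbf 1\hookrightarrow\Ind_Q^G\mathbf 1$ match up), so no new verification is required beyond tracing through the identifications.

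Finally, the torsion hypothesis: by Proposition~\ref{prop:fourhom}, $\Phi_{Q'}^G$ is an isomorphism whenever $\mathcal V$ has no $q_s+1$-torsion for $s\in S_{M_2'\cap M_{Q'}}^{\aff}$, and since $S_{M_2'\cap M_{Q'}}^{\aff}\subseteq S_{M_2'}^{\aff}$ for every $Q'$, the single hypothesis that $\mathcal V$ has no $q_s+1$-torsion for any $s\in S_{M_2'}^{\aff}$ guarantees that all the $\Phi_{Q'}^G$ are isomorphisms simultaneously. Combining with the diagram-chase step concludes the proof.

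The only mildly delicate point I anticipate is the bookkeeping in the first paragraph — making sure that applying $-\otimes_{\mathcal{H}_R}\mathbb X$ to the $\mathcal{H}$-side presentation really yields, term by term and map by map, the presentation obtained by applying $-\otimes_{\mathcal H_M}\mathbb X_M$ (or rather the induced $G$-representation picture) on the $G$-side. But this is exactly what Propositions~\ref{prop:ovvv}, \ref{prop:fourhom} and \cite[Corollary~4.7]{arXiv:1703.04921} were set up to provide, so the argument is essentially a formal assembly of already-established compatibilities rather than a genuinely new computation.
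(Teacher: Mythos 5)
Your proposal is correct and follows essentially the same route as the paper: present both sides as cokernels of the maps indexed by $Q\subsetneq Q'$, use right-exactness of $-\otimes_{\mathcal{H}_R}\mathbb X$, check that the $\Phi_{Q'}^G$ give a commutative ladder, and pass to cokernels; the torsion hypothesis enters exactly as you say via Proposition~\ref{prop:fourhom}. The only difference is that the paper verifies the commutativity of the square by an explicit element computation on $v\otimes f_{Q'\mathcal U}\otimes 1_{\mathcal U}$ (using $f_{Q'\mathcal U}=f_{Q\mathcal U}\theta_{Q'}(e_Q^{Q'})$), whereas you defer this to the compatibilities of Proposition~\ref{prop:ovvv} --- acceptable, though that short check is the one nontrivial point of the proof.
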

 
\begin{proof} The proof is straightforward, with the arguments already developped for Proposition \ref{prop:ovvv} and Theorem \ref{thm:main8}. The representations $e(\sigma)\otimes_R \St_Q^G$ and $(e(\mathcal V)\otimes_R (\St_Q^G)^\mathcal{U}) \otimes_{\mathcal{H}_{ R}} \mathbb X $ of $G$ are the cokernels of the natural $R[G]$-homomorphisms
\begin{align*}\oplus_{Q\subsetneq Q' }e(\sigma)\otimes_R \Ind_{Q'}^G {\bf 1} &\xrightarrow {\id\otimes \alpha} e(\sigma)\otimes_R \Ind_Q^G {\bf 1} ,\\
\oplus_{Q\subsetneq Q' }
(e(\mathcal{V})\otimes_R (\Ind_{Q'}^G {\bf 1})^\mathcal{U}) \otimes_{\mathcal{H}_{ R}} \mathbb X  & \xrightarrow{\id\otimes \alpha^\mathcal{U} \otimes \id} (e(\mathcal{V})\otimes_R (\Ind_{Q}^G {\bf 1})^\mathcal{U}) \otimes_{\mathcal{H}_{ R}} \mathbb X .
\end{align*}
These $R[G]$-homomorphisms make a commutative diagram with the   $R[G]$-homomorphisms $\oplus_{Q\subsetneq Q' }\Phi_{Q'}^G$ and $\Phi_{Q}^G$ going from the lower line to the upper line. Indeed,  let $ v \otimes f_{Q'\mathcal{U}} \otimes 1_\mathcal{U} \in (e(\mathcal{V})\otimes_R (\Ind_{Q'}^G {\bf 1})^\mathcal{U}) \otimes_{\mathcal{H}_{ R}} \mathbb X  $. One one hand, it  goes to   $ v \otimes f_{Q\mathcal{U}} \theta_{Q'}(e_Q^{Q'}) \otimes  1_\mathcal{U}\in (e(\mathcal{V})\otimes_R (\Ind_{Q}^G {\bf 1})^\mathcal{U}) \otimes_{\mathcal{H}_{ R}} \mathbb X  $  by the horizontal map, and then   to $ v  \otimes  1_{\mathcal{U}_M}\otimes f_{Q\mathcal{U}} \theta_{Q'}(e_Q^{Q'})$ by the vertical map. On  the other hand,   it goes to  $ v  \otimes  1_{\mathcal{U}_M}\otimes f_{Q'\mathcal{U}}$ by the vertical map,  and then  to $ v  \otimes  1_{\mathcal{U}_M}\otimes f_{Q\mathcal{U}} \theta_{Q'}(e_Q^{Q'})$ by the horizontal map. 
One deduces that  $\Phi_{Q}^G$ induces an $R[G]$-homomorphism 
$( e(\mathcal V)\otimes_R (\St_Q^G)^\mathcal{U}) \otimes_{\mathcal{H}_{ R}} \mathbb X \to e(\sigma)\otimes_R \St_Q^G$, which is an isomorphism if  $\Phi_{Q'}^G$  is an $R[G]$-isomorphism for all $Q\subset Q'$.
  \end{proof}
  
\subsection{General}

We consider now the general case: let  $P=MN\subset Q$ be two standard parabolic subgroups of $G$ and $\mathcal{V}$  a non-zero right $\mathcal{H}_{M,R}$-module  with $Q\subset P(\mathcal{V})$.  
 We recall   $ I_\mathcal{H}(P,\mathcal{V},Q)= \Ind_{\mathcal{H}_{M(\mathcal{V})}}^{\mathcal{H}}((e(\mathcal{V})\otimes_R (\St_{Q  }^{P(\mathcal{V})})^{\mathcal{U}_{M(\mathcal{V})}}) $ 
and  $\sigma=\mathcal V \otimes_{\mathcal{H}_{M,R}} \mathbb X_M$ (Proposition\ref{prop:orthtriple}).  There is a natural $R[G]$-homomorphism  
 $$ I_\mathcal{H}(P,\mathcal{V},Q)\otimes _{\mathcal{H}_R} \mathbb X \xrightarrow{\Phi_{I}^G} \Ind_{P(\mathcal{V})}^G(e_{M(\mathcal{V})}(\sigma)\otimes_R \St_{Q }^{P(\mathcal{V})} )$$
 obtained by composition of the 
 $R[G]$-isomorphism \cite[Corollary~4.7]{arXiv:1703.04921} (proof of Proposition \ref{prop:fourhom}):
$$ I_\mathcal{H}(P,\mathcal{V},Q)\otimes _{\mathcal{H}_R} \mathbb X \to \Ind_{P(\mathcal{V})}^G((e(\mathcal{V})\otimes_R (\St_{Q\cap M(\mathcal{V}) }^{M(\mathcal{V})})^{\mathcal{U}_{M(\mathcal{V})}}) \otimes_{\mathcal{H}_{M(\mathcal{V}), R}} \mathbb X_{M(\mathcal{V})} ),$$
with the  $R[G]$-homomorphism
 $$\Ind_{P(\mathcal{V})}^G((e(\mathcal{V})\otimes_R (\St_{Q }^{P(\mathcal{V})})^{\mathcal{U}_{M(\mathcal{V})}}) \otimes_{\mathcal{H}_{M(\mathcal{V}), R}} \mathbb X_{M(\mathcal{V})} )  \to \Ind_{P(\mathcal{V})}^G(e_{M(\mathcal{V})}(\sigma)\otimes_R \St_{Q }^{P(\mathcal{V})}),$$
 image by the parabolic induction $\Ind_{P(\mathcal{V})}^G$ of the  homomorphism
 $$(e(\mathcal{V})\otimes_R (\St_{Q}^{P(\mathcal{V})})^{\mathcal{U}_{M(\mathcal{V})}}) \otimes_{\mathcal{H}_{M(\mathcal{V}), R}} \mathbb X_{M(\mathcal{V})}   \to e_{M(\mathcal{V})}(\sigma)\otimes_R \St_Q^{P(\mathcal{V})} .$$
induced by  the  $R[M(\mathcal{V})]$-homomorphism $\Phi_Q^{P(\mathcal{V})}=\Phi_{Q\cap M(\mathcal{V})}^{M(\mathcal{V})}$ of Proposition \ref{prop:extX} applied to $M(\mathcal{V})$ instead of $G$.

 This homomorphism $\Phi_{I}^G$ is an isomorphism if   $\Phi_{Q}^{P(\mathcal{V})}$ is an isomorphism, in particular if  $\mathcal{V}$ has no $q_{s}+1$-torsion for any  $s \in S_{M'_2 }^{\aff}$ where $\Delta_{M_2}= \Delta_{M(\mathcal{V})}\setminus\Delta_M$ (Proposition \ref{prop:extX}). We get the main theorem of this section:
 \begin{theorem} \label{thm:10.10} Let $(P=MN,\mathcal{V},Q)$ be an $\mathcal{H}_R$-triple and  $\sigma=\mathcal V \otimes_{\mathcal{H}_{M,R}}R[\mathcal{U}_M\backslash M]$. Then, $(P,\sigma,Q)$ is an $R[G]$-triple. The  $R[G]$-homomorphism  $$I_\mathcal{H}(P,\mathcal V,Q)\otimes_{\mathcal{H}_R}R[\mathcal{U} \backslash G]  \xrightarrow{\Phi_{I}^G} \Ind_{P(\mathcal{V})}^G(e_{M(\mathcal{V})}(\sigma)\otimes_R \St_{Q }^{P(\mathcal{V})} )$$
  is an isomorphism  if   $\Phi_{Q}^{P(\mathcal{V})}$ is an isomorphism. In  particular $\Phi_{I}^G$ is an isomorphism if  $\mathcal{V}$ has no $q_{s}+1$-torsion for any  $s \in S_{M'_2 }^{\aff}$. \end{theorem}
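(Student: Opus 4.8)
The statement is essentially a bookkeeping result: it packages the three reduction steps already carried out in this section. The first thing to notice is that $(P,\sigma,Q)$ is an $R[G]$-triple. This requires that $Q\subset P(\sigma)$; by Proposition~\ref{prop:orthtriple}(1) we have $P(\mathcal{V})\subset P_\perp(\sigma)\subset P(\sigma)$ whenever $\sigma\neq 0$, and $Q\subset P(\mathcal{V})$ by hypothesis, so $Q\subset P(\sigma)$ holds; if $\sigma=0$ then $I_G(P,\sigma,Q)=0$ and the assertion is vacuous, but in fact the homomorphism $\Phi_I^G$ is still an isomorphism between two zero modules. (We should also record that $\sigma$, if nonzero, is left cuspidal-adjacent only to the extent needed; here nothing beyond $Q\subset P(\sigma)$ is used, and the definition of $I_G(P,\sigma,Q)$ as written uses $\St_Q^{P(\sigma)}$, but we are producing $\Ind_{P(\mathcal{V})}^G(e_{M(\mathcal{V})}(\sigma)\otimes_R\St_Q^{P(\mathcal{V})})$, and these agree when $P(\mathcal{V})=P(\sigma)$; the theorem as stated only claims an isomorphism onto the latter object, so we need not invoke $P(\mathcal{V})=P(\sigma)$ at all for the isomorphism statement.)

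\textbf{Construction of the map.} The map $\Phi_I^G$ is built by composing three morphisms, exactly as displayed in the paragraph preceding the theorem. First, apply \cite[Corollary~4.7]{arXiv:1703.04921}, which commutes $-\otimes_{\mathcal{H}_R}\mathbb{X}$ with parabolic induction $\Ind_{\mathcal{H}_{M(\mathcal{V})}}^{\mathcal{H}}$; this turns $I_\mathcal{H}(P,\mathcal{V},Q)\otimes_{\mathcal{H}_R}\mathbb{X}=\Ind_{\mathcal{H}_{M(\mathcal{V})}}^{\mathcal{H}}\bigl((e(\mathcal{V})\otimes_R(\St_{Q\cap M(\mathcal{V})}^{M(\mathcal{V})})^{\mathcal{U}_{M(\mathcal{V})}})\bigr)\otimes_{\mathcal{H}_R}\mathbb{X}$ into $\Ind_{P(\mathcal{V})}^G$ of $(e(\mathcal{V})\otimes_R(\St_{Q\cap M(\mathcal{V})}^{M(\mathcal{V})})^{\mathcal{U}_{M(\mathcal{V})}})\otimes_{\mathcal{H}_{M(\mathcal{V}),R}}\mathbb{X}_{M(\mathcal{V})}$. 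Second, apply $\Ind_{P(\mathcal{V})}^G$ (which is exact) to the $R[M(\mathcal{V})]$-homomorphism $\Phi_{Q\cap M(\mathcal{V})}^{M(\mathcal{V})}$ furnished by Proposition~\ref{prop:extX}, applied with $M(\mathcal{V})$ in place of $G$ (legitimate because $\Delta_P$ and $\Delta_{M(\mathcal{V})}\setminus\Delta_P$ are orthogonal by Lemma~\ref{lemma:ext}, and $\mathcal{V}$ is extensible to $\mathcal{H}_{M(\mathcal{V})}$ by the same lemma, so $e(\mathcal{V})=e_{M(\mathcal{V})}(\mathcal{V})$ is defined). This lands in $\Ind_{P(\mathcal{V})}^G(e_{M(\mathcal{V})}(\sigma)\otimes_R\St_Q^{P(\mathcal{V})})$ after noting $e_{M(\mathcal{V})}(\sigma)=e_{M(\mathcal{V})}(\mathcal{V}\otimes_{\mathcal{H}_{M,R}}\mathbb{X}_M)$ as $R[M(\mathcal{V})]$-modules (this is essentially Proposition~\ref{prop:QG}/the $Q=G$ analysis applied inside $M(\mathcal{V})$). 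Composing gives $\Phi_I^G$; well-definedness is the assertion that $\Phi_{Q\cap M(\mathcal{V})}^{M(\mathcal{V})}$ is $R[M(\mathcal{V})]$-equivariant, which is part of Proposition~\ref{prop:extX}.

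\textbf{The isomorphism criterion.} If $\Phi_Q^{P(\mathcal{V})}=\Phi_{Q\cap M(\mathcal{V})}^{M(\mathcal{V})}$ is an isomorphism, then its image under the exact functor $\Ind_{P(\mathcal{V})}^G$ is an isomorphism, and the two outer maps in the composite are already isomorphisms (the first by \cite[Corollary~4.7]{arXiv:1703.04921}, the identification with $e_{M(\mathcal{V})}(\sigma)$ being the content of the $Q=G$ case inside $M(\mathcal{V})$); hence $\Phi_I^G$ is an isomorphism. Finally, Proposition~\ref{prop:extX}, applied inside $M(\mathcal{V})$ with $Q\cap M(\mathcal{V})$, tells us that $\Phi_{Q\cap M(\mathcal{V})}^{M(\mathcal{V})}$ is an isomorphism provided $\mathcal{V}$ has no $q_s+1$-torsion for every $s\in S^{\aff}_{M'_2}$, where $M_2$ is the standard Levi with $\Delta_{M_2}=\Delta_{M(\mathcal{V})}\setminus\Delta_M$; this gives the last sentence. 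The only genuinely non-formal input is Proposition~\ref{prop:extX} itself (which in turn rests on Propositions~\ref{prop:QG} and \ref{prop:fourhom}), so the main obstacle — already overcome earlier — was the torsion analysis in the $Q=G$ case; here the work is purely the transitivity/compatibility of the three morphisms, which I would verify by chasing $v\otimes f_{Q\mathcal{U}\cap M(\mathcal{V})}\otimes 1_{\mathcal{U}}$ through each arrow and checking it maps to the expected generator $v\otimes 1_{\mathcal{U}_M}\otimes f_{Q\mathcal{U}}$ of the target, exactly as in the proof of Proposition~\ref{prop:fourhom}.
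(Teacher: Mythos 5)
Your proposal is correct and follows essentially the same route as the paper: the paper's proof is precisely the composition of the Ollivier--Vign\'eras isomorphism (\cite[Corollary~4.7]{arXiv:1703.04921}) with $\Ind_{P(\mathcal{V})}^G$ applied to $\Phi_{Q\cap M(\mathcal{V})}^{M(\mathcal{V})}$ from Proposition~\ref{prop:extX}, together with Proposition~\ref{prop:orthtriple}(1) to see that $Q\subset P(\mathcal{V})\subset P_\perp(\sigma)\subset P(\sigma)$ makes $(P,\sigma,Q)$ an $R[G]$-triple. Your observation that the isomorphism statement does not require $P(\mathcal{V})=P(\sigma)$ (that identification being deferred to Corollary~\ref{cor:10.12}) is also consistent with the paper.
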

 
 Recalling $I_G(P,\sigma,Q)  = \Ind_{P(\sigma)}^G( e (\sigma)\otimes_R \St_Q^{P(\sigma)} ) $ when $\sigma\neq 0$, we deduce:
\begin{corollary} \label{cor:10.12}We have:

 $I_\mathcal{H}(P,\mathcal V,Q)\otimes_{\mathcal{H}_R} R[\mathcal{U} \backslash G] \simeq I_G(P, \sigma,Q) $, if  $\sigma\neq 0$, $P(\mathcal V)=P( \sigma)$ and $\mathcal{V}$ has no $q_{s}+1$-torsion for any  $s \in S_{M'_2 }^{\aff}$.

  $I_\mathcal{H}(P,\mathcal V,Q)\otimes_{\mathcal{H}_R} R[\mathcal{U} \backslash G]=I_G(P,\sigma,Q)  =0$,  if $\sigma=0$.

\end{corollary}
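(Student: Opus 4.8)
The first statement is essentially a reformulation of Theorem~\ref{thm:10.10}. Under the hypotheses of that case, $\mathcal V$ has no $q_s+1$-torsion for $s\in S_{M'_2}^{\aff}$ (with $\Delta_{M_2}=\Delta_{M(\mathcal V)}\setminus\Delta_M$), so Theorem~\ref{thm:10.10} provides an $R[G]$-isomorphism $I_\mathcal{H}(P,\mathcal V,Q)\otimes_{\mathcal{H}_R}R[\mathcal U\backslash G]\xrightarrow{\,\sim\,}\Ind_{P(\mathcal V)}^G(e_{M(\mathcal V)}(\sigma)\otimes_R\St_Q^{P(\mathcal V)})$. The assumption $P(\mathcal V)=P(\sigma)$ forces $M(\mathcal V)=M(\sigma)$, hence $e_{M(\mathcal V)}(\sigma)=e(\sigma)$ and $\St_Q^{P(\mathcal V)}=\St_Q^{P(\sigma)}$; since moreover $\sigma\neq0$ and $P\subset Q\subset P(\mathcal V)=P(\sigma)$, the triple $(P,\sigma,Q)$ is an $R[G]$-triple and the right-hand side is exactly $\Ind_{P(\sigma)}^G(e(\sigma)\otimes_R\St_Q^{P(\sigma)})=I_G(P,\sigma,Q)$. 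This gives $I_\mathcal{H}(P,\mathcal V,Q)\otimes_{\mathcal{H}_R}R[\mathcal U\backslash G]\simeq I_G(P,\sigma,Q)$, so this part is purely formal once $P(\mathcal V)=P(\sigma)$ is used to identify the target of $\Phi_I^G$ with $I_G(P,\sigma,Q)$.

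For the second statement, $I_G(P,\sigma,Q)=0$ is immediate from the definition of $I_G$ when $\sigma=0$ (recalled before Theorem~\ref{thm:10.10}); the substance is the vanishing of $I_\mathcal{H}(P,\mathcal V,Q)\otimes_{\mathcal{H}_R}R[\mathcal U\backslash G]$. The plan is to reduce this to a vanishing statement inside $M_Q$. By Theorem~\ref{thm:IHP} (via $\kappa_Q$), $I_\mathcal{H}(P,\mathcal V,Q)$ is a quotient of $\Ind_{\mathcal{H}_{M_Q}}^{\mathcal{H}}(e_{\mathcal{H}_Q}(\mathcal V))$. The functor $-\otimes_{\mathcal{H}_R}R[\mathcal U\backslash G]$ is right exact and commutes with parabolic induction, and $\Ind_Q^G$ is faithful (it has the left adjoint $(-)_{N_Q}$, and the counit $(\Ind_Q^G W)_{N_Q}\to W$, induced by $f\mapsto f(1)$, is surjective, so $\Ind_Q^G W=0$ forces $W=0$). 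Hence it suffices to prove $e_{\mathcal{H}_Q}(\mathcal V)\otimes_{\mathcal{H}_{Q,R}}R[\mathcal U_{M_Q}\backslash M_Q]=0$.

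To prove this I would use that $\Delta_P$ and $\Delta_{M_Q}\setminus\Delta_P$ are orthogonal (because $Q\subset P(\mathcal V)$), so the parabolic $P\cap M_Q=M\,N'$ of $M_Q$ with Levi $M$ has unipotent radical $N'$ contained in the subgroup $M'_2$ commuting with $M$; consequently $M^+=M$, the map $\theta$ embeds $\mathcal{H}_M$ as a subring of $\mathcal{H}_{M_Q}$, and the indices $|\mathcal U_M:\mathcal U_M\cap m^{-1}\mathcal U_M m|$ and $|\mathcal U_{M_Q}:\mathcal U_{M_Q}\cap m^{-1}\mathcal U_{M_Q}m|$ coincide for $m\in M$, so the natural $M$-map $j\colon R[\mathcal U_M\backslash M]\to R[\mathcal U_{M_Q}\backslash M_Q]$, $\mathcal U_M m\mapsto\mathcal U_{M_Q}m$, is well defined and $\mathcal{H}_M$-linear via $\theta$. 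Since $e_{\mathcal{H}_Q}(\mathcal V)\otimes_{\mathcal{H}_{Q,R}}R[\mathcal U_{M_Q}\backslash M_Q]$ is generated over $R[M_Q]$ by the vectors $v\otimes 1_{\mathcal U_{M_Q}}=v\otimes j(1_{\mathcal U_M})$ ($v\in\mathcal V$), it is a quotient of $e_{\mathcal{H}_Q}(\mathcal V)\otimes_{\mathcal{H}_M,\theta}R[\mathcal U_M\backslash M]$; and restricting $e_{\mathcal{H}_Q}(\mathcal V)$ to $\mathcal{H}_M$ along $\theta$ recovers $\mathcal V$ up to the positive-integer factor $q_{M_2}$ (Corollary~\ref{cor:extT}), so this module is $\mathcal V\otimes_{\mathcal{H}_{M,R}}R[\mathcal U_M\backslash M]=\sigma=0$ provided the $q_{M_2}$-twist is harmless for this vanishing.

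The main obstacle is exactly that last point: one must verify that the $q_{M_2}$-twist (nontrivial only on the part of $\mathcal{H}_M$ indexed by $\Lambda(1)$) does not affect whether $\mathcal V\otimes_{\mathcal{H}_{M,R}}R[\mathcal U_M\backslash M]$ vanishes — equivalently, that $\sigma=0$ forces every $v\otimes 1_{\mathcal U_{M_Q}}$ to vanish in $e_{\mathcal{H}_Q}(\mathcal V)\otimes_{\mathcal{H}_{Q,R}}R[\mathcal U_{M_Q}\backslash M_Q]$. By contrast, the reductions via Theorems~\ref{thm:10.10} and~\ref{thm:IHP}, the compatibility of $-\otimes_{\mathcal{H}_R}R[\mathcal U\backslash G]$ with parabolic induction, and the faithfulness of $\Ind_Q^G$ are all routine, so the argument concentrates entirely on this twist-invariance inside $M$.
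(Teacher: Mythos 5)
Your handling of the first assertion is correct and coincides with the paper's: Theorem~\ref{thm:10.10} (whose torsion hypothesis you invoke) makes $\Phi_I^G$ an isomorphism, and $P(\mathcal V)=P(\sigma)$ identifies its target with $\Ind_{P(\sigma)}^G(e(\sigma)\otimes_R\St_Q^{P(\sigma)})=I_G(P,\sigma,Q)$; nothing more is needed there. The paper in fact obtains the \emph{whole} corollary as an immediate consequence of Theorem~\ref{thm:10.10}, so for $\sigma=0$ the vanishing of $I_\mathcal{H}(P,\mathcal V,Q)\otimes_{\mathcal{H}_R}\mathbb X$ is read off from $\Phi_I^G$ mapping isomorphically onto the zero target, i.e.\ again through the condition that $\Phi_Q^{P(\mathcal V)}$ be an isomorphism; it does not attempt the hypothesis-free transfer you aim for.

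For the second assertion your reductions are fine (Theorem~\ref{thm:IHP} exhibits $I_\mathcal{H}(P,\mathcal V,Q)$ as a quotient of $\Ind_{\mathcal{H}_Q}^\mathcal{H}(e_{\mathcal{H}_Q}(\mathcal V))$, $-\otimes_{\mathcal{H}_R}\mathbb X$ is right exact and compatible with parabolic induction, and $\Ind_Q^G W=0$ forces $W=0$), but the remaining claim $e_{\mathcal{H}_Q}(\mathcal V)\otimes_{\mathcal{H}_{Q,R}}\mathbb X_{M_Q}=0$ is exactly the substance, and your route to it breaks down before the point you flag. Orthogonality of $\Delta_P$ and $\Delta_Q\setminus\Delta_P$ does \emph{not} give $M^+=M$: it makes $M'$ commute with $M'_{2,Q}$, but $Z\subset M$ only normalizes the root subgroups in $N'$, and $z\in Z$ lies in $M^+$ only when $v_F(\alpha(z))\ge 0$ for all $\alpha\in\Phi^+_{M_Q}\setminus\Phi^+_M$ (see \S\ref{S:2.3}); already for a group with two orthogonal simple roots and $M$ containing one of them this fails. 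Likewise the indices do not coincide: by Lemma~\ref{lemma:0}(1), $q_{M_Q}(m)=q_M(m)\,q_{M_{2,Q}}(m)$ on $\mathcal N\cap M$, so your map $j\colon R[\mathcal U_M\backslash M]\to R[\mathcal U_{M_Q}\backslash M_Q]$ is not $\mathcal{H}_M$-linear via $\theta$. This $q_{M_{2,Q}}$-discrepancy is the same one appearing in Corollary~\ref{cor:extT}, and it is not harmless: the only mechanism the paper provides for comparing $\mathcal V\otimes_{\mathcal{H}_{M,R}}\mathbb X_M$ with $e_{\mathcal{H}_Q}(\mathcal V)\otimes_{\mathcal{H}_{Q,R}}\mathbb X_{M_Q}$ is the triviality of the $M'_2$-action on the universal module (the two lemmas of \S\ref{S:Q=G}), which is established only under the $q_s+1$-torsion-freeness hypothesis (Proposition~\ref{prop:QG}); without it the map $v\mapsto v\otimes 1_{\mathcal U_{M_Q}}$ is not even $\mathcal{H}_M$-equivariant, so the relation $\sigma=0$ cannot be transported. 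In short, the "twist-invariance inside $M$" you defer is not a routine verification but the missing proof, and your intermediate claims ($M^+=M$, equality of indices) are false as stated.
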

Recalling $P(\mathcal V)=P( \sigma)$ if $\sigma\neq 0$, $R$ is a field of characteristic $p$ and $\mathcal{V}$  simple supersingular  (Proposition \ref{prop:orthtriple} 4)), we deduce:
\begin{corollary} \label{cor:10.13}
$I_\mathcal{H}(P,\mathcal V,Q)\otimes_{\mathcal{H}_R} R[\mathcal{U} \backslash G] \simeq I_G(P, \sigma,Q) $ if $R$ is a field of characteristic $p$ 
 and $\mathcal{V}$ simple supersingular.
\end{corollary}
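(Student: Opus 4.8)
The plan is to deduce Corollary \ref{cor:10.13} directly from Corollary \ref{cor:10.12}, by verifying that its two hypotheses hold in the present situation; the only point needing an argument is the absence of $q_s+1$-torsion, for which one uses that the parameters $q_s$ are powers of $p$. First I would dispose of the degenerate case: if $\sigma=\mathcal V\otimes_{\mathcal{H}_{M,R}}R[\mathcal{U}_M\backslash M]=0$, then the last clause of Corollary \ref{cor:10.12} gives $I_\mathcal{H}(P,\mathcal V,Q)\otimes_{\mathcal{H}_R}R[\mathcal{U}\backslash G]=0=I_G(P,\sigma,Q)$, so the asserted isomorphism is trivial. (Whether $\sigma$ can vanish for $\mathcal V$ simple supersingular is Question \ref{que:non0}, which is not needed here.)

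Now assume $\sigma\neq 0$. Since $R$ is a field of characteristic $p$ and $\mathcal V$ is simple supersingular, Proposition \ref{prop:orthtriple}(4) yields $P(\mathcal V)=P(\sigma)$, which is the first hypothesis of Corollary \ref{cor:10.12}. For the second, recall that for $s\in S^{\aff}$ the parameter $q_s=|\mathcal{U}/(\mathcal{U}\cap\hat s\,\mathcal{U}\,\hat s^{-1})|$ is a power of $p$, because $\mathcal{U}$ is pro-$p$; hence $q_s+1\equiv 1\pmod p$ is invertible in $R$, so multiplication by $q_s+1$ is bijective on every $R$-module and in particular $\mathcal V$ has no $q_s+1$-torsion, for every $s$ — a fortiori for every $s\in S^{\aff}_{M'_2}$ with $\Delta_{M_2}=\Delta_{M(\mathcal V)}\setminus\Delta_M$. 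Both hypotheses of Corollary \ref{cor:10.12} being met, I conclude $I_\mathcal{H}(P,\mathcal V,Q)\otimes_{\mathcal{H}_R}R[\mathcal{U}\backslash G]\simeq I_G(P,\sigma,Q)$.

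The hard part is not in this corollary at all: the substance was already packaged into Corollary \ref{cor:10.12} (via Theorem \ref{thm:10.10} and Propositions \ref{prop:QG}, \ref{prop:fourhom}, \ref{prop:extX}, which identify the four relevant comparison morphisms and reduce their bijectivity to the torsion hypothesis) and into Proposition \ref{prop:orthtriple}(4) (via Proposition \ref{pro:p0}, Remark \ref{rem:or}, and the fact that simplicity over a field lets one take $n=1$ in the supersingularity condition, so that $\mathcal L^{\mathcal H}_{\mathcal H_M}(\mathcal V)=0$ for proper $M$). If one wanted the argument self-contained, the only computational check worth isolating is the identity $q_s+1=1$ in $R$, i.e. that the pro-$p$-Iwahori Hecke parameters vanish modulo $p$; everything else is bookkeeping of references.
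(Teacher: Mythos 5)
Your proposal is correct and follows the same route as the paper: the paper deduces Corollary \ref{cor:10.13} precisely by invoking Proposition \ref{prop:orthtriple}(4) to get $P(\mathcal V)=P(\sigma)$ when $\sigma\neq 0$ and then applying Corollary \ref{cor:10.12} (whose last clause covers $\sigma=0$). Your explicit check that $q_s+1=1$ is invertible in a field of characteristic $p$, so the torsion hypothesis is vacuous, is exactly the point the paper leaves implicit.
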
 
  \section{Vanishing of the smooth dual }\label{S:9}
 
 Let $V$ be an $R[G]$-module.  The dual $\Hom _R (V,R)$ of $V$ is an $R[G]$-module for  the contragredient action: $gL (gv)=L(v)$  if $g\in G$, $L\in \Hom _R (V,R)$ is a linear form  and $v\in V$.  When $V\in \Mod_R^\infty(G)$ is a smooth $R$-representation of $G$, the dual of $V$ is not necessarily smooth.  A linear form $L$ is smooth   if there exists an open  subgroup $H\subset G$ such that $L(hv)=L(v)$ for all $h\in H, v\in V$; the space $\Hom _R (V,R)^\infty$of smooth linear forms   is a smooth $R$-representation of $G$,  called the {\bf smooth dual} (or  smooth contragredient) of $V$. The  smooth dual of $V$  is contained in the dual of $V$.

 \begin{example} 
  When  $R$ is a field and  the dimension of $V$ over $R$ is finite, the  dual   of   $V$ is equal to the smooth dual of $V$ because the kernel of the action of $G$ on $V$ is an open normal subgroup $H\subset G$; the  action of $G$ on the dual $\Hom _R (V,R)$  is  trivial on  $H$.
\end{example}
 
We assume in this section that $R$ is a field of characteristic $p$. 
Let $P=MN$ be  a  parabolic subgroup of $G$ and $V\in \Mod_R^\infty (M)$.  Generalizing the proof given  in \cite[8.1]{MR2392364}  when $G=GL(2,F)$ and the dimension of $V$ is $1$, we show: 

\begin{proposition}\label{He} If $P\neq G$, the smooth dual of  $\Ind_P^G (V)$ is $0$.
 \end{proposition}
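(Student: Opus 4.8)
The statement to prove is that for a proper parabolic $P = MN \neq G$ and any smooth $R$-representation $V$ of $M$, with $R$ a field of characteristic $p$, the smooth dual of $\Ind_P^G(V)$ vanishes. The idea is to exploit the non-compactness of $N$ together with the fact that $p = 0$ in $R$, so that averaging a smooth linear form over larger and larger compact subgroups of $N$ forces it to be zero. First I would fix a nonzero smooth linear form $L$ on $\Ind_P^G(V)$ and an open subgroup $H$ of $G$ with $L$ fixed by $H$; shrinking $H$, we may assume $H = H_N H_M H_{\overline N}$ has an Iwahori-type decomposition with respect to $P$, with $H_N = H \cap N$ a compact open subgroup of $N$. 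The key observation is that $N$ is a union of an increasing chain of compact open subgroups $H_N = N_0 \subset N_1 \subset N_2 \subset \cdots$ with $N = \bigcup_k N_k$, since $N \neq \{1\}$ is a nontrivial unipotent group (here $P \neq G$ is used).

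Next I would pick $f \in \Ind_P^G(V)$ with $L(f) \neq 0$; since $f$ is a smooth function $G \to V$ supported on finitely many cosets $P\mathcal{U}$, it is in particular fixed on the right by some compact open subgroup, and we may arrange (after translating $f$ by an element of $N$, using $H$-invariance suitably) that $f$ is supported in $P H_{\overline N}$ and takes a fixed value. The main point is then a counting argument: because $L$ is $H_N$-invariant, for each $k$ we can write $\sum_{n \in N_k/N_0} n \cdot f$ as a sum of $[N_k : N_0]$ translates of $f$, each translate still lying in the right $H$-fixed part, and $L$ applied to it equals $[N_k : N_0] \cdot L(f)$ in $R$. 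Choosing $k$ large enough that $[N_k : N_0]$ is divisible by $p$ — which is possible precisely because $N$ is a $p$-adic unipotent group, so that the indices $[N_k : N_0]$ are (eventually) divisible by arbitrarily high powers of $p$ — one gets $[N_k : N_0] \cdot L(f) = 0$ in $R$. On the other hand $\sum_{n \in N_k/N_0} n\cdot f$, for suitable $f$, is a single function whose value at a well-chosen point is an $R^\times$-multiple of $f$'s value (the translates have disjoint supports after restricting appropriately, so no cancellation occurs in the value), forcing $L$ to vanish on that function and hence $L(f) = 0$, a contradiction.

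The delicate point — and the main obstacle — is making the bookkeeping of supports precise: one must choose $f$ and the point of evaluation so that the translates $n \cdot f$ for $n \in N_k/N_0$ have supports that do not overlap in a way that would create cancellation, while still arranging that $L$ of the whole sum is a nonzero scalar multiple of $L(f)$. This is exactly the kind of argument carried out in \cite[8.1]{MR2392364} for $G = GL(2,F)$ and $\dim V = 1$; the generalization requires replacing the single coset bookkeeping there by the Iwahori-type decomposition of $H$ and the filtration of $N$ by compact open subgroups, and checking that the value of $\Ind_P^G(V)$-functions transforms correctly under the $M$-action (which enters only through the fixed compact data, so $p$-divisibility of the index is unaffected). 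A clean way to package this is to note that $\Ind_P^G(V)$ restricted to $N$ contains, for each $f$ as above, a copy of the regular representation $R[N/H_N]$ (generated by the $N$-translates of $f$), and the smooth dual of $R[N/H_N]$ is zero because every smooth linear form is $N_k$-invariant for some $k$ and $[N_k : N_0]$ is divisible by $p$ for $k$ large; then one transfers this vanishing to $L$ via the evaluation map. I expect the write-up to reduce entirely to this last lemma about $R[N/H_N]$, whose proof is the short $p$-divisibility computation sketched above.
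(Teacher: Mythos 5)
There is a genuine gap, and it sits at the very center of your counting argument. You sum the translates $n\cdot f$ over $n\in N_k/N_0$ for a \emph{large} compact open subgroup $N_k$ of $N$ and claim $L\bigl(\sum_{n}n\cdot f\bigr)=[N_k:N_0]\,L(f)$. That identity needs $L(n\cdot f)=L(f)$ for every $n\in N_k$, i.e.\ it needs $N_k$ to fix $L$; but $L$ is only fixed by $H$, hence only by $N_0=H\cap N$, and the elements of $N_k\setminus N_0$ have no reason to stabilize $L$. The divisibility mechanism cannot be run by enlarging the group of translations beyond the stabilizer of $L$; it has to be run \emph{inside} it. Your fallback lemma is also false as stated: the smooth dual of $R[N/H_N]$ is not zero, since the augmentation functional $f\mapsto\sum_{x\in N/H_N}f(x)$ is a nonzero $N$-invariant (hence smooth) linear form --- the point being that the atoms $1_{xH_N}$ of $R[N/H_N]$ cannot be subdivided. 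The correct vanishing statement is for $C_c^\infty(\overline N,V)$, where supports \emph{can} be subdivided by shrinking the level; and even that only handles the subrepresentation of $\Ind_P^G(V)$ supported on the open cell, so one would still have to work through the remaining Bruhat strata to conclude anything about $L$ on all of $\Ind_P^G(V)$.

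The argument the paper gives (following the $GL(2)$ case you cite) reverses the direction of your subdivision. Fix a pro-$p$ open subgroup $K$ fixing $L$, an open $J\subset K$, and $f\in(\Ind_P^G(V))^J$ supported on a single set $PgJ$. For an open normal $J'\trianglelefteq J$, let $\varphi$ be supported on $PgJ'$ with $\varphi(g)=f(g)$; then $f=\sum_j j\varphi$ where $j$ runs over cosets in $J/J'$ of the image $X$ of $g^{-1}Pg\cap J$, and since every $j$ lies in $J\subset K$ it \emph{does} fix $L$, so $L(f)=r\,L(\varphi)$ with $r=[J/J':X]$. Because $P\neq G$, the subgroup $g^{-1}Pg\cap J$ is not open in $J$, so for $J'$ small enough $r>1$; as $J/J'$ is a $p$-group, $r$ is a positive power of $p$ and $L(f)=0$. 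This use of the non-openness of $g^{-1}Pg\cap J$ (for every $g$, not just on the open cell) is the point your proposal is missing; I would rework the proof along these lines.
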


\begin{proof} Let $L$ be a smooth linear form on $\Ind_P^G (V)$ and   $K$   an open pro-$p$-subgroup
of $G$ which fixes $L$. Let $J$ an arbitrary open subgroup  of $K$, $g\in G$ and $f\in  (\Ind_P^G (V))^J$ with support $PgJ$. We want to show that $L(f)=0$. Let $J' $ be any open
normal subgroup of $ J$ and let $\varphi$ denote  the function in $(\Ind_P^G (V))^{J'}$ with support 
$PgJ'$ and value $\varphi(g)=f(g)$ at $g$. For $j\in J$ we have $L(j\varphi)=L(\varphi)$, and the support of $ j\varphi(x)= \varphi (xj)$ is $PgJ'j^{-1}$. 
The function $f$ is the sum of translates $ j \varphi$, 
where $ j$ ranges through the left cosets of the image $X$ of 
 $g^{-1}Pg\cap  J$ in $J/J' $, so that $L(f)= r L(\varphi)$ where $r$ is the order of $X$ in $J/J'$. 
We can certainly find $J'$ such that $r\neq 1$, and then $r$ is a positive power of $p$. As  the characteristic of $C$ is $p$ we  have $L(f)=0$.
\end{proof}
 
The  module  $R[\mathcal{U}\backslash G]$ is contained in the module $R^{\mathcal{U}\backslash G}$ of functions $f: \mathcal{U}\backslash G\to R$. The actions of $\mathcal{H}$ and of $G$ on $R[\mathcal{U}\backslash G]$ extend to $R^{\mathcal{U}\backslash G}$ by the same formulas. The pairing 
$$(f,\varphi)\mapsto \langle f,\varphi\rangle=\sum_{g\in \mathcal{U}\backslash G} f(g) \varphi (g):R^{\mathcal{U}\backslash G}\times R[\mathcal{U}\backslash G]\to R$$
identifies $R^{\mathcal{U}\backslash G}$ with the dual of $R[\mathcal{U}\backslash G]$. Let $h\in \mathcal{H}$ and $\check h\in \mathcal{H}, \ \check h (g)=h(g^{-1})$ for $g\in G$. We have 
$$\langle f,h\varphi\rangle= \langle \check hf,\varphi\rangle.
$$

 \begin{proposition}\label{arXiv:1406.1003_accepted}
When $R$ is an algebraically closed field of characteristic $p$, $G$ is not compact modulo the center  and  $\mathcal V$ is a simple supersingular right $\mathcal{H}_R$-module,  the  smooth dual of $\mathcal{V}\otimes_{\mathcal{H}_R}R[{\mathcal{U}}\backslash G]$ is 
$0$. 
\end{proposition}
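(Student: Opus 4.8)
The plan is to make three formal reductions and then isolate what I expect to be the geometric heart.

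\textit{Reduction to coinvariants.} Write $\pi=\mathcal V\otimes_{\mathcal H_R}\mathbb X$, $\mathbb X=R[\mathcal U\backslash G]$. For any smooth $R[G]$-module $W$ and any open subgroup $H$ one has $(W^{\vee})^{H}=(W_{H})^{\vee}$, where $W_{H}$ is the module of $H$-coinvariants (a linear form is fixed by $H$ iff it factors through $W_H$); hence $W^{\vee,\infty}=\bigcup_{H}(W_{H})^{\vee}$, and $W^{\vee,\infty}=0$ if and only if $W_{H}=0$ for every open \emph{compact} $H$. Since the $H$-action on $\pi$ affects only the factor $\mathbb X$, one gets $\pi_{H}=\mathcal V\otimes_{\mathcal H_R}\mathbb X_{H}$ with $\mathbb X_{H}=R[\mathcal U\backslash G/H]$ a left $\mathcal H_R$-module; so it suffices to show $\mathcal V\otimes_{\mathcal H_R}\mathbb X_{H}=0$ for all open compact $H$.

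\textit{Using supersingularity.} Because $G$ is not compact modulo its center, $T^{+}$ contains elements $t$ that are not invertible (equivalently $T^{+}/T_{1}$ strictly exceeds its unit group). For each such $t$ the central element $z(t)\in\mathcal H_R$ acts locally nilpotently on $\mathcal V$; as $R$ is a field and $\mathcal V$ is simple, the Remark after Definition~\ref{def:supersingularH} lets us take the exponent $1$, i.e. $\mathcal V\,z(t)=0$. Since $z(t)$ is central, $v\otimes z(t)x=v\,z(t)\otimes x=0$ for $v\in\mathcal V$, $x\in\mathbb X_{H}$, so $\mathcal V\otimes_{\mathcal H_R}(z(t)\mathbb X_{H})=0$ and therefore
$$\mathcal V\otimes_{\mathcal H_R}\mathbb X_{H}=\mathcal V\otimes_{\mathcal H_R}\bigl(\mathbb X_{H}/\mathfrak z\mathbb X_{H}\bigr),\qquad \mathfrak z:=\textstyle\sum_{t}z(t)\mathcal H_R,$$
the sum over non-invertible $t\in T^{+}$. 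Thus everything reduces to the combinatorial statement $\mathfrak z\,\mathbb X_{H}=\mathbb X_{H}$ for all open compact $H$; equivalently, to the assertion that the universal supersingular quotient $\mathbb X/\mathfrak z\mathbb X$ of $R[\mathcal U\backslash G]$ has vanishing smooth dual. If $\pi=0$ everything here is vacuous, so the argument is insensitive to Question~\ref{que:non0}, and note that only cyclicity of $\mathcal V$ has been used so far.

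\textit{The main point.} It remains to prove $\mathfrak z\,\mathbb X_{H}=\mathbb X_{H}$, and this is exactly where the hypothesis that $G$ is not compact modulo its center is genuinely used: the dominant $t$ producing the generators of $\mathfrak z$ are not invertible in $\mathcal H_R$ (so $z(t)$ is not surjective on $\mathbb X$ itself), yet they \emph{are} units of the group $G$, and the required surjectivity has to be manufactured after passing to the coinvariant spaces $\mathbb X_{H}$. Concretely I would fix a strictly dominant $z_{0}\in Z$, so that $\mu_{z_{0}}\in\Lambda_T(1)$ is strictly $N$-positive and central in $W_{Z}(1)$; use the congruences recalled in the proof of Proposition~\ref{pro:ssLR} (in characteristic $p$, $z(z_{0})^{n}T_{\mu_{z_{0}}}=T_{\mu_{z_{0}}}^{n+1}$ and, for an antidominant orientation, $E_o(\mu_{z_{0}})=T_{\mu_{z_{0}}}$) to replace $z(z_{0})$ by the Hecke translation $T_{\mu_{z_{0}}}$; and then run a direct computation with the Iwahori factorisation $\mathcal U=\mathcal U_{N}\mathcal U_{Z}\mathcal U_{\overline N}$ together with the contraction properties $z_{0}\mathcal U_{N}z_{0}^{-1}\subseteq\mathcal U_{N}$, $z_{0}^{-1}\mathcal U_{\overline N}z_{0}\subseteq\mathcal U_{\overline N}$, to show that modulo the right-$H$ relations defining $\mathbb X_{H}$ every basis class $\overline{\mathbf 1_{\mathcal U gH}}$ lies in the image of $T_{\mu_{z_{0}}}$ (after summing over the finitely many relevant double cosets, whose indices are powers of $p$); summing over the generators of $\mathfrak z$ then gives $\mathfrak z\,\mathbb X_{H}=\mathbb X_{H}$. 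I expect this last computation — the bookkeeping of double cosets and $p$-power indices — to be the chief obstacle, and I would model it on the $p$-divisibility manipulation in the proof of Proposition~\ref{He}. As a sanity check, when $G$ is compact modulo its center $T^{+}=T$ has no non-invertible element, $\mathfrak z=0$, $\pi=\mathcal V\ne0$, and the statement indeed fails, consistently with the Remark after Definition~\ref{def:supersingularH} that then every module is supersingular.
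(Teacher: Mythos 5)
Your two formal reductions are sound: over a field, the smooth dual of a smooth $W$ vanishes iff $W_H=0$ for every open compact $H$ (in fact, since $\mathcal U$ is pro-$p$ and $\Char R=p$, a nonzero smooth representation has nonzero $\mathcal U$-invariants and $(W^{\vee,\infty})^{\mathcal U}=(W_{\mathcal U})^{\vee}$, so $H=\mathcal U$ alone suffices); and centrality of $z(t)$ plus simplicity do give $\mathcal V z(t)=0$, so everything collapses onto the claim $\mathfrak z\,\mathbb X_H=\mathbb X_H$. The gap is that this claim --- which is now the entire content of the proposition --- is not proved, and the route you sketch for it is false. You propose to deduce it from surjectivity of $T_{\mu_{z_0}}$ on $\mathbb X_H$, via $z(z_0)^nT_{\mu_{z_0}}=T_{\mu_{z_0}}^{n+1}$. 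But $T_{\tilde\mu}$ is \emph{not} surjective on $\mathbb X_{\mathcal U}=R[\mathcal U\backslash G/\mathcal U]$ in characteristic $p$: the dual of $\mathbb X_{\mathcal U}$ is $R^{\mathcal U\backslash G/\mathcal U}\supset\mathcal H_R$, on which the transpose of $T_{\tilde\mu}$ acts by (the extension of) left multiplication by $\check T_{\tilde\mu}=T_{\tilde\mu^{-1}}$; writing $\tilde\mu^{-1}=\tilde w\tilde s$ with $\ell(\tilde\mu^{-1})=\ell(\tilde w)+1$, the identity $T_{\tilde s}T^*_{\tilde s^{-1}}=q_s=0$ yields $T_{\tilde\mu^{-1}}T^*_{\tilde s^{-1}}=0$ with $T^*_{\tilde s^{-1}}\neq 0$, so $T_{\tilde\mu}\mathbb X_{\mathcal U}$ lies in the kernel of a nonzero linear form. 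In characteristic $p$ every single $T_{\tilde\mu}$ of positive length is a zero divisor; only the full central sum $z(t)=E_o(C(\mu_t))$ over the conjugacy class could act surjectively, and your congruence does not reduce the latter to the former in the direction you need.

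Even if one could establish $\mathfrak z\,\mathbb X_H=\mathbb X_H$, note that it is strictly stronger than the proposition: it would give the vanishing for \emph{every} right $\mathcal H_R$-module killed by the $z(t)$, with no simplicity hypothesis. The paper's proof, by contrast, uses simplicity in an essential way: it restricts $\mathcal V$ to the affine subalgebra $\mathcal H_R^{\aff}$, where it becomes a finite sum of supersingular characters $\chi$ encoded by a pair $(\chi_k,J)$ with $J$ meeting each irreducible component of $S^{\aff}$ nontrivially and properly; it identifies the $\mathcal U$-fixed part of the dual with the formal sums $f=\sum_w a_wT_w$ in $R^{\mathcal U\backslash G/\mathcal U}$ satisfying the eigenvector condition $\check hf=\chi(h)f$ for all $h\in\mathcal H_R^{\aff}$; and the quadratic relations then turn that condition into the recurrences $a_{\tilde sw}=0$ (for $s\in J\cup(S^{\aff}\setminus S^{\aff}_{\chi_k})$, $\tilde sw<w$) and $a_{\tilde sw}=a_w$ (for $s\in S^{\aff}_{\chi_k}\setminus J$), which force any $w$ with $a_w\neq 0$ to satisfy $w>\tilde sw$ for all $s\in S^{\aff}$ --- impossible. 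Your condition ``$\check{z(t)}f=0$ for all non-invertible $t\in T^+$'' is far weaker than this eigenvector condition, and ruling out such $f$ (for arbitrarily small $H$, or even just for $H=\mathcal U$) is an unresolved combinatorial problem in your write-up, not a $p$-divisibility bookkeeping exercise of the kind used in Proposition \ref{He}. As it stands the proposal establishes only the easy reductions and leaves the theorem unproved.
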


 \begin{proof}   
  Let  $\mathcal{H}_R^{\aff}$ be the subalgebra of $\mathcal{H}_R$ of basis $(T_w)_{w\in W'(1)}$ where $W'(1)$ is the inverse image of $W'$ in $W(1)$.
 The   dual of $\mathcal{V}\otimes_{\mathcal{H}_R}R[{\mathcal{U}}\backslash G]$  is contained in the   dual of $\mathcal{V}\otimes_{\mathcal{H}_R^{\aff}}R[{\mathcal{U}}\backslash G]$;   the   $\mathcal{H}_R^{\aff}$-module  $\mathcal{V}|_{ \mathcal{H}_R^{\aff}}$  is a finite sum of supersingular characters \cite{Vigneras-prop-III}. Let $\chi: \mathcal{H}_R^{\aff}\to R$ be a 
supersingular character. The  dual of  $\chi\otimes_{\mathcal{H}_R^{\aff}} R[{\mathcal{U}}\backslash G]$ is contained in the dual of $R[{\mathcal{U}}\backslash G]$   isomorphic to $R^{\mathcal{U}\backslash G}$. It is the space of $f\in R^{\mathcal{U}\backslash G}$ with $\check h f =\chi(h)f$ for all  $h\in  \mathcal{H}_R^{\aff} $.
The smooth dual of $\chi\otimes_{\mathcal{H}_R^{\aff}} R[{\mathcal{U}}\backslash G]$  is $0$ if the 
dual of $\chi\otimes_{\mathcal{H}_R^{\aff}} R[{\mathcal{U}}\backslash G]$ has no non-zero element   fixed by $\mathcal{U}$.
Let us take $f\in R^{\mathcal{U}\backslash G/\mathcal{U}}$ 
 with $\check h f =\chi(h)f$ for all  $h\in  \mathcal{H}_R^{\aff} $. We shall prove that $f=0$.  We have $\check T_w=T_{w^{-1}}$ for $w\in W(1)$.

 The elements  $(T_t)_{t\in Z_k}$ and $(T_{\tilde s})_{s\in S^{\aff}}$ where $\tilde s$ is an admissible  lift of  $s$ in $ W^{\aff}(1)$, generate the algebra $ \mathcal{H}_R^{\aff} $ and 
  $$T_t T_w=T_{tw},\quad T_{\tilde s}T_w=\begin{cases} T_{\tilde s w}  & \tilde s w>w,\\
  c_{\tilde s}T_w& \tilde s w<w.
  \end{cases}
  $$
with $c_{\tilde s}= - |Z_{k,s}'|\sum _{t\in Z'_{k,s}}T_t$ because the characteristic of $R$ is $p$ \cite[Proposition 4.4]{MR3484112}. 
Expressing $ f = \sum_{ w \in W(1)}a_{ w} T_{ w} $, $a_w\in R$,  as an infinite sum,  we have 
  $$T_{t}f=\sum_{w \in W(1)}a_{t^{-1} w} T_{ w}, \quad T_{\tilde s}f=\sum_{w \in W(1), \tilde s w <  w}  (a_{(\tilde s)^{-1} w} + a_{ w} c_{\tilde s})T_{ w} ,$$
where  $<$ denote  the Bruhat order  of  $W(1)$ associated to $S^{\aff}$ \cite{MR3484112} and  \cite[Proposition 4.4]{MR3484112}. 
A  character $\chi$ of $ \mathcal{H}_R^{\aff} $ is associated to a character $\chi_k:Z_k\to R^*$ and a subset $J $ of  
$$S_{\chi_k}^{\aff}=\{ s\in  S ^{\aff}  \ | \ (\chi_k)|_ {Z'_{k,s}} \ \text{ trivial} \ \} $$ 
 \cite[Definition 2.7]{Vigneras-prop-III}. We have 
      \begin{equation}\label{cf} \begin{cases} \chi(T_t )=\chi_k(t)  \quad \quad \  t\in Z_k,\\
      \chi(T_{\tilde s}) = \begin{cases} 0 &  s\in S^{\aff}\setminus J  , \\  -1 & s\in  J.
   \end{cases}  
   \end{cases}  \quad  (\chi_k)(c_{\tilde s})= \begin{cases} 0 &  s\in S^{\aff}\setminus S_{\chi_k}^{\aff}  , \\  -1 & s\in  S_{\chi_k}^{\aff}. 
   \end{cases} 
\end{equation}
 Therefore $ \chi_k(t)f= \check T_{t}f=T_{t^{-1}}f$ hence $\chi_k(t) a_w= a_{tw}$. 
 We have  $\chi(T_{\tilde s})f=\check T_{\tilde s}f= T_{(\tilde s)^{-1}}f = T_{\tilde s} T_{(\tilde s)^{-2}} f=\chi_k((\tilde s)^2) T_{\tilde s}f  $; as    $(\tilde s) ^2 \in Z'_{k,s}$ \cite[three lines before Proposition 4.4]{MR3484112} and $J\subset S_{\chi_k}^{\aff}$, we obtain \begin{equation}\label{eq:Tsf}T_{\tilde s}f=\begin{cases} 0 &  s\in S^{\aff}\setminus J  , \\  -f & s\in  J.
   \end{cases}
   \end{equation}  Introducing $\chi_k(t) a_w= a_{tw}$ in the formula for  $T_{\tilde s}f $, we get 
\begin{align*}\sum_{w \in W(1), \tilde s w <  w}  a_{ w} c_{\tilde s} T_{ w} &=-|Z'_{k,s}|^{-1}\sum_{w \in W(1), \tilde s w <  w, t\in Z'_{k,s}}   a_{ w} T_{t w}\\
& =-  |Z'_{k,s}|^{-1}\sum_{w \in W(1), \tilde s w <  w, t\in Z'_{k,s}} a_{t^{-1} w} T_{w}\\
&=-  |Z'_{k,s}|^{-1}\sum_{t\in Z'_{k,s}}\chi_k(t^{-1})\sum_{w \in W(1), \tilde s w <  w} a_{ w} T_{w}\\
& =\chi_k(c_{\tilde s})\sum_{w \in W(1), \tilde s w <  w} a_{ w} T_{w}.
\end{align*}
\begin{align*}
T_{\tilde s}f & =\sum_{w \in W(1), \tilde s w <  w}  (a_{(\tilde s)^{-1} w}+  a_{ w} \chi_k(c_{\tilde s}))T_{ w}\\
&= \begin{cases}\sum_{w \in W(1), \tilde s w <  w}   a_{(\tilde s)^{-1} w}T_{ w} & s\in S^{\aff}\setminus  S_{\chi_k}^{\aff},
\\
\sum_{w \in W(1), \tilde s w <  w}  (a_{(\tilde s)^{-1} w}-  a_{ w})T_{ w} & s\in  S_{\chi_k}^{\aff}
.
\end{cases}
\end{align*} From  the last  equality and \eqref{eq:Tsf} for  $T_{\tilde s}f $, we get:     \begin{equation}\label{df}
 a_{\tilde s  w} = \begin{cases}0 &s \in J \cup (S^{\aff}\setminus  S_{\chi_k}^{\aff}),  \tilde s  w < w
 ,\\
  a_{w} & s\in S_{\chi_k}^{\aff} \setminus J .
\end{cases}
\end{equation}
  Assume that $a_ {w}\ne 0$. By the first condition, we know that $w>\tilde s w$ for $s \in J \cup (S^{\aff}\setminus  S_{\chi_k}^{\aff})$.
The character $\chi$ is supersingular  if  for each irreducible component $X$ of $S^{\aff}$, the intersection  $X\cap J$ is not  empty and different from $X$  \cite[Definition 2.7, Theorem  6.18]{Vigneras-prop-III}. This implies that  the group generated by the $s\in S_{\chi_k}^{\aff} \setminus J$ is finite.
If $\chi$ is supersingular, by the second condition   we can suppose $w>\tilde s w$ for any  $s\in S^{\aff}$. But there 
is no such element if $S^{\aff}$ is not empty.
  \end{proof}
  
 \begin{theorem}\label{contra}  Let  $\pi$ be an irreducible admissible $R$-representation of $G$ with a non-zero smooth dual where $R$ is an algebraically closed field of characteristic $p$. Then $\pi$ is finite dimensional. \end{theorem}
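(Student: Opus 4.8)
The plan is to reduce the general case to the two extreme situations already handled in the paper: the parabolically induced case (Proposition \ref{He}) and the supercuspidal case (Proposition \ref{arXiv:1406.1003_accepted}). First I would invoke the classification of \cite{MR3600042}: since $R$ is an algebraically closed field of characteristic $p$ and $\pi$ is irreducible admissible, there is an $R[G]$-triple $(P,\sigma,Q)$ with $\sigma$ supercuspidal and $\pi\simeq I_G(P,\sigma,Q)$, and moreover $\sigma$ is $e$-minimal (the remark after Lemma \ref{lemma:2.2}), so $\Delta_P$ and $\Delta_{P(\sigma)}\setminus\Delta_P$ are orthogonal. By the alternative description in Definition \ref{def:Gtriple}, $I_G(P,\sigma,Q)$ is a quotient of $\Ind_Q^G(e_Q(\sigma))$; if $Q\neq G$ then Proposition \ref{He} applied to $M=M_Q$ and $V=e_Q(\sigma)$ shows the smooth dual of $\Ind_Q^G(e_Q(\sigma))$ is $0$, hence so is the smooth dual of its quotient $\pi$, contradicting our hypothesis. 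Therefore we may assume $Q=G$, which forces $P=Q=G$ as well (since $P\subset Q$), so $P(\sigma)=G$, $\sigma$ is a supercuspidal representation of $G=M$ itself, and $\pi\simeq\sigma$ is supercuspidal.

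Next I would handle the supercuspidal case $\pi=\sigma$. The idea is to pass to $\mathcal{U}$-invariants and use the Hecke-module machinery. Since $\sigma$ is irreducible admissible and $R$ is a field of characteristic $p$, $\sigma$ is generated by $\sigma^{\mathcal{U}}$ (a standard fact, used e.g.\ in the remark before Theorem \ref{thm:main}), so $\sigma^{\mathcal{U}}\neq 0$; being admissible, $\sigma^{\mathcal{U}}$ is finite dimensional. The left-adjoint map $\sigma^{\mathcal{U}}\otimes_{\mathcal{H}_R}R[\mathcal{U}\backslash G]\to\sigma$ is then surjective, so it suffices to show that if $\sigma^{\mathcal{U}}$ is nonzero finite dimensional and $\sigma\simeq\sigma^{\mathcal{U}}\otimes_{\mathcal{H}_R}R[\mathcal{U}\backslash G]$ has nonzero smooth dual, then $\sigma$ is finite dimensional. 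Now I would split according to whether $G$ is compact modulo its center. If $G$ is not compact modulo its center: the finite-dimensional $\mathcal{H}_R$-module $\sigma^{\mathcal{U}}$ has finite length, and I would argue (using that $\sigma$ is supercuspidal, hence $\sigma^{\mathcal U}$ has no subquotient killed by the relevant parabolic-induction left adjoint, i.e.\ its simple subquotients are supersingular by the compatibility in \cite[\S 5]{arXiv:1703.04921}) that every simple subquotient of $\sigma^{\mathcal{U}}$ is simple supersingular. Filtering $\sigma^{\mathcal{U}}$ and applying right-exactness of $-\otimes_{\mathcal{H}_R}R[\mathcal{U}\backslash G]$ together with Proposition \ref{arXiv:1406.1003_accepted} to each graded piece shows the smooth dual of $\sigma^{\mathcal{U}}\otimes_{\mathcal{H}_R}R[\mathcal{U}\backslash G]$ is $0$; since this module surjects onto $\sigma$, the smooth dual of $\sigma$ is $0$, a contradiction. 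Hence $G$ must be compact modulo its center, in which case $G/Z$ is compact, $\sigma$ is admissible with trivial-modulo-center restriction to an open compact subgroup, and one concludes $\sigma$ is finite dimensional directly.

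The step I expect to be the main obstacle is the reduction, in the case $G$ not compact modulo center, from ``$\pi$ supercuspidal irreducible'' to ``$\pi^{\mathcal{U}}$ has all simple subquotients simple supersingular.'' This is where I need to connect supercuspidality of the $G$-representation with supersingularity of the $\mathcal{H}_R$-module; the cleanest route is to cite the compatibility between the left adjoints of parabolic induction on the $G$-side and the $\mathcal{H}$-side established in \cite[\S 4]{arXiv:1703.04921} (formula \eqref{eq:JT} and the companion \cite[formula (4.14)]{arXiv:1703.04921}), which shows that if some simple subquotient of $\sigma^{\mathcal{U}}$ were not supersingular, then a proper parabolic left adjoint of $\sigma$ would be nonzero, contradicting supercuspidality. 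A subtlety to check is that $\sigma^{\mathcal{U}}$ itself, not merely its simple subquotients, behaves well under filtration and that right-exactness of $-\otimes_{\mathcal{H}_R}R[\mathcal{U}\backslash G]$ suffices: a short exact sequence $0\to A\to\sigma^{\mathcal{U}}\to B\to 0$ gives $A\otimes R[\mathcal{U}\backslash G]\to\sigma^{\mathcal{U}}\otimes R[\mathcal{U}\backslash G]\to B\otimes R[\mathcal{U}\backslash G]\to 0$, and since the smooth dual is left exact (it is a fixed-point functor of an intersection of open subgroups), vanishing of the smooth duals of the two outer terms forces vanishing of the middle one; iterating down a composition series of $\sigma^{\mathcal{U}}$ completes the argument.
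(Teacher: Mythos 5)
Your first reduction contains a genuine error: from $Q=G$ you conclude ``which forces $P=Q=G$ as well (since $P\subset Q$)''. The containment $P\subset Q$ gives nothing when $Q=G$; the only consequence of $Q=G$ is that $P(\sigma)=G$, and then $\pi\simeq I_G(P,\sigma,G)=e(\sigma)$ is the extension to $G$ of a supercuspidal representation $\sigma$ of the Levi $M$ of $P$, where $P$ may well be a \emph{proper} parabolic subgroup (this happens whenever $\Delta_P$ and $\Delta\setminus\Delta_P$ are orthogonal and $Z\cap M'_\alpha$ acts trivially for $\alpha\notin\Delta_P$ --- always for $P=B$, and with $M\supsetneq Z$ when $\Delta$ decomposes orthogonally). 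Your proof as written silently discards exactly this case, which includes the infinite-dimensional representations $e(\sigma)$ with $Z\subsetneq M\subsetneq G$ whose smooth dual the theorem must show to vanish. The fix is what the paper does: since $e(\sigma)|_M=\sigma$, a $G$-smooth linear form on $e(\sigma)$ is in particular $M$-smooth, so the smooth dual of $\sigma$ as an $M$-representation is nonzero; then run your supersingular/Hecke argument on $M$ instead of $G$ (Proposition \ref{arXiv:1406.1003_accepted} applied to $M$) to conclude $M$ is compact modulo center, i.e.\ $M=Z$, whence $\sigma$ and $e(\sigma)\simeq\pi$ are finite dimensional.

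A secondary, lesser point: in the supercuspidal step you want every simple subquotient of $\sigma^{\mathcal{U}}$ to be supersingular, which you propose to extract from the adjunction compatibilities of \cite{arXiv:1703.04921}. That statement is true but is a substantially heavier input than needed (and its standard proof is close to circular with the present circle of papers). The paper only uses the weaker fact, due to Vign\'eras \cite[Proposition 7.10, Corollary 7.11]{Vigneras-prop-III}, that $\sigma^{\mathcal{U}_M}$ \emph{contains} a simple supersingular submodule $\mathcal{V}$; by adjunction and irreducibility of $\sigma$ this gives a surjection $\mathcal{V}\otimes_{\mathcal{H}_{M,R}}R[\mathcal{U}_M\backslash M]\twoheadrightarrow\sigma$, so the nonvanishing of the smooth dual of $\sigma$ passes to $\mathcal{V}\otimes_{\mathcal{H}_{M,R}}R[\mathcal{U}_M\backslash M]$ and Proposition \ref{arXiv:1406.1003_accepted} applies directly, with no filtration argument. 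Your filtration/left-exactness mechanism is sound in itself, but you should either supply a precise reference for the supersingularity of all simple subquotients or switch to the submodule-plus-quotient argument.
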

 \begin{proof} Let  $(P,\sigma,Q)$ be   a  $R[G]$-triple with $\sigma$ supercuspidal such that $\pi \simeq I_G(P,\sigma,Q)$.  The representation $I_G(P,\sigma,Q)$ is a quotient of  
 $\Ind_Q^G e_Q(\sigma)$  hence  the smooth  dual of  $\Ind_Q^G e_Q(\sigma)$ is not  zero. From  Proposition \ref{He}, $Q=G$.  We have $I_G(P,\sigma,G)=e(\sigma)$.   
 The smooth dual of 
  $\sigma$ contains the  smooth linear dual of $e(\sigma)$ hence is not zero. 
As $\sigma$ is  supercuspidal, the $\mathcal{H}_{M}$-module $\sigma^{\mathcal{U}_M}$ contains a simple supersingular submodule $\mathcal V$ \cite[Proposition 7.10, Corollary~7.11]{Vigneras-prop-III}.  The functor  $-\otimes_{\mathcal{H}_{M,R}} 
R[{\mathcal{U}_M}\backslash M]$ being the right adjoint of  $(-)^{\mathcal{U}_M}$,  the irreducible representation $\sigma$ is a quotient of $\mathcal V\otimes_{\mathcal{H}_{M,R}} R[{\mathcal{U}_M}\backslash M]$,  hence the smooth  dual of $\mathcal V\otimes_{\mathcal{H}_{M,R}} R[{\mathcal{U}_M}\backslash M]$   is not  zero.  By Proposition \ref{arXiv:1406.1003_accepted},  $M=Z $. Hence $\sigma$ is finite dimensional and the same is true for $e(\sigma)= I_G(B,\sigma,G) \simeq \pi$.
 \end{proof}
  
\begin{remark}
  When  the characteristic of $F$ is $0$,  Theorem \ref{contra} was proved by Kohlhaase for a field $R$ of characteristic $p$. He gives two proofs  \cite[Proposition 3.9, Remark 3.10]{kohlhasse-smooth-duality}, but none of them  extends to $F$ of characteristic $p$.
Our proof  is valid without restriction on the characteristic of $F$ and does not use the results of Kohlhaase. Our assumption that $R$ is an algebraically closed field of characteristic $p$ comes  from the classification theorem in \cite{MR3600042}.
\end{remark}

\end{document}